\tikzset{every loop/.style={}}
\tikzset{
    labl/.style={anchor=south, rotate=90, inner sep=.5mm}
}
\setlist[enumerate,1]{label=(\arabic*)}%enumerate always with round brackets
\setlist[enumerate,2]{label={(\alph*)},ref={(\alph*)}}
\setlist[enumerate,3]{label={(\roman*)},ref={(\roman*)}}
\newlist{steplist}{enumerate}{1}%custom enumerate environment
\setlist[steplist]{label={Step \arabic*:}, ref={Step \arabic*}}
\newcommand*{\wackyenum}[1]{%
	\expandafter\@wackyenum\csname c@#1\endcsname%
}
\newcommand*{\@wackyenum}[1]{%
	$\ifcase#1\or(1')\or(2')\or(3a')\or(3b')\or(4')%
	\else\@ctrerr\fi$%
}
\AddEnumerateCounter{\wackyenum}{\@wackyenum}{(1')}
\newtheorem{thm}{Theorem}[section]
\newtheorem{lem}[thm]{Lemma}
\newtheorem{prop}[thm]{Proposition}
\newtheorem{cor}[thm]{Corollary}
\numberwithin{equation}{section}
\newtheorem{que}[thm]{Question}
\theoremstyle{definition}
\newtheorem{defn}[thm]{Definition} % definition numbers are dependent on theorem numbers
\newtheorem{remk}[thm]{Remark}
\newtheorem{nota}[thm]{Notation}
\newcommand{\cA}{\mathcal{A}}
\newcommand{\cC}{\mathcal{C}}
\newcommand{\cG}{\mathcal{G}}
\newcommand{\cL}{\mathcal{L}}
\newcommand{\cR}{\mathcal{R}}
\newcommand{\Z}{\mathbb{Z}}
\newcommand{\G}{\Gamma}
\newcommand{\La}{\Lambda}
\newcommand{\Aut}{\operatorname{Aut}}
\newcommand{\rk}{\operatorname{rk}}
\newcommand{\Is}{\operatorname{Is}}
\NewDocumentCommand{\dn}{e{^}}{%
	^{\IfValueT{#1}{#1}\vphantom{\smash[t]{\big|}}}
}
\newcommand{\uperp}{\dn^{\underline{\perp}}}
\newcommand{\jperp}{\dn^\perp}
\newcommand{\ut}{\underline{t}}
\newsavebox{\@brx}
\newcommand{\llangle}[1][]{\savebox{\@brx}{\(\m@th{#1\langle}\)}%
	\mathopen{\copy\@brx\kern-0.5\wd\@brx\usebox{\@brx}}}
\newcommand{\rrangle}[1][]{\savebox{\@brx}{\(\m@th{#1\rangle}\)}%
	\mathclose{\copy\@brx\kern-0.5\wd\@brx\usebox{\@brx}}}
\begin{document}
\begin{center}
{\LARGE\bf
Commensurability of lattices in right-angled buildings}\\
\bigskip
\bigskip
{\large Sam Shepherd}
\end{center}
\bigskip

\begin{abstract}
	Let $\G$ be a graph product of finite groups, with finite underlying graph, and let $\Delta$ be the associated right-angled building.
	We prove that a uniform lattice $\La$ in the cubical automorphism group $\Aut(\Delta)$ is weakly commensurable to $\G$ if and only if all convex subgroups of $\La$ are separable.
	As a corollary, any two finite special cube complexes with universal cover $\Delta$ have a common finite cover.
	An important special case of our theorem is where $\G$ is a right-angled Coxeter group and $\Delta$ is the associated Davis complex.
	We also obtain an analogous result for right-angled Artin groups.
	In addition, we deduce quasi-isometric rigidity for the group $\G$ when $\Delta$ has the structure of a Fuchsian building.
\end{abstract}
\bigskip
\tableofcontents

\bigskip
\section{Introduction}

Given compact length spaces $X_1$ and $X_2$ with a common universal cover $\tilde{X}$, it is natural to ask whether $X_1$ and $X_2$ have a common finite cover.
The deck transformation groups of $\tilde{X}\to X_1, X_2$ are \emph{uniform lattices} $\G_1,\G_2$ in the isometry group $\Is(\tilde{X})$ (i.e. they act properly and cocompactly on $\tilde{X}$), and the existence of a common finite cover is equivalent to $\G_1$ and $\G_2$ being \emph{weakly commensurable in $\Is(\tilde{X})$} -- meaning that there exists $g\in\Is(\tilde{X})$ such that $g\G_1 g^{-1}$ is \emph{commensurable} to $\G_2$ (i.e. $g\G_1 g^{-1}\cap\G_2$ has finite index in both $g\G_1 g^{-1}$ and $\G_2$).
Alternatively, one could start with a locally compact cocompact length space $X$ and uniform lattices $\G_1,\G_2<\Is(X)$, and ask whether $\G_1$ and $\G_2$ are weakly commensurable.
In particular, one could ask if there is an algebraic property of the lattices that guarantees weak commensurability.
If $X=\mathbb{H}^2$ is the hyperbolic plane then it can happen that $\G_1$ and $\G_2$ are not weakly commensurable even if the lattices are isomorphic as groups (for instance using the uncountability of the moduli space of hyperbolic surfaces of a given genus).
However, if $X$ is a symmetric space associated to a semisimple Lie group with no compact factors and trivial center that is not locally isomorphic to SL$(2,\mathbb{R})$, then Mostow Rigidity \cite{Mostow73} tells us that $\G_1$ is conjugate to $\G_2$ if and only if the lattices are isomorphic; therefore $\G_1$ and $\G_2$ are weakly commensurable if and only if they are \emph{abstractly commensurable} (i.e. have isomorphic finite-index subgroups).
Nevertheless, there are still many examples in this setting where $\G_1$ and $\G_2$ are not abstractly commensurable but have similar algebraic properties -- for instance they could be a pair of uniform lattices in PU$(n,1)$ with isomorphic profinite completions \cite{Stover19}.

A very different setting is where $X$ is a locally finite cell complex, and we consider uniform lattices $\G_1,\G_2$ in its automorphism group $\Aut(X)$. If $X$ is a tree then Leighton's Theorem tells us that \emph{all} uniform lattices in $\Aut(X)$ are weakly commensurable \cite{Leighton82}.
There are various results for other cell complexes giving sufficient (and sometimes necessary) conditions for $\G_1$ and $\G_2$ to be weakly commensurable \cite{Haglund06,Huang18,Woodhouse21b,Shepherd22,BridsonShepherd22,Woodhouse23} -- we will discuss many of these later in the introduction.
For many of these results the conditions for $\G_1$ and $\G_2$ involve the existence of \emph{separable subgroups} (see Definition \ref{defn:separable}). Separable subgroups are natural to consider in this context, because if $\G_1$ and $\G_2$ have many separable subgroups then one can often replace them by finite-index sublattices with certain desired properties (usually involving the geometry of $X$).
On the other hand, lattices with a lack of separability properties provide a good source of counter-examples; for instance if $X$ is a product of trees then there exist some uniform lattices that are residually finite (e.g. a product of lattices) and others that are not \cite{Wise96,BurgerMozes97} -- residual finiteness is preserved by weak commensurability, so this gives examples where $\G_1$ and $\G_2$ are not weakly commensurable.

The setting of interest to us is as follows.
Let $\G=\G(\cG,(G_i)_{i\in I})$ be a graph product of finite groups, with finite underlying graph $\cG$, and let $\Delta=\Delta(\cG,(G_i)_{i\in I})$ be the associated right-angled building (see Section \ref{sec:graphprod} for definitions).
The cellular structure on $\Delta$ makes it a CAT(0) cube complex. Examples include products of trees and Davis complexes of right-angled Coxeter groups.
The finiteness assumption for the groups $G_i$ and graph $\cG$ ensures that $\Delta$ is locally finite.
Our main theorem is as follows.

\begin{thm}\label{thm:Delta}
	Let $\La<\Aut(\Delta)$ be a uniform lattice.
	Then $\La$ and $\G$ are weakly commensurable in $\Aut(\Delta)$ if and only if all convex subgroups of $\La$ are separable.
\end{thm}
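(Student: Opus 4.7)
The plan is to handle the two implications separately, with the converse carrying essentially all of the work.

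For the forward direction, I would use the fact that $\G$ itself has all convex subgroups separable. Indeed, a graph product of finite groups is virtually compact special in the sense of Haglund--Wise --- a torsion-free finite-index subgroup acts freely, cocompactly, and specially on $\Delta$ --- and in a virtually compact special group, stabilizers of convex subcomplexes are separable via the canonical-completion-and-retraction machinery. Since separability of convex subgroups is inherited under passage to finite-index sub- and supergroups, and under conjugation inside $\Aut(\Delta)$, any uniform lattice weakly commensurable to $\G$ in $\Aut(\Delta)$ inherits this property.

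For the converse, suppose every convex subgroup of $\La$ is separable. I would first use separability of (finite) cube stabilizers to pass to a torsion-free finite-index subgroup $\La_0\le\La$ with $\La_0\backslash\Delta$ a compact nonpositively curved cube complex. Separability of hyperplane stabilizers and of stabilizers of intersections of hyperplanes then lets me refine to $\La_1\le\La_0$ of finite index so that $\La_1\backslash\Delta$ is a \emph{special} cube complex; a further refinement, using separability of the convex subgroups that stabilize labeled subcomplexes of $\Delta$, should produce $\La_2\le\La_1$ whose quotient carries an $I$-labeling of its hyperplanes matching the canonical $I$-labeling of $\G\backslash\Delta$, where $I$ is the vertex set of $\cG$.

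At that point, $\La_2\backslash\Delta$ and $\G\backslash\Delta$ are two finite special cube complexes covered by $\Delta$ with compatible hyperplane labelings by $I$. The theorem would then follow from a Leighton-type common-cover theorem for right-angled buildings: any two such labeled finite special cube complexes admit a common finite cover, which is equivalent to their deck groups being weakly commensurable in $\Aut(\Delta)$. This Leighton-type step is the main technical obstacle; I would expect it to be proved in the spirit of the tree, Fuchsian-building, and cube-complex common-cover theorems cited in the introduction, by an inductive construction matching the link structure at each vertex type --- which is forced by the $I$-labeling together with the building structure of $\Delta$ --- and then extending coherently across higher-dimensional cubes using the graph-product combinatorics of $\G$. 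The separability assumption would feed into this induction by allowing one to pass to further finite-index subgroups whenever the local gluing data fails to match on the nose.
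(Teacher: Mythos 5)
The forward direction of your proposal is correct and matches the paper: $\G$ is virtually special in $\Aut(\Delta)$ (Proposition \ref{prop:Gvspecial}), hence has separable convex subgroups, and this property passes to any lattice weakly commensurable to $\G$ in $\Aut(\Delta)$.

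For the converse there is a genuine gap. You reduce the theorem to a ``Leighton-type common-cover theorem for right-angled buildings'' which you explicitly flag as ``the main technical obstacle'' and leave unproved. That step is not a known black box; it is essentially the entire content of the hard direction of Theorem \ref{thm:Delta}, and no existing result covers the generality needed here (Leighton handles trees, Haglund's and Woodhouse's results handle special cases, but nothing off the shelf gives the general right-angled building case, which is exactly why this paper exists). Asserting that ``any two such labeled finite special cube complexes admit a common finite cover'' is a restatement of the theorem, not a reduction of it.

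A secondary issue: the labeling you construct is too weak. An $I$-labeling of hyperplanes is roughly the data of a typing map $t:\Delta^0\to\bar N$, but the paper's Proposition \ref{prop:typedatlas} needs the full notion of a \emph{typed atlas} $(t,\cA)$, where $\cA$ additionally prescribes a local $G_i$-action on each level-equivalence class of rank-one vertices (mimicking right multiplication by $G_i$). Matching only the $I$-labels of two finite quotients does not determine these local group actions, and without them the construction of the conjugating automorphism $\bar f$ in Proposition \ref{prop:typedatlas} breaks down: one cannot transport galleries from one chamber to another in a well-defined way. Moreover, even producing a $\hat\La$-invariant typing map is nontrivial; the paper obtains it from the $\La'$-invariant $\cC(\Delta)$-groupoid of Proposition \ref{prop:Deltagroupoid}, which in turn is built via the inductive hierarchy of residue-groupoids over the poset of level-equivalence classes in Sections \ref{sec:hierclasses}--\ref{sec:hierresgroup}. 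This hierarchy is where the separability hypothesis is actually consumed (via the holonomy maps and Lemma \ref{lem:separable}), and nothing in your sketch plays that role.

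In short: your first paragraph is fine, your intermediate finite-index reductions are in the right spirit (and analogues appear in the paper, e.g.\ passing to $\Aut_{\rk}(\Delta)$ and arranging that $\La$ acts cleanly), but the core of the converse --- the groupoid hierarchy, the typed atlas, and Proposition \ref{prop:typedatlas} --- is exactly what you have deferred to an unproven common-cover statement.
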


Here $\Aut(\Delta)$ is the group of all cubical automorphisms of $\Delta$, not just the type-preserving automorphisms, so this theorem generalizes the work of Haglund \cite[Theorems 1.4 and 7.2]{Haglund06}.
By a \emph{convex subgroup} of $\La$ we mean a subgroup that stabilizes and acts cocompactly on a convex subcomplex of $\Delta$.
Some parts of the proof of Theorem \ref{thm:Delta} use similar arguments to \cite{Haglund06}, but the majority of the proof uses new ideas -- a summary of the proof strategy is given in Section \ref{subsec:strategy}.

\begin{remk}\label{remk:inthyp}
	The condition in Theorem \ref{thm:Delta} that all convex subgroups of $\La$ are separable can be replaced with the weaker condition that all finite-index subgroups of finite intersections of $\La$-stabilizers of hyperplanes are separable (see Proposition \ref{prop:separable} and the discussion preceding it).
	The same replacement can be made in Corollaries \ref{cor:Coxeter} and \ref{cor:Artin}.
\end{remk}

\subsection{Some corollaries}

An important concept in the theory of cube complexes is the notion of a group acting specially on a CAT(0) cube complex (Definition \ref{defn:specially}), due to Haglund and Wise \cite{HaglundWise08}.
The resulting theory has lead to many striking advancements in group theory and topology, including the resolution of the Virtual Haken Conjecture \cite{Agol13}.
At the heart of this theory is the fact that a group inherits various strong separability properties if it acts specially on a CAT(0) cube complex. In particular, if $X$ is a locally finite CAT(0) cube complex and $\La<\Aut(X)$ is a (virtually) special uniform lattice, then all convex subgroups of $\La$ are separable.
Special uniform lattices are thus good candidates for weak commensurability results. As a simple example, it follows from \cite{Leighton82,Wise06} that if $X$ is a product of trees then all special uniform lattices in $\Aut(X)$ are weakly commensurable.
Returning to the right-angled building $\Delta$, we show in Proposition \ref{prop:Gvspecial} that the uniform lattice $\G<\Aut(\Delta)$ is virtually special.
In particular, this implies that all convex subgroups of $\G$ are separable, so we deduce the ``only if'' direction of Theorem \ref{thm:Delta} (we note that the separability of convex subgroups of $\G$ is also proved in \cite{Haglund08}).
In addition, the above discussion implies the following corollary of Theorem \ref{thm:Delta}.

\begin{cor}\label{cor:speciallattices}
	All special uniform lattices in $\Aut(\Delta)$ are weakly commensurable.
\end{cor}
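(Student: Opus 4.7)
The plan is to use Theorem \ref{thm:Delta} together with transitivity of weak commensurability. Let $\La_1,\La_2<\Aut(\Delta)$ be two special uniform lattices. The strategy is to verify the separability hypothesis of Theorem \ref{thm:Delta} for each of $\La_1$ and $\La_2$ separately, conclude that each is weakly commensurable to $\G$, and then combine these two weak commensurabilities into a single weak commensurability between $\La_1$ and $\La_2$.

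For the first step, I would invoke the fact already recorded in the paragraph preceding the corollary: since $\Delta$ is a locally finite CAT(0) cube complex, any (virtually) special uniform lattice in $\Aut(\Delta)$ has all of its convex subgroups separable. Applying this to $\La_1$ and $\La_2$, Theorem \ref{thm:Delta} yields elements $g_1,g_2\in\Aut(\Delta)$ such that $g_i\La_i g_i^{-1}$ is commensurable with $\G$ for $i=1,2$.

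The second step is a routine diagonal intersection. Set $H_i:=g_i\La_i g_i^{-1}\cap\G$, so that $H_i$ has finite index in both $g_i\La_i g_i^{-1}$ and $\G$. Then $H_1\cap H_2$ has finite index in $\G$, and therefore also has finite index in each of $g_1\La_1 g_1^{-1}$ and $g_2\La_2 g_2^{-1}$. Thus $g_1\La_1 g_1^{-1}$ is commensurable with $g_2\La_2 g_2^{-1}$, and conjugating everything by $g_2^{-1}$ shows that $(g_2^{-1}g_1)\La_1(g_2^{-1}g_1)^{-1}$ is commensurable with $\La_2$, giving weak commensurability of $\La_1$ and $\La_2$ in $\Aut(\Delta)$.

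There is essentially no obstacle here: the substantive content is packaged entirely into Theorem \ref{thm:Delta} and Proposition \ref{prop:Gvspecial} (the latter only enters implicitly, to assure us that $\G$ itself is one of the lattices whose convex subgroups are separable, so that $\G$ is a legitimate reference point to commensurate against). The only point worth stating carefully is that weak commensurability in $\Aut(\Delta)$ is transitive via the intersection argument above, since this is what converts ``both are weakly commensurable to $\G$'' into the desired conclusion.
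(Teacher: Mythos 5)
Your proof is correct and follows exactly the route the paper intends: special uniform lattices have separable convex subgroups, Theorem~\ref{thm:Delta} then makes each one weakly commensurable to $\G$, and transitivity of weak commensurability (which you sensibly spell out via the diagonal-intersection argument that the paper leaves implicit) finishes the job. One small clarification: Proposition~\ref{prop:Gvspecial} is not actually needed here, even implicitly—Theorem~\ref{thm:Delta} asserts weak commensurability of $\La$ with $\G$ given only that the convex subgroups of $\La$ are separable, with no corresponding hypothesis on $\G$; the virtual specialness of $\G$ is what powers the ``only if'' direction of the theorem, which this corollary does not use.
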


We remark that it is unknown whether all special uniform lattices in $\Aut(X)$ are weakly commensurable for $X$ an arbitrary locally finite CAT(0) cube complex -- this is an open problem of Haglund \cite[Problem 2.4]{Haglund06}.
One can also formulate Corollary \ref{cor:speciallattices} in terms of covering spaces as follows.

\begin{cor}
	Let $X_1$ and $X_2$ be finite special cube complexes with universal cover $\Delta$. Then $X_1$ and $X_2$ have a common finite cover.
\end{cor}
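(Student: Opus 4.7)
The plan is to translate the covering-space statement into the lattice language and apply Corollary \ref{cor:speciallattices} directly. For $i=1,2$, let $\La_i=\pi_1(X_i)$ act on $\Delta$ as deck transformations. Since $X_i$ is a finite cube complex whose universal cover is $\Delta$, the action of $\La_i$ on $\Delta$ is free, cubical, and cocompact, so $\La_i$ is a uniform lattice in $\Aut(\Delta)$. The hypothesis that $X_i=\Delta/\La_i$ is special means precisely that $\La_i$ acts specially on $\Delta$ in the sense of Definition \ref{defn:specially}.

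Now I would invoke Corollary \ref{cor:speciallattices}: the special uniform lattices $\La_1$ and $\La_2$ are weakly commensurable in $\Aut(\Delta)$. Thus there is some $g\in\Aut(\Delta)$ such that $H:=g\La_1 g^{-1}\cap\La_2$ has finite index in both $g\La_1 g^{-1}$ and $\La_2$.

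To conclude, I would consider the quotient $Y:=\Delta/H$. Since $H$ is a subgroup of $\La_2$, it still acts freely on $\Delta$; since $[\La_2:H]<\infty$, it also acts cocompactly, so $Y$ is a finite cube complex and the natural map $Y\to\Delta/\La_2=X_2$ is a finite regular cover of degree $[\La_2:H]$. Conjugating by $g$, the subgroup $H$ is also a finite-index subgroup of $g\La_1 g^{-1}$; equivalently, $g^{-1}Hg$ has finite index in $\La_1$, and one checks that $g$ induces a cubical isomorphism $\Delta/H\to\Delta/(g^{-1}Hg)$. Hence $Y$ also finitely covers $\Delta/\La_1=X_1$. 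This gives the required common finite cover.

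There is no real obstacle here: the entire content has been packaged into Corollary \ref{cor:speciallattices}, and the remaining work is the standard dictionary between uniform lattices acting on a common universal cover and their quotient cell complexes, together with the fact that intersections of finite-index subgroups of a free cocompact action on $\Delta$ remain free and cocompact.
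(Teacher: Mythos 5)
Your proof is correct and takes the same route the paper implicitly intends: the paper presents this corollary without a separate proof, simply as the covering-space reformulation of Corollary \ref{cor:speciallattices}, and your argument fills in exactly the standard dictionary between uniform lattices and quotient cube complexes. One small terminology slip: the cover $Y=\Delta/H\to X_2=\Delta/\La_2$ is a finite cover of degree $[\La_2:H]$, but it need not be \emph{regular}, since $H=g\La_1g^{-1}\cap\La_2$ is not generally normal in $\La_2$; the word ``regular'' should be dropped (and it plays no role in the argument). Everything else is fine, including the observation that $g$ induces a cubical isomorphism $\Delta/H\to\Delta/(g^{-1}Hg)$, which is what lets you cover $X_1$ as well.
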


If the groups $G_i$ are all copies of $\Z/2\Z$ then the graph product $\G$ is called a \emph{right-angled Coxeter group} and the right-angled building $\Delta$ is called the \emph{Davis complex} associated to $\G$ \cite{Davis83,Moussong88}. We thus get another corollary of Theorem \ref{thm:Delta}, which in particular answers \cite[Problem 2.2]{Haglund06} in the affirmative.

\begin{cor}\label{cor:Coxeter}
	Let $W$ be a right-angled Coxeter group with associated Davis complex $X$, and let $\La<\Aut(X)$ be a uniform lattice.
	Then $\La$ and $W$ are weakly commensurable in $\Aut(X)$ if and only if all convex subgroups of $\La$ are separable.
\end{cor}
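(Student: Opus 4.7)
The plan is to obtain Corollary \ref{cor:Coxeter} as a direct specialization of Theorem \ref{thm:Delta}, so the work is essentially in matching up the two setups and then invoking the theorem as a black box.

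First I would verify the algebraic identification: a right-angled Coxeter group $W$ with defining graph $\cG$ is, by definition, the graph product $\G(\cG,(G_i)_{i\in I})$ in which each vertex group $G_i$ is a copy of $\Z/2\Z$. In particular, $W$ is an example of the group $\G$ appearing in Theorem \ref{thm:Delta}, and since $\cG$ is finite and each $\Z/2\Z$ is finite, the finiteness hypotheses for applying Theorem \ref{thm:Delta} are automatic.

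Next I would check the geometric identification, namely that the Davis complex $X$ of $W$ agrees, as a CAT(0) cube complex, with the right-angled building $\Delta = \Delta(\cG,(G_i)_{i\in I})$ associated to this graph product. This is the standard description of the Davis complex as the right-angled building of type $W$, and I would simply cite the relevant references (e.g. Davis \cite{Davis83} or the construction in Section \ref{sec:graphprod}). Under this identification $\Aut(X)=\Aut(\Delta)$ as cubical automorphism groups.

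With these two identifications in hand, the corollary is immediate: a uniform lattice $\La<\Aut(X)=\Aut(\Delta)$ is weakly commensurable to $W=\G$ in $\Aut(\Delta)$ if and only if all convex subgroups of $\La$ are separable, which is exactly the conclusion of Theorem \ref{thm:Delta}. There is no real obstacle here, since the content has already been carried by Theorem \ref{thm:Delta}; the only thing worth being careful about is that the notion of ``convex subgroup'' (stabilizer of a convex subcomplex acting cocompactly on it) is defined with respect to the same cubical structure on both sides, which follows from the identification $X=\Delta$.
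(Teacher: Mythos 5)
Your proposal matches the paper's argument exactly: the paper obtains Corollary \ref{cor:Coxeter} by observing that when each $G_i = \Z/2\Z$ the graph product $\G$ is by definition the right-angled Coxeter group $W$ and the right-angled building $\Delta$ is the associated Davis complex, so Theorem \ref{thm:Delta} specializes immediately. The only small point you omit, which the paper handles in Remark \ref{remk:otherDavis}, is that the alternative definition of the Davis complex (as the cube complex on the Cayley graph of $W$) also works after taking a cubical subdivision, but this does not affect the correctness of your argument for the version stated.
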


\begin{remk}\label{remk:otherDavis}
The Davis complex $X$ is sometimes defined as the CAT(0) cube complex whose 1-skeleton is the undirected Cayley graph of $W$ with respect to the standard generating set \cite[Proposition 7.3.4]{Davis08}; but taking the cubical subdivision recovers the definition of the Davis complex given above, so Corollary \ref{cor:Coxeter} holds for both versions of the Davis complex.
\end{remk}

Corollary \ref{cor:Coxeter} generalizes work of Woodhouse \cite{Woodhouse23}, who considers the case where the defining graph of $W$ is the 1-skeleton of a certain kind of Kneser complex.
Haglund also has a result similar to Corollary \ref{cor:Coxeter} \cite[Theorem 1.7]{Haglund06}, which applies to certain hyperbolic but non-right-angled Coxeter groups with two-dimensional Davis complexes.

Similar to the class of right-angled Coxeter groups is the class of right-angled Artin groups. A \emph{right-angled Artin group} $A$ is defined as the graph product of finitely many copies of $\Z$, and it is the fundamental group of a certain finite non-positively curved cube complex called the Salvetti complex \cite{Charney07}; in particular, the universal cover $X$ of the Salvetti complex is a CAT(0) cube complex admitting a proper cocompact action of $A$.
The cube complex $X$ is not the right-angled building associated to the graph product structure on $A$ (the latter is not even locally finite) but it does coincide with the Davis complex of a certain right-angled Coxeter group $W$ \cite{DavisJanuszkiewicz00} (this uses the notion of Davis complex from Remark \ref{remk:otherDavis}), and $A$ is commensurable to $W$ as a lattice in $\Aut(X)$. Therefore we obtain the following corollary of Corollary \ref{cor:Coxeter}.

\begin{cor}\label{cor:Artin}
	Let $A$ be a right-angled Artin group, let $X$ be the universal cover of the associated Salvetti complex, and let $\La<\Aut(X)$ be a uniform lattice.
	Then $\La$ and $A$ are weakly commensurable in $\Aut(X)$ if and only if all convex subgroups of $\La$ are separable.
\end{cor}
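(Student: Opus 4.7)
The plan is to reduce Corollary~\ref{cor:Artin} to Corollary~\ref{cor:Coxeter} through the Davis--Januszkiewicz identification recalled in the paragraph preceding the statement, whereby $X$ coincides with the Davis complex of a right-angled Coxeter group $W$ (in the convention described in Remark~\ref{remk:otherDavis}) and $A$ sits as a commensurable subgroup of $W$ inside $\Aut(X)$. With these inputs in place, the deduction is essentially bookkeeping.

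First I would apply Corollary~\ref{cor:Coxeter} to the pair $(W,X)$; this is legitimate because Remark~\ref{remk:otherDavis} certifies that the corollary holds for either convention of the Davis complex. The output is the equivalence: $\La$ is weakly commensurable to $W$ in $\Aut(X)$ if and only if every convex subgroup of $\La$ is separable. Since the separability condition on $\La$ matches the one appearing in Corollary~\ref{cor:Artin}, all that remains is to replace ``$W$'' by ``$A$'' on the weak commensurability side.

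For this I would record the elementary fact that if $G_1,G_2$ are commensurable subgroups of a group $H$, then a third subgroup $\La<H$ is weakly commensurable to $G_1$ in $H$ if and only if it is weakly commensurable to $G_2$ in $H$. The key observation is that for any $g\in H$, the subgroup $g\La g^{-1}\cap G_1\cap G_2$ has finite index in each of $g\La g^{-1}\cap G_1$, $G_1$, and $G_2$ (using that $G_1\cap G_2$ has finite index in both $G_1$ and $G_2$, together with the standard fact that intersecting a finite-index subgroup of $G_1$ with $G_2$ gives a finite-index subgroup of $G_1\cap G_2$). Consequently any conjugator realizing weak commensurability of $\La$ with $G_1$ simultaneously realizes it with $G_2$, and the reverse holds by symmetry. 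Specializing to $G_1=W$ and $G_2=A$ concludes the argument.

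The main obstacle is effectively absent --- the heavy lifting is packaged in Corollary~\ref{cor:Coxeter}, and what remains is a routine commensurability transfer. The only point worth scrutinizing is the convention of ``Davis complex'' relevant to the Davis--Januszkiewicz construction, which is precisely the issue Remark~\ref{remk:otherDavis} is designed to handle.
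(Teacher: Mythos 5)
Your proposal is correct and takes exactly the same route as the paper: identify $X$ with the Davis complex of the right-angled Coxeter group $W$ provided by Davis–Januszkiewicz (using the convention of Remark~\ref{remk:otherDavis}), apply Corollary~\ref{cor:Coxeter}, and then transfer weak commensurability from $W$ to $A$ using the fact that $A$ and $W$ are commensurable lattices in $\Aut(X)$. The paper leaves the commensurability-transfer step implicit; your explicit verification of it is a correct and welcome expansion of the same argument.
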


This generalizes a theorem of Huang \cite[Theorem 1.5]{Huang18}, who considers the case where the defining graph of $A$ is star-rigid and has no induced 4-cycles. However, Huang's result is stronger in one sense because it shows that \emph{all} uniform lattices in $\Aut(X)$ are weakly commensurable. It is natural to ask when this holds in general.

\begin{que}\label{que:allwc}
	For which right-angled buildings $\Delta$ is it true that all uniform lattices in $\Aut(\Delta)$ are weakly commensurable?
\end{que}

We do not have a complete solution, but we can answer this question for certain cases.
It follows from \cite{Meier96} that $\Delta$ is hyperbolic if and only if the defining graph $\cG$ contains no induced 4-cycle, and hyperbolic cubulated groups are virtually special by \cite{Agol13}, so Corollary \ref{cor:speciallattices} yields the following result.

\begin{cor}\label{cor:hyp}
	If the defining graph $\cG$ contains no induced 4-cycle, then all uniform lattices in $\Aut(\Delta)$ are weakly commensurable.
\end{cor}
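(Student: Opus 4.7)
My plan is to deduce the corollary by combining the main theorem with two external ingredients: Meier's hyperbolicity criterion for right-angled buildings and Agol's virtual specialness theorem for hyperbolic cubulated groups. The strategy is to show that every uniform lattice $\La<\Aut(\Delta)$ is virtually special, which (by the paragraph preceding Corollary \ref{cor:speciallattices}) forces all convex subgroups of $\La$ to be separable, and then invoke Theorem \ref{thm:Delta} to conclude that $\La$ is weakly commensurable to $\G$; transitivity then yields the statement for any pair of uniform lattices.

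First I would apply Meier's result \cite{Meier96}, cited explicitly in the paragraph preceding the corollary, which characterizes hyperbolicity of $\Delta$ by the absence of induced $4$-cycles in $\cG$. Under our hypothesis this gives that $\Delta$ is a hyperbolic CAT(0) cube complex. Next, for an arbitrary uniform lattice $\La<\Aut(\Delta)$, the proper cocompact action on $\Delta$ together with the Švarc--Milnor lemma shows that $\La$ is a hyperbolic group acting properly and cocompactly on the CAT(0) cube complex $\Delta$. Agol's theorem \cite{Agol13} then says that $\La$ is virtually special.

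The third step is to translate virtual specialness into the hypothesis needed for Theorem \ref{thm:Delta}. As recalled in the passage preceding Corollary \ref{cor:speciallattices}, a virtually special uniform lattice on a locally finite CAT(0) cube complex has all of its convex subgroups separable. Applying Theorem \ref{thm:Delta} to $\La$ therefore yields that $\La$ is weakly commensurable to $\G$ in $\Aut(\Delta)$.

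Finally I would note that weak commensurability in $\Aut(\Delta)$ is an equivalence relation: given two uniform lattices $\La_1,\La_2$, conjugating $\La_1,\La_2$ by suitable elements we may assume each is commensurable with $\G$, and then intersecting the relevant finite-index subgroups of $\G$ gives a common finite-index subgroup of conjugates of $\La_1$ and $\La_2$. Applying this to the conclusion of the previous paragraph shows that all uniform lattices in $\Aut(\Delta)$ are weakly commensurable. There is no real obstacle here since all of the substantive work has already been carried out in Theorem \ref{thm:Delta} and in the cited results of Meier and Agol; the only mildly non-trivial verification is the transitivity of weak commensurability, which is routine.
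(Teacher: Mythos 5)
Your proposal is correct and follows essentially the same route as the paper: Meier's criterion gives hyperbolicity of $\Delta$, Agol's theorem gives virtual specialness of every uniform lattice, virtual specialness gives separability of convex subgroups, and Theorem \ref{thm:Delta} together with transitivity of weak commensurability finishes. The paper packages the last steps by citing Corollary \ref{cor:speciallattices}, but that corollary is itself an immediate consequence of Theorem \ref{thm:Delta}, so the two arguments are the same.
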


We can also use Corollary \ref{cor:speciallattices} to get a positive answer to Question \ref{que:allwc} in some relatively hyperbolic cases.
Indeed, Caprace gives conditions for a Coxeter group to be hyperbolic relative to a collection of parabolic subgroups \cite{Caprace15}, and Genevois gives conditions for an arbitrary graph product of groups to be relatively hyperbolic \cite{Genevois17}. 
If the graph product $\G$ is hyperbolic relative to virtually abelian subgroups, then all uniform lattices in $\Aut(\Delta)$ will also be hyperbolic relative to virtually abelian subgroups by \cite[Theorem 1.6 and Remark 5.23]{DrutuSapir05}, so these lattices will be virtually special by \cite[Corollary 1.3]{Oregonreyes23}.
Putting this together, we get the following corollary.

\begin{cor}
	Let $W$ be a right-angled Coxeter group with defining graph $\cG$ and associated Davis complex $X$.
	Let $\mathfrak{J}$ denote the set of all induced 4-cycles in $\cG$.
	Suppose
	\begin{enumerate}
		\item for each $J\in \mathfrak{J}$, there is no vertex in $\cG-J$ adjacent to all the vertices in $J$, and
		\item for distinct $J_1,J_2\in\mathfrak{J}$, the intersection $J_1\cap J_2$ is either empty, a single vertex or a single edge.
	\end{enumerate}
Then $W$ is hyperbolic relative to the parabolic subgroups induced by $\mathfrak{J}$ (which are virtually $\Z^2$ subgroups), and all uniform lattices in $\Aut(X)$ are weakly commensurable.
\end{cor}

Additionally, we get a positive answer to Question \ref{que:allwc} if $\G$ has finite index in $\Aut(\Delta)$ as then all uniform lattices in $\Aut(\Delta)$ are commensurable, this happens for example if $\G$ is a right-angled Coxeter group with star-rigid defining graph (Remark \ref{remk:Autdiscrete}).
On the other hand, there are some cases where we get a negative answer to Question \ref{que:allwc} because there is a non-residually-finite uniform lattice in $\Aut(\Delta)$, for example if $\Delta$ is a product of trees or the cube complex associated to a right-angled Artin group whose defining graph contains an induced 4-cycle \cite[Theorem 1.8]{Huang18}. 

\subsection{Quasi-isometric rigidity}

We prove that the graph product $\G$ is quasi-isometrically rigid in certain cases with the following theorem. This requires the defining graph $\cG$ to be a \emph{generalized $m$-gon}, meaning that it is connected, bipartite, and has diameter $m$ and girth $2m$.

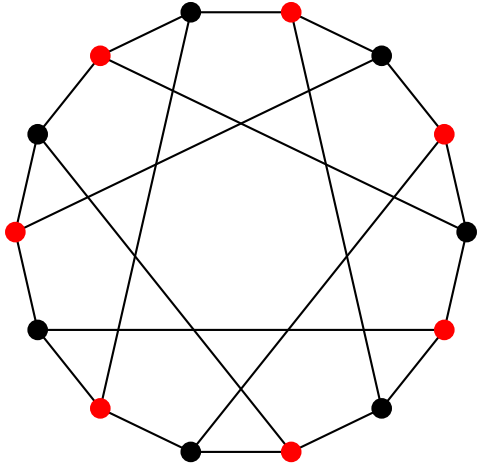
\begin{figure}[H]
	\centering
	\scalebox{1}{
		\begin{tikzpicture}[auto,node distance=2cm,
			thick,every node/.style={circle,draw,fill,inner sep=0pt,minimum size=7pt},
			every loop/.style={min distance=2cm},
			hull/.style={draw=none},
			]
			\tikzstyle{label}=[draw=none,fill=none]
			\tikzstyle{a}=[isosceles triangle,sloped,allow upside down,shift={(0,-.05)},minimum size=3pt]

\foreach \i in {0,1,...,6}
{	\draw ({3*cos(\i*360/7)},{3*sin(\i*360/7)})--({3*cos(\i*360/7+360/14)},{3*sin(\i*360/7+360/14)})--({3*cos(\i*360/7+360/7)},{3*sin(\i*360/7+360/7)});
	\draw ({3*cos(\i*360/7)},{3*sin(\i*360/7)})--({3*cos(\i*360/7+360*5/14)},{3*sin(\i*360/7+360*5/14)});
}

\foreach \i in {0,1,...,6}
{\node at ({3*cos(\i*360/7)},{3*sin(\i*360/7)}){};	
	\node[red] at ({3*cos(\i*360/7+360/14)},{3*sin(\i*360/7+360/14)}){};
}
			
		\end{tikzpicture}
	}
	\caption{An example of a 3-regular generalized 3-gon, known as the Heawood graph. This is a valid choice for $\cG$ in parts \ref{item:i} and \ref{item:ii} of Theorem \ref{thm:QI}.}\label{fig:Heawood}
\end{figure}

\begin{thm}\label{thm:QI}
	Let $\G=\G(\cG,(G_i)_{i\in I})$ be a graph product.
	Suppose that $\cG$ is a finite generalized $m$-gon, with $m\geq3$, which is bipartite with respect to the partition $I=I_1\sqcup I_2$.
	Suppose that $d_1,d_2,p_1,p_2\geq2$ are integers such that every $i\in I_k$ ($k=1,2$) has degree $d_k$ and $|G_i|=p_k$.
	Then in each of the following cases
	\begin{enumerate}[label=(\roman*)]
		\item\label{item:i} $2< d_1,d_2,p_1,p_2$,
		\item\label{item:ii} $p_1=p_2=2<d_1,d_2$,
		\item\label{item:iii} $d_1=d_2=2<p_1,p_2$,
	\end{enumerate}
any finitely generated group quasi-isometric to $\G$ is abstractly commensurable with $\G$.
\end{thm}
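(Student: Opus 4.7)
The plan is to combine a quasi-isometric rigidity theorem for Fuchsian buildings (Bourdon--Pajot, Xie) with Corollary \ref{cor:hyp}.

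First I would verify that $\Delta$ has the structure of a Fuchsian building in each of the three cases. Since $\cG$ is a generalized $m$-gon with $m\geq 3$, its girth is $2m\geq 6$, so $\cG$ is triangle-free and $\Delta$ is two-dimensional. The bipartition $I=I_1\sqcup I_2$ together with the uniformity data $(d_1,d_2,p_1,p_2)$ give $\Delta$ the structure of a hyperbolic building whose apartments are isomorphic to the tiling of $\mathbb{H}^2$ by right-angled $2m$-gons; the thickness parameters of the chambers and edges are controlled by the numerics in cases \ref{item:i}--\ref{item:iii}, with case \ref{item:i} being the generic thick case, case \ref{item:ii} recovering the Davis complex of a hyperbolic right-angled Coxeter group, and case \ref{item:iii} arising from $\cG$ being a $2m$-cycle.

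Next, if $H$ is a finitely generated group quasi-isometric to $\G$, then (via the \v{S}varc--Milnor lemma for $\G\acts\Delta$) $H$ quasi-acts properly and coboundedly on $\Delta$. The quasi-isometric rigidity theorem for Fuchsian buildings says that every self-quasi-isometry of $\Delta$ is at bounded distance from an element of $\Aut(\Delta)$, and a standard argument then promotes the quasi-action of $H$ to a genuine proper cocompact action on $\Delta$ by cubical automorphisms, with finite kernel. This produces a uniform lattice $\La<\Aut(\Delta)$ that is abstractly commensurable with $H$.

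Finally, since $\cG$ has girth $\geq 6$ and in particular contains no induced 4-cycle, Corollary \ref{cor:hyp} applies: all uniform lattices in $\Aut(\Delta)$ are weakly commensurable. In particular $\La$ and $\G$ are weakly commensurable in $\Aut(\Delta)$, so there exists $g\in\Aut(\Delta)$ such that $g\La g^{-1}\cap\G$ has finite index in both $g\La g^{-1}$ and $\G$; conjugation by $g$ then provides an isomorphism between finite-index subgroups of $\La$ and $\G$, so $\La$ and $\G$ are abstractly commensurable, and therefore so are $H$ and $\G$. The main obstacle is the quasi-isometric rigidity input: Bourdon--Pajot's original theorem is stated for the thick case (essentially case \ref{item:i}), and cases \ref{item:ii} and \ref{item:iii} are degenerate, so I would expect to invoke Xie's extensions and to handle the degenerate regimes separately, with case \ref{item:iii} requiring the most care because $\Delta$ there is a tree-like assembly of $2m$-gons rather than a thick building.
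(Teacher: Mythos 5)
Your proposal follows essentially the same route as the paper: show $\Delta$ carries the structure of a Fuchsian building, invoke Xie's quasi-isometric rigidity for Fuchsian buildings (Corollary 1.4 of \cite{Xie06}, the paper's Theorem \ref{thm:localF} reference is the Gaboriau--Paulin local criterion used inside that verification), and then apply Corollary \ref{cor:hyp} (equivalently Theorem \ref{thm:Delta} via Agol). The bulk of the paper's work is Proposition \ref{prop:Fuchsian}, which exhibits the Fuchsian chamber explicitly in each case and checks Theorem \ref{thm:localF}.

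Two points in your proposal deserve correction. First, your worry that cases \ref{item:ii} and \ref{item:iii} are ``degenerate'' and that \ref{item:iii} yields a ``tree-like assembly'' is misplaced: in all three cases $\Delta$ is a bona fide Fuchsian building with every edge in at least three chambers (the conditions $p_k>2$ or $d_k>2$, as appropriate, give $q_i\geq 2$ in Definition \ref{defn:Fuchsian}). What varies between the cases is the shape of the Fuchsian chamber --- a quadrilateral with one $\pi/m$ angle and three right angles in case \ref{item:i}, a quadrilateral with four $\pi/m$ angles in case \ref{item:ii}, and a $2m$-gon mixing $\pi/m$-angles and right angles in case \ref{item:iii} --- not the thickness of the building. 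You are conflating thickness of the defining graph $\cG$ (which indeed fails in case \ref{item:iii}) with thickness of $\Delta$ as a building. Xie's theorem applies uniformly to all three. Second, your step ``this produces a uniform lattice $\La<\Aut(\Delta)$ that is abstractly commensurable with $H$'' glosses over the finite kernel: a group is not automatically abstractly commensurable with its quotient by a finite normal subgroup. The fix, which the paper invokes explicitly, is that $H$ is hyperbolic and acts properly cocompactly on a CAT(0) cube complex, so by Agol it is virtually special and in particular virtually torsion-free; a torsion-free finite-index subgroup acts freely, hence injects into $\Aut(\Delta)$ as a uniform lattice, and this supplies the commensuration you need. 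With these adjustments your argument matches the paper's.
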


The source of this rigidity comes from the fact that the associated building $\Delta$ has the structure of a Fuchsian building in these cases. The result then follows by combining the quasi-isometric rigidity of Fuchsian buildings \cite{Xie06} with Corollary \ref{cor:hyp} (or with \cite{Haglund06} and \cite{Agol13} in cases \ref{item:ii} and \ref{item:iii}). The details are in Section \ref{sec:QI}.
Theorem \ref{thm:QI} provides examples of quasi-isometrically rigid hyperbolic groups with Menger curve boundary, answering a question of Dru\c{t}u--Kapovich \cite[Problem 25.20]{DrutuKapovich18}.

A key ingredient in showing that $\Delta$ has the structure of a Fuchsian building in Theorem \ref{thm:QI} is the following equivalent definition of a generalized $m$-gon: a connected, bipartite, simplicial graph with the following two properties:
\begin{enumerate}
	\item Given any pair of edges there is a circuit of length $2m$ containing both.
	\item For two circuits $A_1,A_2$ of length $2m$ with non-empty intersection, there is an isomorphism $f:A_1\to A_2$ that pointwise fixes $A_1\cap A_2$.
\end{enumerate}
See \cite[Theorem 1.1]{VanMaldeghem02} for the equivalence between the two definitions of generalized $m$-gon.
If every vertex has degree at least 3 then we say that the generalized polygon is \emph{thick}.
The above characterization of generalized polygons implies that a thick generalized polygon is simply a 1-dimensional spherical building.
The classification of generalized polygons reduces to the thick case because every non-thick generalized $m$-gon is either obtained from a thick generalized $m/k$-gon by subdividing each edge into $k$ edges, or it consists of two vertices joined by a collection of (at least two) embedded paths of length $m$ such that any pair of these paths intersect only at their endpoints \cite[Theorem 3.1]{VanMaldeghem02}. Finite thick generalized $m$-gons exist only for $m\in\{2,3,4,6,8\}$ \cite{FeitHigman64}, but there are infinitely many for each $m$. For example, a generalized 2-gon is just a complete bipartite graph (with at least two vertices in each set) and a thick generalized 3-gon is precisely the incidence graph of an abstract projective plane (such as in Figure \ref{fig:Heawood}). For any thick generalized polygon with vertex sets $I_1,I_2$, there are integers $d_1,d_2\geq3$ such that every vertex in $I_k$ has degree $d_k$ \cite[p29]{Ronan89}.

\subsection{Proof strategy}\label{subsec:strategy}

We utilize a number of well known structures on the right-angled building $\Delta$, many of which originate from the classical theory of Tits buildings.
For example, $\Delta$ is divided into a number of finite convex subcomplexes called \emph{chambers} (Definition \ref{defn:building}), and $\G$ acts simply transitively on the set of chambers.
Each vertex of $\Delta$ also has an associated \emph{rank} (Definition \ref{defn:rank}).
The cubical automorphism group $\Aut(\Delta)$ might not preserve chambers or rank, but we show that the subgroup  $\Aut_{\rk}(\Delta)$ of rank-preserving automorphisms does preserve the chamber structure, along with a number of other structures (Proposition \ref{prop:preserved}), and we show that $\Aut_{\rk}(\Delta)$ has finite index in $\Aut(\Delta)$ (Proposition \ref{prop:finiteindex}).
Most of our arguments involve these structures, so we always work with $\Aut_{\rk}(\Delta)$ instead of $\Aut(\Delta)$.

Another key definition in our argument is the notion of \emph{typed atlas} (Definition \ref{defn:typedatlas}), which can be thought of as a way of decorating $\Delta$ to make it more rigid. We show that $\Aut_{\rk}(\Delta)$ acts transitively on the set of typed atlases, and that the stabilizer of each typed atlas is a uniform lattice in $\Aut_{\rk}(\Delta)$ (Proposition \ref{prop:typedatlas}). 
Haglund has a similar argument involving atlases rather than typed atlases \cite[Proposition 6.5]{Haglund06}.
The ``if'' direction of Theorem \ref{thm:Delta} then reduces to finding a pair of typed atlases that are (virtually) preserved by $\La$ and $\G$ respectively. 
Finding such (typed) atlases is almost immediate if one works with lattices in the type-preserving automorphism group of $\Delta$, which is the setting considered by Haglund \cite{Haglund06}, but it is much harder in our setting.

Constructing a typed atlas that is preserved by $\G$ is relatively easy, but finding one that is virtually preserved by $\La$ requires considerable work. We first build a groupoid that consists of one cubical isomorphism between each pair of chambers in $\Delta$. This groupoid must satisfy several other properties, including being invariant under the action of some finite-index subgroup $\La'<\La$. We then construct a typed atlas by specifying some decorations on a single chamber $C$ and transporting these decorations to all other chambers using the groupoid.
This typed atlas may not be preserved by $\La$ or even $\La'$, but it will be preserved by the kernel of a certain holonomy map $\Upsilon:\La'\to\Aut(C)$, where $\Upsilon(\lambda)$ is defined by mapping $C$ onto another chamber using $\lambda$ and then mapping back to $C$ using the groupoid (Lemma \ref{lem:Latypedatlas}).

The core of the paper is devoted to building the $\La'$-invariant groupoid.
The strategy is to build a hierarchy of groupoids, each defined on a certain subset of chambers called a \emph{chamber-residue} (Definition \ref{defn:chamberres}). Roughly, the chamber-residues correspond to subgroups of $\G$ obtained by restricting to a subgraph of the defining graph (and also the cosets of such subgroups).
The hierarchy starts with finite chamber-residues corresponding to spherical subgroups of $\G$, and we define groupoids on these using the actions of the spherical subgroups. The groupoid we need, defined on the set of all chambers, is obtained at the final step of the hierarchy.
The groupoids built at each step of the hierarchy must be equivariant with respect to some finite-index subgroup of $\La$, which necessitates using holonomy maps for chamber-residue stabilizers (these are similar to the holonomy map described in the previous paragraph but with $\Aut(C)$ replaced by the symmetry group of a certain finite set of groupoids).
This is where we use the separability of convex subgroups of $\La$.
These arguments have parallels in the work of Woodhouse \cite{Woodhouse23}, who proved a result similar to Theorem \ref{thm:Delta} for a certain class of right-angled Coxeter groups, but one big difference is that Woodhouse's hierarchy only has two levels, one level corresponding to hyperplanes and another corresponding to the whole of $\Delta$, whereas our hierarchy is more complex.

\subsection{Structure of the paper}

Sections \ref{sec:graphprod} and \ref{sec:galleries} review some standard definitions and lemmas regarding graph products and right-angled buildings, including the notions of chambers, galleries and chamber-residues.
Section \ref{sec:rank} introduces the rank and poset structures on $\Delta$, as well as a new notion called \emph{level-equivalence}, which we use to prove propositions about the rank-preserving automorphism group $\Aut_{\rk}(\Delta)$.
Section \ref{sec:hyperplanes} studies the hyperplanes in $\Delta$, and relates them to the level-equivalence classes. We also prove that $\G$ is a virtually special lattice in $\Aut(\Delta)$, from which we deduce the ``only if'' direction in Theorem \ref{thm:Delta}.
Section \ref{sec:resgroup} introduces the notion of \emph{residue-groupoids}, which are the groupoids we need for the hierarchy discussed in Section \ref{subsec:strategy}.
Sections \ref{sec:hierclasses}--\ref{sec:atlas} prove the ``if'' direction in Theorem \ref{thm:Delta}.
Section \ref{sec:hierclasses} lays the groundwork for the hierarchy of residue-groupoids by defining a hierarchy of level-equivalence classes.
Section \ref{sec:hierresgroup} constructs the hierarchy of residue-groupoids.
Section \ref{sec:atlas} defines the notion of typed atlases, and uses them to complete the proof that $\La$ and $\G$ are weakly commensurable.
Finally, Section \ref{sec:QI} proves Theorem \ref{thm:QI}, identifying several cases where $\Delta$ has the structure of a Fuchsian building, and deducing that $\G$ is quasi-isometrically rigid in these cases.

\textbf{Acknowledgements:}\,
I thank the referee for their careful reading and helpful comments.
I am also grateful for the comments and suggestions of Jingyin Huang, Martin Bridson, Eduardo Oregón-Reyes and Daniel Woodhouse.

\bigskip
\section{Graph products and right-angled buildings}\label{sec:graphprod}

In this section we establish some basic definitions and lemmas, including the definitions of graph product and the associated right-angled building. Most of this content is standard and appears in \cite{Haglund06}.
First we need the notion of cubical cones.

\begin{nota}
For $N$ a simplicial complex, we let $\bar{N}$ denote the poset of simplices of $N$, together with the empty set, ordered by inclusion. 
We regard $\bar{N}$ combinatorially, so an element of $\bar{N}$ is a subset of the vertex set of $N$ that is either empty or spans a simplex.
\end{nota}

\begin{defn}(Cubical cones)\\\label{defn:cubecone}
	The \emph{cubical cone} $C(N)$ of a simplicial complex $N$ with vertex set $I$ is the cube complex constructed as follows.
	We have a bijection $t$ from the vertex set of $C(N)$ to $\bar{N}$, called the \emph{typing map}.
	We refer to $t(v)$ as the \emph{type} of the vertex $v$, and the vertex of type $\emptyset$ is called the \emph{center} of $C(N)$.
	We have an edge in $C(N)$ joining vertices $v_1,v_2$ whenever $t(v_1)\cup\{i\}=t(v_2)$ for some $i\in I$, so the 1-skeleton of $C(N)$ corresponds to the Hasse diagram of $\bar{N}$.
	Whenever $J_1\subset J_2\in\bar{N}$, the induced subgraph in the 1-skeleton of $C(N)$ spanned by vertices $v$ with $J_1\subset t(v)\subset J_2$ is isomorphic to the 1-skeleton of a cube of dimension $|J_2-J_1|$; and each face of this cube corresponds to sets $J_1\subset J'_1\subset J'_2\subset J_2$ and vertices $v$ with $J'_1\subset t(v)\subset J'_2$.
	We can therefore define the cubes of $C(N)$ to be in correspondence with pairs of nested sets $J_1\subset J_2\in\bar{N}$.
	If $Q$ is the cube corresponding to $J_1\subset J_2$ then we define $\ut(Q):=J_1$.
	For any point $p\in C(N)$, there will be a unique cube $Q$ containing $p$ in its interior, and we define $\ut(p):=\ut(Q)$.
\end{defn}

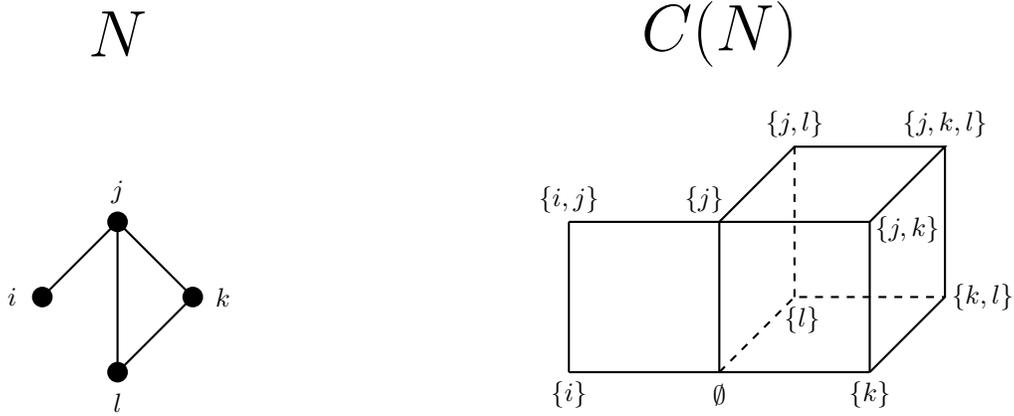
\begin{figure}[H]
	\centering
	\scalebox{1}{
		\begin{tikzpicture}[auto,node distance=2cm,
			thick,every node/.style={circle,draw,fill,inner sep=0pt,minimum size=7pt},
			every loop/.style={min distance=2cm},
			hull/.style={draw=none},
			]
			\tikzstyle{label}=[draw=none,fill=none]
			\tikzstyle{a}=[isosceles triangle,sloped,allow upside down,shift={(0,-.05)},minimum size=3pt]

			\begin{scope}[shift={(-6,1)}]
				\node at (-1,0) {};
				\node at (1,0) {};
				\node at (0,1) {};
				\node at (0,-1) {};
				\draw (-1,0)--(0,1)--(1,0)--(0,-1)--(0,1);
				\node[label] at (-1.4,0) {$i$};
				\node[label] at (0,1.4) {$j$};
				\node[label] at (1.4,0) {$k$};
				\node[label] at (0,-1.4) {$l$};		
				\node[label,font=\Huge] at (0,3.5) {$N$};		
			\end{scope}

			\begin{scope}[shift={(2,0)}]
				\draw[step=2] (-2,0) grid (2,2);
				\draw(0,2)--(1,3)--(3,3)--(2,2);
				\draw(3,3)--(3,1)--(2,0);
				\draw[dashed] (0,0)--(1,1)--(3,1);
				\draw[dashed] (1,1)--(1,3);

				\node[label] at (0,-.3) {$\emptyset$};
				\node[label] at (-2,-.3) {$\{i\}$};
				\node[label] at (-2,2.3) {$\{i,j\}$};
				\node[label] at (-.2,2.3) {$\{j\}$};
				\node[label] at (2.5,1.9) {$\{j,k\}$};
				\node[label] at (2,-.3) {$\{k\}$};
				\node[label] at (1.1,.7) {$\{l\}$};	
				\node[label] at (1,3.3) {$\{j,l\}$};
				\node[label] at (3,3.3) {$\{j,k,l\}$};
				\node[label] at (3.5,1) {$\{k,l\}$};		
				\node[label,font=\Huge] at (0,4.5) {$C(N)$};			
			\end{scope}
			
		\end{tikzpicture}
	}
	\caption{An example of a simplicial complex $N$ and corresponding cubical cone $C(N)$.}\label{fig:ccone}
\end{figure}

\begin{lem}	\cite[Lemma 3.5]{Haglund06}\\\label{lem:cubecone}
	The cubical cone $C(N)$ is a CAT(0) cube complex, and the link of its center is isomorphic to $N$.
\end{lem}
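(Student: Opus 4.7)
The plan is to verify the two assertions separately. For the link of the center: by construction, the cubes of $C(N)$ containing the center $c$ correspond bijectively to pairs $\emptyset \subset J_2$ with $J_2 \in \bar{N}$ non-empty, i.e.\ to non-empty simplices of $N$. The cube $Q_{\emptyset, J_2}$ has dimension $|J_2|$ and contributes a $(|J_2|-1)$-simplex to $\link(c)$ whose vertices are identified, via the edges at $c$, with the elements of $J_2$. This identification is face-preserving, so $\link(c)\cong N$.

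For the CAT(0) property, I would apply Gromov's link condition: it suffices to show that $C(N)$ is simply connected and that every vertex link is a flag simplicial complex. For a vertex $v$ of type $J$, the cubes containing $v$ correspond to pairs $J_1\subset J_2$ in $\bar{N}$ with $J_1\subset J\subset J_2$. Parametrizing such a cube by $A=J\setminus J_1\subset J$ and $B=J_2\setminus J$, I would identify
\[
	\link_{C(N)}(v)\;\cong\;\sigma_J * \link_N(J),
\]
the join of the full simplex on $J$ with the simplicial link of $J$ in $N$. Here $A$ encodes the edges from $v$ going ``down'' (to types $J\setminus\{i\}$), which always assemble into a full simplex $\sigma_J$, while $B$ encodes the edges going ``up'' (to types $J\cup\{j\}$ with $J\cup\{j\}\in\bar{N}$), which assemble into $\link_N(J)$. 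Since $N$ is flag (as an implicit hypothesis, satisfied in all uses in the paper where $N$ is the flag complex on the defining graph $\cG$), $\link_N(J)$ is flag, and the join of a full simplex with a flag complex is flag; hence $\link_{C(N)}(v)$ is flag for every vertex $v$.

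For simple connectivity, I would prove the stronger statement that $C(N)$ is contractible by induction on the number of simplices of $N$. The base case $N=\emptyset$ is trivial. For the inductive step, pick a maximal simplex $\sigma$ of $N$ and let $N'=N\setminus\{\sigma\}$. The maximality of $\sigma$ forces the only new vertex of $C(N)$ to be $v_\sigma$ of type $\sigma$, and the new cubes are precisely the faces of the single $|\sigma|$-cube $Q_{\emptyset,\sigma}$ that contain $v_\sigma$. Thus $C(N)=C(N')\cup Q_{\emptyset,\sigma}$, with $C(N')\cap Q_{\emptyset,\sigma}$ equal to the antistar of $v_\sigma$ in $Q_{\emptyset,\sigma}$, namely the union of faces of $Q_{\emptyset,\sigma}$ not containing $v_\sigma$. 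The antistar deformation retracts onto the opposite corner $c$ and is therefore contractible; $Q_{\emptyset,\sigma}$ is contractible; and $C(N')$ is contractible by induction. Van Kampen (or a standard gluing lemma) then yields contractibility of $C(N)$.

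The main obstacle, and the step that carries most of the content, is the identification $\link_{C(N)}(v)\cong\sigma_J*\link_N(J)$: it is what separates the ``down'' and ``up'' directions at $v$ and reduces flagness of all vertex links to flagness of $N$. Once this is in place, flagness follows from standard properties of the join, and simple connectivity reduces to the inductive attachment argument above, which is routine since each new cube $Q_{\emptyset,\sigma}$ is glued to the previous stage along a contractible subcomplex.
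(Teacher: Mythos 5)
The paper does not supply its own proof of this lemma — it simply cites \cite[Lemma~3.5]{Haglund06} — so there is no in-text argument to compare against; I will assess your proof on its own terms.

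Your proof is correct and takes the standard route (Gromov's link condition: simple connectedness plus flag vertex links), which is also the approach in Haglund's source. The substantive step, as you say, is the identification $\link_{C(N)}(v)\cong\sigma_J*\link_N(J)$ for a vertex $v$ of type $J$: cubes at $v$ are intervals $[J_1,J_2]$ with $J_1\subset J\subset J_2$ in $\bar N$, the ``down'' edges are indexed by $J\setminus J_1\subset J$ (unconstrained, giving the full simplex $\sigma_J$), the ``up'' edges by $J_2\setminus J$ (constrained by $J_2\in\bar N$, giving $\link_N(J)$), and the join structure is exactly because the two constraints are independent. Flagness of all vertex links then reduces to flagness of $N$. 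You are right to call out the implicit flag hypothesis: the statement is literally false for arbitrary $N$ (take $N$ the boundary of a triangle; then $\link_{C(N)}$ of the center is a $3$-cycle, which is not flag and not CAT(1)), but every use in the paper has $N=N(\cG)$ the flag completion of $\cG$.

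Your contractibility induction also checks out: removing a maximal simplex $\sigma$ leaves a genuine subcomplex $N'$, the new cells of $C(N)$ are exactly the faces of $Q_{\emptyset,\sigma}$ through $v_\sigma$, and $C(N')\cap Q_{\emptyset,\sigma}$ is the antistar of $v_\sigma$, a union of codimension-one faces through the opposite corner $c$, hence contractible. Two small remarks. First, ``Van Kampen'' alone controls only $\pi_1$; what you actually need (and also name) is the gluing lemma for contractibility, or Mayer–Vietoris plus Van Kampen plus Whitehead. Second, there is an even shorter route to contractibility that avoids the induction: every maximal cube of $C(N)$ is of the form $[\emptyset,J]$ and hence contains the center $v_N$, and the straight-line homotopy to $v_N$ in each such cube is consistent on shared faces (any common face of two such cubes is again of the form $[\emptyset,J'']$ and contains $v_N$), giving a global deformation retraction onto $v_N$. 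Either argument is fine.
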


We can now define graph products and their associated right-angled buildings.

\begin{defn}(Graph products)\\
	Let $\cG$ denote a simplicial graph with vertex set $I$. Suppose that for each $i\in I$ we are given a non-trivial group $G_i$.
	We define the \emph{graph product of $(G_i)_{i\in I}$ along $\cG$} to be the group 
	$$\G=\G(\cG,(G_i)_{i\in I}):=\frac{*_{i\in I} G_i}{\llangle g_i g_j g_i^{-1} g_j^{-1}\mid i,j\in I\text{ adjacent, }g_i\in G_i,\,g_j\in G_j\rrangle},$$
	where $i,j\in I$ \emph{adjacent} means that $i,j$ are distinct vertices joined by an edge in $\cG$.
	
	Let $\G_i<\G$ be the image of the morphism $G_i\to\G$, and for $J\subset I$ we denote by $\G_J$ the subgroup of $\G$ generated by $(\G_j)_{j\in J}$, and we denote by $\cG_J$ the subgraph of $\cG$ induced by $J$.
	We say that $J\subset I$ is \emph{spherical} if any two distinct vertices of $J$ are adjacent.
	
	For $i\in I$ we let $i\jperp$ denote the set of all $j\in I$ adjacent to $i$, and we let $i\uperp:=\{i\}\cup i\jperp$.
	Similarly, for $J\subset I$ spherical we let 
	\begin{equation*}
J\uperp:=\bigcap_{j\in J} j\uperp\quad\text{and}\quad J\jperp:=J\uperp - J.
	\end{equation*}
By convention we let $\emptyset\uperp=\emptyset\jperp=I$.
	Note that we always have $J\subset J\uperp$.
\end{defn}

\begin{lem}\label{lem:GammaJ}
	The following statements hold:
	\begin{enumerate}
		\item\label{item:GiGi} For each $i\in I$ the morphism $G_i\to\G$ is injective, so we will identify the groups $G_i$ and $\G_i$.
		\item\label{item:GJ} For $J\subset I$ the natural map $*_{j\in J} G_j\to\Gamma$ induces an isomorphism $\G(\cG_J,(G_j)_{j\in J})\to\G_J$.
		\item\label{item:GJ1J2} $\G_{J_1}\cap\G_{J_2}=\G_{J_1\cap J_2}$ for $J_1,J_2\subset I$.
	\end{enumerate}	
\end{lem}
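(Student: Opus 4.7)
The plan is to construct retraction homomorphisms for all three parts.

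For \ref{item:GiGi}, I will define a homomorphism $r_i\colon\G\to G_i$ by sending each generator $g_j\in G_j$ to itself if $j=i$ and to the identity otherwise. The only thing to verify is that this respects the defining commutator relations: for any adjacent pair $j,k\in I$, at most one of the images of $g_j,g_k$ is non-trivial in $G_i$, so the commutator vanishes. Since $r_i$ restricts to the identity on $G_i$, the natural map $G_i\to\G$ is injective, and we may identify $G_i$ with its image $\G_i$.

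For \ref{item:GJ}, the map $\G(\cG_J,(G_j)_{j\in J})\to\G_J$ is surjective by the very definition of $\G_J$. For injectivity I will apply the same trick: define a retraction $r_J\colon\G\to\G(\cG_J,(G_j)_{j\in J})$ by sending $g_j\mapsto g_j$ when $j\in J$ and $g_j\mapsto 1$ otherwise. Well-definedness follows from the same commutator check, noting that adjacency in $\cG$ restricts to adjacency in $\cG_J$ whenever both endpoints lie in $J$. Composing the given map with $r_J$ yields the identity on $\G(\cG_J,(G_j)_{j\in J})$, so the given map is injective and hence an isomorphism.

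For \ref{item:GJ1J2}, the inclusion $\G_{J_1\cap J_2}\subset\G_{J_1}\cap\G_{J_2}$ is immediate from \ref{item:GJ}. For the reverse inclusion, let $\iota_K\colon\G(\cG_K,(G_k)_{k\in K})\to\G$ denote the map from \ref{item:GJ} and set $\bar{r}_K:=\iota_K\circ r_K\colon\G\to\G$, which is a retraction of $\G$ onto $\G_K$. A one-line check on generators gives the composition identity $\bar{r}_{J_1}\circ\bar{r}_{J_2}=\bar{r}_{J_1\cap J_2}$. So if $g\in\G_{J_1}\cap\G_{J_2}$, then $\bar{r}_{J_1}(g)=\bar{r}_{J_2}(g)=g$, and therefore
$$g=\bar{r}_{J_1}(g)=\bar{r}_{J_1}(\bar{r}_{J_2}(g))=\bar{r}_{J_1\cap J_2}(g)\in\G_{J_1\cap J_2}.$$

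The core of the argument is simply the well-definedness of each candidate retraction on the quotient defining $\G$, which in every case reduces to the observation that the relator $g_j g_k g_j^{-1} g_k^{-1}$ maps to a commutator in which at least one entry is trivial. There is no serious obstacle here; in particular, no normal form theorem for graph products is needed to prove \ref{item:GJ1J2}, since the composition identity for the $\bar{r}_K$ already encodes the required cancellation.
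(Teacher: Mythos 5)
Your proof is correct and takes essentially the same approach as the paper: all three parts are handled via retraction homomorphisms $\G\to\G_J$ defined by killing the $G_i$ with $i\notin J$, and part (3) uses the composition of two such retractions (the paper observes that $\rho_{J_1}\circ\rho_{J_2}$ lands in $\G_{J_1\cap J_2}$ and fixes $\G_{J_1}\cap\G_{J_2}$ pointwise, which is the same content as your identity $\bar{r}_{J_1}\circ\bar{r}_{J_2}=\bar{r}_{J_1\cap J_2}$).
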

\begin{proof}
	For $J\subset I$ we have a morphism from $*_{i\in I}G_i\to *_{j\in J} G_j$ that is the identity on each $G_j$ for $j\in J$ and kills each $G_i$ for $i\notin J$. This descends to a morphism $\rho_J:\G\to\G(\cG_J,(G_j)_{j\in J})$.
	The natural map $\G(\cG_J,(G_j)_{j\in J})\to\G$ postcomposed with $\rho_J$ is the identity, so this proves \ref{item:GJ}. \ref{item:GiGi} is a special case of \ref{item:GJ}.
	We may now consider $\rho_J$ as a group retraction $\rho_J:\G\to\G_J$.
	
	For $J_1,J_2\subset I$, observe that $\rho_{J_1}\circ\rho_{J_2}$ has image in $\G_{J_1\cap J_2}$ (for example by considering the image of a product of elements in the $G_i$). But $\G_{J_1\cap J_2}$ is a subgroup of $\G_{J_1}\cap\G_{J_2}$, and $\rho_{J_1}\circ\rho_{J_2}$ is the identity on $\G_{J_1}\cap\G_{J_2}$, hence $\G_{J_1}\cap\G_{J_2}=\G_{J_1\cap J_2}$.
\end{proof}

\begin{defn}(Right-angled building of a graph product)\\\label{defn:building}
	Let $\G=\G(\cG,(G_i)_{i\in I})$ be a graph product, let $N=N(\cG)$ be the flag completion of $\cG$, and let $C(N)$ be the cubical cone of $N$.
	Recall from Lemma \ref{lem:cubecone} that we have a map $\ut:C(N)\to\bar{N}$, and in this setting $\bar{N}$ is the set of spherical subsets of $I$.
	Consider the equivalence relation on $\G\times C(N)$ defined by
	$$(\gamma,p)\sim(\gamma',p')\quad\Leftrightarrow\quad p=p'\text{ and }\gamma^{-1}\gamma'\in\G_J\text{ for }J=\ut(p).$$
	The \emph{right-angled building of $\G$} is the quotient $\Delta=\Delta(\cG,(G_i)_{i\in I}):=\G\times C(N)/\sim$.
	We denote the equivalence class of $(\gamma,p)$ by $[\gamma,p]$.
	
	For $\gamma\in\Gamma$, the image of the inclusion $\{\gamma\}\times C(N)\xhookrightarrow{}\Delta$ is called a \emph{chamber of $\Delta$}, denoted $C_\gamma$. 
	Letting $v_N$ denote the center of $C(N)$, we call $[\gamma,v_N]$ the \emph{center of $C_\gamma$}.
	We denote the base chamber $C_{1_\G}$ by $C_*$ and the set of all chambers by $\cC(\Delta)$.	
	
	We define the \emph{typing map} $t:\Delta^0\to\bar{N}$ (where $\Delta^0$ is the vertex set of $\Delta$) to be the natural extension of the typing map from Definition \ref{defn:cubecone}.
	Again we refer to $t(v)$ as the \emph{type} of the vertex $v$.
\end{defn}

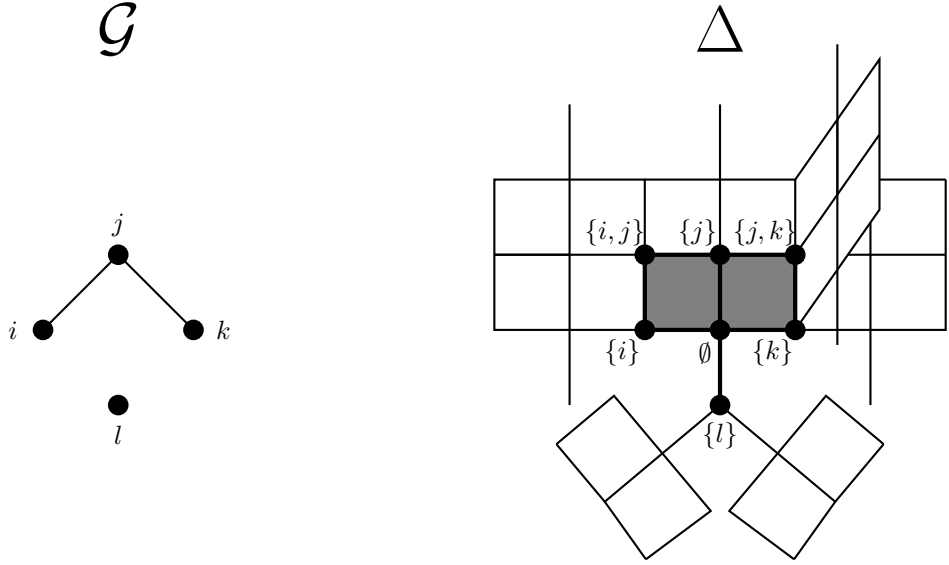
\begin{figure}[H]
	\centering
	\scalebox{1}{
		\begin{tikzpicture}[auto,node distance=2cm,
			thick,every node/.style={circle,draw,fill,inner sep=0pt,minimum size=7pt},
			every loop/.style={min distance=2cm},
			hull/.style={draw=none},
			]
			\tikzstyle{label}=[draw=none,fill=none]
			\tikzstyle{a}=[isosceles triangle,sloped,allow upside down,shift={(0,-.05)},minimum size=3pt]

			\begin{scope}[shift={(-6,0)}]
				\node at (-1,0) {};
				\node at (1,0) {};
				\node at (0,1) {};
				\node at (0,-1) {};
				\draw (-1,0)--(0,1)--(1,0);
				\node[label] at (-1.4,0) {$i$};
				\node[label] at (0,1.4) {$j$};
				\node[label] at (1.4,0) {$k$};
				\node[label] at (0,-1.4) {$l$};		
				\node[label,font=\Huge] at (0,4) {$\cG$};		
			\end{scope}

			\begin{scope}[shift={(2,0)}]
				\draw[fill=gray](-1,0)--(1,0)--(1,1)--(-1,1)--(-1,0);
				
				\draw(-2,0)-- (-3,0)--(-3,1);
				\draw(-2,0)-- (-2,1)--(-3,1);
				\draw(-2,2)-- (-3,2)--(-3,1);
				\draw(-2,2)-- (-2,1)--(-3,1);
				\draw(-2,2)-- (-2,3);
				\draw(-2,0)-- (-2,-1);
				\draw(-2,1)-- (-1,1);
				\draw(-2,0)-- (-1,0)--(-1,1);
				\draw(-2,2)-- (-1,2)--(-1,1);
				
				\draw(0,0)--(0,1)--(-1,1);
				\draw(0,2)--(-1,2);
				\draw(0,2)--(0,1)--(1,1);
				\draw(0,2)--(0,3);
				\draw(0,2)--(1,2)--(1,1);
				\draw(0,0)--(1,0)--(1,1);
				\draw(0,0)--(0,-1);
				\draw(1.56,.8)--(1,0);
				\draw(1.56,2.8)--(1,2);
				\draw(1.56,2.8)--(1.56,1.8)--(1,1);
				\draw(1.56,2.8)--(1.56,1.8)--(2.12,2.6);
				\draw(1.56,.8)--(1.56,1.8);
				\draw(1.56,.8)--(2.12,1.6)--(2.12,2.6);
				\draw(1.56,.8)--(1.56,-.2);
				\draw(1.56,2.8)--(2.12,3.6)--(2.12,2.6);
				\draw(1.56,2.8)--(1.56,3.8);
				
				\draw(2,0)--(1,0);
				\draw(2,0)--(2,1)--(3,1);
				\draw(2,0)--(2,-1);
				\draw(2,0)--(3,0)--(3,1);
				\draw(3,2)--(3,1);
				\draw(2.12,2)--(3,2);
				\draw(1.7,1)--(2,1)--(2,1.429);
				
				\draw(-.766,-1.643)--(0,-1);
				\draw(-.766,-1.643)--(-.123,-2.409)--(-.985,-3.052);
				\draw(-.766,-1.643)--(-1.532,-2.286)--(-.985,-3.052);
				\draw(-.766,-1.643)--(-1.409,-.877)--(-2.175,-1.52);
				\draw(-1.532,-2.286)--(-2.175,-1.52);
				
				\draw(.766,-1.643)--(0,-1);
				\draw(.766,-1.643)--(.123,-2.409)--(.985,-3.052);
				\draw(.766,-1.643)--(1.532,-2.286)--(.985,-3.052);
				\draw(.766,-1.643)--(1.409,-.877)--(2.175,-1.52);
				\draw(1.532,-2.286)--(2.175,-1.52);

				%\draw[fill=black,opacity=.3](-1,0)--(1,0)--(1,1)--(-1,1)--(-1,0);
				
				\draw[ultra thick](0,0)--(0,-1);
				\draw[ultra thick](-1,0) grid (1,1);

				\node at (0,0){};
				\node at (-1,0){};
				\node at (-1,1){};
				\node at (0,1){};		
				\node at (1,1){};
				\node at (1,0){};
				\node at (0,-1){};
				
				\node[label] at (-.2,-.3) {$\emptyset$};
				\node[label] at (-1.3,-.3) {$\{i\}$};
				\node[label] at (-1.4,1.3) {$\{i,j\}$};
				\node[label] at (-.3,1.3) {$\{j\}$};
				\node[label] at (.6,1.3) {$\{j,k\}$};
				\node[label] at (.7,-.3) {$\{k\}$};
				\node[label] at (0,-1.4) {$\{l\}$};	
				\node[label,font=\Huge] at (0,4) {$\Delta$};			
			\end{scope}
			
		\end{tikzpicture}
	}
	\caption{An example of the graph $\cG$ and a section of the right-angled building $\Delta$. In this example the groups $G_i,G_j,G_k,G_l$ have orders $2,2,3,3$ respectively. One of the chambers is shown in bold, and its vertices are labeled by their types.}\label{fig:building}
\end{figure}

We need the following key lemma regarding the structure of intersections of chambers.

\begin{lem}\label{lem:chamberint}
	Let $\gamma_1,\gamma_2\in\G$. Then $C_{\gamma_1}\cap C_{\gamma_2}$ is non-empty if and only if there exists $J\in\bar{N}$ with $\gamma_1^{-1}\gamma_2\in\G_J$.
	In this case, there is a unique minimal such $J$, and we have
	\begin{equation}\label{chamberintersect}
C_{\gamma_1}\cap C_{\gamma_2}=\{\gamma_1\}\times C_J=\{\gamma_2\}\times C_J, 
	\end{equation}	
	where $C_J\subset C(N)$ is the subcomplex defined by
	$$C_J:=\{p\in C(N)\mid J\subset \ut(p)\}.$$
\end{lem}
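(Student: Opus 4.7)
The plan is to work directly from the definition of the equivalence relation $\sim$ on $\G\times C(N)$ and reduce the entire statement to an analysis of the set
\[
\cS:=\{J\in\bar{N}\mid \gamma_1^{-1}\gamma_2\in\G_J\}.
\]
The preliminary observation that controls everything else is that a point $[\gamma_1,p]\in C_{\gamma_1}$ lies in $C_{\gamma_2}$ precisely when $[\gamma_1,p]=[\gamma_2,p']$ for some $p'$; unwinding $\sim$ this is equivalent to $p=p'$ together with $\ut(p)\in\cS$.

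First I would show that $\cS$, when non-empty, has a unique minimum $J_0$. The critical input here is that $\bar{N}$ is closed under intersection (any subset of a spherical set is spherical) combined with Lemma \ref{lem:GammaJ}\ref{item:GJ1J2}, which together force $\cS$ to be closed under intersection; since $I$ is finite this produces the minimum. These same two inputs also give the useful stronger characterization $\cS=\{J\in\bar{N}\mid J\supseteq J_0\}$, which will do most of the bookkeeping afterwards.

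Next I would deduce the non-emptiness equivalence. One direction is immediate from the preliminary observation: any point of $C_{\gamma_1}\cap C_{\gamma_2}$ witnesses some $\ut(p)\in\cS$. Conversely, given $J_0\in\cS$, the construction of $C(N)$ supplies a vertex $v$ with $\ut(v)=J_0$ (a vertex of each type in $\bar{N}$ exists by Definition \ref{defn:cubecone}), and then $[\gamma_1,v]=[\gamma_2,v]$ supplies a point in the intersection.

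For the explicit formula I would combine the preliminary observation with the characterization of $\cS$: a point $[\gamma_1,p]\in C_{\gamma_1}$ lies in $C_{\gamma_2}$ iff $\ut(p)\in\cS$ iff $J_0\subseteq \ut(p)$ iff $p\in C_{J_0}$. Since the map $p\mapsto[\gamma_1,p]$ is a bijection from $\{\gamma_1\}\times C(N)$ onto $C_{\gamma_1}$ (the equivalence $(\gamma_1,p)\sim(\gamma_1,p')$ forces $p=p'$), this yields $C_{\gamma_1}\cap C_{\gamma_2}=\{\gamma_1\}\times C_{J_0}$; the symmetric formula with $\gamma_2$ follows because $\gamma_2^{-1}\gamma_1=(\gamma_1^{-1}\gamma_2)^{-1}$ produces exactly the same minimum $J_0$. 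The only genuinely nontrivial input is Lemma \ref{lem:GammaJ}\ref{item:GJ1J2}: without it one could only find minimal elements of $\cS$ rather than a minimum, and the description of the intersection as a single $C_J$ would fail.
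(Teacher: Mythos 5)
Your proof is correct and reaches all parts of the statement. The one place where your route genuinely diverges from the paper's is the existence and uniqueness of the minimal $J$. The paper proves this concretely: since $\G_J$ is a direct product of the $G_j$, $j\in J$, one writes $\gamma_1^{-1}\gamma_2=\prod_{j\in J}g_j$ with each $g_j\ne 1$ (after shrinking $J$), and then applies the coordinate retractions $\rho_i:\G\to G_i$ to show that any $J'$ with $\gamma_1^{-1}\gamma_2\in\G_{J'}$ must contain $J$. You instead observe that the set $\cS$ of admissible $J$'s is closed under intersection, invoking Lemma~\ref{lem:GammaJ}\ref{item:GJ1J2} together with the fact that $\bar{N}$ is downward closed, which yields a unique minimum abstractly (the descending chain terminates because each $J\in\bar{N}$ is a finite set --- a simplex --- not really because $I$ is finite, so this also works in the infinite-$I$ setting). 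Your version is a bit more modular: it isolates ``closed under intersection'' as the structural fact driving the argument, and lets the already-proved Lemma~\ref{lem:GammaJ}\ref{item:GJ1J2} carry the algebraic load, whereas the paper essentially re-runs the retraction argument inline and, as a byproduct, exhibits $J$ explicitly as the ``support'' of $\gamma_1^{-1}\gamma_2$. Since Lemma~\ref{lem:GammaJ}\ref{item:GJ1J2} is itself established via the retractions $\rho_J$, the underlying mechanism is the same; what you have done is factor the argument through a cleaner intermediate statement. The remaining pieces of your proof --- the ``preliminary observation'' unwinding the equivalence relation, the two directions of the non-emptiness equivalence, and the description of the intersection as $\{\gamma_1\}\times C_{J_0}$ --- match the paper's reasoning essentially step for step.
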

\begin{proof}
	If $J\in\bar{N}$ with $\gamma_1^{-1}\gamma_2\in\G_J$ then $[\gamma_1,v]=[\gamma_2,v]\in C_{\gamma_1}\cap C_{\gamma_2}$ for $v\in C(N)$ the vertex with $t(v)=\ut(v)=J$.
Conversely, if $[\gamma,p]\in C_{\gamma_1}\cap C_{\gamma_2}$ then $\gamma_1^{-1}\gamma_2\in\G_{J}$ for $J=\ut(p)$.
Now $\G_J$ is the direct product of the subgroups $G_j$ for $j\in J$, so we may write $\gamma_1^{-1}\gamma_2=\prod_{j\in J}g_j$ with $g_j\in G_j$.
Shrinking $J$ if necessary, we may assume that the $g_j$ are non-trivial. We claim that $J\in\bar{N}$ is unique minimal with $\gamma_1^{-1}\gamma_2\in\G_{J}$.
Indeed for any $i\in I$ we have a homomorphism $\rho_i:\G\to G_i$ by killing $G_j$ for $i\neq j\in I$, and it is clear that $\rho_i(\G_{J'})=\{1\}$ if $i\notin J'$.
But we know that $\rho_j(\gamma_1^{-1}\gamma_2)=g_j\neq 1$ for $j\in J$, so $\gamma_1^{-1}\gamma_2\in\G_{J'}$ implies $J\subset J'$ as required.

We now show (\ref{chamberintersect}). Consider $p\in C(N)$ with $\ut(p)=J'$. We have that $[\gamma_1,p]=[\gamma_2,p]$ if and only if $\gamma_1^{-1}\gamma_2\in\G_{J'}$, but by the first part of the lemma this is equivalent to $J\subset J'$ -- i.e. $p\in C_J$.
\end{proof}

\begin{cor}\label{cor:cubestructure}
The cube complex structure on $C(N)$ induces a cube complex structure on $\Delta$.
\end{cor}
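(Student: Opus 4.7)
The plan is to transfer the cube complex structure from $C(N)$ to each chamber via the natural bijection, and then check that Lemma \ref{lem:chamberint} forces the induced structures to glue consistently across intersections of chambers.

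First I would observe that for each $\gamma\in\G$, the restriction of the quotient map $\G\times C(N)\to\Delta$ to $\{\gamma\}\times C(N)$ is a bijection onto $C_\gamma$: if $[\gamma,p]=[\gamma,p']$ then by definition of $\sim$ we must have $p=p'$. So each chamber $C_\gamma$ inherits a cube complex structure isomorphic to that of $C(N)$, whose cubes are indexed by pairs $J_1\subset J_2\in\bar{N}$ as in Definition \ref{defn:cubecone}.

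Next I would verify that for each $J\in\bar{N}$, the set $C_J=\{p\in C(N)\mid J\subset\ut(p)\}$ appearing in Lemma \ref{lem:chamberint} is a subcomplex of $C(N)$. Indeed, $\ut$ is constant on the interior of each cube, equal to $J_1$ on the cube corresponding to $J_1\subset J_2$, so $C_J$ is the union of those (open) cubes with $J\subset J_1$. Since a face of such a cube corresponds to $J_1\subset J'_1\subset J'_2\subset J_2$ and still satisfies $J\subset J_1\subset J'_1$, the collection is closed under taking faces, so $C_J$ is a subcomplex.

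Finally I would combine these observations with Lemma \ref{lem:chamberint}. Whenever $C_{\gamma_1}\cap C_{\gamma_2}\neq\emptyset$, the lemma gives a unique minimal $J$ with $\gamma_1^{-1}\gamma_2\in\G_J$, and the intersection equals $\{\gamma_1\}\times C_J=\{\gamma_2\}\times C_J$ via the identification $[\gamma_1,p]=[\gamma_2,p]$ for all $p\in C_J$. Under the bijections from step one, this identification is the identity on the $C(N)$-coordinate, hence a cubical isomorphism of the subcomplex $C_J$ with itself. Consequently, the cube structures induced on $C_{\gamma_1}$ and $C_{\gamma_2}$ agree on their overlap, which is a subcomplex of each. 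Declaring the cubes of $\Delta$ to be the images of the cubes of $\{\gamma\}\times C(N)$ for all $\gamma\in\G$, the agreement on overlaps means this is well-defined and that the attaching maps between faces are cubical isomorphisms, giving $\Delta$ the desired cube complex structure.

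I do not expect any serious obstacle: once the subcomplex property of $C_J$ is noted, everything else is a direct consequence of the fact that the equivalence relation acts by the identity on the $C(N)$ factor, which is built into the definition of $\sim$ and reconfirmed by Lemma \ref{lem:chamberint}.
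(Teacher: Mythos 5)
Your proof is correct and takes the same approach the paper intends: the corollary is stated without explicit proof precisely because it follows immediately from Lemma \ref{lem:chamberint}, which already asserts that $C_J$ is a subcomplex and exhibits each pairwise chamber intersection as the identity map on the $C(N)$-coordinate restricted to $C_J$. You have simply spelled out the gluing argument that the paper leaves implicit, including a short verification (using Definition \ref{defn:cubecone}) that $C_J$ really is a subcomplex, which matches the structure the authors rely on.
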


\begin{remk}\label{remk:chamberneigh}
If $C$ is a chamber with center $v$, then $C$ is the cubical neighborhood of $v$ -- i.e. the union of cubes in $\Delta$ that contain $v$. This is because $C(N)$ is the cubical neighborhood of $v_N$ (see Definition \ref{defn:cubecone}), and for $\gamma\in \G$ the equivalence class $[\gamma,v_N]$ is a singleton, so all cubes in $\Delta$ containing $[\gamma,v_N]$ must come from cubes in $\{\gamma\}\times C(N)$.
\end{remk}

We will also need the following three results.

\begin{lem}\cite[Lemma 4.4]{Haglund06}\\
	The left action of $\G$ on $\G\times C(N)$ induces an action of $\G$ on $\Delta$ by cubical automorphisms. 
	Moreover, the projection $\G\times C(N)\to C(N)$ induces a projection $\rho:\Delta\to C(N)$, invariant under the action of $\G$, and two points are in the same $\G$-orbit if and only if they have the same image under $\rho$.
	The typing map $t$ factors through $\rho$, so $t$ is also invariant under the action of $\G$.
\end{lem}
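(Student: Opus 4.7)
The plan is to verify each of the four assertions in turn, each of which reduces to a direct check against the definition of the equivalence relation $\sim$ on $\G\times C(N)$.

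First I would show that the left action of $\G$ on $\G\times C(N)$ (by multiplication on the first factor) descends to $\Delta$. This amounts to checking that $\sim$ is $\G$-invariant: given $\eta\in\G$, if $(\gamma,p)\sim(\gamma',p')$ then $p=p'$ and $\gamma^{-1}\gamma'\in\G_J$ for $J=\ut(p)$; but $(\eta\gamma)^{-1}(\eta\gamma')=\gamma^{-1}\gamma'\in\G_J$, so $(\eta\gamma,p)\sim(\eta\gamma',p')$. The induced bijection on $\Delta$ is $[\gamma,p]\mapsto[\eta\gamma,p]$. It preserves cubes because it maps the chamber $C_\gamma$ bijectively onto $C_{\eta\gamma}$ via the identity on $C(N)$, which is cubical by construction; that it is globally cubical on $\Delta$ then follows from Corollary \ref{cor:cubestructure}, since the gluing data along chamber intersections (as described in Lemma \ref{lem:chamberint}) is invariant under $\gamma\mapsto\eta\gamma$.

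Next, for the projection $\rho$, note that the map $\G\times C(N)\to C(N)$, $(\gamma,p)\mapsto p$, is constant on $\sim$-classes since $(\gamma,p)\sim(\gamma',p')$ forces $p=p'$; thus it descends to a well-defined map $\rho:\Delta\to C(N)$, and the formula $\eta\cdot[\gamma,p]=[\eta\gamma,p]$ shows this descended map is $\G$-invariant. For the orbit characterization, one direction is immediate from $\G$-invariance of $\rho$; for the converse, suppose $\rho([\gamma_1,p])=\rho([\gamma_2,p'])=q$. Then $p=p'=q$ (after choosing representatives with the common image), and taking $\eta=\gamma_2\gamma_1^{-1}$ gives $\eta\cdot[\gamma_1,p]=[\gamma_2,p]$ as required.

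Finally, for the typing map $t:\Delta^0\to\bar{N}$, the natural extension from Definition \ref{defn:cubecone} is the map defined on a vertex $[\gamma,v]\in\Delta^0$ by $t([\gamma,v]):=t_{C(N)}(v)$, where $t_{C(N)}$ is the typing map of $C(N)$; this is well-defined because if $[\gamma,v]=[\gamma',v']$ then $v=v'$. By definition this factors as $t=t_{C(N)}\circ\rho|_{\Delta^0}$, so $\G$-invariance of $t$ follows from that of $\rho$. The main thing to watch is that each step stays honest to the definition of $\sim$; no substantial obstacle is anticipated, as the whole statement is essentially a packaging of the fact that $\G$ acts on the first coordinate of $\G\times C(N)$ while $C(N)$ is the quotient by this action.
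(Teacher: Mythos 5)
Your argument is correct: each of the four claims reduces to the direct check that $\sim$ depends only on the second coordinate and on $\gamma^{-1}\gamma'$, both of which are invariant under left multiplication, and you carry out each check cleanly. Note that the paper itself does not give a proof of this lemma but cites Haglund \cite[Lemma 4.4]{Haglund06}; your argument is the natural unwinding of the definitions and matches what is expected there.
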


\begin{lem}\cite[Corollary 4.9]{Haglund06}\\\label{lem:lfinite}
	$\Delta$ is locally finite if and only if the graph $\cG$ and the groups $G_i$ are finite.
	In this case $\G$ is a uniform lattice in $\Aut(\Delta)$, the group of cubical automorphisms of $\Delta$.
\end{lem}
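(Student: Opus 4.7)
The plan is to prove the biconditional first, then establish the lattice property. Both directions of the biconditional will rest on the following observation, obtained from Remark \ref{remk:chamberneigh} and Lemma \ref{lem:chamberint}: for any vertex $v\in\Delta$ with $t(v)=J\in\bar{N}$, if $C_{\gamma_0}$ is any chamber containing $v$, then the chambers through $v$ are exactly $\{C_\gamma:\gamma\in\gamma_0\G_J\}$. Indeed, $v\in C_\gamma$ iff $v\in C_\gamma\cap C_{\gamma_0}$, which by Lemma \ref{lem:chamberint} occurs iff $\gamma_0^{-1}\gamma\in\G_{J^*}$ for some $J^*\subset J$, and such a $J^*$ exists precisely when $\gamma_0^{-1}\gamma\in\G_J$.

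For the forward direction, I assume $\cG$ and the $G_i$ are finite. Then $N$ is a finite simplicial complex, so each chamber $C_\gamma\cong C(N)$ is a finite cube complex. The coset $\gamma_0\G_J$ above is finite, because $J$ being spherical forces $\G_J$ to be a direct product of finitely many finite groups. Combining these two finiteness statements, the cubical neighborhood of any vertex is contained in a finite union of finite cube complexes, so $\Delta$ is locally finite.

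For the reverse direction, I assume $\Delta$ is locally finite. By Remark \ref{remk:chamberneigh}, the chamber $C_*$ is the cubical neighborhood of its center, hence finite; therefore $C(N)$ is finite, forcing $\cG$ to be finite. To bound each $|G_i|$, I would apply the coset observation to the type-$\{i\}$ vertex $v$ of $C_*$: the chambers through $v$ are parameterized by $\G_{\{i\}}=G_i$, and each one contributes a distinct edge from $v$ to its center (distinct because the chamber centers are pairwise distinct vertices of $\Delta$). Hence $|G_i|$ is bounded by the finite degree of $v$.

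Finally, assuming finiteness of $\cG$ and the $G_i$, I would equip $\Aut(\Delta)$ with the topology of pointwise convergence on the vertex set, under which it is locally compact with compact open vertex stabilizers. The action of $\G$ on $\cC(\Delta)$ is transitive by construction, and is free because $C_\gamma=C_*$ forces $\gamma\in\G_\emptyset=\{1\}$ via Lemma \ref{lem:chamberint}. In particular, the $\G$-stabilizer of the center of $C_*$ is trivial, so $\G$ meets the compact open stabilizer of that vertex trivially, which gives discreteness. Cocompactness is immediate since the finite chamber $C_*$ surjects onto $\Delta/\G$. I do not anticipate a serious obstacle; the subtlest point is the coset identification of chambers through $v$, which drives both directions of the biconditional and also the freeness needed for discreteness.
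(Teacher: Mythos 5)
The paper itself gives no argument for this lemma --- it is quoted directly from \cite[Corollary~4.9]{Haglund06} --- so there is no in-house proof to compare against; I am judging your proposal on its own. Your proof is correct. The coset observation you prove from Lemma~\ref{lem:chamberint} is precisely what the paper later records as Lemmas~\ref{lem:CJC} and~\ref{lem:cCv} (that $\cC(v)=\{C_\gamma:\gamma\in\gamma_0\G_J\}$), and re-deriving it here is appropriate since those come after the present statement. Both directions of the biconditional are sound: in the ``if'' direction, every cube through $v$ lies in one of the finitely many chambers through $v$, each of which is a finite complex; in the ``only if'' direction, finiteness of $C_*\cong C(N)$ gives $|I|<\infty$, and the degree of a type-$\{i\}$ vertex bounds $|G_i|$ because the $|G_i|$ chambers through it have pairwise distinct centers, each joined to $v$ by an edge. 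For the lattice claim, the pieces are all there; if you want to be fully explicit you should close the loop by observing that discreteness of $\G$ in $\Aut(\Delta)$ together with compactness of vertex stabilizers (or of $\{g:gK\cap K\neq\emptyset\}$ for compact $K$) gives properness of the $\G$-action, which combined with the cocompactness you established via the finite fundamental domain $C_*$ yields the uniform lattice property as used in this paper.
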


Finiteness of $\cG$ and the $G_i$ is an assumption of Theorem \ref{thm:Delta}, but we will not assume it in Sections \ref{sec:galleries}--\ref{sec:resgroup} since it is not necessary for most of the results therein. 

Finally, the following is proved in \cite{Davis98} and \cite{Meier96}.

\begin{prop}
	$\Delta$ is a CAT(0) cube complex.
\end{prop}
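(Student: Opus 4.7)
The plan is to verify Gromov's link condition: a cube complex is CAT(0) if and only if it is simply connected and the link of every vertex is a flag simplicial complex.

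For the link condition, I would compute $\link(v)$ for an arbitrary vertex $v=[\gamma,p]\in\Delta$ of type $J=t(v)=\ut(p)$. By Lemma \ref{lem:chamberint}, the chambers containing $v$ are precisely those $C_{\gamma'}$ with $\gamma'\in\gamma\G_J$. The edges at $v$ split into inward edges of type $(J\setminus\{j\},J)$ for $j\in J$ and outward edges of type $(J,J\cup\{i\})$ for $i\in J\jperp$. Since $\G_J$ is the direct product of the $G_j$ for $j\in J$, the number of inward edges in direction $j$ is $|\G_J/\G_{J\setminus\{j\}}|=|G_j|$, while the containment $\G_J\subset\G_{J\cup\{i\}}$ produces exactly one outward edge per $i\in J\jperp$. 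A parallel counting argument based on Lemma \ref{lem:GammaJ}\ref{item:GJ1J2} shows that higher cubes through $v$ factor independently into inward and outward parts. Consequently $\link(v)$ is the join of the flag complex $N(\cG_{J\jperp})$ with the join of $|J|$ discrete sets of cardinalities $|G_j|$ for $j\in J$. Since discrete sets and flag completions of graphs are flag, and joins of flag complexes are flag, $\link(v)$ is flag.

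For simple connectivity, I would argue via galleries. Using contractibility of each chamber $C_\gamma\cong C(N)$ from Lemma \ref{lem:cubecone} together with convexity of chamber intersections from Lemma \ref{lem:chamberint}, any edge loop in $\Delta$ can be pushed to a closed walk in the \emph{chamber graph} with vertex set $\cC(\Delta)$ and an edge joining $C_\gamma,C_{\gamma'}$ whenever $\gamma^{-1}\gamma'\in G_i\setminus\{1\}$ for some $i\in I$. This is the Cayley graph of $\G$ with respect to the generating set $\bigsqcup_{i\in I}(G_i\setminus\{1\})$; so it is connected, and its fundamental group is killed by the defining relations of $\G$. The group relations within a single $G_i$ fill inside the union of the $|G_i|$ chambers sharing a common vertex of type $\{i\}$; the commutation relations $[G_i,G_j]=1$ for adjacent $i,j\in I$ fill inside the union of the $|G_i||G_j|$ chambers sharing a common vertex of type $\{i,j\}$, which exists precisely because $\{i,j\}\in\bar N$ when $i,j$ are adjacent. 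Each such ``residue'' is simply connected by a Seifert--van Kampen argument, since by Lemma \ref{lem:chamberint} its constituent chambers pairwise intersect along the convex (hence contractible) subcomplex $C_{\{i\}}$ or $C_{\{i,j\}}$. Assembling $\Delta$ chamber by chamber via repeated Seifert--van Kampen then yields simple connectivity.

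The hardest part will be the simple connectivity step, where one must carefully verify that fillings of the defining relations of $\G$ lie in simply connected subcomplexes of $\Delta$. The essential feature is that $N=N(\cG)$ records exactly the pairs $\{i,j\}$ admitting a commutation relation, so the four chambers realizing a commutation relation share a common type-$\{i,j\}$ vertex whose cubical neighborhood supplies the required $2$-cell filling; by contrast, no such shared vertex exists when $i,j$ are non-adjacent in $\cG$, consistent with the absence of any relation between $G_i$ and $G_j$ in $\G$.
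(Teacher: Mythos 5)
The paper does not supply its own proof of this proposition; it simply cites \cite{Davis98} and \cite{Meier96}. So there is no in-paper argument to compare against, but your plan is the standard one (Gromov's link condition plus simple connectivity), and it is consistent with how those references establish the result.

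Your link computation is correct in outcome. Using Lemma~\ref{lem:chamberint}, the chambers through $v$ are indexed by $\gamma\G_J\cong\prod_{j\in J}G_j$; two chambers share the type-$(J\setminus\{j\})$ vertex iff they agree in the $G_j$-coordinate, which gives the $|G_j|$ inward edges per $j\in J$ and a single outward edge per $i\in J\jperp$, and one can check that a set of directions spans a simplex in $\link(v)$ iff the chosen inward edges lie in a common chamber (possible because the $G_j$-coordinates are independent) and the outward directions form a spherical subset. That yields precisely the join $N(\cG_{J\jperp})*\bigl(\ast_{j\in J}\,D_{|G_j|}\bigr)$ of a flag complex with discrete sets, hence a flag complex. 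The sentence ``a parallel counting argument based on Lemma~\ref{lem:GammaJ}\ref{item:GJ1J2}\ldots'' is the right instinct but is not quite a proof; what you really need is the independence of the $G_j$-coordinates just described, not the intersection formula per se.

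The simple-connectivity step is where there is a genuine gap in what you have written, and you already flag it as the hard part. Two specific issues. First, ``any edge loop in $\Delta$ can be pushed to a closed walk in the chamber graph'' is not justified by contractibility of chambers and of pairwise intersections alone: either invoke the nerve lemma for the cover of $\Delta$ by (open stars of) chambers --- which works because Lemma~\ref{lem:chamberint} shows that every finite nonempty intersection of chambers has the form $\{\gamma\}\times C_J$ with $J$ spherical, which is contractible --- or carry out a careful subdivision argument. Note that Lemma~\ref{lem:chamberint} identifies the intersection set-theoretically; it does not assert convexity or contractibility of $C_J$, so that needs a separate (easy) lemma. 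Second, ``assembling $\Delta$ chamber by chamber via repeated Seifert--van Kampen'' is not automatically valid: a chamber-by-chamber induction requires controlling $\pi_1$ of the intermediate unions and the connectivity of $X_k\cap C_{k+1}$ at each step, which depends on the order in which chambers are attached. The cleaner route is the nerve: once you know $\Delta$ is homotopy equivalent to the nerve of the chamber cover, the $1$-skeleton of the nerve is the Cayley graph of $\G$ on $\bigsqcup_i(G_i\setminus\{1\})$, and you only need that each defining relator (a cycle entirely within one coset of $\G_J$ with $J=\{i\}$ or $J=\{i,j\}$ spherical) bounds a union of $2$-simplices of the nerve, which holds because any three chambers in a common spherical residue have nonempty common intersection by Lemma~\ref{lem:chamberint}. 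With that repair your plan closes up.
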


\bigskip
\section{Galleries and chamber-residues}\label{sec:galleries}

In this section we define adjacency of chambers, galleries and chamber-residues, and prove some basic lemmas.
Again most of this material is standard and appears in \cite{Haglund06}.

\begin{defn}(Adjacent chambers)\\\label{defn:adjacent}
	We say that chambers $C_{\gamma_1},C_{\gamma_2}$ are \emph{$i$-adjacent} if $1\neq \gamma_1^{-1}\gamma_2\in G_i$. We say that $C_{\gamma_1},C_{\gamma_2}$ are \emph{adjacent} if they are $i$-adjacent for some $i\in I$.
	It follows from Definition \ref{defn:building} and Lemma \ref{lem:chamberint} that chambers $C,C'$ are $i$-adjacent if and only if $C\neq C'$ and there is a vertex $v\in C\cap C'$ of type $\{i\}$.
\end{defn}

\begin{defn}(Galleries)\\
	Let $C,C'\in\cC(\Delta)$ be chambers.
	A \emph{gallery of $\Delta$ joining $C$ and $C'$} is a sequence of chambers $(C_0,C_1,...,C_n)$ such that $C_0=C$, $C_n=C'$ and for each $1\leq k\leq n$ the chambers $C_{k-1},C_k$ are $i_k$-adjacent for some $i_k\in I$.
	For $J\subset I$ we say that $(C_0,C_1,...,C_n)$ is a \emph{$J$-gallery} if $i_k\in J$ for $1\leq k\leq n$.
	Note that an $\emptyset$-gallery consists of a single chamber.
	The product of two galleries $G\cdot G'$ is defined by concatenation, and the product of two $J$-galleries is again a $J$-gallery.
	
	The action of $\G$ on $\cC(\Delta)$ preserves $i$-adjacency for each $i\in I$, so we get a natural action of $\G$ on the set of galleries, which preserves the set of $J$-galleries for each $J\subset I$.	
\end{defn}

The following lemma follows straight from the definitions.

\begin{lem}\label{lem:alljoined}	
	The galleries joining $C_{\gamma}$ and $C_{\gamma'}$ correspond to product decompositions $\gamma^{-1}\gamma'=g_1g_2\cdots g_n$ for $g_k\in\sqcup_i G_i-\{1\}$.
	The corresponding gallery is $(C_{\gamma_0},C_{\gamma_1},...,C_{\gamma_n})$ where $\gamma_k:=\gamma g_1\cdots g_k$.
	In particular, any two chambers are joined by a gallery.
\end{lem}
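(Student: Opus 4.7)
The lemma follows directly from unpacking Definition \ref{defn:adjacent}, so the plan is simply to set up the bijection explicitly and then invoke the generating set of $\G$.

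First I would prove the correspondence. Given a gallery $(C_{\gamma_0},C_{\gamma_1},\dots,C_{\gamma_n})$ joining $C_\gamma$ and $C_{\gamma'}$ (so $\gamma_0=\gamma$, $\gamma_n=\gamma'$, and each consecutive pair $C_{\gamma_{k-1}},C_{\gamma_k}$ is $i_k$-adjacent for some $i_k\in I$), set $g_k:=\gamma_{k-1}^{-1}\gamma_k$. By Definition \ref{defn:adjacent}, $g_k\in G_{i_k}-\{1\}\subset\sqcup_i G_i-\{1\}$, and the product telescopes to
\[
g_1g_2\cdots g_n=\gamma_0^{-1}\gamma_n=\gamma^{-1}\gamma'.
\]
Conversely, given a decomposition $\gamma^{-1}\gamma'=g_1g_2\cdots g_n$ with each $g_k\in G_{i_k}-\{1\}$, define $\gamma_k:=\gamma g_1\cdots g_k$ (so $\gamma_0=\gamma$ and $\gamma_n=\gamma'$). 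Then $\gamma_{k-1}^{-1}\gamma_k=g_k\in G_{i_k}-\{1\}$, so $C_{\gamma_{k-1}}$ and $C_{\gamma_k}$ are $i_k$-adjacent, giving a gallery. The two constructions are clearly inverse to each other, so the correspondence is established.

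For the final assertion, note that $\G$ is generated by the union of the images of the $G_i$, so every element of $\G$ can be written as a product of elements of $\sqcup_i G_i$, and by discarding identity factors we can assume each factor lies in $\sqcup_i G_i-\{1\}$ (the case $\gamma=\gamma'$ corresponds to the empty decomposition and the length-one gallery $(C_\gamma)$). Applied to $\gamma^{-1}\gamma'$, this yields a product decomposition, which via the correspondence above gives a gallery joining $C_\gamma$ and $C_{\gamma'}$.

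There is no real obstacle here — the whole argument is a direct translation between adjacency data and factorizations in $\G$, and the generation statement is immediate from the definition of $\G$ as a quotient of $*_{i\in I}G_i$.
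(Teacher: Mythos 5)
Your proof is correct, and it is essentially the same as the paper's, which simply asserts that the lemma ``follows straight from the definitions''; you have spelled out the bijection (via $g_k=\gamma_{k-1}^{-1}\gamma_k$ and the telescoping product) and the generation argument that the paper leaves implicit.
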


We now describe moves that allow one to transform between any two galleries with the same end-chambers.

\begin{lem}\label{lem:moves}
	If two galleries have the same end-chambers then one can be transformed into the other by a sequence of the following three moves (or their inverses -- which we refer to by the same names):
	\begin{enumerate}[label={(M\arabic*)}]
		\item\label{M1} Replace $(C_0,C_1,...,C_n)$ with $(C_0,C_1,...,C_k,C',C_k,...,C_n)$, where $C_k,C'$ are adjacent.
		\item\label{M2} Replace $(C_0,C_1,...,C_n)$ with $(C_0,C_1,...,C_k,C',C_{k+1},...,C_n)$, where $C_k,C',C_{k+1}$ are pairwise $i$-adjacent for some $i\in I$.
		\item\label{M3} Replace $(C_0,C_1,...,C_n)$ with $(C_0,C_1,...,C_k,C',C_{k+2},...,C_n)$, where $C_k,C_{k+1}$ and $C',C_{k+2}$ are $i$-adjacent, and $C_k,C'$ and $C_{k+1},C_{k+2}$ are $j$-adjacent, for some adjacent $i,j\in I$.
	\end{enumerate}
\end{lem}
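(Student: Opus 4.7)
By Lemma \ref{lem:alljoined}, galleries joining $C_\gamma$ and $C_{\gamma'}$ correspond bijectively to factorizations $\gamma^{-1}\gamma' = g_1 g_2 \cdots g_n$ with each $g_k \in \bigsqcup_{i\in I}(G_i \setminus \{1\})$, and two galleries share end-chambers if and only if the associated factorizations have the same product in $\G$. My first step is to translate each of the three moves into a local operation on factorizations: M1 inserts a pair $g \cdot g^{-1}$ at some position, for $g \in G_i\setminus\{1\}$; M2 replaces a single factor $g'g''$ by two consecutive factors $g'\cdot g''$ (or vice versa) whenever $g',g'',g'g''\in G_i\setminus\{1\}$; and M3 swaps consecutive factors $g_ig_j \leftrightarrow g_jg_i$ when $g_i \in G_i$, $g_j \in G_j$ and $i,j$ are distinct adjacent vertices of $\cG$. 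For M3 one must check that the swapped gallery really exhibits the adjacencies claimed in the statement; this follows because $\G_{\{i,j\}}\cong G_i\times G_j$ by Lemma \ref{lem:GammaJ}, so an element of this subgroup factors uniquely as a product from $G_i$ and $G_j$.

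The lemma thus reduces to showing that any two factorizations of the same element of $\G$ are related by a sequence of M1, M2, M3. The plan is to derive this directly from the defining presentation
$$\G = \frac{*_{i\in I}G_i}{\llangle g_ig_jg_i^{-1}g_j^{-1} \mid i,j\in I\text{ adjacent}\rrangle}.$$
Let $M$ be the free monoid on $\bigsqcup_i(G_i\setminus\{1\})$ and let $\sim$ be the equivalence relation on $M$ generated by M1, M2, M3. Since each move preserves the element of $\G$ represented, the evaluation map $\pi:M\to\G$ factors through an induced map $\bar\pi:M/\sim\to\G$. I would show $\bar\pi$ is a bijection by equipping $M/\sim$ with a group structure via concatenation and then verifying the universal property of $\G$ as a graph product: inverses exist (trim $g_n^{-1}\cdots g_1^{-1}$ appended on the right using iterated M1); each map $G_i\to M/\sim$ sending $g\mapsto[g]$ and $1_{G_i}$ to the empty word is a group homomorphism (using M2 to combine non-trivial factors and M1 to cancel pairs when products land on the identity); and the images of $G_i$ and $G_j$ commute in $M/\sim$ whenever $i,j$ are adjacent, immediately from M3. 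The universal property of $\G$ then supplies an inverse to $\bar\pi$.

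The main obstacle will be the verification that each inclusion $G_i \to M/\sim$ is well-defined as a group homomorphism: the boundary case is two consecutive letters from $G_i$ whose product in $G_i$ equals $1_{G_i}$, which is excluded from our alphabet, and this is exactly where one must interleave M1 and M2. A slicker alternative would be to set up a normal form for elements of $M/\sim$ by greedily using the reverses of M1 and M2 to merge consecutive same-group letters, then using M3 to sort letters by a fixed ordering on $I$ whenever possible, and to check that any two normal forms representing the same element of $\G$ differ only by M3-swaps; this is the standard Tits-style normal form argument for graph products. Either route reduces the lemma to routine bookkeeping once the dictionary between the three gallery moves and the three kinds of elementary relation has been set up.
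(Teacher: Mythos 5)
Your proposal follows the same strategy as the paper's proof: translate the three gallery moves into corresponding moves on words over $\sqcup_{i}(G_i\setminus\{1\})$ via Lemma \ref{lem:alljoined}, and then reduce to showing that those word moves generate equality among words representing the same element of $\G$. Where the paper simply asserts that this last step ``follows from the presentation of $\G$'', you correctly supply a justification via the universal property of the graph product (or, alternatively, via a normal-form argument), which fills in a detail the paper leaves to the reader.
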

\begin{proof}
	If two words on $\sqcup_i G_i - \{1\}$ represent the same element of $\G$, then it follows from the presentation of $\G$ that the first word can be transformed into the second word by a sequence of the following three moves, or their inverses (see \cite{Green90}):
	\begin{enumerate}[label={(M\arabic*$'$)}]
		\item\label{i1} Replace $g_1\cdots g_n$ with $g_1\cdots g_kgg^{-1}g_{k+1}\cdots g_n$, where $g\in\sqcup_i G_i-\{1\}$.
		\item Replace $g_1\cdots g_n$ with $g_1\cdots g_k g'_1 g'_2 g_{k+2}\cdots g_n$, where $g_{k+1}=g'_1 g'_2$ and $g_{k+1},g'_1,g'_2\in G_i-\{1\}$ for some $i\in I$. 
		\item\label{i3} Replace $g_1\cdots g_n$ with $g_1\cdots g_k g_{k+2}g_{k+1}g_{k+3}\cdots g_n$, where $g_{k+1}\in G_i$ and $g_{k+2}\in G_j$ with $i,j\in I$ adjacent.
	\end{enumerate}
Given a gallery $(C_0,C_1,...,C_n)$, as in Lemma \ref{lem:alljoined} there exist $\gamma\in\G$ and $g_k\in\sqcup_i G_i-\{1\}$ such that $C_k=\gamma g_1\cdots g_k C_*$ for $0\leq k\leq n$. Moreover, the product $g_1\cdots g_n\in\G$ only depends on the end-chambers $C_0$ and $C_n$.
We conclude by the observation that transforming the word $g_1\cdots g_n$ using the moves \ref{i1}--\ref{i3} corresponds to transforming the gallery $(C_0,C_1,...,C_n)$ using the moves \ref{M1}--\ref{M3}.
\end{proof}

Galleries allow us to group chambers together into chamber-residues, defined as follows.

\begin{defn}(Chamber-residues)\\\label{defn:chamberres}
	Given a chamber $C\in\cC(\Delta)$ and $J\subset I$, the \emph{$J$-chamber-residue of $C$}, denoted $\cC(J,C)$, is the set of all chambers that appear in $J$-galleries based at $C$.
	(Note that some authors refer to $\cC(J,C)$ as just a $J$-residue, although Haglund defines $J$-residues to be slightly different yet related structures in \cite{Haglund06}.)
	Note that $\cC(\emptyset,C)=\{C\}$ and $\cC(I,C)=\cC(\Delta)$.
\end{defn}

We finish the section with a number of important results about the structure of chamber-residues.

\begin{lem}\label{lem:CJC}
	$\cC(J,C_\gamma)=\{C_{\gamma'}\mid \gamma'\in\gamma\G_J\}$
\end{lem}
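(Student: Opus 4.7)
The plan is to deduce this from Lemma \ref{lem:alljoined} together with Lemma \ref{lem:GammaJ}\ref{item:GJ}, by translating between $J$-galleries and product decompositions whose factors live in $\sqcup_{j\in J} G_j - \{1\}$. The key observation is that a gallery $(C_{\gamma_0},\ldots,C_{\gamma_n})$ from Lemma \ref{lem:alljoined} is a $J$-gallery precisely when each factor $g_k$ in the decomposition $\gamma^{-1}\gamma'=g_1\cdots g_n$ lies in $G_{i_k}$ for some $i_k\in J$.

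For the forward inclusion, I would take $C_{\gamma'}\in\cC(J,C_\gamma)$, so that there is a $J$-gallery from $C_\gamma$ to $C_{\gamma'}$. By Lemma \ref{lem:alljoined}, this gallery corresponds to a decomposition $\gamma^{-1}\gamma'=g_1\cdots g_n$ with each $g_k\in G_{i_k}-\{1\}$ and $i_k\in J$. Hence $\gamma^{-1}\gamma'\in\G_J$, i.e.\ $\gamma'\in\gamma\G_J$.

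For the reverse inclusion, I would suppose $\gamma'\in\gamma\G_J$, so $\gamma^{-1}\gamma'\in\G_J$. By Lemma \ref{lem:GammaJ}\ref{item:GJ}, the subgroup $\G_J$ is isomorphic to $\G(\cG_J,(G_j)_{j\in J})$, which is generated by $\sqcup_{j\in J}G_j$. So we may write $\gamma^{-1}\gamma'=g_1\cdots g_n$ with each $g_k\in G_{j_k}-\{1\}$ for some $j_k\in J$. Applying Lemma \ref{lem:alljoined} to this decomposition produces a $J$-gallery from $C_\gamma$ to $C_{\gamma'}$, hence $C_{\gamma'}\in\cC(J,C_\gamma)$.

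There is no real obstacle here: the statement is essentially a dictionary between galleries and words, and the only subtle point is invoking Lemma \ref{lem:GammaJ}\ref{item:GJ} to know that every element of $\G_J$ really can be expressed using only the generators $G_j$ with $j\in J$ (so that the associated gallery stays inside $J$-adjacencies rather than passing through some $G_i$ with $i\notin J$).
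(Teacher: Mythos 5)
Your proof is correct and takes the same approach as the paper, which cites only Lemma~\ref{lem:alljoined}; you merely spell out the dictionary between $J$-galleries and words in $\sqcup_{j\in J}G_j-\{1\}$. One small note: for the reverse inclusion you do not actually need Lemma~\ref{lem:GammaJ}\ref{item:GJ}, since the fact that every element of $\G_J$ is a product of elements from $\sqcup_{j\in J}G_j$ is immediate from the definition of $\G_J$ as the subgroup generated by the $\G_j$, $j\in J$.
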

\begin{proof}
	This follows from Lemma \ref{lem:alljoined}.
\end{proof}

\begin{lem}\label{lem:intchamres}
	$\cC(J_1,C)\cap\cC(J_2,C)=\cC(J_1\cap J_2,C)$ for $J_1,J_2\subset I$ and $C\in \cC(\Delta)$.
\end{lem}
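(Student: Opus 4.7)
The plan is to reduce the statement to the group-theoretic identity $\G_{J_1}\cap\G_{J_2}=\G_{J_1\cap J_2}$ already supplied by Lemma \ref{lem:GammaJ}\ref{item:GJ1J2}. Write $C=C_\gamma$. By Lemma \ref{lem:CJC}, the chamber-residue $\cC(J_k,C_\gamma)$ is the image of the coset $\gamma\G_{J_k}$ under the map $\gamma'\mapsto C_{\gamma'}$, for $k=1,2$.

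First I would verify that this map $\G\to\cC(\Delta)$ is injective, so that intersections of chamber-residues correspond bijectively to intersections of cosets. This follows from the observation that the center $[\gamma',v_N]$ of $C_{\gamma'}$ determines $\gamma'$: since $\ut(v_N)=\emptyset$ and $\G_\emptyset=\{1\}$, the definition of the equivalence relation $\sim$ in Definition \ref{defn:building} gives $[\gamma_1',v_N]=[\gamma_2',v_N]$ only when $\gamma_1'=\gamma_2'$.

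Then $\cC(J_1,C_\gamma)\cap\cC(J_2,C_\gamma)$ is the image of $\gamma\G_{J_1}\cap\gamma\G_{J_2}=\gamma(\G_{J_1}\cap\G_{J_2})=\gamma\G_{J_1\cap J_2}$, where the last equality is Lemma \ref{lem:GammaJ}\ref{item:GJ1J2}. Applying Lemma \ref{lem:CJC} in the reverse direction identifies this image with $\cC(J_1\cap J_2,C_\gamma)$, completing the argument. There is no real obstacle here; the lemma is an almost immediate corollary of the two cited results, modulo the minor injectivity remark about chambers being distinguished by their centers.
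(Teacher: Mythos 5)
Your proof is correct and follows the same route as the paper's: reduce via Lemma~\ref{lem:CJC} to an intersection of cosets, then apply Lemma~\ref{lem:GammaJ}\ref{item:GJ1J2}. The paper simply cites those two lemmas without further comment; your injectivity remark about $\gamma\mapsto C_\gamma$ (chambers are distinguished by their centers) is a correct and sensible detail to make explicit.
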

\begin{proof}
	This follows from Lemmas \ref{lem:GammaJ}\ref{item:GJ1J2} and \ref{lem:CJC}.
\end{proof}

\begin{cor}
If $C_1,C_2\in\cC(J,C)$ are $i$-adjacent then $i\in J$.
\end{cor}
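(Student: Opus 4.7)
The plan is to reduce the corollary to the algebraic identity $\G_{J_1}\cap\G_{J_2}=\G_{J_1\cap J_2}$ from Lemma \ref{lem:GammaJ}\ref{item:GJ1J2}, via the translation between chamber-residues and cosets provided by Lemma \ref{lem:CJC}.

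First I would fix notation by writing $C=C_\gamma$ and, since $C_1,C_2\in\cC(J,C)$, using Lemma \ref{lem:CJC} to write $C_1=C_{\gamma_1}$ and $C_2=C_{\gamma_2}$ with $\gamma_1,\gamma_2\in\gamma\G_J$. In particular, $\gamma_1^{-1}\gamma_2\in\G_J$. Next, from the hypothesis that $C_1$ and $C_2$ are $i$-adjacent, Definition \ref{defn:adjacent} gives $1\neq\gamma_1^{-1}\gamma_2\in G_i=\G_{\{i\}}$. Combining these observations,
\begin{equation*}
\gamma_1^{-1}\gamma_2\in\G_J\cap\G_{\{i\}}=\G_{J\cap\{i\}},
\end{equation*}
where the equality uses Lemma \ref{lem:GammaJ}\ref{item:GJ1J2}. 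If $i\notin J$ then $J\cap\{i\}=\emptyset$, so $\G_{J\cap\{i\}}=\G_\emptyset=\{1\}$, contradicting $\gamma_1^{-1}\gamma_2\neq1$. Hence $i\in J$.

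There is essentially no obstacle here; the only minor point to notice is that one must justify combining the two pieces of information (the $J$-coset information and the $i$-adjacency information) on the same element $\gamma_1^{-1}\gamma_2$, which is immediate from the definitions. Alternatively, one could bypass the element-level calculation entirely by observing that $\cC(J,C_1)=\cC(J,C)$ (since $J$-gallery-connectedness is an equivalence relation), hence $C_2\in\cC(J,C_1)\cap\cC(\{i\},C_1)=\cC(J\cap\{i\},C_1)$ by Lemma \ref{lem:intchamres}; then $i\notin J$ would force $C_2\in\cC(\emptyset,C_1)=\{C_1\}$, contradicting $i$-adjacency.
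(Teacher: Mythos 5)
Your proof is correct. The alternative argument you sketch at the end, via $\cC(J,C_1)\cap\cC(\{i\},C_1)=\cC(J\cap\{i\},C_1)$, is exactly the route the paper intends — the statement appears as an immediate corollary of Lemma \ref{lem:intchamres} — while your main argument is simply the same computation carried out at the level of group elements via Lemma \ref{lem:CJC} and Lemma \ref{lem:GammaJ}\ref{item:GJ1J2} rather than at the level of chamber-residues; the two are essentially equivalent.
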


\begin{lem}\label{lem:cCv}
	Let $v\in\Delta^0$ be a vertex of type $J$.
	Then the set $\cC(v)$ of chambers containing $v$ is a $J$-chamber-residue.
\end{lem}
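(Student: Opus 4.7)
The plan is to unpack the definition of a type-$J$ vertex using the quotient construction of $\Delta$, then read off the chambers containing it directly from the equivalence relation, and finally match the resulting set with the description of a $J$-chamber-residue provided by Lemma \ref{lem:CJC}.

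First I would choose a representative for $v$. Since $t(v)=J$ and the typing map on $\Delta^0$ is the natural extension of the typing map on $C(N)$, there exist $\gamma_0\in\G$ and a vertex $u\in C(N)$ with $t(u)=J$ such that $v=[\gamma_0,u]$. Note that for the $0$-dimensional cube $\{u\}$ we have $\ut(u)=t(u)=J$, which is spherical (hence in $\bar{N}$) because $t(v)\in\bar{N}$ by construction of $C(N)$.

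Next I would determine which chambers contain $v$. A chamber $C_\gamma$ contains $v$ iff there is some $p\in C(N)$ with $[\gamma,p]=[\gamma_0,u]$. By the equivalence relation defining $\Delta$ this forces $p=u$ and $\gamma^{-1}\gamma_0\in\G_{\ut(u)}=\G_J$, i.e. $\gamma\in\gamma_0\G_J$. Conversely, for every such $\gamma$ the point $[\gamma,u]$ equals $v$, so $v\in C_\gamma$. Hence
\[
\cC(v)=\{C_\gamma \mid \gamma\in\gamma_0\G_J\}.
\]

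Finally, Lemma \ref{lem:CJC} gives $\cC(J,C_{\gamma_0})=\{C_{\gamma'}\mid \gamma'\in\gamma_0\G_J\}$, so $\cC(v)=\cC(J,C_{\gamma_0})$, which is a $J$-chamber-residue, as required. There is no real obstacle here: everything follows by carefully tracing the equivalence relation, and the only point to watch is that $\ut$ agrees with $t$ on vertices so that the spherical subset governing the equivalence is exactly the type $J$ of $v$.
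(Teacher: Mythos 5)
Your proof is correct. The paper's argument for this lemma goes through Lemma~\ref{lem:chamberint}: it fixes $C_{\gamma_1},C_{\gamma_2}\in\cC(v)$, invokes that lemma to produce $J'\subset t(v)=J$ with $\gamma_1^{-1}\gamma_2\in\G_{J'}\subset\G_J$, and then uses Lemma~\ref{lem:CJC} together with the converse direction of Lemma~\ref{lem:chamberint}. You instead bypass Lemma~\ref{lem:chamberint} entirely and unwind the equivalence relation $\sim$ on $\G\times C(N)$ at the specific representative $[\gamma_0,u]$; this immediately yields $\cC(v)=\{C_\gamma\mid\gamma\in\gamma_0\G_J\}$ in one step, after which Lemma~\ref{lem:CJC} finishes as in the paper. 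The two proofs hinge on the same underlying fact (membership of $v$ in $C_\gamma$ is governed by the coset $\gamma_0\G_J$), so they are close in spirit; yours is more self-contained since it re-derives the needed consequence of the construction from scratch, while the paper's is more modular and re-uses Lemma~\ref{lem:chamberint}, which it needs elsewhere anyway. Both are sound, and your direct computation is arguably cleaner for this particular statement.
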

\begin{proof}
Let $C_{\gamma_1},C_{\gamma_2}\in\cC(v)$. Lemma \ref{lem:chamberint} tells us that there is $J'\in\bar{N}$ with $\gamma_1^{-1}\gamma_2\in\G_{J'}$ and $J'\subset \ut(v)=t(v)=J$. We deduce that $\gamma_1^{-1}\gamma_2\in\G_{J}$, so $C_{\gamma_2}\in \cC(J,C_{\gamma_1})$ by Lemma \ref{lem:CJC}. Conversely, for any other $C_{\gamma_3}\in \cC(J,C_{\gamma_1})$ we must have $\gamma_1^{-1}\gamma_3\in\G_J$, so $v\in C_{\gamma_3}$ by Lemma \ref{lem:chamberint}.
\end{proof}

\begin{lem}\label{lem:moveinJ}
	If $C,C'$ are two chambers in the same $J$-chamber-residue, then any two $J$-galleries joining $C$ and $C'$ differ by a sequence of moves \ref{M1}--\ref{M3} such that the intermediate galleries are also $J$-galleries.
\end{lem}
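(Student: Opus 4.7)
The plan is to reduce to the word-theoretic argument used in the proof of Lemma \ref{lem:moves}, but carried out inside the subgroup $\G_J$ rather than in all of $\G$. Concretely, suppose $C=C_\gamma$ and $C'=C_{\gamma'}$ lie in the same $J$-chamber-residue; by Lemma \ref{lem:CJC} we have $\gamma^{-1}\gamma'\in\G_J$. A $J$-gallery from $C$ to $C'$ corresponds, via Lemma \ref{lem:alljoined}, to a product decomposition $\gamma^{-1}\gamma'=g_1g_2\cdots g_n$ in which every $g_k$ lies in $\bigsqcup_{j\in J}(G_j-\{1\})$.

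Next I would invoke Lemma \ref{lem:GammaJ}\ref{item:GJ}, which says that $\G_J=\G(\cG_J,(G_j)_{j\in J})$ is itself a graph product, and such a product decomposition as above is precisely a word on the generators of this smaller graph product that evaluates to $\gamma^{-1}\gamma'$. Applying the presentation-based argument from the proof of Lemma \ref{lem:moves} to $\G_J$ (rather than $\G$), any two such words are related by a sequence of the moves \ref{i1}--\ref{i3}, using only letters from $\bigsqcup_{j\in J}(G_j-\{1\})$ and only commutation relations coming from edges of $\cG_J$. Translating back via the correspondence with galleries (again using Lemma \ref{lem:alljoined}), each elementary word move transforms a $J$-gallery into another $J$-gallery through one of the moves \ref{M1}--\ref{M3}; so the entire sequence of intermediate galleries stays within $J$-galleries joining $C$ and $C'$.

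The only mild subtlety to verify, which I would spell out explicitly, is that each of the three types of word moves preserves the property that every letter comes from $\bigsqcup_{j\in J}(G_j-\{1\})$: for \ref{i1} the inserted letter $g$ can be taken in $G_j$ for any $j\in J$, for \ref{i2} the factors $g'_1,g'_2$ automatically lie in the same $G_i$ as $g_{k+1}$ and hence in $\bigsqcup_{j\in J}(G_j-\{1\})$, and for \ref{i3} the swapped letters already belonged to $G_i,G_j\subset\G_J$ with $i,j\in J$ adjacent in $\cG_J$. Once this is noted, the reduction is immediate and the lemma follows.
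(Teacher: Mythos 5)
Your proof is correct and follows essentially the same route as the paper: reduce to words in $\G_J$, use Lemma \ref{lem:GammaJ}\ref{item:GJ} to identify $\G_J$ with the graph product $\G(\cG_J,(G_j)_{j\in J})$, then apply the presentation-based word argument from the proof of Lemma \ref{lem:moves} there and translate back to galleries via Lemma \ref{lem:alljoined}. Your closing paragraph, spelling out that each of the word moves \ref{i1}--\ref{i3} keeps all letters in $\bigsqcup_{j\in J}(G_j-\{1\})$, is a useful explicit check that the paper leaves implicit.
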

\begin{proof}
	By Lemma \ref{lem:alljoined}, a $J$-gallery joining $C$ and $C'$ corresponds to a word on $\sqcup_{j\in J} G_j-\{1\}$. Such a word is an element of $\G_J$, which is isomorphic to the graph product $\G(\cG_J,(G_j)_{j\in J})$ by Lemma \ref{lem:GammaJ}, so any two such words differ by a sequence of moves \ref{i1}--\ref{i3} such that all intermediate words also have letters in $\sqcup_{j\in J} G_j-\{1\}$. And such a sequence of moves \ref{i1}--\ref{i3} corresponds to a sequence of moves \ref{M1}--\ref{M3} such that the intermediate galleries are $J$-galleries.
\end{proof}

\begin{lem}(Product structure of $J\uperp$-chamber-residues for $J$ spherical)\\\label{lem:product}
	Let $C\in\cC(\Delta)$ and $J\subset I$ spherical.
	Then there is a bijection
	$$\beta:\cC(J,C)\times\cC(J\jperp,C)\to\cC(J\uperp,C),$$
	such that:
	\begin{enumerate}
		\item\label{item:betasections} $\beta(C_1,C)=C_1$ for all $C_1\in \cC(J,C)$ and $\beta(C,C_2)=C_2$ for all $C_2\in \cC(J\jperp,C)$.
		
		\item\label{item:iadjbeta} $\beta(C_1,C_2),\beta(C'_1,C'_2)$ are $i$-adjacent if and only if
		\begin{enumerate}
			\item\label{item:adj1} $C_1,C'_1$ are $i$-adjacent and $C_2=C'_2$, or
			\item\label{item:adj2} $C_2,C'_2$ are $i$-adjacent and $C_1=C'_1$.
		\end{enumerate}
		
		\item\label{item:betares} $\beta(\cC(J,C)\times\{C_2\})=\cC(J,C_2)$ for all $C_2\in\cC(J\jperp,C)$ and $\beta(\{C_1\}\times\cC(J\jperp,C))=\cC(J\jperp,C_1)$ for all $C_1\in\cC(J,C)$.
	\end{enumerate}
\end{lem}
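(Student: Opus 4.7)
The plan is to define $\beta$ algebraically using the fact that $\Gamma_{J^{\underline{\perp}}}$ decomposes as an internal direct product $\Gamma_J\cdot\Gamma_{J^\perp}$, then read off each of the three properties from this decomposition together with Lemma~\ref{lem:CJC}. The main input is the following observation: since $J$ is spherical, every $j\in J$ and $k\in J^\perp$ are distinct vertices that are adjacent in $\cG$, so the defining relations of $\Gamma$ make $\Gamma_J$ and $\Gamma_{J^\perp}$ centralize each other in $\Gamma_{J^{\underline{\perp}}}$. Combined with $\Gamma_J\cap\Gamma_{J^\perp}=\Gamma_\emptyset=\{1\}$ (Lemma~\ref{lem:GammaJ}\ref{item:GJ1J2}) and $J^{\underline{\perp}}=J\sqcup J^\perp$, this gives that every element of $\Gamma_{J^{\underline{\perp}}}$ has a unique expression $gh$ with $g\in\Gamma_J$ and $h\in\Gamma_{J^\perp}$.

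Write $C=C_\gamma$. By Lemma~\ref{lem:CJC} the chambers in $\cC(J,C)$, $\cC(J\jperp,C)$ and $\cC(J\uperp,C)$ are respectively $C_{\gamma g}$, $C_{\gamma h}$ and $C_{\gamma k}$ for $g\in\Gamma_J$, $h\in\Gamma_{J^\perp}$ and $k\in\Gamma_{J^{\underline{\perp}}}$. I would then define
\[\beta(C_{\gamma g},C_{\gamma h}):=C_{\gamma gh}.\]
Since chambers are in bijection with elements of $\Gamma$ (via the simply transitive action), the elements $g$ and $h$ are recovered from $C_{\gamma g}$ and $C_{\gamma h}$; and by the unique decomposition above, $k\mapsto(g,h)$ is a bijection, so $\beta$ is a well-defined bijection. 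Property~\ref{item:betasections} is immediate by setting $h=1$ or $g=1$.

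For property~\ref{item:iadjbeta}, the chambers $\beta(C_{\gamma g},C_{\gamma h})$ and $\beta(C_{\gamma g'},C_{\gamma h'})$ are $i$-adjacent exactly when $(gh)^{-1}(g'h')\in G_i-\{1\}$, and using that $\Gamma_J$ and $\Gamma_{J^\perp}$ commute this element equals $(g^{-1}g')(h^{-1}h')$, which is the canonical $\Gamma_J\cdot\Gamma_{J^\perp}$ decomposition. Splitting into the cases $i\in J$, $i\in J^\perp$, and $i\notin J^{\underline{\perp}}$ (using Lemma~\ref{lem:GammaJ}\ref{item:GJ1J2} to rule out the last case, since $G_i\cap\Gamma_{J^{\underline{\perp}}}=\Gamma_{\{i\}\cap J^{\underline{\perp}}}=\{1\}$), the uniqueness of the decomposition forces exactly one of \ref{item:adj1} or \ref{item:adj2} to hold. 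Finally, property~\ref{item:betares} follows from commutativity: $\beta(\cC(J,C)\times\{C_{\gamma h}\})=\{C_{\gamma gh}\mid g\in\Gamma_J\}=\{C_{(\gamma h)g}\mid g\in\Gamma_J\}=\cC(J,C_{\gamma h})$, and similarly for the other factor.

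No step looks genuinely hard; the only point requiring slight care is the case analysis in~\ref{item:iadjbeta}, where one must invoke Lemma~\ref{lem:GammaJ}\ref{item:GJ1J2} twice — once to get uniqueness of the $\Gamma_J\Gamma_{J^\perp}$-decomposition, and once to exclude the possibility that a non-trivial element of $G_i$ for some $i\notin J^{\underline{\perp}}$ can arise from $\Gamma_{J^{\underline{\perp}}}$.
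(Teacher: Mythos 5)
Your proof is correct and takes essentially the same route as the paper: both reduce to the product splitting $\G_{J\uperp}=\G_J\times\G_{J\jperp}$ and define $\beta(C_{\gamma g},C_{\gamma h}):=C_{\gamma gh}$, reading off the three properties via Lemma~\ref{lem:CJC}. The paper simply translates so $C=C_*$ and then states that properties~\ref{item:betasections} and~\ref{item:iadjbeta} ``follow straight from the definition,'' whereas you spell out the case analysis for the if-and-only-if in~\ref{item:iadjbeta} (including ruling out $i\notin J\uperp$ via $\G_{\{i\}}\cap\G_{J\uperp}=\{1\}$), which is a worthwhile bit of extra detail but not a different argument.
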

\begin{proof}
	Translating by $\G$ we may assume that $C=C_*$.
	We have a product splitting $\G_{J\uperp}=\G_J\times \G_{J\jperp}$,
	so, using Lemma \ref{lem:CJC}, we may define $\beta$ by $\beta(C_{\gamma_1},C_{\gamma_2}):=C_{\gamma_1\gamma_2}$ for $\gamma_1\in\G_J$ and $\gamma_2\in\G_{J\jperp}$.
	Properties \ref{item:betasections} and \ref{item:iadjbeta} follow straight from this definition, and, again using Lemma \ref{lem:CJC}, property \ref{item:betares} follows from the equations
	\begin{equation*}
\cC(J,C_{\gamma_2})=\{C_{\gamma_1\gamma_2}\mid\gamma_1\in \G_J\}\quad\text{for }\gamma_2\in\G_{J\jperp},
	\end{equation*} 
and
\begin{equation*}	
	\cC(J,C_{\gamma_1})=\{C_{\gamma_1\gamma_2}\mid\gamma_2\in \G_{J\jperp}\}\quad\text{for }\gamma_1\in\G_J.\qedhere
\end{equation*}
\end{proof}

\bigskip
\section{Rank, poset structure and level-equivalence}\label{sec:rank}

In this section we introduce the rank and poset structure for the vertices of $\Delta$, and relate these notions to the galleries, chamber-residues and typing map of the previous sections. Key to this relationship is a certain equivalence relation on the vertices of $\Delta$, called level-equivalence.
The rank and poset structures are standard, but level-equivalence is a notion new to this paper.
We also prove two important propositions about the group of rank-preserving automorphisms of $\Delta$.

\begin{defn}(Rank and poset structure)\\\label{defn:rank}
	The \emph{rank} of a vertex $v\in\Delta^0$ is defined by $\rk(v):=|t(v)|$.
	Note that centers of chambers are precisely the rank-0 vertices of $\Delta$.
	Recall that the vertices of $C(N)$ are in bijection with the poset $\bar{N}$, so we get a poset structure on $C(N)$. We extend this poset structure $\G$-equivariantly to $\Delta^0$: explicitly $u\leq v$ if $t(u)\subset t(v)$ and there is some chamber $C$ containing both $u$ and $v$.
\end{defn}

\begin{lem}\label{lem:min}
	The following statements hold:
	\begin{enumerate}
		\item\label{item:Q} For vertices $u,v\in\Delta^0$ we have $u\leq v$ if and only if $\rk(u)\leq\rk(v)$ and there is a cube $Q$ containing both $u$ and $v$ of dimension $\rk(v)-\rk(u)$. In this case the cube $Q$ contains a vertex of type $J$ for every $t(u)\subset J\subset t(v)$.
		
		\item\label{item:wCcapC'} If the intersection of two chambers $C\cap C'$ is non-empty, then there is $J\in\bar{N}$ such that a vertex $v\in C$ is contained in $C\cap C'$ if and only if $J\subset t(v)$.
		Hence the vertex $v\in C$ of type $J$ is the unique $\leq$-minimal vertex in $C\cap C'$ -- and we denote it by $v=\wedge(C, C')$ (since $\wedge$ denotes the meet in a poset).
		
		\item\label{item:stayC} Given vertices $u\leq v$, if $u$ is in a chamber $C$ then $v$ is also in $C$.

		\item\label{item:iadjwedge} Chambers $C,C'$ are $i$-adjacent if and only if $t(\wedge(C, C'))=\{i\}$.
		
		\item\label{item:adjwedge} Chambers $C,C'$ are adjacent if and only if $\rk(\wedge(C, C'))=1$.
	\end{enumerate}
\end{lem}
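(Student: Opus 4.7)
The plan is to establish the five parts in sequence, with (1) supplying the geometric dictionary that the remaining parts will rely on. Concretely, (1) will follow from the definition of $\leq$ combined with the cube structure of $C(N)$ from Definition \ref{defn:cubecone}; (2) will be essentially a translation of Lemma \ref{lem:chamberint}; and (3)--(5) will each be short deductions from (1) and (2).

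For (1), I will use the $\G$-equivariance of the poset together with the fact that $\Delta$'s cube complex structure is induced from that of the chambers (Corollary \ref{cor:cubestructure}), so that I may reduce to working inside a single chamber $C$, which is a copy of $C(N)$. In $C$, the cubes correspond bijectively to pairs $J_1\subset J_2\in\bar{N}$; the cube for $(J_1,J_2)$ has dimension $|J_2-J_1|$ and contains exactly the vertices of type $J$ with $J_1\subset J\subset J_2$. The forward direction is then immediate: $u\leq v$ gives $t(u)\subset t(v)$ and a common chamber, and the cube corresponding to $(t(u),t(v))$ in that chamber does the job. For the backward direction, any cube $Q$ containing $u,v$ lies in some chamber and corresponds to some pair $J_1\subset J_2$; combining $J_1\subset t(u),t(v)\subset J_2$ with the dimension hypothesis $|J_2-J_1|=|t(v)|-|t(u)|$ forces $t(u)=J_1$ and $t(v)=J_2$, whence $u\leq v$, and the list of intermediate types falls out of the description of vertices of $Q$.

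Part (2) is almost a restatement of Lemma \ref{lem:chamberint}: after translating so that $C=C_{\gamma_1}$ and $C'=C_{\gamma_2}$, there is a unique minimal $J\in\bar{N}$ with $\gamma_1^{-1}\gamma_2\in\G_J$, and $C\cap C'$ consists of those points $p$ with $J\subset\ut(p)$; restricting to vertices and invoking (1) identifies the vertex of type $J$ in $C$ as the $\leq$-minimum. For (3), I pick any chamber $C'$ containing both $u$ and $v$ (provided by $u\leq v$); then $u\in C\cap C'$ together with (2) forces the associated $J$ to satisfy $J\subset t(u)\subset t(v)$, so $v\in C\cap C'\subset C$. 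For (4), Definition \ref{defn:adjacent} requires $C\neq C'$ together with a vertex of type $\{i\}$ in $C\cap C'$; by (2) these translate to $J\neq\emptyset$ and $J\subset\{i\}$, i.e., $J=\{i\}$. Part (5) is then immediate from (4).

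The main subtlety I anticipate is the backward direction of (1), specifically ensuring that an arbitrary cube of $\Delta$ lies in at least one chamber so that the combinatorics of $C(N)$ applies. This is supplied by Corollary \ref{cor:cubestructure} together with the fact that each cube of $\Delta$ is the image of $\{\gamma\}\times Q$ for some cube $Q\subset C(N)$, but it deserves an explicit one-line justification in the write-up.
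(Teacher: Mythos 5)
Your proposal is correct and follows essentially the same route as the paper: reduce part (1) to the combinatorics of cubes in a single chamber $\cong C(N)$, derive (2) directly from Lemma \ref{lem:chamberint}, and obtain (3)--(5) as short consequences of (1) and (2). The one place you go beyond what the paper writes out is the backward direction of (1), where the paper essentially gestures at the intra-chamber description of cubes; your explicit argument — that the inequalities $J_1\subset t(u),t(v)\subset J_2$ together with the dimension count force $t(u)=J_1$ and $t(v)=J_2$ — is a worthwhile clarification, and your flagged subtlety (every cube of $\Delta$ lies in a chamber, since it is the image of some $\{\gamma\}\times Q$) is exactly the right point to make explicit.
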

\begin{proof}
	We prove each statement in turn.
	\begin{enumerate}
		\item If $u,v$ are vertices in $C(N)$ with $t(u)\subset t(v)$, then recall from Definition \ref{defn:cubecone} that there is a unique cube $Q$ of dimension $|t(v)|-|t(u)|$ containing $u$ and $v$; moreover, $Q$ contains a vertex of type $J$ for every $t(u)\subset J\subset t(v)$.
		This concludes the proof of \ref{item:Q} since the action of $\G$ provides type-preserving isomorphisms from every chamber in $\Delta$ to $C(N)$.
				
		\item This follows from Lemma \ref{lem:chamberint}.
		
		\item Since $u\leq v$, we know that $u$ and $v$ are both contained in some chamber $C'$. We have $u\in C\cap C'$, so by \ref{item:wCcapC'} there exists $J\in\bar{N}$ such that a vertex $w\in C'$ is contained in $C\cap C'$ if and only if $J\subset t(w)$ -- in particular $J\subset t(u)$. Hence $J\subset t(u)\subset t(v)$ and $v\in C$.
		
		\item We have already seen that distinct chambers $C,C'$ are $i$-adjacent if and only if there is a vertex $v\in C\cap C'$ with $t(v)=\{i\}$ (Definition \ref{defn:adjacent}). As distinct chambers have distinct type-$\emptyset$ vertices (i.e. distinct centers), we conclude from \ref{item:wCcapC'} that $C,C'$ are $i$-adjacent if and only if $t(\wedge(C, C'))=\{i\}$.
		
		\item The rank-1 vertices are the vertices of type $\{i\}$ for some $i\in I$, so we deduce from \ref{item:iadjwedge} that chambers $C,C'$ are adjacent if and only if $\rk(\wedge(C, C'))=1$.\qedhere
	\end{enumerate}
\end{proof}

We can think of rank as dividing $\Delta^0$ into a number of levels. Edges in $\Delta^0$ always go between consecutive levels, but we also want a notion of adjacency for vertices in the same level.
The equivalence classes generated by this ``level-adjacency'' will play an important role throughout the paper, and we will see later that they follow certain intersections of hyperplane carriers (Lemma \ref{lem:approxhyp}).

\begin{defn}(Level-adjacency and level-equivalence)\\\label{defn:[v]}
	We say that vertices $v_1,v_2\in\Delta^0$ are \emph{level-adjacent} if they are contained in adjacent chambers $C_1,C_2$ respectively, with $v_1,v_2\notin C_1\cap C_2$, and with some vertex $v_1,v_2\leq u\in C_1\cap C_2$ such that $\rk(v_1)=\rk(v_2)=\rk(u)-1$. Let $\approx$ be the equivalence relation on $\Delta^0$ generated by level-adjacency, and let $[v]$ denote the $\approx$-class of a vertex $v$. We call $\approx$ \emph{level-equivalence}.
\end{defn}

\begin{figure}[H]
	\centering
	\scalebox{1}{
		\begin{tikzpicture}[auto,node distance=2cm,
			thick,every node/.style={circle,draw,fill,inner sep=0pt,minimum size=7pt},
			every loop/.style={min distance=2cm},
			hull/.style={draw=none},
			]
			\tikzstyle{label}=[draw=none,fill=none]
			\tikzstyle{a}=[isosceles triangle,sloped,allow upside down,shift={(0,-.05)},minimum size=3pt]

			\begin{scope}[shift={(-6,0)}]
				\node at (-1,0) {};
				\node at (1,0) {};
				\node at (0,1) {};
				\node at (0,-1) {};
				\draw (-1,0)--(0,1)--(1,0);
				\node[label] at (-1.4,0) {$i$};
				\node[label] at (0,1.4) {$j$};
				\node[label] at (1.4,0) {$k$};
				\node[label] at (0,-1.4) {$l$};		
				\node[label,font=\Huge] at (0,4) {$\cG$};		
			\end{scope}

			\begin{scope}[shift={(2,0)}]
				\draw[fill=gray](-1,0)--(1,0)--(1,1)--(-1,1)--(-1,0);
				
				\draw(-2,0)-- node[a]{} (-3,0)--node[a]{}(-3,1);
				\draw(-2,0)-- node[a]{}(-2,1)-- node[a]{}(-3,1);
				\draw(-2,2)-- node[a]{}(-3,2)-- node[a]{}(-3,1);
				\draw(-2,2)-- node[a]{}(-2,1)-- node[a]{}(-3,1);
				\draw(-2,2)-- node[a]{}(-2,3);
				\draw(-2,0)-- node[a]{}(-2,-1);
				\draw(-2,1)-- node[a]{}(-1,1);
				\draw(-2,0)-- node[a]{}(-1,0)-- node[a]{}(-1,1);
				\draw(-2,2)-- node[a]{}(-1,2)-- node[a]{}(-1,1);
				\draw(0,0)-- node[a]{}(-1,0);
				\draw(0,0)-- node[a]{}(0,1)-- node[a]{}(-1,1);
				\draw(0,2)-- node[a]{}(-1,2);
				\draw(0,2)-- node[a]{}(0,1)-- node[a]{}(1,1);
				\draw(0,2)-- node[a]{}(0,3);
				\draw(0,2)-- node[a]{}(1,2)-- node[a]{}(1,1);
				\draw(0,0)-- node[a]{}(1,0)-- node[a]{}(1,1);
				\draw(0,0)-- node[a]{}(0,-1);
				\draw(1.56,.8)-- node[a]{}(1,0);
				\draw(1.56,2.8)-- node[a]{}(1,2);
				\draw(1.56,2.8)-- node[a]{}(1.56,1.8)-- node[a]{}(1,1);
				\draw(1.56,2.8)-- node[a]{}(1.56,1.8)-- node[a]{}(2.12,2.6);
				\draw(1.56,.8)-- node[a]{}(1.56,1.8);
				\draw(1.56,.8)-- node[a]{}(2.12,1.6)-- node[a]{}(2.12,2.6);
				\draw(1.56,.8)-- node[a]{}(1.56,-.2);
				\draw(1.56,2.8)-- node[a]{}(2.12,3.6)-- node[a]{}(2.12,2.6);
				\draw(1.56,2.8)-- node[a]{}(1.56,3.8);
				
				\draw(2,0)-- node[a]{}(1,0);
				\draw(2,0)-- node[a]{}(2,1)-- node[a]{}(3,1);
				\draw(2,0)-- node[a]{}(2,-1);
				\draw(2,0)-- node[a]{}(3,0)-- node[a]{}(3,1);
				\draw(3,2)-- node[a]{}(3,1);
				\draw(2.12,2)-- node[isosceles triangle,sloped,allow upside down,shift={(-.13,-.05)},minimum size=3pt]{}(3,2);
				\draw(1.7,1)--(2,1)--(2,1.429);
				
				\draw(-.766,-1.643)-- node[a]{}(0,-1);
				\draw(-.766,-1.643)-- node[a]{}(-.123,-2.409)-- node[a]{}(-.985,-3.052);
				\draw(-.766,-1.643)-- node[a]{}(-1.532,-2.286)-- node[a]{}(-.985,-3.052);
				\draw(-.766,-1.643)-- node[a]{}(-1.409,-.877)-- node[a]{}(-2.175,-1.52);
				\draw(-1.532,-2.286)-- node[a]{}(-2.175,-1.52);
				
				\draw(.766,-1.643)-- node[a]{}(0,-1);
				\draw(.766,-1.643)-- node[a]{}(.123,-2.409)-- node[a]{}(.985,-3.052);
				\draw(.766,-1.643)-- node[a]{}(1.532,-2.286)-- node[a]{}(.985,-3.052);
				\draw(.766,-1.643)-- node[a]{}(1.409,-.877)-- node[a]{}(2.175,-1.52);
				\draw(1.532,-2.286)-- node[a]{}(2.175,-1.52);

				%\draw[fill=black,opacity=.3](-1,0)--(1,0)--(1,1)--(-1,1)--(-1,0);
				
				\draw[ultra thick](0,0)--(0,-1);
				\draw[ultra thick](-1,0) grid (1,1);
				
				\node[red] at (0,0){};
				\node[red] at (-2,0){};
				\node[red] at (-2,2){};
				\node[red] at (0,2){};
				\node[red] at (2,0){};
				\node[red] at (1.56,.8){};
				\node[red] at (1.56,2.8){};
				\node[red] at (-.766,-1.643){};
				\node[red] at (.766,-1.643){};
				\node[blue] at (-1,0){};
				\node[blue] at (-1,2){};
				\node[orange] at (-1,1){};
				\node[green] at (0,1){};
				\node[green] at (-2,1){};
				\node[green] at (1.56,1.8){};	
				\node[green] at (2,1){};			
				\node at (1,1){};
				\node at (1,0){};
				\node at (0,-1){};
				
				\node[label] at (-.2,-.3) {$\emptyset$};
				\node[label] at (-1.3,-.3) {$\{i\}$};
				\node[label] at (-1.4,1.3) {$\{i,j\}$};
				\node[label] at (-.3,1.3) {$\{j\}$};
				\node[label] at (.6,1.3) {$\{j,k\}$};
				\node[label] at (.7,-.3) {$\{k\}$};
				\node[label] at (0,-1.4) {$\{l\}$};	
				\node[label,font=\Huge] at (0,4) {$\Delta$};			
			\end{scope}
			
		\end{tikzpicture}
	}
	\caption{A repeat of Figure \ref{fig:building}, but with the edges oriented to point upwards in the poset structure, so they form the Hasse diagram for $(\Delta^0,\leq)$.
		Four examples of level-equivalence classes are depicted, with colors red, orange, green and blue.}\label{fig:leveladj}
\end{figure}
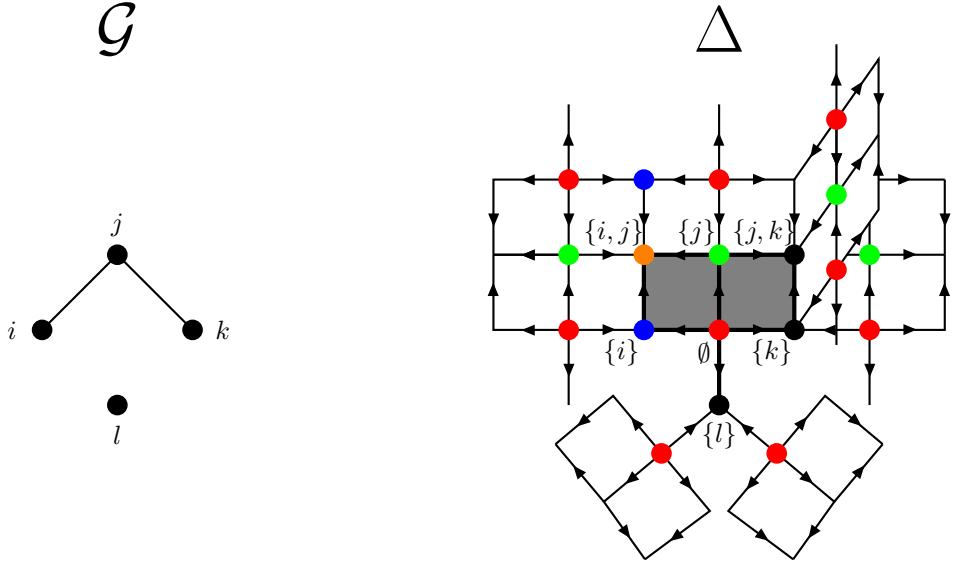

\begin{lem}\label{lem:leveladj}
	Vertices $v_1,v_2\in\Delta^0$ are level-adjacent if and only if they are of the same type and are contained in chambers $C_1,C_2$ respectively that are $i$-adjacent for some $i\in t(v_1)\jperp$.
\end{lem}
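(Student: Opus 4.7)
The plan is to prove both directions by directly unpacking the definition of level-adjacency and using Lemma \ref{lem:min}, especially parts \ref{item:wCcapC'} and \ref{item:iadjwedge} describing the intersection of two chambers and the type of the meet of adjacent chambers. Throughout, the key technical fact is that for $i$-adjacent chambers $C_1, C_2$, a vertex $v \in C_1$ lies in $C_1 \cap C_2$ if and only if $i \in t(v)$.

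For the forward direction, suppose $v_1, v_2$ are level-adjacent, witnessed by $i$-adjacent chambers $C_1 \ni v_1$, $C_2 \ni v_2$ and a vertex $u \in C_1 \cap C_2$ with $v_1, v_2 \leq u$ and $\rk(v_1) = \rk(v_2) = \rk(u) - 1$. By Lemma \ref{lem:min}\ref{item:iadjwedge} applied to $C_1, C_2$, together with Lemma \ref{lem:min}\ref{item:wCcapC'}, the condition $u \in C_1 \cap C_2$ forces $i \in t(u)$. The relation $v_1 \leq u$ together with $\rk(u) - \rk(v_1) = 1$ gives $t(v_1) = t(u) \setminus \{j_1\}$ for some $j_1 \in t(u)$, and likewise $t(v_2) = t(u) \setminus \{j_2\}$. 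Since $v_1 \in C_1 \setminus C_2$, Lemma \ref{lem:min}\ref{item:wCcapC'} forces $i \notin t(v_1)$, so $j_1 = i$; the same reasoning gives $j_2 = i$. Hence $t(v_1) = t(v_2) = t(u) \setminus \{i\}$. Finally, $t(u) \in \bar{N}$ is spherical, so $i$ is adjacent in $\cG$ to every element of $t(v_1)$ and $i \notin t(v_1)$, which is exactly the condition $i \in t(v_1)\jperp$.

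For the backward direction, assume $t(v_1) = t(v_2) = J$ and $v_1 \in C_1$, $v_2 \in C_2$ with $C_1, C_2$ being $i$-adjacent for some $i \in J\jperp$. Set $K := J \cup \{i\}$; since $i \in J\jperp$, we have $K \in \bar{N}$, so by the structure of the chamber $C_1$ (isomorphic to $C(N)$ via the $\G$-action) there is a unique vertex $u \in C_1$ of type $K$. Because $i \in t(u)$, Lemma \ref{lem:min}\ref{item:iadjwedge} and \ref{item:wCcapC'} place $u$ in $C_1 \cap C_2$. Then $v_1, u \in C_1$ with $t(v_1) \subset t(u)$ gives $v_1 \leq u$, and similarly $v_2, u \in C_2$ gives $v_2 \leq u$. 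The rank condition $\rk(u) - \rk(v_k) = |K| - |J| = 1$ is immediate, and $i \notin J = t(v_k)$ combined with Lemma \ref{lem:min}\ref{item:wCcapC'} yields $v_1, v_2 \notin C_1 \cap C_2$, completing the verification of level-adjacency.

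There is no real obstacle here: the argument is a bookkeeping exercise that pivots on the characterization of $C_1 \cap C_2$ by the condition $i \in t(v)$ on types. The only subtlety worth flagging is the symmetric role of $v_1$ and $v_2$ in pinning down $t(v_1) = t(v_2) = t(u) \setminus \{i\}$ in the forward direction, and the observation in the backward direction that $i \in J\jperp$ is exactly what is needed for $J \cup \{i\}$ to be spherical (and hence for the witness vertex $u$ to exist).
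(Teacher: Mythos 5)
Your proof is correct and takes essentially the same approach as the paper: both directions pivot on Lemma \ref{lem:min}\ref{item:wCcapC'} and \ref{item:iadjwedge} to characterize $C_1 \cap C_2$ as the vertices whose type contains $i$. The only cosmetic difference is that in the forward direction you fix $i$ as the adjacency type first and then show $t(v_1)=t(v_2)=t(u)\setminus\{i\}$, whereas the paper fixes $i$ as the element of $t(u)\setminus t(v_1)$ and then shows the adjacency type equals $i$; the underlying deductions are identical.
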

\begin{proof}
	First suppose $v_1,v_2$ are level-adjacent. Say they are contained in adjacent chambers $C_1,C_2$ respectively, with $v_1,v_2\notin C_1\cap C_2$, and with some vertex $v_1,v_2\leq u\in C_1\cap C_2$ such that $\rk(v_1)=\rk(v_2)=\rk(u)-1$.
	Let $t(u)=t(v_1)\cup\{i\}$. Note that $i\in t(v_1)\jperp$. By Lemma \ref{lem:min}\ref{item:wCcapC'}, $C_1,C_2$ must be $j$-adjacent for some $j\in t(v_1)\cup\{i\}$, and since $v_1\notin C_1\cap C_2$ we deduce that $j=i$. As $v_2\notin C_1\cap C_2$ we also have $t(v_2)=t(v_1)$.
	
	Conversely, suppose that $v_1,v_2$ are of the same type and are contained in chambers $C_1,C_2$ respectively that are $i$-adjacent for some $i\in t(v_1)\jperp$. Let $u\in C_1\cap C_2$ be the vertex of type $t(v_1)\cup\{i\}$ (Lemma \ref{lem:min}\ref{item:wCcapC'}). It follows that $v_1,v_2\leq u$ and $\rk(v_1)=\rk(v_2)=\rk(u)-1$. And applying Lemma \ref{lem:min}\ref{item:wCcapC'} again we see that $v_1,v_2\notin C_1\cap C_2$.
\end{proof}

\begin{cor}\label{cor:leveltype}
	Any two level-equivalent vertices are of the same type.
\end{cor}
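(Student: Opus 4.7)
The plan is straightforward: the corollary follows essentially immediately from Lemma \ref{lem:leveladj}, since level-equivalence is defined as the equivalence relation generated by level-adjacency, and Lemma \ref{lem:leveladj} tells us that level-adjacent vertices are automatically of the same type.

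More precisely, if $v_1 \approx v_2$, then by definition of $\approx$ (Definition \ref{defn:[v]}) there exists a finite chain of vertices $v_1 = u_0, u_1, \ldots, u_n = v_2$ such that $u_{k-1}$ and $u_k$ are level-adjacent for each $1 \leq k \leq n$. I would proceed by induction on $n$. The base case $n=0$ is trivial. For the inductive step, Lemma \ref{lem:leveladj} gives $t(u_{k-1}) = t(u_k)$ for each $k$, so $t(v_1) = t(u_0) = t(u_1) = \cdots = t(u_n) = t(v_2)$.

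There is no real obstacle here; the corollary is a formal consequence of the ``same type'' clause already extracted in Lemma \ref{lem:leveladj}, combined with the fact that equality of types is itself a transitive relation, so the equivalence relation generated by level-adjacency refines the equivalence relation ``same type''.
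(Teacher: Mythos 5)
Your proof is correct and matches the paper's intent exactly: the paper states this as an immediate corollary of Lemma \ref{lem:leveladj}, and your argument — that level-adjacent vertices share a type by that lemma, so the equivalence relation generated by level-adjacency refines the ``same type'' relation — is precisely the unstated reasoning.
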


\begin{remk}\label{remk:extreme}
	If $t(v)$ is a maximal spherical subset of $I$, then $v$ is $\leq$-maximal and the class $[v]=\{v\}$ is a singleton.
	At the opposite extreme, we note that two rank-0 vertices are level-adjacent if and only if their corresponding chambers are adjacent (by Lemma \ref{lem:leveladj}), so the set of all rank-0 vertices defines a single level-equivalence class.
\end{remk}

\begin{lem}\label{lem:cC[v]}
	Let $v$ be a vertex of type $J$ contained in a chamber $C$, and let $\cC([v])$ denote the set of chambers that contain a vertex in $[v]$. Then $\cC(J\uperp,C)=\cC([v])$.
	Moreover, we have a product decomposition
	$$\cC(J\uperp,C)\cong\cC(J,C)\times\cC(J\jperp,C)$$
	from Lemma \ref{lem:product} in which the sections $\cC(J,C)\times\{C_2\}$ correspond to the sets $\cC(v')$ for $v'\in[v]$ (which are $J$-chamber-residues), and the sections $\{C_1\}\times\cC(J\jperp,C)$ correspond to the $J\jperp$-chamber-residues in $\cC([v])$.
	In particular, the product decomposition would remain the same up to isomorphism if we chose a different level-equivalence class representative $v$ and chamber $C\in\cC(v)$.
\end{lem}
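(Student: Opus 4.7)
The plan is to establish the set equality $\cC(J\uperp, C) = \cC([v])$ first, then read off the product decomposition from Lemma \ref{lem:product}, and finally observe that both sides are intrinsic to $[v]$ to obtain the independence statement. For the forward inclusion $\cC([v]) \subset \cC(J\uperp, C)$, I would trace a level-adjacency chain $v = v_0, v_1, \ldots, v_n$ ending at a vertex $v_n$ in the target chamber $C'$. Each $v_k$ has type $J$ by Corollary \ref{cor:leveltype}, and each step of the chain provides $i_k$-adjacent chambers containing $v_{k-1}$ and $v_k$ respectively for some $i_k \in J\jperp$ (Lemma \ref{lem:leveladj}). Since chambers containing a fixed type-$J$ vertex form a $J$-chamber-residue (Lemma \ref{lem:cCv}) and $J, J\jperp \subset J\uperp$, every intermediate chamber lies in $\cC(J\uperp, C)$.

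For the reverse inclusion $\cC(J\uperp, C) \subset \cC([v])$, I would induct on the length of a $J\uperp$-gallery $(C_0, \ldots, C_n)$ from $C$ to $C'$. At the inductive step one has $v_{n-1} \in [v] \cap C_{n-1}$ and an $i$-adjacency from $C_{n-1}$ to $C_n$ for some $i \in J\uperp = J \cup J\jperp$. If $i \in J = t(v_{n-1})$, then Lemma \ref{lem:min}\ref{item:wCcapC'} places $v_{n-1}$ directly into $C_n$. If $i \in J\jperp$, then $J \cup \{i\}$ is spherical, and letting $v_n$ be the unique type-$J$ vertex of $C_n$, the pair $v_{n-1}, v_n$ is level-adjacent by Lemma \ref{lem:leveladj}, so $v_n \in [v]$. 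I expect this inductive step to be the main obstacle: one must verify that the two cases genuinely exhaust $J\uperp$, that the type-$J$ vertex of $C_n$ exists, and that level-adjacency (rather than $v_n = v_{n-1}$) actually holds, which requires checking that $v_{n-1}, v_n \notin C_{n-1} \cap C_n$ via Lemma \ref{lem:min}\ref{item:wCcapC'}.

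Once $\cC([v]) = \cC(J\uperp, C)$, the product decomposition follows from Lemma \ref{lem:product} since $J$ is spherical. Part \ref{item:betares} of that lemma identifies each section $\cC(J, C) \times \{C_2\}$ with the $J$-residue $\cC(J, C_2)$, and each section $\{C_1\} \times \cC(J\jperp, C)$ with the $J\jperp$-residue $\cC(J\jperp, C_1)$. To match the first family with sets of the form $\cC(v')$ for $v' \in [v]$, I would show that $v' \mapsto \cC(v')$ is a bijection between $[v]$ and the $J$-residues contained in $\cC([v])$: surjectivity holds because any $C_2 \in \cC([v])$ contains some $v' \in [v]$, whence $\cC(J, C_2) = \cC(v')$ by Lemma \ref{lem:cCv}; injectivity is immediate from the uniqueness of the type-$J$ vertex inside a chamber. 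The independence assertion is then automatic, since the set $\cC([v])$ and its two partitions into $J$- and $J\jperp$-residues depend only on $[v]$ and not on the choice of $v$ or $C \in \cC(v)$.
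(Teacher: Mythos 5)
Your proof is correct and follows essentially the same route as the paper: both inclusions are established from Lemmas \ref{lem:leveladj}, \ref{lem:cCv}, and Corollary \ref{cor:leveltype}, and the product structure is then read off from Lemmas \ref{lem:cCv} and \ref{lem:product}. You spell out the inductive step on gallery length more explicitly than the paper does, and your anticipated worries (the two cases $i\in J$, $i\in J\jperp$ exhaust $J\uperp$; the type-$J$ vertex exists; $v_{n-1},v_n\notin C_{n-1}\cap C_n$ so level-adjacency holds) are all resolved exactly as you expect via Lemma \ref{lem:min}\ref{item:wCcapC'}.
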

\begin{proof}
	We know that all vertices in $[v]$ are of type $J$ by Corollary \ref{cor:leveltype}.
	If $v_1\in[v]$ is in a chamber $C_1$ and if $C_1$ is $i$-adjacent to $C_2$ for $i\in J\jperp$, then the vertex $v_2\in C_2$ of type $J$ is level-adjacent to $v_1$ by Lemma \ref{lem:leveladj}.
	And for each $v'\in[v]$ we know that $\cC(v')$ is a $J$-chamber-residue by Lemma \ref{lem:cCv}.
	Combining these two facts, we see that $\cC(J\uperp,C)\subset\cC([v])$.
	
	Conversely, if $v_1,v_2\in[v]$ are level-adjacent, then they are contained in chambers $C_1,C_2$ respectively that are $i$-adjacent for some $i\in J\jperp$ (Lemma \ref{lem:leveladj} again). Combined with the fact that $\cC(v')$ is a $J$-chamber-residue for each $v'\in[v]$, we deduce that $\cC([v])$ is contained inside a $J\uperp$-chamber-residue, yielding the reverse inclusion $\cC([v])\subset\cC(J\uperp,C)$.
	
	The remainder of the lemma follows from Lemmas \ref{lem:cCv} and \ref{lem:product}.
\end{proof}

\begin{lem}\label{lem:approxjperp}
	If $v_1\approx v_2$ are of type $J$ and $C_1\in\cC(v_1)$, then the intersection $\cC(v_2)\cap\cC(J\jperp,C_1)$ consists of a single chamber.
\end{lem}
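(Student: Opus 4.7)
The plan is to deduce this directly from the product decomposition established in Lemma \ref{lem:cC[v]} together with the properties of the bijection $\beta$ in Lemma \ref{lem:product}. Since $v_1\approx v_2$, both $\cC(v_2)$ and $\cC(J\jperp,C_1)$ sit inside the same set $\cC([v_1])=\cC(J\uperp,C_1)$, which Lemma \ref{lem:cC[v]} identifies with the product $\cC(J,C_1)\times\cC(J\jperp,C_1)$ via $\beta$. So the strategy is simply to transport the intersection through $\beta$ and show the two sets correspond to two ``transverse'' fibers whose intersection in the product is a single point.

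More precisely, I would proceed as follows. First, by Lemma \ref{lem:cC[v]}, $\cC(v_2)$ is one of the $J$-chamber-residues contained in $\cC([v_1])$, so by Lemma \ref{lem:product}\ref{item:betares} there is a unique chamber $C_2^*\in\cC(J\jperp,C_1)$ with
\[
\cC(v_2)=\cC(J,C_2^*)=\beta\bigl(\cC(J,C_1)\times\{C_2^*\}\bigr).
\]
Second, Lemma \ref{lem:product}\ref{item:betares} applied to $C_1\in\cC(J,C_1)$ gives
\[
\cC(J\jperp,C_1)=\beta\bigl(\{C_1\}\times\cC(J\jperp,C_1)\bigr).
\]
Since $\beta$ is a bijection, the intersection of the images equals the image of the intersection, and
\[
\bigl(\cC(J,C_1)\times\{C_2^*\}\bigr)\cap\bigl(\{C_1\}\times\cC(J\jperp,C_1)\bigr)=\{(C_1,C_2^*)\},
\]
so $\cC(v_2)\cap\cC(J\jperp,C_1)=\{\beta(C_1,C_2^*)\}=\{C_2^*\}$, a singleton.

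There is essentially no obstacle here — the real content has already been packaged into Lemmas \ref{lem:product} and \ref{lem:cC[v]}, and this lemma is a clean corollary of that product structure. The only thing one has to be a little careful about is confirming that $\cC(v_2)$ really is one of the $J$-chamber-residue fibers in the decomposition (which is exactly what Lemma \ref{lem:cC[v]} provides, via $v_2\in[v_1]$ and Corollary \ref{cor:leveltype} giving $t(v_2)=J$) and that $\cC(J\jperp,C_1)$ really is the transverse fiber through $C_1$.
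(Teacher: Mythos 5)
Your proof is correct and follows the same approach as the paper, which observes in one line that $\cC(v_2)$ and $\cC(J\jperp,C_1)$ correspond to orthogonal sections in the product decomposition of Lemma \ref{lem:cC[v]}. You have simply unpacked that observation, carefully tracking the bijection $\beta$ to confirm that the two transverse fibers meet in the single chamber $\beta(C_1,C_2^*)=C_2^*$.
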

\begin{proof}
This follows from the fact that the sets $\cC(v_2)$ and $\cC(J\jperp,C_1)$ correspond to orthogonal sections in the product decomposition from Lemma \ref{lem:cC[v]}.
\end{proof}

We will need the following definition in later sections.

\begin{defn}(Sets $E^-(v)$ and lower degree)\\\label{defn:E-}
	For $v\in\Delta^0$, let $E^-(v)$ denote the set of edges incident to $v$ that join it to a vertex of lower rank, and let $d^-(v):=|E^-(v)|$ (which in general might be $\infty$). We call $d^-(v)$ the \emph{lower degree} of $v$. Note that $d^-(v)=0$ if and only if $\rk(v)=0$. In general, $d^-(v)$ can be computed by the formula $d^-(v)=\sum_{i\in t(v)}|G_i|$.
\end{defn}

Let $\Aut_{\rk}(\Delta)$ denote the group of cubical automorphisms of $\Delta$ that preserve ranks of vertices.
The following key proposition shows that $\Aut_{\rk}(\Delta)$ preserves many of the structures on $\Delta$ that we have defined so far.

\begin{prop}(Things preserved by $\Aut_{\rk}(\Delta)$)\\\label{prop:preserved}
	The group $\Aut_{\rk}(\Delta)$ preserves the following structures on $\Delta$:
	\begin{itemize}
		\item the poset structure on $\Delta$,
		\item lower degrees of vertices,
		\item the chambers,
		\item adjacency of chambers,
		\item galleries,
		\item level-equivalence of vertices,
		\item the families of sets $\{\cC(v)\mid v\in\Delta^0\}$ and $\{\cC([v])\mid v\in\Delta^0\}$,
		\item the product structures on the sets $\cC([v])$.
	\end{itemize} 
\end{prop}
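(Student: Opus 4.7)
The plan is to verify each bullet in turn, leveraging the fact that $\Aut_{\rk}(\Delta)$ acts by cubical automorphisms and preserves ranks of vertices. Most items fall out as direct consequences of earlier lemmas once one substitutes in the right intrinsic characterization.

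First I would handle the poset and lower degrees. For the poset, Lemma \ref{lem:min}\ref{item:Q} rephrases $u\leq v$ purely as "$\rk(u)\leq\rk(v)$ and there is a cube $Q$ of dimension $\rk(v)-\rk(u)$ containing both"; since cubes are preserved by cubical automorphisms and ranks are preserved by assumption, this is immediate. The lower degree $d^-(v)$ depends only on the edges incident to $v$ and the ranks of their endpoints, so it is preserved as well.

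Next, for chambers, adjacency and galleries, I would invoke Remark \ref{remk:chamberneigh}, which identifies each chamber as the cubical neighborhood of its (unique) rank-$0$ vertex; since $\Aut_{\rk}(\Delta)$ permutes rank-$0$ vertices and preserves cubical neighborhoods, it permutes chambers. Adjacency of chambers is then handled by Lemma \ref{lem:min}\ref{item:adjwedge}: chambers $C,C'$ are adjacent iff $\rk(\wedge(C,C'))=1$, and this is expressed purely in terms of data that has already been shown to be preserved. Galleries are sequences of adjacent chambers, so they are preserved, and level-adjacency (Definition \ref{defn:[v]}) is defined from chamber adjacency, the partial order and ranks, hence so is the equivalence relation $\approx$. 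The sets $\cC(v)$ and $\cC([v])$ then follow from preservation of chambers, vertices and $\approx$.

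The main subtlety, and the step I expect to require the most care, is the product structure on $\cC([v])$. By Lemma \ref{lem:cC[v]}, if $v$ has type $J$ and lies in a chamber $C$, then $\cC([v])=\cC(J\uperp,C)$ and the product structure $\cC(J,C)\times\cC(J\jperp,C)\cong\cC(J\uperp,C)$ gives two families of slices, and both must be shown preserved by any $\phi\in\Aut_{\rk}(\Delta)$. One family, the horizontal slices $\cC(v')$ for $v'\in[v]$, is already preserved. The other family consists of the $J\jperp$-chamber-residues inside $\cC([v])$, and the challenge is that the set $J\jperp$ itself is not canonically preserved (under $\phi$, the type $J$ may move to a different spherical set $J'$). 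To bypass this I would give the following intrinsic characterization: within $\cC([v])$ any two adjacent chambers are $i$-adjacent for some $i\in J\uperp=J\sqcup J\jperp$, and by Lemma \ref{lem:leveladj} the second case occurs precisely when the two chambers contain distinct (necessarily level-adjacent) vertices of $[v]$, whereas the first case occurs precisely when they share a common vertex of $[v]$. Defining the relation $C_1\sim C_2$ on $\cC([v])$ to mean "$C_1,C_2$ are adjacent and do not share a vertex in $[v]$", the connected components of the transitive closure of $\sim$ are exactly the vertical $J\jperp$-slices (that they recover each full slice follows from the fact that $\cC(J\jperp,C_1)$ is generated as a chamber-residue by $J\jperp$-adjacencies). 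Since $\sim$ is formulated entirely in terms of chambers, adjacency, and the set $[v]$, all of which are preserved, the vertical slices are preserved too, and hence so is the product decomposition.
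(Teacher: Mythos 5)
Your proof is correct and follows essentially the same route as the paper, reducing each bullet to data already known to be preserved via Lemma~\ref{lem:min}\ref{item:Q}, Remark~\ref{remk:chamberneigh}, Lemma~\ref{lem:min}\ref{item:adjwedge}, and Lemma~\ref{lem:cC[v]}. The one place where you give substantially more detail is the product structure: the paper simply asserts that the induced bijection of products ``preserves the factors and their order,'' whereas you supply the intrinsic characterization of the $J\jperp$-slices as connected components of the relation ``adjacent and not sharing a vertex in $[v]$,'' which is precisely the missing justification and is correct.
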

\begin{proof}
	Lemma \ref{lem:min}\ref{item:Q} implies that $\Aut_{\rk}(\Delta)$ preserves the partial order $\leq$.
	Lower degrees of vertices are defined using the poset structure, so these are preserved as well.
	The centers of chambers are the rank-0 vertices, and each chamber is the cubical neighborhood of its center (Remark \ref{remk:chamberneigh}), so $\Aut_{\rk}(\Delta)$ also preserves the chamber structure on $\Delta$.
	Adjacency of chambers is preserved because of Lemma \ref{lem:min}\ref{item:adjwedge}, so galleries are preserved as well.
	Level-equivalence of vertices is defined using rank, chambers and the poset structures on $\Delta^0$, so this is also preserved -- along with the sets $\cC([v])$.
	The sets $\cC(v)$ are preserved because chambers are preserved.
	Moreover, we have that $g\cC(v)=\cC(gv)$ and $g\cC([v])=\cC([gv])$ for $v\in\Delta^0$ and $g\in\Aut_{\rk}(\Delta)$.
	Combining this observation with Lemma \ref{lem:cC[v]}, we deduce that an automorphism $g\in\Aut_{\rk}(\Delta)$ induces a bijection $g:\cC([v])\to\cC([gv])$ that corresponds to a bijection of products $$\cC(J,C)\times\cC(J\jperp,C)\to\cC(t(gv),gC)\times\cC(t(gv)\jperp,gC),$$
	and this bijection of products preserves the factors and their order.
\end{proof}

In the context of Theorem \ref{thm:Delta}, we would like the lattices $\G$ and $\La$ to both preserve all the structures from Proposition \ref{prop:preserved}.
This may not be true for $\La$, but it will hold for $\La\cap\Aut_{\rk}(\Delta)$, and the following proposition implies that this is a finite-index subgroup of $\La$.
As Theorem \ref{thm:Delta} is a statement about commensurability, there is no harm in replacing $\La$ by a finite-index subgroup, so in later sections we will be able to assume that $\La$ does preserve all the structures from Proposition \ref{prop:preserved}.

\begin{prop}\label{prop:finiteindex}
	If the graph $\cG$ is finite, then $\Aut_{\rk}(\Delta)$ has finite index in $\Aut(\Delta)$.
\end{prop}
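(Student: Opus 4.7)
The plan is to equip $\Aut(\Delta)$ with the topology of pointwise convergence on $\Delta^0$. By Lemma~\ref{lem:lfinite} (using that both $\cG$ and the $G_i$ are finite, as in the paper's standing assumption), $\Aut(\Delta)$ becomes a locally compact totally disconnected group in which $\G$ is a uniform lattice, and $\G \le \Aut_{\rk}(\Delta)$ since the $\G$-action preserves the typing map. The finite-index statement then reduces to showing that $\Aut_{\rk}(\Delta)$ is \emph{open}: once this is known, $\Aut(\Delta)/\Aut_{\rk}(\Delta)$ is discrete, and writing $\Aut(\Delta) = K\G$ for a compact set $K$ (using cocompactness of $\G$) together with $\G \le \Aut_{\rk}(\Delta)$ shows that the image of $K$ surjects onto the quotient, which is then compact and discrete, hence finite.

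To establish openness, I will show that the vertex stabilizer $\Aut(\Delta)_{v_0}$ -- a clopen subgroup of $\Aut(\Delta)$ -- is contained in $\Aut_{\rk}(\Delta)$, where $v_0 = [1_\G, v_N]$ is the rank-$0$ center of $C_*$. Writing $c_C$ for the center of a chamber $C$, my strategy for $u \in \Aut(\Delta)_{v_0}$ is to argue by induction on gallery distance from $C_*$ that $u$ maps every chamber $C$ to another chamber $uC$; this forces $u$ to send rank-$0$ vertices to rank-$0$ vertices, and since $\rk(v)$ equals the $\Delta^1$-distance from $v$ to the rank-$0$ set (edges change rank by exactly one, and the distance is realized within any chamber containing $v$), $u$ must then preserve rank globally. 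The base case is immediate because $u$ fixes $v_0$ and hence preserves $C_*$ (the cubical neighborhood of $v_0$ by Remark~\ref{remk:chamberneigh}) setwise, and any automorphism of $C_* \cong C(N)$ fixing its center preserves ranks within $C_*$.

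The main obstacle will be the inductive step. Consider a chamber $C$ that is $i$-adjacent to a chamber $C'$ at smaller distance, with $uC'$ already a chamber by hypothesis, and let $v_i = \wedge(C, C') \in C \cap C'$, which is a rank-$1$ vertex. Its image $u(v_i) \in uC'$ has rank $1$ by the inductive hypothesis, and $u(c_C)$ is adjacent in $\Delta$ to $u(v_i)$. Because $u$ preserves the bipartition of the $1$-skeleton (the fixed vertex $v_0$ lies in the even part), $\rk(u(c_C))$ has the same parity as $\rk(c_C) = 0$, forcing $\rk(u(c_C)) \in \{0, 2\}$. The rank-$2$ neighbors of $u(v_i)$ are precisely the vertices $v' \ge u(v_i)$ in the poset $(\Delta^0, \le)$, and by Lemma~\ref{lem:min}\ref{item:stayC} each such $v'$ lies in every chamber containing $u(v_i)$ -- in particular in $uC'$. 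On the other hand, $c_C \notin C'$ since distinct chambers have distinct centers, so $u(c_C) \notin uC'$ by injectivity of $u$. This rules out the rank-$2$ option, forcing $u(c_C)$ to be a rank-$0$ center of a chamber adjacent to $uC'$, completing the induction.
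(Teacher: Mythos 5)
Your proof is correct, but it takes a genuinely different route from the paper's. The paper proceeds entirely combinatorially: it defines an intrinsic equivalence relation $\simeq$ on $\Delta^0$ (generated by "distance two with no common cube", i.e.\ $\ell_\infty$-distance $2$), shows that the $\simeq$-class of a rank-$0$ vertex is exactly the set of all rank-$0$ vertices, concludes that $\Aut_{\rk}(\Delta)$ is the setwise stabilizer of the rank-$0$ vertex set, and then bounds the index directly: $\G$ acts transitively on chambers, so every left coset of $\Aut_{\rk}(\Delta)$ has a representative sending the fixed center into a fixed chamber $C$, and representatives in distinct cosets have distinct images (because the stabilizer of the center lies in $\Aut_{\rk}(\Delta)$); finiteness of $C$ then gives the bound. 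You instead prove a local statement --- the stabilizer of a single rank-$0$ vertex lies in $\Aut_{\rk}(\Delta)$, via an induction along galleries together with the bipartiteness of $\Delta^{(1)}$ --- and then finish with a topological argument (open subgroup, cocompactness of $\G$, hence a compact discrete coset space). Both arguments hinge on the same underlying fact, that the rank-$0$ set is detectable from the bare cubical structure; the paper encodes this globally through $\simeq$, while you encode it through the vertex-stabilizer computation.

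Two remarks worth noting. First, your inductive step is carried out carefully and is correct: the key observations --- that $u$ preserves rank parity because it fixes a vertex of $\Delta^{(1)}$ (which is bipartite and connected, so has a unique $2$-coloring given by rank parity), that every rank-$2$ neighbor of $u(v_i)$ sits above $u(v_i)$ in the poset and hence by Lemma~\ref{lem:min}\ref{item:stayC} lies inside the chamber $uC'$, and that $u(c_C)\notin uC'$ by injectivity --- do exactly rule out the rank-$2$ case and force $u(c_C)$ to be a center. Second, your route requires the groups $G_i$ to be finite (you need local finiteness of $\Delta$ for local compactness of $\Aut(\Delta)$ and the existence of a uniform lattice, via Lemma~\ref{lem:lfinite}), whereas the paper's counting argument uses only finiteness of $\cG$ (to know chambers are finite), which matches the hypothesis as literally stated in the proposition. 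Under the paper's standing assumptions this distinction is immaterial, but the combinatorial proof is genuinely more general on this point.
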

\begin{proof}
	We define a second equivalence relation $\simeq$ on $\Delta^0$ generated by the relation $R$, where $uRv$ if $u,v$ are joined by an edge path of length two, but no cube contains both $u$ and $v$. (Equivalently, $u,v$ are at distance 2 in the $\ell_\infty$ metric.)
	Let $u$ be the center of a chamber $C$. 
	A vertex $w$ is adjacent to $u$ if and only if $w\in C$ and $\rk(w)=1$. A vertex $v\neq u$ is adjacent to $w$ if and only if it is the center of a chamber $C'$ adjacent to $C$ with $w=\wedge(C,C')$, or it is a rank-2 vertex in $C$. If $v$ is a rank-2 vertex in $C$ then there is a cube $Q$ in $C$ containing $u$ and $v$ (Lemma \ref{lem:min}\ref{item:Q}), so it follows from Lemma \ref{lem:min}\ref{item:adjwedge} that the vertices $v$ with $uRv$ are precisely the centers of chambers adjacent to $C$.
	We deduce that the $\simeq$-equivalence class of $u$ is the set of all rank-0 vertices.
	
	Adjacent vertices in $\Delta$ have ranks that differ by 1, so $\Aut_{\rk}(\Delta)$ is equal to the $\Aut(\Delta)$-stabilizer of the set of rank-0 vertices.
	Since $\Aut(\Delta)$ preserves the relation $R$ and the equivalence relation $\simeq$, we deduce that $\Aut_{\rk}(\Delta)$ is the group of $g\in\Aut(\Delta)$ such that $\rk(gv)=0$ for some rank-0 vertex $v$.
		
	Fix a chamber $C$ with center $v$.
	The subgroup $\G<\Aut_{\rk}(\Delta)$ acts transitively on $\cC(\Delta)$, and each chamber is the cubical neighborhood of its center, so for every $g\in\Aut(\Delta)$ there is $g'\in\G$ such that $g^{-1}v$ and $g'v$ are contained in a common cube. Hence $gg'v\in C$. So every left $\Aut_{\rk}(\Delta)$-coset contains an automorphism $g$ with $gv\in C$. The $\Aut(\Delta)$-stabilizer of $v$ is contained in $\Aut_{\rk}(\Delta)$, so automorphisms $g$ in distinct left $\Aut_{\rk}(\Delta)$-cosets have distinct images $gv$. 
	It follows that the index of $\Aut_{\rk}(\Delta)$ in $\Aut(\Delta)$ is bounded by $|C^0|$, which is finite since $\cG$ is finite.
\end{proof}

\begin{remk}\label{remk:Autdiscrete}
	If the graph $\cG$ is finite and \emph{star-rigid} -- meaning that the only automorphism of $\cG$ that pointwise fixes the star of a vertex is the identity -- and if the groups $(G_i)$ have order two, then $\G$ has finite index in $\Aut(\Delta)$. In particular $\Aut(\Delta)$ is a discrete automorphism group (i.e. it acts properly and cocompactly on $\Delta$).
	The group $\G$ acts transitively on the chambers, and $\Aut_{\rk}(\Delta)$ has finite index in $\Aut(\Delta)$ by Proposition \ref{prop:finiteindex}, so it suffices to show that $\Aut_{\rk}(\Delta)$ has finite chamber-stabilizers.
	The chambers are finite (as $\cG$ is finite), so it is enough to show that the identity is the only element of $\Aut_{\rk}(\Delta)$ that pointwise fixes a chamber.
	Let $g\in\Aut_{\rk}(\Delta)$ pointwise fix a chamber $C$.
	To show that $g$ is the identity it suffices to show that it pointwise fixes the chambers adjacent to $C$ (it follows that $g$ pointwise fixes all chambers by running the argument along galleries).
	Let $C'$ be $i$-adjacent to $C$ and let $v:=\wedge(C,C')$.
	We know that $g$ fixes $v$, and that $C'$ is the only chamber $i$-adjacent to $C$ (as $|G_i|=2$), so $g$ stabilizes $C'$ by Lemma \ref{lem:min}.
	For $j\in i\jperp$ and $u\in C'$ the vertex of type $\{j\}$, the vertex $w\in C\cap C'$ of type $\{i,j\}$ is fixed by $g$, and $u$ is the only rank-1 vertex in $C'-C$ with $u\leq w$, hence $g$ fixes $u$.
	The action of $g$ on $C'$ preserves ranks of vertices, and the typing map induces an isomorphism $C'\cong C(N(\cG))$, so the action of $g$ on $C'$ corresponds to an automorphism of $\cG$ that pointwise fixes the star $i\uperp$. But $\cG$ is star-rigid, thus this automorphism must be the identity on $\cG$, and $g$ must pointwise fix $C'$.
\end{remk}

\bigskip
\section{Hyperplanes and separable subgroups}\label{sec:hyperplanes}

In this section we recall the notion of hyperplanes in a CAT(0) cube complex, cast in terms of parallelism of edges, and we analyze the hyperplanes in $\Delta$.
We also recall the notion of a separable subgroup, and we relate a certain separability condition involving level-equivalence classes -- which we will need in later sections -- to a separability condition involving hyperplanes.
Finally, we recall the notion of a group acting specially on a CAT(0) cube complex, and prove that $\G$ is a virtually special lattice in $\Aut(\Delta)$ -- from which we deduce the ``only if'' direction in Theorem \ref{thm:Delta}.

\begin{defn}(Parallelism and hyperplanes)\\
	Two edges in a cube complex $X$ are \emph{elementary parallel} if they appear as opposite edges in some 2-cube.
	\emph{Parallelism} on edges of $X$ is the equivalence relation generated by elementary parallelism.
	For each edge $e$, we define the \emph{hyperplane} $H(e)$ to be the union of all midcubes in $X$ that intersect edges parallel to $e$. We say that $e$ is \emph{dual} to $H(e)$.
	We refer to \cite{WiseRiches} for more facts about hyperplanes. The important fact for us is that two edges are parallel if and only if they are dual to the same hyperplane.
	Also note that $H(ge)=gH(e)$ for $e$ an edge and $g\in\Aut(X)$.
	
	It will be natural in several of our arguments to consider \emph{oriented edges} (i.e. edges that come with an ordering of their vertices), and we will think of each oriented edge as pointing from an initial vertex to a terminal vertex.
	There is also a notion of parallelism for oriented edges.
	We say that two oriented edges in a cube complex $X$ are \emph{elementary parallel} if they appear as opposite edges that point in the same direction in some 2-cube.
	\emph{Parallelism} on oriented edges of $X$ is the equivalence relation generated by elementary parallelism.
\end{defn}

The following lemma enables us to label hyperplanes, and the corresponding dual edges, by elements $i\in I$.

\begin{lem}\label{lem:iedge}
	Let $e$ be an edge in $\Delta$. Then:
	\begin{enumerate}
		\item\label{item:iedge} There is $J\in\bar{N}$ and $i\in J\jperp$ such that $e$ joins vertices of type $J$ and $J\cup\{i\}$.
		We say that $e$ is an \textbf{$i$-edge}.
		\item\label{item:parisi} Any edge parallel to $e$ is also an $i$-edge.
		Hence $i$ only depends on the hyperplane $H(e)$, and we say that $H(e)$ is an \textbf{$i$-hyperplane}.
		
		\item\label{item:paror} If $e_1,e_2$ are parallel $i$-edges, then orienting them to have initial vertices of types $J_1,J_2$ and terminal vertices of types $J_1\cup\{i\},J_2\cup\{i\}$ makes them into parallel oriented edges $\overrightarrow{e_1}, \overrightarrow{e_2}$.
		
		\item\label{item:oneihyp} Each chamber in $\Delta$ intersects exactly one $i$-hyperplane (so intersects just one parallelism class of $i$-edges) for each $i\in I$.
	\end{enumerate} 	
\end{lem}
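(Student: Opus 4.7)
\textbf{Plan for Lemma \ref{lem:iedge}.} Part (1) is essentially bookkeeping. Every edge $e$ of $\Delta$ sits inside some chamber $C_\gamma$ (take any chamber containing the lower-rank endpoint of $e$), and by Lemma \ref{lem:chamberint} together with Definition \ref{defn:cubecone} the restriction of the typing map to $C_\gamma$ identifies it with $C(N)$. The edges of $C(N)$ join vertices whose types differ by a single element, so $e$ joins vertices of types $J$ and $J \cup \{i\}$ for some $J\in\bar{N}$ and $i\notin J$. Since $J\cup\{i\}\in\bar{N}$ is spherical, $i$ is adjacent to every element of $J$, i.e. $i\in J\jperp$. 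One should also observe that $i$ does not depend on the choice of chamber: it is simply $t(v_2)\setminus t(v_1)$ where $v_1,v_2$ are the endpoints of $e$ with $v_1\le v_2$.

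For parts (2) and (3) I will analyze a single 2-cube $Q$ of $\Delta$. By Lemma \ref{lem:min}\ref{item:Q} and Definition \ref{defn:cubecone}, $Q$ corresponds to a pair $J_1 \subset J_2$ in $\bar N$ with $|J_2\setminus J_1|=2$, say $J_2\setminus J_1=\{i,j\}$. The four vertices of $Q$ have types $J_1,\, J_1\cup\{i\},\, J_1\cup\{j\},\, J_2$, and the two pairs of opposite edges are the two $i$-edges $J_1\to J_1\cup\{i\}$ and $J_1\cup\{j\}\to J_2$, and the two $j$-edges defined analogously. Hence elementary parallelism sends $i$-edges to $i$-edges, proving (2) by generation; and the two $i$-edges of $Q$, oriented from lower to higher rank, point in the same direction in $Q$, which gives (3) again by generation.

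For part (4), fix a chamber $C$ and identify it with $C(N)$. An $i$-edge incident to the center exists because $\{i\}$ is always in $\bar{N}$, so we get the edge from $\emptyset$ to $\{i\}$; this handles existence. For uniqueness I would show by induction on $|J|$ that every $i$-edge $e$ in $C$ joining types $J$ and $J\cup\{i\}$ is parallel in $C$ (hence in $\Delta$) to this distinguished edge at the center. If $J=\emptyset$ there is nothing to show; otherwise pick any $j\in J$ and note that $J':=J\setminus\{j\}$ and $J'\cup\{i\}$ both lie in $\bar N$ because they are subsets of the spherical set $J\cup\{i\}$, so Lemma \ref{lem:min}\ref{item:Q} gives a 2-cube on the types $J',\,J,\,J'\cup\{i\},\,J\cup\{i\}$ in which $e$ is elementary parallel to an $i$-edge of smaller base type, and induction finishes the argument. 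This shows the $i$-edges of $C$ form a single parallelism class, so $C$ meets exactly one $i$-hyperplane; that different $i$'s give different hyperplanes is immediate from (2).

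The main obstacle is the inductive step in part (4); everything else is a direct unpacking of Definition \ref{defn:cubecone} and the poset structure. The key trick there is that hereditarity of sphericality ($J\cup\{i\}\in\bar N \Rightarrow J'\cup\{i\}\in\bar N$ for $J'\subset J$) is precisely what allows one to find the auxiliary 2-cube needed to strip one element at a time from the base type of the $i$-edge.
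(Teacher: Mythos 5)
Your proposal is correct and follows essentially the same strategy as the paper: reduce everything to the model cone $C(N)$ via type-preserving chamber isomorphisms, and read the edge labels and parallelisms off the interval structure of cubes in $\bar N$. The only cosmetic difference is in part (4), where the paper notes directly that an $i$-edge of $C(N)$ joining types $J$ and $J\cup\{i\}$ lies together with the central $i$-edge in the single cube corresponding to the interval $\emptyset\subset J\cup\{i\}$, whereas you make the underlying chain of elementary parallelisms explicit by stripping elements of $J$ one at a time through auxiliary 2-cubes.
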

\begin{proof}
	Each elementary parallelism is supported on a 2-cube that lies inside a single chamber, and there is a type-preserving isomorphism from each chamber to $C(N)$, so it suffices to prove the lemma for $C(N)$ rather than $\Delta$.
	Statement \ref{item:iedge} holds because cubes in $C(N)$ correspond to intervals in $\bar{N}$.
	If $e_1,e_2$ are edges in $C(N)$ that are elementary parallel, and $e_1$ is an $i$-edge, then there is a 2-cube $Q$ containing $e_1,e_2$ as opposite edges, and the types of vertices of $Q$ make it isomorphic to the Hasse diagram of an interval in $\bar{N}$, so we deduce that $e_2$ is also an $i$-edge.
	Furthermore, if $e_1,e_2$ are given orientations $\overrightarrow{e_1},\overrightarrow{e_2}$ so that their terminal vertices are of types containing $i$, then $\overrightarrow{e_1},\overrightarrow{e_2}$ point in the same direction in $Q$.
	Statements \ref{item:parisi} and \ref{item:paror} follow.
	Finally, if $e$ is an $i$-edge in $C(N)$ then there is a cube $Q$ in $C(N)$ containing the center $v$ and $e$, and $e$ is parallel to the $i$-edge in $Q$ that joins $v$ to the vertex of type $\{i\}$. Hence all $i$-edges in $C(N)$ are parallel -- proving statement \ref{item:oneihyp}.
\end{proof}

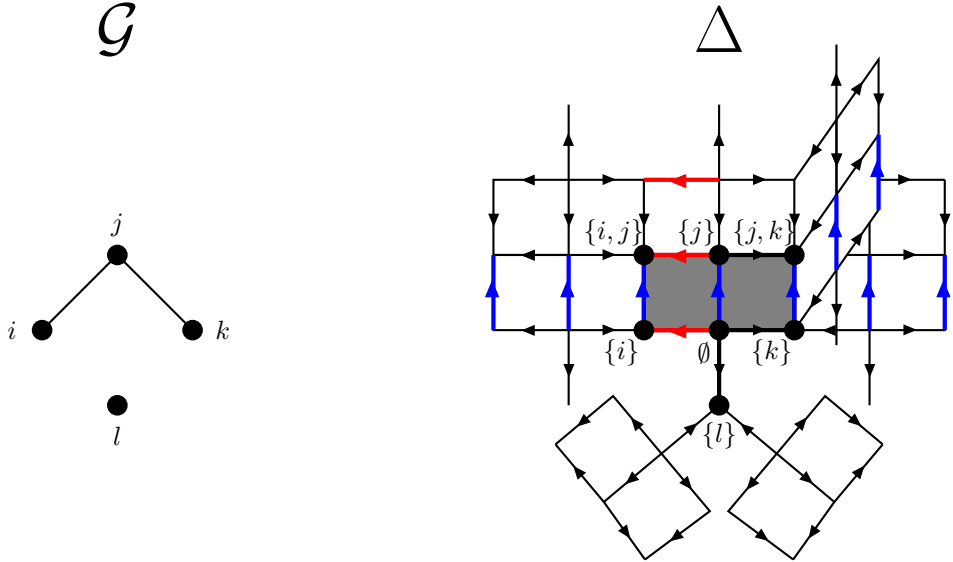
\begin{figure}[H]
	\centering
	\scalebox{1}{
		\begin{tikzpicture}[auto,node distance=2cm,
			thick,every node/.style={circle,draw,fill,inner sep=0pt,minimum size=7pt},
			every loop/.style={min distance=2cm},
			hull/.style={draw=none},
			]
			\tikzstyle{label}=[draw=none,fill=none]
			\tikzstyle{a}=[isosceles triangle,sloped,allow upside down,shift={(0,-.05)},minimum size=3pt]

			\begin{scope}[shift={(-6,0)}]
				\node at (-1,0) {};
				\node at (1,0) {};
				\node at (0,1) {};
				\node at (0,-1) {};
				\draw (-1,0)--(0,1)--(1,0);
				\node[label] at (-1.4,0) {$i$};
				\node[label] at (0,1.4) {$j$};
				\node[label] at (1.4,0) {$k$};
				\node[label] at (0,-1.4) {$l$};		
				\node[label,font=\Huge] at (0,4) {$\cG$};		
			\end{scope}

			\begin{scope}[shift={(2,0)}]
				\draw[fill=gray](-1,0)--(1,0)--(1,1)--(-1,1)--(-1,0);
				
				\draw(-2,0)-- node[a]{} (-3,0)--node[a]{}(-3,1);
				\draw(-2,0)-- node[a]{}(-2,1)-- node[a]{}(-3,1);
				\draw(-2,2)-- node[a]{}(-3,2)-- node[a]{}(-3,1);
				\draw(-2,2)-- node[a]{}(-2,1)-- node[a]{}(-3,1);
				\draw(-2,2)-- node[a]{}(-2,3);
				\draw(-2,0)-- node[a]{}(-2,-1);
				\draw(-2,1)-- node[a]{}(-1,1);
				\draw(-2,0)-- node[a]{}(-1,0)-- node[a]{}(-1,1);
				\draw(-2,2)-- node[a]{}(-1,2)-- node[a]{}(-1,1);
				
				\draw(0,0)-- node[a]{}(0,1)-- node[a]{}(-1,1);
				\draw(0,2)-- node[a]{}(-1,2);
				\draw(0,2)-- node[a]{}(0,1)-- node[a]{}(1,1);
				\draw(0,2)-- node[a]{}(0,3);
				\draw(0,2)-- node[a]{}(1,2)-- node[a]{}(1,1);
				\draw(0,0)-- node[a]{}(1,0)-- node[a]{}(1,1);
				\draw(0,0)-- node[a]{}(0,-1);
				\draw(1.56,.8)-- node[a]{}(1,0);
				\draw(1.56,2.8)-- node[a]{}(1,2);
				\draw(1.56,2.8)-- node[a]{}(1.56,1.8)-- node[a]{}(1,1);
				\draw(1.56,2.8)-- node[a]{}(1.56,1.8)-- node[a]{}(2.12,2.6);
				\draw(1.56,.8)-- node[a]{}(1.56,1.8);
				\draw(1.56,.8)-- node[a]{}(2.12,1.6)-- node[a]{}(2.12,2.6);
				\draw(1.56,.8)-- node[a]{}(1.56,-.2);
				\draw(1.56,2.8)-- node[a]{}(2.12,3.6)-- node[a]{}(2.12,2.6);
				\draw(1.56,2.8)-- node[a]{}(1.56,3.8);
				
				\draw(2,0)-- node[a]{}(1,0);
				\draw(2,0)-- node[a]{}(2,1)-- node[a]{}(3,1);
				\draw(2,0)-- node[a]{}(2,-1);
				\draw(2,0)-- node[a]{}(3,0)-- node[a]{}(3,1);
				\draw(3,2)-- node[a]{}(3,1);
				\draw(2.12,2)-- node[isosceles triangle,sloped,allow upside down,shift={(-.13,-.05)},minimum size=3pt]{}(3,2);
				\draw(1.7,1)--(2,1)--(2,1.429);
				
				\draw(-.766,-1.643)-- node[a]{}(0,-1);
				\draw(-.766,-1.643)-- node[a]{}(-.123,-2.409)-- node[a]{}(-.985,-3.052);
				\draw(-.766,-1.643)-- node[a]{}(-1.532,-2.286)-- node[a]{}(-.985,-3.052);
				\draw(-.766,-1.643)-- node[a]{}(-1.409,-.877)-- node[a]{}(-2.175,-1.52);
				\draw(-1.532,-2.286)-- node[a]{}(-2.175,-1.52);
				
				\draw(.766,-1.643)-- node[a]{}(0,-1);
				\draw(.766,-1.643)-- node[a]{}(.123,-2.409)-- node[a]{}(.985,-3.052);
				\draw(.766,-1.643)-- node[a]{}(1.532,-2.286)-- node[a]{}(.985,-3.052);
				\draw(.766,-1.643)-- node[a]{}(1.409,-.877)-- node[a]{}(2.175,-1.52);
				\draw(1.532,-2.286)-- node[a]{}(2.175,-1.52);

				%\draw[fill=black,opacity=.3](-1,0)--(1,0)--(1,1)--(-1,1)--(-1,0);
				\draw[ultra thick](0,0)--(0,-1);
				\draw[ultra thick](-1,0) grid (1,1);
				
				\draw[red,ultra thick](0,0)-- node[a]{}(-1,0);
				\draw[red,ultra thick](0,1)-- node[a]{}(-1,1);
				\draw[red,ultra thick](0,2)-- node[a]{}(-1,2);
				
				\draw[blue,ultra thick](0,0)-- node[a]{}(0,1);
				\draw[blue,ultra thick](-1,0)-- node[a]{}(-1,1);
				\draw[blue,ultra thick](-2,0)-- node[a]{}(-2,1);
				\draw[blue,ultra thick](-3,0)-- node[a]{}(-3,1);
				\draw[blue,ultra thick](1,0)-- node[a]{}(1,1);
				\draw[blue,ultra thick](2,0)-- node[a]{}(2,1);
				\draw[blue,ultra thick](3,0)-- node[a]{}(3,1);
				\draw[blue,ultra thick](1.56,.8)-- node[a]{}(1.56,1.8);
				\draw[blue,ultra thick](2.12,1.6)-- node[a]{}(2.12,2.6);

				\node at (0,0){};
				\node at (-1,0){};
				\node at (-1,1){};
				\node at (0,1){};		
				\node at (1,1){};
				\node at (1,0){};
				\node at (0,-1){};
				
				\node[label] at (-.2,-.3) {$\emptyset$};
				\node[label] at (-1.3,-.3) {$\{i\}$};
				\node[label] at (-1.4,1.3) {$\{i,j\}$};
				\node[label] at (-.3,1.3) {$\{j\}$};
				\node[label] at (.6,1.3) {$\{j,k\}$};
				\node[label] at (.7,-.3) {$\{k\}$};
				\node[label] at (0,-1.4) {$\{l\}$};	
				\node[label,font=\Huge] at (0,4) {$\Delta$};			
			\end{scope}
			
		\end{tikzpicture}
	}
	\caption{A repeat of Figure \ref{fig:leveladj}, but with a parallelism class of (oriented) $i$-edges shown in red and a parallelism class of (oriented) $j$-edges shown in blue.}\label{fig:iedge}
\end{figure}

Next we establish two basic lemmas regarding edge labels and parallelism.

\begin{lem}\label{lem:ijadj}
	Let $v\in\Delta^0$, let $e_1$ be an $i$-edge incident at $v$, and let $e_2$ be a $j$-edge incident at $v$.
	Then $e_1,e_2$ form the corner of a 2-cube at $v$ if and only if $i$ is adjacent to $j$.
\end{lem}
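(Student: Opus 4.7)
The plan is to reduce both directions of the lemma to the local model $C(N)$ inside a single chamber. Since every cube of $\Delta$ is contained in some chamber by Corollary \ref{cor:cubestructure}, any 2-cube at $v$ lies inside a chamber $C\cong C(N)$, where cubes are read off directly from Definition \ref{defn:cubecone}. Throughout, set $J:=t(v)$; note from Lemma \ref{lem:iedge} that every edge label at $v$ lies in $J\cup J\jperp$.

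For the forward direction, suppose $e_1,e_2$ form the corner of a 2-cube $Q$ at $v$, and pick a chamber $C\supset Q$. Via the type-preserving isomorphism $C\cong C(N)$, the cube $Q$ corresponds to an interval $J_1\subset J_2$ in $\bar N$ with $J_2=J_1\cup\{a,b\}$ for two distinct elements $a,b$. From the structure of a 2-cube in $C(N)$, the two edges of $Q$ meeting $v$ carry distinct labels forming the set $\{a,b\}$, so $\{i,j\}=\{a,b\}$; since $J_2\in\bar N$ is spherical, $i$ and $j$ are adjacent in $\cG$.

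For the reverse direction, assume $i$ is adjacent to $j$, so $i\neq j$. The main step is to exhibit a single chamber $C$ containing both $e_1$ and $e_2$. Translating by $\G$ we may assume $v=[1,v_0]$ where $v_0\in C(N)$ has type $J$, so $\cC(v)=\{C_\gamma:\gamma\in\G_J\}$ by Lemmas \ref{lem:cCv} and \ref{lem:CJC}. If a label $k\in\{i,j\}$ lies in $J\jperp$, the corresponding edge at $v$ goes up to a vertex $u\geq v$ and, by Lemma \ref{lem:min}, lies in every chamber of $\cC(v)$. If instead $k\in J$, the edge goes down and lies precisely in those chambers indexed by some coset of $\G_{J\setminus\{k\}}$ in $\G_J$. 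Using sphericity of $J$ to write $\G_J=\prod_{k\in J}G_k$ (Lemma \ref{lem:GammaJ}), together with $i\neq j$, one checks in each up/down combination that the two prescribed subsets of $\cC(v)$ intersect. The delicate case is when both edges go down, where a common $\gamma\in\G_J$ is built by independently prescribing its $G_i$- and $G_j$-factors.

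Having fixed such a chamber $C$, the edges $e_1$ and $e_2$ are the unique $i$- and $j$-edges at the type-$J$ vertex of $C\cong C(N)$. Setting $J_1:=J\setminus\{i,j\}$ and $J_2:=J_1\cup\{i,j\}$ gives $J_1\subset J\subset J_2$ with $|J_2\setminus J_1|=2$. A short case check, based on which of $i,j$ lie in $J$, shows $J_2\in\bar N$: the hypothesis that $i,j$ are adjacent is used precisely in the case $i,j\in J\jperp$, while the other cases follow from sphericity of $J$ or from $i,j\in J\uperp$. The interval $J_1\subset J_2$ then yields a 2-cube of $C(N)$, whose image in $C$ is the desired 2-cube at $v$ containing $e_1$ and $e_2$. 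The principal obstacle is producing the common chamber $C$ in the both-down case, where the product decomposition of $\G_J$ and the hypothesis $i\neq j$ are both indispensable.
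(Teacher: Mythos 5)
Your proof is correct and takes essentially the same approach as the paper's: reduce to a single chamber $C\cong C(N)$, read cubes as intervals in $\bar N$, and for the converse produce a common chamber containing both edges (using the product decomposition $\G_J=\prod_{k\in J}G_k$ in the both-down case) before exhibiting the 2-cube via the interval $[J\setminus\{i,j\},\,J\cup\{i,j\}]$. The only cosmetic difference is that you analyze the set of chambers containing each edge uniformly and then case-check sphericity, whereas the paper case-splits on whether $\{i,j\}\subset J$ from the outset.
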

\begin{proof}
	If $e_1,e_2$ form the corner of a 2-cube $Q$ at $v$, then the types of the vertices of $Q$ correspond to the interval from $t(v)-\{i,j\}$ to $t(v)\cup\{i,j\}$ in $\bar{N}$ (Definitions \ref{defn:cubecone} and \ref{defn:building}), so $i$ is adjacent to $j$.
	
	Conversely, suppose that $i$ is adjacent to $j$.
	Let $t(v)=J$.
	It suffices to show that there is a chamber $C$ containing both $e_1$ and $e_2$, as then we can consider the 2-cube $Q\subset C$ whose vertex types correspond to the interval from $J-\{i,j\}$ to $J\cup\{i,j\}$ in $\bar{N}$, and we observe that $e_1,e_2$ form the corner of $Q$ at $v$.
	If $\{i,j\}\not\subset J$, say $i\notin J$, then the other endpoint $u$ of $e_1$ is of type $J\cup\{i\}$, so $v\leq u$, and any chamber containing $e_2$ also contains $e_1$ (Lemma \ref{lem:min}\ref{item:stayC}).
	Now suppose $\{i,j\}\subset J$.
	Consider chambers $C_{\gamma_1},C_{\gamma_2}$ containing $e_1,e_2$ respectively.
	We have $\gamma_1^{-1}\gamma_2\in\G_J$ by Lemma \ref{lem:chamberint}, so we may write $\gamma_1^{-1}\gamma_2=g_i g_j g$ where $g_i\in G_i$, $g_j\in G_j$ and $g\in \G_{J-\{i,j\}}$.
	The other endpoints of $e_1,e_2$ are of types $J-\{i\},J-\{j\}$ respectively, so Lemma \ref{lem:chamberint} implies that
	$$e_1\in C_{\gamma_1g_jg}\quad\text{and}\quad e_2\in C_{\gamma_2 g_i^{-1}}.$$
	But $g_i,g_j,g$ commute, so these two chambers are in fact the same chamber.
\end{proof}

\begin{lem}\label{lem:parallelchamber}
	Let $e,e'$ be $i$-edges in chambers $C,C'$ respectively. Then $e$ is parallel to $e'$ if and only if $C$ and $C'$ are contained in the same $i\jperp$-chamber-residue.
\end{lem}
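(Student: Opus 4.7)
The plan is to prove the two directions separately, reducing each to a single-step version and then iterating.

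\emph{Forward direction.} I would first establish the following base case: if two chambers $D,D'$ share a common $i$-edge $e_0$, then $D,D'$ lie in a common $i\jperp$-chamber-residue. Indeed, write $e_0$ with endpoints $v_{bot},v_{top}$ of types $J$ and $J\cup\{i\}$; since $e_0$ is an $i$-edge, Lemma \ref{lem:iedge}\ref{item:iedge} gives $i\in J\jperp$, equivalently $J\subset i\jperp$. Now $v_{bot}\in D\cap D'$, so by Lemma \ref{lem:chamberint} there is a minimal $J'\in\bar{N}$ with $D'=D_\gamma$ for some $\gamma\in\G_{J'}$, and the characterization of $D\cap D'$ in Lemma \ref{lem:min}\ref{item:wCcapC'} forces $J'\subset t(v_{bot})=J$. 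Hence by Lemma \ref{lem:CJC}, $D'\in\cC(J',D)\subset\cC(i\jperp,D)$.

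Given parallel $i$-edges $e\in C$ and $e'\in C'$, parallelism decomposes as a chain $e=e_0,e_1,\dots,e_n=e'$ with each consecutive pair elementary parallel through a 2-cube $Q_k$. Each $Q_k$ is contained in some chamber $C''_k$ (every cube of $\Delta$ lies in a chamber, by Corollary \ref{cor:cubestructure} and Definition \ref{defn:building}), and the two opposite $i$-edges of $Q_k$ are both edges of $C''_k$. Applying the base case to the consecutive pairs $(C,C''_0)$, $(C''_{k-1},C''_k)$, and $(C''_{n-1},C')$, which successively share the edges $e_0, e_k, e_n$, and using transitivity of being in a common $i\jperp$-chamber-residue (Lemma \ref{lem:intchamres}), we conclude that $C,C'$ lie in a common $i\jperp$-chamber-residue.

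\emph{Backward direction.} By induction on the length of an $i\jperp$-gallery from $C$ to $C'$, it suffices to handle the case where $C$ and $C'$ are $j$-adjacent for some $j\in i\jperp$ (together with the trivial base case $C=C'$, which is immediate from Lemma \ref{lem:iedge}\ref{item:oneihyp}). Since $j\in i\jperp$ the pair $\{i,j\}$ is spherical, so each of $C,C'$ contains a 2-cube with vertices of types $\emptyset,\{i\},\{j\},\{i,j\}$ (Lemma \ref{lem:min}\ref{item:Q}). Let $w=\wedge(C,C')$ be the shared type-$\{j\}$ vertex (Lemma \ref{lem:min}\ref{item:iadjwedge}), and let $v\in C$ be the type-$\{i,j\}$ vertex with $w\leq v$. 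Then by Lemma \ref{lem:min}\ref{item:wCcapC'} we have $v\in C\cap C'$, so $v$ is also the type-$\{i,j\}$ vertex of $C'$. The $i$-edge from $w$ to $v$ therefore lies in both 2-cubes, hence is parallel to an $i$-edge of $C$ and to an $i$-edge of $C'$.

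The main subtlety is in the forward direction: one must notice that when two chambers share an $i$-edge they actually lie in a common $J$-chamber-residue for $J\subset i\jperp$ (not merely in a common $(J\cup\{i\})$-chamber-residue), and this is what makes the minimal-$J'$ observation in Lemma \ref{lem:chamberint} essential.
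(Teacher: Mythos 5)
Your proof is correct, and the two directions use the same underlying mechanism as the paper's proof, though the forward direction is organized a bit differently. The paper reduces to a \emph{single} elementary parallelism $e\sim e'$ through a 2-cube $Q$ and then argues directly that one of the two lower-rank endpoints of $e,e'$ lies in $C\cap C'$ (via Lemma \ref{lem:min}\ref{item:stayC}), from which $J\subset i\jperp$ and Lemma \ref{lem:cCv} finish. You instead first prove a cleaner base case --- two chambers sharing an $i$-edge lie in a common $i\jperp$-chamber-residue --- and then reduce a general parallelism chain to this by inserting, for each 2-cube $Q_k$, an auxiliary chamber $C''_k\supset Q_k$; this buys you a slightly more modular argument at the cost of introducing the intermediate chambers and having to justify that every cube lies in a chamber (which you do). Both routes come down to locating a vertex of type $J\subset i\jperp$ in the intersection of the relevant chambers. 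The backward direction is essentially identical to the paper's; you construct the shared $i$-edge in $C\cap C'$ via Lemma \ref{lem:min}\ref{item:Q} and \ref{item:wCcapC'} whereas the paper reads it off Lemma \ref{lem:chamberint}, but the content is the same. One small slip: the transitivity of ``lying in a common $i\jperp$-chamber-residue'' is not what Lemma \ref{lem:intchamres} says (that is about intersections of chamber-residues based at a single chamber). Transitivity is simply the fact that $J$-chamber-residues partition $\cC(\Delta)$ into $\G_J$-cosets, which is Lemma \ref{lem:CJC}. Also, in your base case, you could shorten the argument by citing Lemma \ref{lem:cCv} directly: $v_{bot}\in D\cap D'$ means $D,D'\in\cC(v_{bot})$, which is a $J$-chamber-residue with $J\subset i\jperp$.
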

\begin{proof}
	First suppose that $e$ is parallel to $e'$. We wish to show that $C$ and $C'$ are contained in the same $i\jperp$-chamber-residue. Since being in the same $i\jperp$-chamber-residue is an equivalence relation on chambers, it suffices to consider the case where $e$ is elementary parallel to $e'$.
	Let $Q$ be the 2-cube containing $e$ and $e'$, and let $u,u'$ be the endpoints of $e,e'$ respectively with least rank. If we label the vertices of $Q$ by their types, then $Q$ corresponds to some interval in $\bar{N}$, and one of $u,u'$ is at the bottom of this interval -- say $u$. Then $u\leq u'$, hence $u'\in C\cap C'$ by Lemma \ref{lem:min}\ref{item:stayC}. If $u'$ has type $J$ then $i\in J\jperp$ by Lemma \ref{lem:iedge}, so $J\subset i\jperp$ and $C'\in\cC(i\jperp,C)$ by Lemma \ref{lem:cCv}.
	
	Conversely, suppose that $C$ and $C'$ are contained in the same $i\jperp$-chamber-residue. We wish to show that $e$ is parallel to $e'$. As parallelism of edges is an equivalence relation, it suffices to consider the case where $C,C'$ are $j$-adjacent for some $j\in i\jperp$.
	By Lemma \ref{lem:chamberint} there is an $i$-edge $e''$ in $C\cap C'$ joining a vertex of type $\{j\}$ to a vertex of type $\{i,j\}$. But by Lemma \ref{lem:iedge}, $C$ and $C'$ each intersect just one parallelism class of $i$-edges, hence $e$ is parallel to $e'$ as required.
\end{proof}

For a vertex $v\in\Delta^0$ recall the set $E^-(v)$ from Definition \ref{defn:E-}.
These sets give us a way to characterize level-equivalence in terms of hyperplanes as follows.

\begin{lem}\label{lem:approxhyp}
	For vertices $v_1,v_2\in\Delta^0$ we have that $v_1\approx v_2$ if and only if
	\begin{equation}\label{samehyps}
		\{H(e_1)\mid e_1\in E^-(v_1)\}=\{H(e_2)\mid e_2\in E^-(v_2)\}.
	\end{equation}
\end{lem}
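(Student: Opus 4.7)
The plan is to prove both directions of the equivalence simultaneously by showing that for any vertex $v$ of type $J$, the set $\{H(e)\mid e\in E^-(v)\}$ records exactly the pair $(J,\cC([v]))$, and hence is a complete invariant of the level-equivalence class.

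The first step is to unpack $E^-(v)$ for $v$ of type $J$. Since $J$ is spherical, the lower-rank neighbors of $v$ have types $J-\{i\}$ for $i\in J$, and by Lemma \ref{lem:iedge} each such descending edge is an $i$-edge. For a fixed $i\in J$, the type-$(J-\{i\})$ neighbors of $v$ biject with the $(J-\{i\})$-chamber-residues in $\cC(v)=\cC(J,C)$ for any $C\in\cC(v)$, and by Lemma \ref{lem:parallelchamber} the hyperplane dual to the $i$-edge through $C'\in\cC(v)$ is identified with the $i\jperp$-chamber-residue of $C'$.

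The main step is to apply Lemma \ref{lem:product} with $\{i\}$ in the role of $J$: we get a product decomposition $\cC(i\uperp,C)\cong\cC(\{i\},C)\times\cC(i\jperp,C)$, inside which $\cC(v)=\cC(J,C)$ sits as $\cC(\{i\},C)\times\cC(J-\{i\},C)$ (using $J-\{i\}\subset i\jperp$ since $J$ is spherical). The $i\jperp$-residues inside $\cC(i\uperp,C)$ are precisely the fibers of the first projection, and $\cC(v)$ surjects onto the first factor; so each such $i\jperp$-residue meets $\cC(v)$ in a unique $(J-\{i\})$-residue. In particular, distinct $i$-edges of $E^-(v)$ are dual to distinct hyperplanes, and the $i$-hyperplanes appearing in $\{H(e)\mid e\in E^-(v)\}$ biject with the $i\jperp$-residues in $\cC(i\uperp,C)$, whose union (as a set of chambers) is all of $\cC(i\uperp,C)$.

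Intersecting over $i\in J$ via Lemma \ref{lem:intchamres} gives $\bigcap_{i\in J}\cC(i\uperp,C)=\cC(J\uperp,C)$, which equals $\cC([v])$ by Lemma \ref{lem:cC[v]}. So the hyperplane set determines, and is determined by, the type $J$ (read off from the labels) together with $\cC([v])$. Both directions then follow: if $v_1\approx v_2$, Corollary \ref{cor:leveltype} gives $t(v_1)=t(v_2)$, and by definition $\cC([v_1])=\cC([v_2])$, so the hyperplane sets agree. Conversely, equality of hyperplane sets forces $t(v_1)=t(v_2)=J$ and $\cC([v_1])=\cC([v_2])$; any chamber $C_2\in\cC(v_2)$ therefore lies in $\cC([v_1])$, and Lemma \ref{lem:cC[v]} identifies its unique type-$J$ vertex, namely $v_2$, as an element of $[v_1]$. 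The main obstacle is the product-decomposition bookkeeping in the main step above, but once the two-factor identification is in place the rest is formal manipulation of the residue and hyperplane structure developed in Sections \ref{sec:galleries} and \ref{sec:rank}.
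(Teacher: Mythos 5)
Your proof is correct, and it takes a genuinely different (more unified) route than the paper. The paper proves the two implications separately: for the forward direction it invokes Lemma \ref{lem:approxjperp} to produce, for each $e_1\in E^-(v_1)$, a specific $e_2\in E^-(v_2)$ in the chamber $\cC(v_2)\cap\cC(J\jperp,C_1)$, and then applies Lemma \ref{lem:parallelchamber}; for the converse it extracts one $j$-edge per $j\in J$, uses Lemma \ref{lem:parallelchamber} and Lemma \ref{lem:cCv} to place $C_2$ in $\cC(J\cup j\jperp,C_1)$ for every $j$, and intersects via Lemma \ref{lem:intchamres} to land in $\cC(J\uperp,C_1)=\cC([v_1])$. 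You instead establish a single invariance claim — that $\{H(e)\mid e\in E^-(v)\}$ encodes exactly the pair $(J,\cC([v]))$ — by making explicit the product decomposition from Lemma \ref{lem:product} applied with $\{i\}$ in place of $J$, showing the $i$-hyperplanes in the set correspond to the fibers of the first projection on $\cC(i\uperp,C)$, and then intersecting over $i\in J$. Both directions then drop out formally. The two arguments share the same essential machinery (Lemmas \ref{lem:parallelchamber}, \ref{lem:intchamres}, \ref{lem:cC[v]}); your version is more conceptual since it identifies the hyperplane set as a complete invariant rather than chasing edges, while the paper's is more economical since Lemma \ref{lem:approxjperp} already packages the product bookkeeping you redo by hand. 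One small note: the paper explicitly handles $J=\emptyset$ via Remark \ref{remk:extreme}, whereas your argument handles it implicitly through the conventions $\emptyset\uperp=I$ and $\bigcap_{i\in\emptyset}\cC(i\uperp,C)=\cC(\Delta)$; it would be worth flagging this case so a reader does not stumble.
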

\begin{proof}	
	Suppose that $v_1\approx v_2$, with $t(v_1)=t(v_2)=J$ (Corollary \ref{cor:leveltype}).
	Let $e_1\in E^-(v_1)$ be a $j$-edge contained in a chamber $C_1$.
	Note that $j\in J$.
	By Lemma \ref{lem:approxjperp} there exists a chamber $C_2\in\cC(v_2)\cap\cC(J\jperp,C_1)$, so in particular $C_2\in\cC(j\jperp,C_1)$.
	Let $e_2$ be the unique $j$-edge in $C_2$ incident to $v_2$, so $e_2\in E^-(v_2)$.
	But then $H(e_1)=H(e_2)$ by Lemma \ref{lem:parallelchamber}.
	This proves the $\subset$ inclusion in (\ref{samehyps}), and the reverse inclusion holds by symmetry.
	
	Conversely, suppose (\ref{samehyps}) holds and let $t(v_1)=J$. We know that $E^-(v_1)$ contains a subset $\{e_1^j\}_{j\in J}$ where $e_1^j$ is a $j$-edge, and we know from (\ref{samehyps}) that $E^-(v_2)$ contains a corresponding subset $\{e_2^j\}_{j\in J}$ such that $e_2^j$ is parallel to $e_1^j$.
	In particular, this implies that $J\subset t(v_2)$, and by symmetry we have $t(v_2)=J$.
	If $J=\emptyset$ then $\rk(v_1)=\rk(v_2)=0$, and $v_1\approx v_2$ by Remark \ref{remk:extreme}, so suppose $J\neq\emptyset$.
	Say that each $e_1^j$ is contained in a chamber $C_1^j$ and that each $e_2^j$ is contained in a chamber $C_2^j$.
	Lemma \ref{lem:parallelchamber} implies that $C_1^j$ and $C_2^j$ are contained in the same $j\jperp$-chamber-residue.
	But we also know from Lemma \ref{lem:cCv} that the chambers $\{C_1^j\}$ are contained in the $J$-chamber-residue $\cC(v_1)$, and that the chambers $\{C_2^j\}$ are contained in the $J$-chamber-residue $\cC(v_2)$, so we deduce that
	\begin{equation}\label{intchamres}
		C_2\in\bigcap_{j\in J}\cC(J\cup j\jperp,C_1)
	\end{equation}	
	for any $C_1\in\cC(v_1)$ and $C_2\in\cC(v_2)$.
	But $\cap_{j\in J}(J\cup j\jperp)=J\uperp$, so the intersection of chamber-residues in (\ref{intchamres}) is equal to the chamber-residue $\cC(J\uperp,C_1)$ by Lemma \ref{lem:intchamres}.
	It then follows from Lemma \ref{lem:cC[v]} that $C_2\in\cC([v_1])$, and since $v_2$ is the only vertex in $C_2$ of type $J$ we deduce that $v_1\approx v_2$.
\end{proof}

We now relate hyperplane stabilizers to level-equivalence class stabilizers.

\begin{prop}\label{prop:hypstabs}
For each level-equivalence class $[v]$, the intersection of the $\Aut_{\rk}(\Delta)$-stabilizers of the hyperplanes $\{H(e)\mid e\in E^-(v)\}$ is a subgroup of the $\Aut_{\rk}(\Delta)$-stabilizer of $[v]$, and it has finite index if the groups $G_i$ are finite.
\end{prop}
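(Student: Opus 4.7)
The plan is to leverage Lemma \ref{lem:approxhyp}, which characterizes the level-equivalence class $[v]$ in terms of the finite set of hyperplanes $S := \{H(e)\mid e\in E^-(v)\}$, and then recognize the intersection of hyperplane stabilizers as the kernel of a permutation action of $\Aut_{\rk}(\Delta)$-stabilizer of $[v]$ on $S$.

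For the first claim, let $g\in\Aut_{\rk}(\Delta)$ lie in the intersection of the stabilizers of the hyperplanes in $S$. By Proposition \ref{prop:preserved}, $g$ preserves the poset structure and ranks on $\Delta^0$, so for any vertex $v'$ we have $E^-(gv')=gE^-(v')$, and therefore
\[
\{H(e)\mid e\in E^-(gv')\}=\{H(ge')\mid e'\in E^-(v')\}=\{gH(e')\mid e'\in E^-(v')\}.
\]
Now suppose $v'\in[v]$. By Lemma \ref{lem:approxhyp} we have $\{H(e')\mid e'\in E^-(v')\}=S$, and since $g$ stabilizes every element of $S$, the equation above gives $\{H(e)\mid e\in E^-(gv')\}=S$. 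Applying Lemma \ref{lem:approxhyp} in the reverse direction yields $gv'\approx v$, i.e.\ $gv'\in[v]$. Hence $g[v]\subset[v]$, and running the same argument with $g^{-1}$ gives $g[v]=[v]$, proving that $g$ stabilizes $[v]$.

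For the finite-index statement, assume $\cG$ and the $G_i$ are finite, so $\Delta$ is locally finite (Lemma \ref{lem:lfinite}); in particular $E^-(v)$ and hence $S$ are finite. Let $H<\Aut_{\rk}(\Delta)$ denote the stabilizer of $[v]$. I claim that $H$ acts on $S$ by permutations. Indeed, given $g\in H$ and $H(e)\in S$ with $e\in E^-(v)$, the computation above (applied with $v'=v$, so $gv\in[v]$) shows that $\{gH(e)\mid e\in E^-(v)\}=\{H(e')\mid e'\in E^-(gv)\}$, and this last set equals $S$ by Lemma \ref{lem:approxhyp} since $gv\in[v]$. Hence $gS=S$, giving a homomorphism $\Upsilon\colon H\to\mathrm{Sym}(S)$. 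The kernel of $\Upsilon$ is precisely the set of $g\in H$ that stabilize each individual hyperplane in $S$, which is exactly $\bigcap_{e\in E^-(v)}\mathrm{Stab}_{\Aut_{\rk}(\Delta)}(H(e))\cap H$; by the first part this intersection is already contained in $H$, so the kernel equals $\bigcap_{e\in E^-(v)}\mathrm{Stab}_{\Aut_{\rk}(\Delta)}(H(e))$. Since $\mathrm{Sym}(S)$ is finite, this kernel has finite index in $H$, completing the proof.

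The only mild subtlety is confirming that every $g\in H$ indeed permutes $S$ (rather than merely sending $S$ into a set of hyperplanes dual to edges in $E^-$'s of vertices of $[v]$); this is handled uniformly by Lemma \ref{lem:approxhyp}, which says that all vertices of $[v]$ produce the same set $S$. Beyond that, the argument is a routine combination of Proposition \ref{prop:preserved}, Lemma \ref{lem:approxhyp}, and local finiteness.
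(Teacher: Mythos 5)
Your proof is correct and takes essentially the same approach as the paper's: both rely on Lemma~\ref{lem:approxhyp} to translate level-equivalence into equality of the finite sets $S=\{H(e)\mid e\in E^-(v)\}$, and both recognize the intersection of hyperplane stabilizers as the kernel of the resulting finite permutation action of the $\Aut_{\rk}(\Delta)$-stabilizer of $[v]$ on $S$. You make explicit the use of Proposition~\ref{prop:preserved} (so $E^-(gv')=gE^-(v')$) and the identification of the kernel, which the paper leaves implicit, but the substance is identical.
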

\begin{proof}
	Let $A$ be the $\Aut_{\rk}(\Delta)$-stabilizer of $[v]$.
	For each level-equivalence class $[v]$, it follows from Lemma \ref{lem:approxhyp} that the intersection of the $\Aut_{\rk}(\Delta)$-stabilizers of the hyperplanes $\{H(e)\mid e\in E^-(v)\}$ is a subgroup of $A$. It also follows that $A$ acts on the set of hyperplanes $\{H(e)\mid e\in E^-(v)\}$. If the groups $G_i$ are finite then this set of hyperplanes is finite by the formula for $d^-(v)$ given in Definition \ref{defn:E-}, so there is a finite-index subgroup of $A$ that stabilizes each of these hyperplanes.
\end{proof}

Recall the definition of separable subgroup.

\begin{defn}(Separable subgroups)\\\label{defn:separable}
	A subgroup $H$ of a group $G$ is \emph{separable} (in $G$) if for any $g\in G-H$ there is a homomorphism $f:G\to\bar{G}$ to a finite group such that $f(g)\notin f(H)$.	
\end{defn}

We will need the following elementary facts about separable subgroups.

\begin{lem}\label{lem:separable}
	Let $G$ be a group. Then the following hold:
	\begin{enumerate}
		\item\label{item:H1H2} If $H_1<H_2<G$ with $H_1$ finite index in $H_2$ and separable in $G$, then $H_2$ is separable in $G$.
		\item\label{item:intH1} If $H_1<H_2<G$ with $H_1$ finite index in $H_2$ and separable in $G$, then there exists a finite-index normal subgroup $\hat{G}\triangleleft G$ with $H_2\cap\hat{G}<H_1$.
	\end{enumerate}
\end{lem}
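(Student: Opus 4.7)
Both parts reduce to the finite coset decomposition $H_2=\bigsqcup_{i=1}^n h_i H_1$ with $h_1=1$, which is available because $H_1$ has finite index in $H_2$. The unifying idea is that separability of $H_1$ provides, for each of the finitely many ``bad'' elements that need to be distinguished from $H_1$, a finite quotient doing the separating; combining these via products (for (a)) or intersections (for (b)) then handles $H_2$.

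For \ref{item:H1H2}, fix $g\in G\setminus H_2$. The coset decomposition gives $h_i^{-1}g\notin H_1$ for every $i$, so by separability of $H_1$ one can choose, for each $i$, a homomorphism $f_i:G\to F_i$ to a finite group with $f_i(h_i^{-1}g)\notin f_i(H_1)$. I would then take the diagonal $f:=(f_1,\dots,f_n):G\to\prod_i F_i$ and verify $f(g)\notin f(H_2)$: an equality $f(g)=f(h_j h)$ with $h\in H_1$ would yield $f_j(h_j^{-1}g)=f_j(h)\in f_j(H_1)$, contradicting the choice of $f_j$.

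For \ref{item:intH1}, I would first use the standard reformulation of separability: for every $h\notin H_1$ there is a finite-index subgroup $K<G$ with $H_1\subset K$ and $h\notin K$ (namely $K=f^{-1}(f(H_1))$ for an appropriate finite-group quotient $f$). Apply this to each $h_i$ with $i\geq 2$ to produce finite-index $K_i<G$ with $H_1\subset K_i$ and $h_i\notin K_i$, and set $K:=\bigcap_{i=2}^n K_i$. Then $K$ is finite-index in $G$, contains $H_1$, and excludes each $h_i$ for $i\geq 2$. The key inclusion $H_2\cap K\subset H_1$ follows by a short check: any $h\in H_2\cap K$ lies in some $h_i H_1$, write $h=h_i h'$ with $h'\in H_1\subset K$; then $h_i=h(h')^{-1}\in K$ forces $i=1$, so $h\in H_1$. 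Finally I would take $\hat{G}$ to be the normal core $\bigcap_{g\in G}gKg^{-1}$, which is normal and of finite index in $G$ (as the kernel of the action on finitely many cosets of $K$), and contained in $K$; hence $H_2\cap\hat{G}\subset H_2\cap K\subset H_1$.

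Neither statement presents a serious obstacle, as these are standard facts about separable subgroups. The only point requiring mild care is that \ref{item:intH1} demands a \emph{normal} finite-index subgroup; this is the reason for passing from $K$ to its normal core at the end, a step that only shrinks the subgroup and therefore preserves the inclusion $H_2\cap\hat{G}\subset H_1$.
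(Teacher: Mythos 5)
The paper states this lemma as a pair of ``elementary facts'' without supplying a proof, so there is nothing to compare against; your argument is correct and is the standard one (finite coset decomposition of $H_1$ in $H_2$, diagonal product of separating quotients for part \ref{item:H1H2}, intersection of finite-index subgroups containing $H_1$ and excluding the nontrivial coset representatives followed by a normal core for part \ref{item:intH1}).
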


When we prove the ``if'' direction of Theorem \ref{thm:Delta} in later sections, we will need the following subgroups of the uniform lattice $\La<\Aut(\Delta)$ to be separable (in $\La$): (1) $\La$-stabilizers of hyperplanes, and (2) finite-index subgroups of $\La$-stabilizers of level-equivalence classes.
In the following lemma we give a sufficient condition for this involving intersections of hyperplane-stabilizers; in particular, this shows that the condition from Theorem \ref{thm:Delta} -- that convex subgroups of $\La$ are separable -- is sufficient.
(Recall that a convex subgroup of $\La$ is a subgroup that stabilizes and acts cocompactly on a convex subcomplex of $\Delta$.)

\begin{prop}\label{prop:separable}
	Suppose that the graph $\cG$ and the groups $G_i$ are finite, and let $\Lambda<\Aut(\Delta)$ be a uniform lattice. If the finite-index subgroups of finite intersections of $\Lambda$-stabilizers of hyperplanes are separable in $\Lambda$, then the finite-index subgroups of the $\La$-stabilizers of level-equivalence classes are also separable in $\La$.
	In particular, this holds if all convex subgroups of $\La$ are separable.
\end{prop}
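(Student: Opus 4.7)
The plan is to use Proposition \ref{prop:hypstabs} as a bridge: for a level-equivalence class $[v]$, the intersection of rank-preserving stabilizers of the hyperplanes dual to edges in $E^-(v)$ is finite-index in the rank-preserving stabilizer of $[v]$. After intersecting with $\La$, this will produce a subgroup that is simultaneously a finite-index subgroup of $\text{Stab}_\La([v])$ and a finite-index subgroup of a finite intersection of $\La$-stabilizers of hyperplanes -- so the hypothesis applies and Lemma \ref{lem:separable}\ref{item:H1H2} then propagates separability back to any finite-index subgroup of $\text{Stab}_\La([v])$.

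More precisely, I would fix a level-equivalence class $[v]$ and a finite-index subgroup $K<\text{Stab}_\La([v])$. Finiteness of $\cG$ and the $G_i$ makes $\Delta$ locally finite (Lemma \ref{lem:lfinite}), so $E^-(v)$ is finite and yields a finite collection of hyperplanes $H_1,\dots,H_n$. Proposition \ref{prop:hypstabs} says $\bigcap_k \text{Stab}_{\Aut_{\rk}(\Delta)}(H_k)$ has finite index in $\text{Stab}_{\Aut_{\rk}(\Delta)}([v])$. Combining this with the fact that $\Aut_{\rk}(\Delta)$ has finite index in $\Aut(\Delta)$ (Proposition \ref{prop:finiteindex}), the subgroup
\[
S:=\Aut_{\rk}(\Delta)\cap \bigcap_k \text{Stab}_\La(H_k)
\]
has finite index in $\text{Stab}_\La([v])$, and also has finite index in $\bigcap_k \text{Stab}_\La(H_k)$. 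Since $K$ and $S$ are both finite-index in $\text{Stab}_\La([v])$, the intersection $K\cap S$ is finite-index in both; in particular it is a finite-index subgroup of $\bigcap_k \text{Stab}_\La(H_k)$. By hypothesis $K\cap S$ is separable in $\La$, and then Lemma \ref{lem:separable}\ref{item:H1H2} gives separability of $K$.

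For the ``in particular'' clause I would show that every finite intersection of $\La$-stabilizers of hyperplanes is a convex subgroup of $\La$, so that the separability of convex subgroups forces the hypothesis to hold (passing to finite-index subgroups is automatic since they act cocompactly on the same convex subcomplex). Each hyperplane $H$ has a convex carrier $N(H)\subset\Delta$ stabilized by $\text{Stab}_\La(H)$; since $\La$ is uniform on the locally finite cube complex $\Delta$, a standard orbit argument shows that $\text{Stab}_\La(H)$ acts cocompactly on $N(H)$. The intersection of finitely many carriers is convex, and when non-empty the same argument gives cocompactness of the intersection of stabilizers. The specific intersections needed for the main argument -- those coming from $E^-(v)$ -- always have the common vertex $v$ in every carrier, so non-emptiness is automatic.

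The main obstacle is keeping the indexing straight between $\La$, $\La\cap\Aut_{\rk}(\Delta)$, and the various stabilizers: Proposition \ref{prop:hypstabs} is stated inside $\Aut_{\rk}(\Delta)$ while the separability hypothesis is stated inside $\La$, and one must verify that intersecting with $\La\cap\Aut_{\rk}(\Delta)$ at each stage does not spoil the relevant finite-index relations. Once this accounting is done the argument is essentially a sequence of applications of Lemma \ref{lem:separable}\ref{item:H1H2}.
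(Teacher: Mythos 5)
Your proposal is correct and follows essentially the same argument as the paper: you identify the same finite-index chain via Proposition \ref{prop:hypstabs} and Proposition \ref{prop:finiteindex}, the same commensurability observation, and the same final application of Lemma \ref{lem:separable}\ref{item:H1H2}; in particular your subgroup $S$ is exactly the paper's $M^{\rk}$. The only material you add is the justification of the ``in particular'' clause via hyperplane carriers, which the paper leaves implicit; your observation that the relevant intersections coming from $E^-(v)$ all contain the vertex $v$, so their carriers have nonempty convex intersection, is exactly the right point.
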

\begin{proof}
	Let $[v]$ be a level-equivalence class and let $L<\La_{[v]}$ be a finite-index subgroup.
	Let $M<\La$ be the intersection of the $\La$-stabilizers of the hyperplanes $\{H(e)\mid e\in E^-(v)\}$.
	Let $\La^{\rk}_{[v]},M^{\rk}$ be the rank-preserving subgroups of $\La_{[v]},M$ respectively.
	Proposition \ref{prop:hypstabs} implies that $M^{\rk}$ is a finite-index subgroup of $\La^{\rk}_{[v]}$.
	Proposition \ref{prop:finiteindex} implies that $\Aut_{\rk}(\Delta)$ has finite index in $\Aut(\Delta)$, so $\La^{\rk}_{[v]},M^{\rk}$ have finite index in $\La_{[v]},M$ respectively.
	Hence $L$ and $M$ are commensurable.
	As $L\cap M$ has finite index in $M$, the hypothesis of the proposition tells us that $L\cap M$ is separable in $\La$.
	And as $L\cap M$ has finite index in $L$, we deduce from Lemma \ref{lem:separable}\ref{item:H1H2} that $L$ is separable in $\La$.
\end{proof}

We recall the definition of a group acting specially on a CAT(0) cube complex \cite{HaglundWise08}, phrased in terms of parallelism of edges.

\begin{defn}(Acting specially)\\\label{defn:specially}
	Let $G$ be a group acting on a CAT(0) cube complex $X$. We say that $G$ acts \emph{specially} on $X$ if the following two properties are satisfied:
	\begin{enumerate}
		\item(Acting \emph{cleanly})\label{item:cleanly} If $\overrightarrow{e_1}, \overrightarrow{e_2}$ are distinct oriented edges with common initial vertex $v$, and $g\in G$, then $g\overrightarrow{e_1}$ is not parallel to $\overrightarrow{e_2}$.
		\item(Acting \emph{nicely}) Suppose edges $e_1,e_2$ form the corner of a 2-cube at a vertex $v$, and suppose edges $e'_1,e'_2$ are incident at a vertex $v'$. If $g\in G$ is such that $ge_1$ is parallel to $e'_1$, and $e_2$ is parallel to $e'_2$, then $e'_1,e'_2$ form the corner of a 2-cube at $v'$.
	\end{enumerate}
	We say that a uniform lattice $\G<\Aut(X)$ is \emph{special} if it acts specially on $X$, and \emph{virtually special} if it has a finite-index subgroup that acts specially on $X$.
\end{defn}

It is well known that any convex subgroup of a virtually special uniform lattice is separable (see for instance \cite[Corollary 7.9 and Lemma 9.16]{HaglundWise08}).
Thus it follows from the following proposition that all convex subgroups of $\G$ are separable.
In particular, this implies the ``only if'' direction in Theorem \ref{thm:Delta}: $\La$ and $\G$ being weakly commensurable in $\Aut(\Delta)$ implies that all convex subgroups of $\La$ are separable.

\begin{prop}\label{prop:Gvspecial}
	$\G<\Aut(\Delta)$ is virtually special.
\end{prop}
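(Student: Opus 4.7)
The plan is to take $\hat{\G}$ to be the kernel of the homomorphism $\pi:\G\to\prod_{i\in I}G_i$ assembled from the retractions $\rho_{\{i\}}$ of Lemma \ref{lem:GammaJ}. Since $\cG$ and each $G_i$ are finite, the target is finite and $\hat{\G}$ has finite index in $\G$. I will verify that $\hat{\G}$ acts specially on $\Delta$ in the sense of Definition \ref{defn:specially}.

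For the clean condition, suppose $\overrightarrow{e_1}\ne\overrightarrow{e_2}$ are distinct oriented edges sharing initial vertex $v$ and $g\in\hat{\G}$ satisfies that $g\overrightarrow{e_1}$ is parallel to $\overrightarrow{e_2}$. Parallel edges share an $i$-label (Lemma \ref{lem:iedge}\ref{item:parisi}), and the canonical orientation for oriented parallelism of $i$-edges points from lower to higher type (Lemma \ref{lem:iedge}\ref{item:paror}); since $g\in\G$ preserves types, $\overrightarrow{e_1}$ and $\overrightarrow{e_2}$ are both $i$-edges oriented the same way relative to $v$. Set $J:=t(v)$. If both point to the higher-type endpoint (so $i\in J\jperp$) there is a unique such up-going $i$-edge at $v$, contradicting $\overrightarrow{e_1}\ne\overrightarrow{e_2}$. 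Hence $i\in J$ and both edges go from $v$ to vertices of type $J-\{i\}$. After translating by $\G$ so that $v$ is the type-$J$ vertex in the base chamber $C_*$, a chamber $C_{\alpha_k}$ containing $\overrightarrow{e_k}$ has $\alpha_k\in\G_J=G_i\times\G_{J-\{i\}}$; writing $\alpha_k=h_k\beta_k$ with $h_k\in G_i$ and $\beta_k\in\G_{J-\{i\}}$, distinctness of the edges forces $h_1\ne h_2$. The assumption $gH(e_1)=H(e_2)$ combined with Lemma \ref{lem:parallelchamber} gives $gC_{\alpha_1}\in\cC(i\jperp,C_{\alpha_2})$, so $g\in\alpha_2\G_{i\jperp}\alpha_1^{-1}$. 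Applying $\rho_{\{i\}}$, which kills both $\G_{i\jperp}$ and $\G_{J-\{i\}}$, yields $\rho_{\{i\}}(g)=h_2h_1^{-1}\ne 1$, contradicting $g\in\hat{\G}$.

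For the nice condition, suppose $e_1,e_2$ form the corner of a 2-cube at $v$ with labels $i_1,i_2$; by Lemma \ref{lem:ijadj}, $i_1$ and $i_2$ are adjacent in $\cG$. If $e'_1,e'_2$ are incident at $v'$ with $ge_1$ parallel to $e'_1$ and $e_2$ parallel to $e'_2$, then $e'_1,e'_2$ carry labels $i_1,i_2$ by Lemma \ref{lem:iedge}\ref{item:parisi}, and Lemma \ref{lem:ijadj} yields the required corner at $v'$. The hard part is the clean condition; its crux is Lemma \ref{lem:parallelchamber}, which converts the geometric hypothesis $gH(e_1)=H(e_2)$ into the group-theoretic inclusion $g\in\alpha_2\G_{i\jperp}\alpha_1^{-1}$ that the retraction $\rho_{\{i\}}$ can detect.
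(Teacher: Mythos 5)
Your proof is correct and follows essentially the same approach as the paper's: you use the same finite-index subgroup $\hat{\G}=\ker(\G\to\prod_{i\in I}G_i)$, the same deduction that $i\in J$ from uniqueness of the upward $i$-edge at $v$, the same use of Lemma~\ref{lem:parallelchamber} to convert parallelism into $i\jperp$-residue membership, and the same treatment of the nice condition via Lemmas~\ref{lem:iedge} and~\ref{lem:ijadj}. The only cosmetic difference is in the final step of the clean condition: after normalizing $v$ into $C_*$, you detect the contradiction algebraically as $\rho_{\{i\}}(g)=h_2h_1^{-1}\neq 1$, whereas the paper stays un-normalized and derives a geometric contradiction (the terminal vertices of $\overrightarrow{e_1},\overrightarrow{e_2}$ would coincide once the difference element is shown to lie in $\G_{J-\{i\}}$) — both are the same computation read in opposite directions.
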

\begin{proof}
	Let $\hat{\G}$ be the kernel of the natural map $\G\to \prod_{i\in I} G_i$.
	We show that $\hat{\G}$ is a special lattice in $\Aut(\Delta)$ by verifying the two properties from Definition \ref{defn:specially}:
	\begin{enumerate}
		\item Let $\overrightarrow{e_1}, \overrightarrow{e_2}$ be distinct oriented edges with common initial vertex $v$, let $\hat{\gamma}\in \hat{\G}$, and suppose for contradiction that $\hat{\gamma}\overrightarrow{e_1}$ is parallel to $\overrightarrow{e_2}$.
		Suppose that $\overrightarrow{e_1}$ is an $i$-edge. The action of $\G$ preserves types of vertices, so $\hat{\gamma}\overrightarrow{e_1}$ is also an $i$-edge, and $\overrightarrow{e_2}$ is an $i$-edge too by Lemma \ref{lem:iedge}.
		Let $t(v)=J$.
		We must have $i\in J$, otherwise the terminal vertices $u_1,u_2$ of $\overrightarrow{e_1}, \overrightarrow{e_2}$ would be distinct vertices of the same type $J\cup\{i\}$, and any chamber containing $v$ would contain $u_1$ and $u_2$ by Lemma \ref{lem:min}\ref{item:stayC}, a contradiction.
		
		Let $\overrightarrow{e_1}$ be contained in a chamber $C_{\gamma_1}$. By Lemma \ref{lem:chamberint} there is a chamber $C_{\gamma_1 g}$ containing $\overrightarrow{e_2}$ with $g\in\G_J$.
		The edge $\hat{\gamma}\overrightarrow{e_1}$ is contained in the chamber $C_{\hat{\gamma}\gamma_1}$, so by Lemmas \ref{lem:CJC} and \ref{lem:parallelchamber} we deduce that $\hat{\gamma}\gamma_1\in \gamma_1g\G_{i\jperp}$.
		Since $\hat{\gamma}\in\hat{\G}$, we know that the projection of $\hat{\gamma}$ to $G_i$ is trivial, so we deduce that the projection of $g$ to $G_i$ is also trivial. We have $g\in\G_J$, and $\G_J$ is the product of the groups $G_j$ for $j\in J$, so $g$ must be a product of elements in the groups $G_j$ for $j\in J-\{i\}$, or equivalently $g\in\G_{J-\{i\}}$.
		The terminal vertices $u_1,u_2$ of $\overrightarrow{e_1}, \overrightarrow{e_2}$ are vertices of the same type $J-\{i\}$ in the chambers $C_{\gamma_1},C_{\gamma_1g}$ respectively, so $u_1=u_2$ by Lemma \ref{lem:chamberint} and the fact that $g\in\G_{J-\{i\}}$, contradicting the distinctness of $\overrightarrow{e_1}, \overrightarrow{e_2}$.
		
		\item Suppose edges $e_1,e_2$ form the corner of a 2-cube at a vertex $v$, and suppose edges $e'_1,e'_2$ are incident at a vertex $v'$. Suppose $\hat{\gamma}\in \hat{\G}$ is such that $\hat{\gamma}e_1$ is parallel to $e'_1$, and suppose $e_2$ is parallel to $e'_2$.
		Let $e_1$ be an $i$-edge and let $e_2$ be a $j$-edge.
		Lemma \ref{lem:ijadj} implies that $i$ is adjacent to $j$.
		The action of $\G$ preserves types of vertices, so Lemma \ref{lem:iedge} implies that $e'_1$ is an $i$-edge and $e'_2$ is a $j$-edge.
		Applying Lemma \ref{lem:ijadj} again, we deduce that $e'_1,e'_2$ form the corner of a 2-cube at $v'$, as required.\qedhere	
	\end{enumerate}
\end{proof}

\bigskip
\section{Residue-groupoids}\label{sec:resgroup}

In this section we introduce the notion of residue-groupoids, and prove several lemmas about them.

\begin{defn}(Residue groupoids)\\\label{defn:resgroup}
	Let $C\in\cC(\Delta)$ and $J\subset I$. A \emph{$\cC(J,C)$-groupoid} is a collection of maps $\phi=(\phi_{C_1,C_2})$, where $(C_1,C_2)$ ranges over all pairs of chambers in $\cC(J,C)$ and each map $\phi_{C_1,C_2}:C_1\to C_2$ is a cubical isomorphism preserving centers, such that the following properties hold for all $C_1,C_2,C_3\in\cC(J,C)$:
	\begin{enumerate}
		\item(Lower degree)\label{item:d-} $\phi_{C_1,C_2}$ preserves lower degrees of rank-1 vertices;
		\item(Identity)\label{item:id} $\phi_{C_1,C_1}$ is the identity map on $C_1$;		
		\item(Composition)\label{item:compose} $\phi_{C_2,C_3}\circ\phi_{C_1,C_2}=\phi_{C_1,C_3}$;
		\item(Intersection)\label{item:fixint} $\phi_{C_1,C_2}$ restricts to the identity map on $C_1\cap C_2$ whenever this intersection is non-empty.
	\end{enumerate}
If we do not wish to specify the chamber-residue $\cC(J,C)$ then we may refer to $\phi$ as a \emph{residue-groupoid}.
\end{defn}

\begin{remk}\label{remk:rankgroup}
	Within a given chamber, the rank of a vertex is equal to the length of a shortest edge path to the center, therefore each map $\phi_{C_1,C_2}$ within a residue-groupoid preserves ranks of vertices and induces an isomorphism between the poset structures on $C_1$ and $C_2$.
\end{remk}

In the following lemma we show that to define a residue-groupoid it is enough to define maps between adjacent chambers that satisfy a version of Definition \ref{defn:resgroup}.

\begin{lem}\label{lem:resgroup}
	Let $\cC(J,C)$ be a chamber-residue and let $\phi=(\phi_{C_1,C_2})$ be a collection of maps, where $(C_1,C_2)$ ranges over all pairs of adjacent chambers in $\cC(J,C)$ and each map $\phi_{C_1,C_2}:C_1\to C_2$ is a cubical isomorphism preserving centers. Then $\phi$ has a unique extension to a $\cC(J,C)$-groupoid $\bar{\phi}$ if the following five conditions hold:
	\begin{enumerate}[label=\wackyenum*]
		\item\label{item:d-'} Each $\phi_{C_1,C_2}$ preserves lower degrees of rank-1 vertices.
		\item\label{item:inverse} $\phi_{C_1,C_2}=\phi_{C_2,C_1}^{-1}$.		
		\item\label{item:coi} $\phi_{C_2,C_3}\circ\phi_{C_1,C_2}=\phi_{C_1,C_3}$ for all $i\in I$ and all chambers $C_1,C_2,C_3$ in the same $\{i\}$-chamber-residue in $\cC(J,C)$.
		\item\label{item:square} For all adjacent $i,j\in I$ and pairs of $i$-adjacent chambers $C_1,C_2$ and $C'_1,C'_2$, if the pairs $C_1,C'_1$ and $C_2,C'_2$ are $j$-adjacent then the following diagram commutes:
			\begin{equation}\label{square}
			\begin{tikzcd}[
				ar symbol/.style = {draw=none,"#1" description,sloped},
				isomorphic/.style = {ar symbol={\cong}},
				equals/.style = {ar symbol={=}},
				subset/.style = {ar symbol={\subset}}
				]
				C_1\ar{d}[swap]{\phi_{C_1,C'_1}}\ar{r}{\phi_{C_1,C_2}}&C_2\ar{d}{\phi_{C_2,C'_2}}\\
				C'_1\ar{r}[swap]{\phi_{C'_1,C'_2}}&C'_2.
			\end{tikzcd}
		\end{equation}
	\item\label{item:fixadj} $\phi_{C_1,C_2}$ restricts to the identity map on $C_1\cap C_2$.
	\end{enumerate}
\end{lem}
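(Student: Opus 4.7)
The plan is to define $\bar\phi_{C_1,C_2}$ by composing the given maps along a $J$-gallery from $C_1$ to $C_2$. Given $C_1,C_2\in\cC(J,C)$, Definition \ref{defn:chamberres} supplies a $J$-gallery $(D_0,D_1,\dots,D_n)$ with $C_1=D_0$ and $C_2=D_n$, and I set
\[
\bar\phi_{C_1,C_2}\;:=\;\phi_{D_{n-1},D_n}\circ\cdots\circ\phi_{D_0,D_1}.
\]
Each factor exists by hypothesis, so $\bar\phi_{C_1,C_2}$ is a cubical isomorphism $C_1\to C_2$ sending the center of $C_1$ to the center of $C_2$. Uniqueness is automatic: any $\cC(J,C)$-groupoid extending $\phi$ must satisfy axiom \ref{item:compose}, which forces the above formula.

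For well-definedness I invoke Lemma \ref{lem:moveinJ}: any two $J$-galleries joining $C_1$ and $C_2$ differ by a sequence of moves \ref{M1}--\ref{M3} whose intermediate galleries remain $J$-galleries. It then suffices to verify invariance of the composition under each individual move. Move \ref{M1} inserts a factor $\phi_{C',C_k}\circ\phi_{C_k,C'}$, which equals the identity on $C_k$ by condition \ref{item:inverse}. Move \ref{M2} replaces the segment $\phi_{C_k,C_{k+1}}$ with $\phi_{C',C_{k+1}}\circ\phi_{C_k,C'}$ between pairwise $i$-adjacent chambers, and the two agree by condition \ref{item:coi}. Move \ref{M3} swaps the corner $C_k\to C_{k+1}\to C_{k+2}$ for $C_k\to C'\to C_{k+2}$, which is precisely the commutativity of the diagram \eqref{square} supplied by condition \ref{item:square}.

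It remains to verify the four groupoid axioms. Axiom \ref{item:d-} follows from condition \ref{item:d-'} since a composition of maps preserving lower degrees of rank-$1$ vertices does likewise; axiom \ref{item:id} follows from the length-zero gallery; and axiom \ref{item:compose} follows by concatenating galleries. For the intersection axiom \ref{item:fixint}, suppose $C_1=C_{\gamma_1}$ and $C_2=C_{\gamma_2}$ with $C_1\cap C_2\neq\emptyset$, and let $J^*\in\bar N$ be the unique minimal set with $\gamma_1^{-1}\gamma_2\in\G_{J^*}$ provided by Lemma \ref{lem:chamberint}. Since $C_2\in\cC(J,C_1)$ we have $\gamma_1^{-1}\gamma_2\in\G_J$ as well; combined with Lemma \ref{lem:GammaJ}\ref{item:GJ1J2} this forces $J^*\subset J$. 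I may therefore choose a $J^*$-gallery $(D_0,\dots,D_n)$ from $C_1$ to $C_2$, which is also a $J$-gallery and so computes $\bar\phi_{C_1,C_2}$ by well-definedness. For each $k$, the minimal set attached to $(D_{k-1},D_k)$ by Lemma \ref{lem:chamberint} is contained in $J^*$, and hence (again by Lemma \ref{lem:chamberint}) $C_1\cap C_2\subset D_{k-1}\cap D_k$. Condition \ref{item:fixadj} then gives that each factor $\phi_{D_{k-1},D_k}$ fixes $D_{k-1}\cap D_k$ pointwise, so the composition fixes $C_1\cap C_2$ pointwise.

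The main technical obstacle is the intersection axiom: it requires both the containment $J^*\subset J$ so that a $J^*$-gallery can be used in the definition of $\bar\phi_{C_1,C_2}$, and the observation that every intermediate face $D_{k-1}\cap D_k$ along a $J^*$-gallery contains the whole of $C_1\cap C_2$. Once these are in hand, the remaining verifications and the well-definedness argument are routine bookkeeping with galleries and the three moves.
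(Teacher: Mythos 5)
Your proposal is correct and follows essentially the same approach as the paper: define $\bar\phi$ by composing along a $J$-gallery, invoke Lemma \ref{lem:moveinJ} together with conditions \wackyenum{2}--\wackyenum{4} to get well-definedness under the moves \ref{M1}--\ref{M3}, and then verify the four groupoid axioms. For the intersection axiom the paper argues vertex-by-vertex, fixing $v\in C_1\cap C_2$ of type $J'$ and running along a $(J\cap J')$-gallery (using Lemmas \ref{lem:cCv} and \ref{lem:intchamres}), whereas you use the minimal spherical set $J^*=t(\wedge(C_1,C_2))$ from Lemma \ref{lem:chamberint} to build one $J^*$-gallery that works for all of $C_1\cap C_2$ at once; this is a marginally cleaner packaging of the same idea, though your line ``hence (again by Lemma \ref{lem:chamberint}) $C_1\cap C_2\subset D_{k-1}\cap D_k$'' is doing real work -- it requires noting that $\gamma_1^{-1}\delta_{k-1}\in\G_{J^*}\subset\G_{\ut(p)}$ for any $[\gamma_1,p]\in C_1\cap C_2$, which is closer to Lemma \ref{lem:CJC} than to \ref{lem:chamberint} and deserves a sentence of justification.
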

\begin{proof}
Given an arbitrary pair of chambers	$C_1,C_2\in\cC(J,C)$, we define $\bar{\phi}_{C_1,C_2}$ to be the identity map on $C_1$ if $C_1=C_2$, otherwise we consider $(C'_0,C'_1,...,C'_n)$ a $J$-gallery from $C_1$ to $C_2$ and define $\bar{\phi}_{C_1,C_2}$ to be the following composition:
\begin{equation}\label{barphidef}
\bar{\phi}_{C_1,C_2}:=\phi_{C'_{n-1},C'_n}\circ\cdots\circ\phi_{C'_1,C'_2}\circ\phi_{C'_0,C'_1}
\end{equation}
First let's show that $\bar{\phi}_{C_1,C_2}$ is well-defined.
Indeed, by Lemma \ref{lem:moveinJ}, any two $J$-galleries from $C_1$ to $C_2$ differ by a sequence of moves \ref{M1}--\ref{M3} such that the intermediate galleries are also $J$-galleries. But $\bar{\phi}_{C_1,C_2}$ is invariant under moves \ref{M1},\ref{M2},\ref{M3} precisely because of properties \ref{item:inverse},\ref{item:coi},\ref{item:square} respectively, hence $\bar{\phi}_{C_1,C_2}$ is well-defined.

It is clear that $\bar{\phi}$ satisfies properties \ref{item:d-}--\ref{item:compose} of Definition \ref{defn:resgroup}, and also that it is the unique extension of $\phi$ to satisfy properties \ref{item:id} and \ref{item:compose}, so it remains to check property \ref{item:fixint}.
Let $C_1,C_2\in\cC(J,C)$ have non-empty intersection.
To show that $\bar{\phi}_{C_1,C_2}$ restricts to the identity map on $C_1\cap C_2$, it suffices to check that it fixes every vertex in $C_1\cap C_2$. Let $v\in C_1\cap C_2$ be a vertex. If $t(v)=J'$ then it follows from Lemma \ref{lem:cCv} that $C_1,C_2$ are contained in the same $J'$-chamber-residue. And we already know that $C_1,C_2$ are in the same $J$-chamber-residue, so it follows from Lemma \ref{lem:intchamres} that they are in fact in the same $(J\cap J')$-chamber-residue. Let $(C'_0,C'_1,...,C'_n)$ be a $(J\cap J')$-gallery from $C_1$ to $C_2$.
The chambers $C'_k$ are in the same $J'$-chamber-residue as $C_1,C_2$, so Lemma \ref{lem:cCv} tells us that $v\in\cap_k C'_k$.
For each $1\leq k\leq n$ the chambers $C'_{k-1},C'_k$ are adjacent, so $\phi_{C'_{k-1},C'_k}(v)=v$ by \ref{item:fixadj}.
It then follows from (\ref{barphidef}) that $\bar{\phi}_{C_1,C_2}(v)=v$ (or from the fact that $\bar{\phi}_{C_1,C_1}$ is the identity map on $C_1$ in the case that $C_1=C_2$).
\end{proof}

\begin{lem}\label{lem:Gammaresgroup}
	For each chamber-residue $\cC(J,C)$, the group $\G$ induces a $\cC(J,C)$-groupoid $\phi$, where each map $\phi_{C_1,C_2}$ is the restriction of the unique element $\gamma\in\G$ with $\gamma C_1=C_2$.
\end{lem}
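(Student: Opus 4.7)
The plan is to first pin down the unique $\gamma\in\G$ claimed in the statement, then define $\phi_{C_1,C_2}$ as the restriction of the action of $\gamma$, and finally verify the four axioms of Definition \ref{defn:resgroup} in turn.

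For existence and uniqueness of $\gamma$: if $C_1=C_{\gamma_1}$ and $C_2=C_{\gamma_2}$, then the element $\gamma:=\gamma_2\gamma_1^{-1}$ satisfies $\gamma C_1 = C_{\gamma_2}=C_2$. Uniqueness follows because $C_\delta=C_{\delta'}$ forces $\delta^{-1}\delta'\in\G_\emptyset=\{1\}$ via Lemma \ref{lem:chamberint}, so $\G$ acts freely on $\cC(\Delta)$. (Note that we do not need $\gamma\in\G_J$ for this step, though in fact $\gamma_1^{-1}\gamma_2\in\G_J$ will hold by Lemma \ref{lem:CJC}.) We then let $\phi_{C_1,C_2}:C_1\to C_2$ be the restriction of the action of this $\gamma$ on $\Delta$. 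Since $\G$ acts by cubical automorphisms (by the lemma cited right before Lemma \ref{lem:lfinite}) that preserve the typing map $t$, each $\phi_{C_1,C_2}$ is a cubical isomorphism sending the type-$\emptyset$ vertex of $C_1$ to that of $C_2$, i.e.\ it preserves centers.

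Now the four properties. For \ref{item:d-}, since $\G$ preserves types and hence ranks, and the poset structure is $\G$-equivariant by Definition \ref{defn:rank}, each $\phi_{C_1,C_2}$ preserves lower degrees of vertices of any rank (in particular rank-$1$). For \ref{item:id}, uniqueness forces the $\gamma$ with $\gamma C_1=C_1$ to be the identity, giving $\phi_{C_1,C_1}=\mathrm{id}_{C_1}$. For \ref{item:compose}, if $\gamma_{12},\gamma_{23},\gamma_{13}$ are the unique elements sending $C_1\to C_2$, $C_2\to C_3$, $C_1\to C_3$ respectively, then $\gamma_{23}\gamma_{12}$ also sends $C_1$ to $C_3$, so uniqueness yields $\gamma_{13}=\gamma_{23}\gamma_{12}$, which gives $\phi_{C_2,C_3}\circ\phi_{C_1,C_2}=\phi_{C_1,C_3}$.

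The only step with real content is \ref{item:fixint}. Suppose $C_1\cap C_2\neq\emptyset$ with $C_k=C_{\gamma_k}$. By Lemma \ref{lem:chamberint} there is a minimal $J'\in\bar N$ with $\gamma_1^{-1}\gamma_2\in\G_{J'}$, and
\begin{equation*}
C_1\cap C_2=\{\gamma_1\}\times C_{J'}=\{[\gamma_1,p]\mid p\in C(N),\ J'\subset\ut(p)\}.
\end{equation*}
With $\gamma:=\gamma_2\gamma_1^{-1}$, for any such $p$ we have $\gamma\cdot[\gamma_1,p]=[\gamma_2,p]$, and this equals $[\gamma_1,p]$ precisely when $\gamma_1^{-1}\gamma_2\in\G_{\ut(p)}$; but $\gamma_1^{-1}\gamma_2\in\G_{J'}\subset\G_{\ut(p)}$ since $J'\subset\ut(p)$. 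Hence $\phi_{C_1,C_2}$ fixes $C_1\cap C_2$ pointwise, completing the verification. This intersection step is the main (and essentially only) obstacle, and it is handled immediately by Lemma \ref{lem:chamberint}.
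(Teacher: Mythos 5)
Your proof is correct and takes essentially the same approach as the paper: you define $\phi_{C_1,C_2}$ via left multiplication by $\gamma_2\gamma_1^{-1}$, observe that the lower-degree, identity, and commutativity axioms follow from $\G$ preserving rank and being a group, and verify the intersection property via Lemma~\ref{lem:chamberint} in exactly the same way the paper does. The only difference is that you spell out the free action of $\G$ on chambers (to justify the word ``unique'' in the statement), which the paper takes as implicit.
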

\begin{proof}
The group $\G$ preserves the rank of vertices, so also preserves the lower degrees. The identity and composition properties for $\phi$ follow from $\G$ being a group. For the intersection property, consider chambers $C_{\gamma_1},C_{\gamma_2}$ with non-empty intersection. Lemma \ref{lem:chamberint} gives us $J\in\bar{N}$ such that $\gamma_1^{-1}\gamma_2\in \G_J$ and such that every point in $C_{\gamma_1}\cap C_{\gamma_2}$ is of the form $[\gamma_1,p]$ with $J\subset \ut(p)$. The map $\phi_{C_{\gamma_1},C_{\gamma_2}}$ is given my left multiplication of $\gamma_2\gamma_1^{-1}$, and for $[\gamma_1,p]\in C_{\gamma_1}\cap C_{\gamma_2}$ we have $\gamma_2\gamma_1^{-1}\cdot [\gamma_1,p]=[\gamma_2,p]$, and this equals $[\gamma_1,p]$ by Definition \ref{defn:building} and the fact that $\gamma_1^{-1}\gamma_2\in\G_J<\G_{\ut(p)}$.
\end{proof}

\begin{lem}\label{lem:stayj}
	Let $\phi_{C_1,C_2}$ be a map within a residue-groupoid $\phi$ such that $C_1,C_2$ are $i$-adjacent.
	Then for any $j\in i\jperp$, the vertex $v_1\in C_1$ of type $\{j\}$ is level-adjacent to $\phi_{C_1,C_2}(v_1)$. In particular $\phi_{C_1,C_2}(v_1)$ is also of type $\{j\}$.
\end{lem}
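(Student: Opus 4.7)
The plan is to identify the image vertex $v_2 := \phi_{C_1,C_2}(v_1)$ using the rigidity properties built into a residue-groupoid (rank preservation, poset preservation, and pointwise fixing of the intersection), and then invoke Lemma \ref{lem:leveladj} to conclude level-adjacency.

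First I would note that since $j \in i\jperp$, the pair $i,j$ is adjacent in $\cG$, so $\{i,j\} \in \bar{N}$. Using the type-preserving isomorphism between $C_1$ and $C(N)$, there is a (unique) vertex $u \in C_1$ of type $\{i,j\}$, and $v_1 \leq u$. Since $i \in t(u)$, Lemma \ref{lem:min}\ref{item:wCcapC'} gives $u \in C_1 \cap C_2$. By property \ref{item:fixint} of Definition \ref{defn:resgroup}, $\phi_{C_1,C_2}$ fixes $C_1 \cap C_2$ pointwise, so $\phi_{C_1,C_2}(u) = u$.

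Next, by Remark \ref{remk:rankgroup}, $\phi_{C_1,C_2}$ preserves rank and induces an isomorphism of the poset structures on $C_1$ and $C_2$. Hence $v_2 = \phi_{C_1,C_2}(v_1)$ has rank $1$, lies in $C_2$, and satisfies $v_2 \leq u$. By Lemma \ref{lem:min}\ref{item:Q}, the unique cube in $C_2$ containing $v_2$ and $u$ has dimension $1$, and the possible types for $v_2$ are the singletons contained in $t(u) = \{i,j\}$, so $t(v_2) \in \{\{i\},\{j\}\}$. The main step is to rule out $t(v_2) = \{i\}$: if that were the case, Lemma \ref{lem:min}\ref{item:wCcapC'} would put $v_2 \in C_1 \cap C_2$, and then property \ref{item:fixint} applied to $\phi_{C_2,C_1} = \phi_{C_1,C_2}^{-1}$ (which exists by property \ref{item:compose}) would force $v_1 = \phi_{C_1,C_2}^{-1}(v_2) = v_2$, contradicting $t(v_1) = \{j\} \neq \{i\}$. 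Therefore $t(v_2) = \{j\} = t(v_1)$.

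Finally, since $v_1,v_2$ have the same type $\{j\}$, lie in the $i$-adjacent chambers $C_1,C_2$ respectively, and $i \in j\jperp = t(v_1)\jperp$, Lemma \ref{lem:leveladj} yields that $v_1$ and $v_2$ are level-adjacent, completing the proof. The only delicate point is the exclusion of the possibility $t(v_2) = \{i\}$, which is handled cleanly by the intersection property combined with the fact that $\phi_{C_1,C_2}$ is a bijection with inverse also in the groupoid.
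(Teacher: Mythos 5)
Your proof is correct and follows essentially the same route as the paper's: both use the type-$\{i,j\}$ vertex $u\in C_1\cap C_2$, the intersection property to fix $u$, and rank/poset preservation to constrain $\phi_{C_1,C_2}(v_1)$. The only cosmetic difference is that you explicitly rule out $t(\phi_{C_1,C_2}(v_1))=\{i\}$ via the inverse map $\phi_{C_2,C_1}$, whereas the paper gets the same conclusion by observing that $v_1$ is the unique rank-$1$ vertex of $C_1-C_2$ below $u$ and that the bijection $\phi_{C_1,C_2}$ must carry $C_1-C_2$ onto $C_2-C_1$.
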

\begin{proof}
	Let $u\in C_1$ be the vertex of type $\{i,j\}$. Then $u\in C_1\cap C_2$ by Lemma \ref{lem:min}\ref{item:wCcapC'} and \ref{item:iadjwedge}, and $v_1$ is the unique rank-1 vertex in $C_1-C_2$ such that $v_1\leq u$.
	Similarly, if $v_2\in C_2$ is the vertex of type $\{j\}$ then $v_2$ is the unique rank-1 vertex in $C_2-C_1$ such that $v_2\leq u$, and $v_2$ is level-adjacent to $v_1$. We know that $\phi_{C_1,C_2}$ fixes $C_1\cap C_2$ by the intersection property of residue-groupoids, so in particular it fixes $u$. Since $\phi_{C_1,C_2}$ preserves rank and poset structure (Remark \ref{remk:rankgroup}), we deduce that $\phi_{C_1,C_2}(v_1)=v_2$.
\end{proof}

\bigskip
\section{Hierarchy of level-equivalence classes}\label{sec:hierclasses}

In this section and the subsequent two we prove the ``if'' direction in Theorem \ref{thm:Delta}. So fix $\G=\G(\cG,(G_i)_{i\in I})$ a graph product of finite groups, with finite underlying graph $\cG$, let $\Delta=\Delta(\cG,(G_i)_{i\in I})$ be the associated right-angled building, and let $\La<\Aut(\Delta)$ be a uniform lattice such that all convex subgroups of $\La$ are separable.
We wish to show that $\La$ and $\G$ are weakly commensurable in $\Aut(\Delta)$.
Proposition \ref{prop:finiteindex} says that $\Aut_{\rk}(\Delta)$ has finite index in $\Aut(\Delta)$, so we may assume that $\La<\Aut_{\rk}(\Delta)$. Thus $\La$ preserves all the structures from Proposition \ref{prop:preserved}.

A key step will be to prove the following proposition.

\begin{prop}\label{prop:Deltagroupoid}
	There exists a $\La'$-invariant $\cC(\Delta)$-groupoid for some finite-index subgroup $\La'<\La$.
\end{prop}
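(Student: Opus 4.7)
The plan is to induct on the partial order $\preceq$ from Section \ref{sec:hierclasses}, building a hierarchy of residue-groupoids indexed by level-equivalence classes, moving from $\preceq$-largest orbits (corresponding to finite chamber-residues) down to the unique $\preceq$-smallest orbit, namely the rank-$0$ class $[v_0]$, whose associated chamber-residue equals $\cC(\Delta)$ by Lemma \ref{lem:cC[v]}. At each stage, for one chosen representative $[v]$ per $\La$-orbit, I will construct (i) a $\cC([v])$-groupoid $\phi^{[v]}$ consistent with the groupoids built previously on $\preceq$-larger classes, and (ii) a finite-index subgroup $\La^{[v]} \leq \La_{[v]}$ preserving $\phi^{[v]}$, then $\La$-translate to define groupoids on the rest of the orbit. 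Finitely many orbits appear by cocompactness of the $\La$-action, and the $\cC([v_0])$-groupoid produced at the last step is the required $\cC(\Delta)$-groupoid.

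For the base case, $[v]$ is $\preceq$-maximal, so by Remark \ref{remk:extremehier} $J:=t(v)$ is maximal spherical and $\cC([v]) = \cC(v) = \cC(J, C_0)$ is a finite chamber-residue (for $C_0 \in \cC(v)$); I take $\phi^{[v]}$ to be the restriction of the $\G$-induced groupoid from Lemma \ref{lem:Gammaresgroup} and let $\La^{[v]}$ be the $\La_{[v]}$-stabilizer of $\phi^{[v]}$, which has finite index since only finitely many $\cC([v])$-groupoids exist (chambers are finite). For the inductive step, Lemma \ref{lem:cC[v]} provides the product decomposition $\cC([v]) = \cC(J\uperp, C_0) \cong \cC(J, C_0) \times \cC(J\jperp, C_0)$, in which the sections $\cC(J, C_0) \times \{C'\}$ are the $J$-chamber-residues $\cC(v')$ for $v' \in [v]$. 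By Lemma \ref{lem:resgroup} it suffices to define $\phi^{[v]}_{C_1, C_2}$ on adjacent chambers of $\cC([v])$, where such adjacencies split into two types: (a) $i$-adjacent with $i \in J\jperp$ (crossing between distinct $\cC(v')$'s), and (b) $i$-adjacent with $i \in J$ (within a single $\cC(v')$). In case (a) I let $\phi^{[v]}_{C_1, C_2}$ be the unique cubical isomorphism fixing $C_1 \cap C_2$ pointwise and sending each vertex of $C_1 - C_2$ to the unique vertex of $C_2 - C_1$ of the same type. In case (b) I invoke the inductive hypothesis: for each rank-$1$ vertex $w_1 \in C_1 - C_2$ of type $\{k\}$, I set $\phi^{[v]}_{C_1, C_2}(w_1) := \phi^{[w_1 \Uparrow u]}_{C_1, C_2}(w_1)$ when the ascent $w_1 \Uparrow u$ is defined (so when $k \in i\jperp$; here $u := \wedge(C_1, C_2)$ has type $\{i\}$, and $[w_1 \Uparrow u] \succ [w_1] \succeq [v]$ by Lemma \ref{lem:ascentdown}, while $C_1, C_2 \in \cC([w_1 \Uparrow u])$ because $i \in t(w_1 \Uparrow u)$), and otherwise I set it to the unique same-type vertex in $C_2$; the values on higher-rank vertices are then forced by the cubical-isomorphism constraint.

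Equivariance of the construction is automatic because every ingredient used --- poset structure, chamber adjacency, ascent, the map $q$, and level-equivalence --- is $\La$-invariant by Proposition \ref{prop:preserved} and Lemma \ref{lem:ascentequi}. Verifying the five hypotheses of Lemma \ref{lem:resgroup} will rely on Lemmas \ref{lem:doubleascent}, \ref{lem:adjascent}, \ref{lem:stayj}, and \ref{lem:downtype}; the most delicate is the square condition, which breaks into cases according to whether the edge labels $i, j$ lie in $J$ or $J\jperp$, with the ``$i, j \in J$'' case reducing to the compatibility encoded in Lemma \ref{lem:doubleascent} and the ``mixed'' case to Lemma \ref{lem:adjascent}\ref{item:Downu1u2}.

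The main obstacle is producing a single finite-index subgroup $\La' \leq \La$ that preserves \emph{every} $\phi^{[v]}$ simultaneously across all orbits, and this is precisely where the separability hypothesis on convex subgroups of $\La$ enters. By Proposition \ref{prop:separable}, each $\La^{[v]}$ --- being a finite-index subgroup of the $\La$-stabilizer of a level-equivalence class --- is separable in $\La$, so Lemma \ref{lem:separable}\ref{item:intH1} supplies a finite-index normal subgroup $\hat{\La}^{[v]} \triangleleft \La$ contained in $\La^{[v]}$. Setting $\La' := \bigcap_{[v]} \hat{\La}^{[v]}$ over the finitely many orbit representatives then produces a finite-index subgroup of $\La$ preserving every $\phi^{[v]}$, and in particular the $\cC(\Delta)$-groupoid obtained at the final stage of the induction.
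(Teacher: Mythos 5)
Your overall architecture is right — induct down the hierarchy $\preceq$, build $\cC([v])$-groupoids one orbit at a time, and use separability at the end — and this matches the paper's Proposition \ref{prop:fullhierarchy}. But the construction of the individual maps $\phi^{[v]}_{C_1,C_2}$ has two genuine gaps, each of which undermines the inductive step.

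First, your case~(a) map (for $i$-adjacent chambers with $i\in J\jperp$) is defined as ``the unique cubical isomorphism fixing $C_1\cap C_2$ and sending each vertex of $C_1-C_2$ to the same-type vertex of $C_2-C_1$.'' This uses the standard typing map $t_\G$, which is preserved by $\G$ but \emph{not} by $\Aut_{\rk}(\Delta)$ or by $\La$ (see Proposition \ref{prop:preserved}: rank and poset are preserved, types are not). Concretely, if $\cG$ has two non-adjacent vertices $k,l\notin i\uperp$ with $|G_k|=|G_l|$, there is an element of $\Aut_{\rk}(\Delta)$ which exchanges the types $\{k\}$ and $\{l\}$ along a chamber, and your type-preserving map does not commute with it. So ``equivariance of the construction is automatic'' is false. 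This is precisely the obstacle the hierarchy's restriction property is designed to circumvent: the paper instead sets $\phi_{C_1,C_2}:=\phi^{[v_1\Uparrow u]}_{C_1,C_2}$ where $v_1\in C_1\cap[v]$ and $u=\wedge(C_1,C_2)$ (this is Lemma \ref{lem:phiJjperp}), which is $\La'$-invariant because it is built out of groupoids already constructed higher in the hierarchy and the ascent operation is $\La$-equivariant (Lemma \ref{lem:ascentequi}). Replacing that data with $t_\G$ discards the only thing that makes the resulting groupoid $\La'$-invariant rather than just $\G$-invariant.

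Second, your case~(b) map (for $i$-adjacency with $i\in J$) appeals to $\phi^{[w_1\Uparrow u]}_{C_1,C_2}$ for rank-$1$ vertices $w_1$, citing $[w_1\Uparrow u]\succ[w_1]\succeq[v]$. The chain $[w_1]\succeq[v]$ does not hold in general, and the conclusion $[w_1\Uparrow u]\succ[v]$ can actually \emph{fail}. Take $w_1$ of type $\{k\}$ with $k\in J-\{i\}$; then $t(w_1\Uparrow u)\subset\{i,k\}\subset J$, so $w_1\Uparrow u\leq v_1$ in the poset (both lie in $C_1$). If $|J|\geq 3$, or if $q(u)\succ q(w_1)$ so that $t(w_1\Uparrow u)=\{i\}$, the inequality is strict, and Remark \ref{remk:extremehier} gives $[w_1\Uparrow u]\prec[v]$ — the groupoid you need is \emph{lower} in the hierarchy and has not been built yet. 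The paper avoids this by treating the $J$-adjacencies entirely differently: it chooses a $\cC(v)$-groupoid $\psi$ (a finite object) and extends to $\cC([v])$ by conjugating $\psi$ through the already-defined $J\jperp$-maps (Lemma \ref{lem:barpsi}, diagram (\ref{barpsiJ})). The resulting $\bar\psi$ is not $\La'_{[v]}$-invariant for an arbitrary $\psi$, which is why the paper introduces the groupoid holonomy $\Upsilon:\La'_{[v]}\to\mathfrak{S}(\cR\cG(v))$ and uses separability to shrink to $\ker\Upsilon$. Your proof has no analogue of this holonomy step; it is not optional.

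Two smaller remarks. The paper first passes to a finite-index subgroup arranging that each $\La_{[v]}$ acts trivially on the $\cC(J,C)$-factor of the product decomposition (Lemma \ref{lem:trivfactor1}); your argument never secures this, but it is used implicitly when you claim $\cC(v)$-residues are stabilized. Also, Lemma \ref{lem:separable}\ref{item:intH1} does not supply a finite-index normal subgroup of $\La$ contained in $\La^{[v]}$ (that is impossible since $\La^{[v]}$ has infinite index); it supplies a finite-index normal $\hat\La\triangleleft\La$ with $\La_{[v]}\cap\hat\La<\La^{[v]}$. The conclusion you want still goes through with the corrected statement, but the distinction matters when you try to propagate the construction over a full $\La$-orbit.
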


Here, a $\cC(\Delta)$-groupoid $\phi$ is \emph{$\La'$-invariant} if the collections of maps $$\{\phi_{C_1,C_2}:C_1\to C_2\}\quad\text{and}\quad\{\lambda\circ\phi_{C_1,C_2}\circ\lambda^{-1}:\lambda C_1\to \lambda C_2\}$$
are the same for any $\lambda\in\La'$.

To prove Proposition \ref{prop:Deltagroupoid}, we will construct a hierarchy of residue-groupoids, and prove that the collection of residue-groupoids on each level of the hierarchy is $\La'$-invariant (possibly different $\Lambda'$ for different levels).
What it means for a collection of residue-groupoids to be $\La'$-invariant will be detailed in Section \ref{sec:hierresgroup}.

Before we construct the hierarchy of residue-groupoids, we must first endow the level-equivalence classes in $\Delta$ with a hierarchical structure, which is the focus of the current section. We will also define an operator on the vertices of $\Delta$ called ascent that will help us to move up this hierarchy in a controlled manner.
First we need two lemmas and a definition regarding the interaction between level-equivalence and the partial order on $\Delta^0$.

\begin{lem}\label{lem:downtype}
	If $u_1\approx v_1$, $u_2\leq u_1$ and $v_2\leq v_1$ with $t(u_2)=t(v_2)$, then $u_2\approx v_2$.
\end{lem}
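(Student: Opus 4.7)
The plan is to reduce the claim to a containment of chamber-residues and then to invoke the characterization of level-equivalence classes from Lemma \ref{lem:cC[v]}. The central observation will be that for $J_2 \subset J_1$ one has $J_1\uperp \subset J_2\uperp$, which will let a level-equivalence between higher-type vertices descend to one between lower-type vertices.

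First I would fix notation: by Corollary \ref{cor:leveltype}, $t(u_1) = t(v_1)$; call this $J_1$. Let $J_2 := t(u_2) = t(v_2)$. From $u_2 \leq u_1$ (equivalently $v_2 \leq v_1$), we have $J_2 \subset J_1$, and $J_2 \in \bar{N}$ since subsets of spherical sets are spherical. Pick any chamber $C_u$ containing $u_1$ and any chamber $C_v$ containing $v_1$; by Lemma \ref{lem:min}\ref{item:stayC}, $u_2 \in C_u$ and $v_2 \in C_v$.

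The main step is the inclusion $J_1\uperp \subset J_2\uperp$, which is immediate from the definition $J\uperp = \bigcap_{j \in J} j\uperp$ and from $J_2 \subset J_1$. Applying Lemma \ref{lem:cC[v]} once to $u_1$ and once to $u_2$ then yields
\[
C_v \;\in\; \cC([u_1]) \;=\; \cC(J_1\uperp, C_u) \;\subset\; \cC(J_2\uperp, C_u) \;=\; \cC([u_2]).
\]
So $C_v$ contains some vertex in $[u_2]$.

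To finish, I would identify that vertex. Any vertex in $[u_2]$ has type $J_2$ (Corollary \ref{cor:leveltype}), and by Lemma \ref{lem:min}\ref{item:Q} each chamber has exactly one vertex of each type in $\bar{N}$; so the vertex of $[u_2]$ inside $C_v$ is the unique type-$J_2$ vertex of $C_v$. But $v_2 \in C_v$ also has type $J_2$, so this vertex is $v_2$, giving $v_2 \approx u_2$. I do not expect any real obstacle here; the content of the argument is the monotonicity of $(-)\uperp$ under reverse inclusion, and the only care needed is the final uniqueness step that pins down $v_2$ (rather than some other type-$J_2$ vertex) as the representative of $[u_2]$ in $C_v$.
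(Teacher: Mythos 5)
Your approach is exactly the paper's: the crux is the monotonicity $J_1\uperp\subset J_2\uperp$ together with the identification of $\cC([\cdot])$ as a $(\cdot)\uperp$-chamber-residue via Lemma \ref{lem:cC[v]}. However, there is a directional error in your use of Lemma \ref{lem:min}\ref{item:stayC}. That lemma says that if $u\leq v$ and the \emph{smaller}-rank vertex $u$ lies in a chamber $C$, then $v$ lies in $C$ as well; it does not go the other way. So choosing $C_u$ to contain $u_1$ does \emph{not} force $u_2\in C_u$ (e.g.\ take $u_2$ the center of one chamber and $u_1$ a rank-$1$ vertex shared among several chambers). Since the equality $\cC(J_2\uperp,C_u)=\cC([u_2])$ from Lemma \ref{lem:cC[v]} requires $u_2\in C_u$, and your final uniqueness step requires $v_2\in C_v$, the proof as written has a real gap at both places.

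The fix is the swap the paper makes: choose $C_u$ to contain $u_2$ and $C_v$ to contain $v_2$; then Lemma \ref{lem:min}\ref{item:stayC} gives $u_1\in C_u$ and $v_1\in C_v$, and every subsequent step you wrote (the chain $C_v\in\cC([u_1])=\cC(J_1\uperp,C_u)\subset\cC(J_2\uperp,C_u)=\cC([u_2])$, and the identification of $v_2$ as the unique type-$J_2$ vertex of $C_v$) is then correctly justified. With that one change your argument coincides with the paper's proof.
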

\begin{proof}
	Let $J_1=t(u_1)=t(v_1)$ (Corollary \ref{cor:leveltype}) and $J_2=t(u_2)=t(v_2)$.
	We know that $J_2\subset J_1$ since $u_2\leq u_1$, and $J_1\subset J_2\uperp$ as $J_1$ is spherical.
	Hence $J_1\uperp\subset J_2\uperp$.
	Let $C_1,C_2$ be chambers containing $u_2,v_2$ respectively. We have $u_1\in C_1$ and $v_1\in C_2$ by Lemma \ref{lem:min}\ref{item:stayC}.
	Lemma \ref{lem:cC[v]} then implies that $\cC([u_1])=\cC(C_1,J_1\uperp)\subset\cC(C_1,J_2\uperp)=\cC([u_2])$.
	Now $u_1\approx v_1$, so $\cC([u_1])=\cC([v_1])$, and we deduce that $C_2\in\cC([u_2])$.
	But the only vertex in $C_2$ of type $J_2$ is $v_2$, hence $u_2\approx v_2$.
\end{proof}

\begin{defn}(1-downsets)\\\label{defn:1downset}
Write $\Delta^0_1$ for the set of rank-1 vertices in $\Delta$.
Define the \emph{1-downset} of a vertex $u\in\Delta^0$ to be the set
$${\downarrow_1}(u):=\{v\in\Delta^0_1\mid v\leq u\}.$$
\end{defn}

\begin{lem}\label{lem:downset}
	The following hold for any vertex $u\in\Delta^0$:
	\begin{enumerate}
		\item\label{item:downsettypes} $\{t(v)\mid v\in{\downarrow_1}(u)\}=\{\{i\}\mid i\in t(u)\}$
		\item\label{item:downclasses} $\{[v]\mid v\in{\downarrow_1}(u)\}$ only depends on $[u]$.
		\item\label{item:downsetdet} If ${\downarrow_1}(u)$ is non-empty then it uniquely determines the vertex $u$.
	\end{enumerate}
\end{lem}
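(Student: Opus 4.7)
The plan is to prove the three parts in order, each time leveraging structural results already established: Lemma \ref{lem:min} for the local geometry around a vertex, Lemma \ref{lem:downtype} for descending level-equivalence through the poset, and Lemma \ref{lem:chamberint}/Definition \ref{defn:building} for uniqueness of vertices of a given type inside a chamber.

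For part \ref{item:downsettypes}, I would fix a chamber $C$ containing $u$ and argue within $C$ using Lemma \ref{lem:min}\ref{item:Q}. A vertex $v\in\Delta^0_1$ satisfies $v\leq u$ precisely when $v$ and $u$ sit in a common cube of dimension $\rk(u)-1$; by Lemma \ref{lem:min}\ref{item:Q} this forces $t(v)\subset t(u)$ with $|t(v)|=1$, i.e.\ $t(v)=\{i\}$ for some $i\in t(u)$. Conversely, for each $i\in t(u)$ the same lemma gives a (unique-within-$C$) vertex of type $\{i\}$ below $u$, proving the reverse inclusion.

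For part \ref{item:downclasses}, suppose $u\approx u'$, so $t(u)=t(u')$ by Corollary \ref{cor:leveltype}. Given $v\in{\downarrow_1}(u)$ of type $\{i\}$, part \ref{item:downsettypes} applied to $u'$ produces some $v'\in{\downarrow_1}(u')$ with $t(v')=\{i\}=t(v)$. Lemma \ref{lem:downtype} then yields $v\approx v'$, so $[v]=[v']$ belongs to $\{[w']\mid w'\in{\downarrow_1}(u')\}$. Swapping the roles of $u,u'$ gives the opposite inclusion.

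For part \ref{item:downsetdet}, assume ${\downarrow_1}(u)$ is non-empty and recover $t(u)$ from it using part \ref{item:downsettypes} (take the union of the types). The key observation is that any $v\in{\downarrow_1}(u)$ forces $u$ into every chamber containing $v$: by Lemma \ref{lem:min}\ref{item:stayC}, since $v\leq u$ and $v\in C$, we have $u\in C$. So if $u'$ is another vertex with ${\downarrow_1}(u')={\downarrow_1}(u)$, then $t(u')=t(u)$ and picking any $v$ in the common (non-empty) downset and any chamber $C\ni v$, both $u$ and $u'$ lie in $C$; but a chamber contains at most one vertex of each type (Definitions \ref{defn:cubecone} and \ref{defn:building}), hence $u=u'$. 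The main thing to verify is this last uniqueness within a chamber, but it is immediate from the typing bijection built into $C(N)\cong C$.
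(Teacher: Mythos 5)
Your proposal is correct and follows essentially the same route as the paper's proof: part (1) by reasoning inside a fixed chamber via Lemma \ref{lem:min}, part (2) by combining part (1) with Lemma \ref{lem:downtype}, and part (3) by recovering $t(u)$ from part (1), using Lemma \ref{lem:min}\ref{item:stayC} to force $u$ into any chamber containing a vertex of ${\downarrow_1}(u)$, and then invoking uniqueness of the vertex of type $t(u)$ within that chamber. The only cosmetic difference is that you route the $\subset$ inclusion in part (1) through Lemma \ref{lem:min}\ref{item:Q} whereas the paper reads it directly off the definition of $\leq$; both are fine.
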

\begin{proof}
	The inclusion $\subset$ in \ref{item:downsettypes} is immediate from the definitions. The reverse inclusion $\supset$ holds because if $C$ is a chamber containing $u$ and $i\in t(u)$ then $C$ contains a vertex $v$ of type $\{i\}$, and necessarily $v\in{\downarrow_1}(u)$.
	Statement \ref{item:downclasses} follows from \ref{item:downsettypes} and Lemma \ref{lem:downtype}.
	Finally, if ${\downarrow_1}(u)\neq\emptyset$ then we can take $v\in{\downarrow_1}(u)$, choose a chamber $C$ containing $v$, and characterize $u$ as the unique vertex in $C$ of type $t(u)$ ($u\in C$ by Lemma \ref{lem:min}\ref{item:stayC}). 
	But $t(u)$ is determined by ${\downarrow_1}(u)$ because of \ref{item:downsettypes}, hence ${\downarrow_1}(u)$ uniquely determines the vertex $u$.
	This proves \ref{item:downsetdet}.
\end{proof}

The hierarchy on level-equivalence classes will be modeled on the following total order.

\begin{lem}\label{lem:total}
	We can define a total order $\preceq$ on the power set of $\{1,...,n\}$ by setting $S_1\preceq S_2$ if $S_1=S_2$ or $\max (S_1\triangle S_2)\in S_2$.
\end{lem}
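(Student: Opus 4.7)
The plan is to verify that $\preceq$ satisfies reflexivity, antisymmetry, totality, and transitivity. Reflexivity is built into the definition. For antisymmetry and totality simultaneously, the key observation is: if $S_1\neq S_2$, then $S_1\triangle S_2$ is non-empty, and $m:=\max(S_1\triangle S_2)$ lies in exactly one of $S_1,S_2$. Hence exactly one of the two strict inequalities $S_1\prec S_2$ or $S_2\prec S_1$ holds, which simultaneously gives totality and rules out $S_1\preceq S_2\preceq S_1$ unless $S_1=S_2$.

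The main content is transitivity. Suppose $S_1\preceq S_2\preceq S_3$; we may assume all three are distinct. Set $m_{12}:=\max(S_1\triangle S_2)\in S_2\setminus S_1$ and $m_{23}:=\max(S_2\triangle S_3)\in S_3\setminus S_2$. Note $m_{12}\neq m_{23}$ (otherwise this common value would lie in $S_2$ and not in $S_2$). Let $m:=\max(m_{12},m_{23})$. For any $k>m$, neither $k\in S_1\triangle S_2$ nor $k\in S_2\triangle S_3$, so $k\in S_1\Leftrightarrow k\in S_2\Leftrightarrow k\in S_3$, and thus $k\notin S_1\triangle S_3$. I will then split into the two cases $m=m_{12}$ and $m=m_{23}$: in the first, $m_{12}>m_{23}$ forces $m_{12}\in S_2\Leftrightarrow m_{12}\in S_3$, so $m_{12}\in S_3\setminus S_1$; in the second, $m_{23}>m_{12}$ forces $m_{23}\in S_1\Leftrightarrow m_{23}\in S_2$, so $m_{23}\in S_3\setminus S_1$. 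Either way, $m\in S_3\setminus S_1$ and $k\notin S_1\triangle S_3$ for $k>m$, so $\max(S_1\triangle S_3)=m\in S_3$, i.e.\ $S_1\preceq S_3$.

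A cleaner alternative, which I may use instead, is to define $\phi:2^{\{1,\ldots,n\}}\to\{0,1,\ldots,2^n-1\}$ by $\phi(S):=\sum_{i\in S}2^{i-1}$, and check that $S_1\preceq S_2$ if and only if $\phi(S_1)\leq\phi(S_2)$. For $S_1\neq S_2$ with $m=\max(S_1\triangle S_2)$, the sign of $\phi(S_2)-\phi(S_1)$ is determined by the $2^{m-1}$ term, since all lower terms contribute in absolute value at most $\sum_{i<m}2^{i-1}=2^{m-1}-1$. This reduces the verification to the standard total order on integers and bypasses a separate transitivity argument.

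The main obstacle is bookkeeping in the transitivity case analysis, but the bijection to integers makes this routine; I expect no substantive difficulty.
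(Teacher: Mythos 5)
Your proposal is correct. Your ``cleaner alternative'' via the encoding $\phi(S)=\sum_{i\in S}2^{i-1}$ is exactly the paper's one-line proof (the paper uses $\sum_{s\in S}2^s$, a harmless rescaling): it transports the question to the standard total order on integers, so the order axioms come for free. Your primary argument --- directly verifying reflexivity, antisymmetry/totality, and transitivity via the largest element of the symmetric difference --- is a genuinely different and also correct route; in particular the transitivity case split on $m=\max(m_{12},m_{23})$ is sound, since for $k>m$ membership in $S_1,S_2,S_3$ agrees, and in each case $m$ itself lands in $S_3\setminus S_1$, forcing $\max(S_1\triangle S_3)=m\in S_3$. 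The direct argument has the virtue of being self-contained and conceptually transparent (it explains \emph{why} the order is total without invoking an auxiliary bijection), at the cost of more bookkeeping; the encoding argument is shorter and is what the paper actually records. Either is a perfectly acceptable proof, and your write-up correctly anticipates both.
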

\begin{proof}
	This follows from the observation that $S_1\preceq S_2$ is equivalent to $\sum_{s\in S_1} 2^s\leq\sum_{s\in S_2} 2^s$.
\end{proof}

\begin{defn}(Hierarchy of level-equivalence classes)\\\label{den:hiertype}
	Write $\cL$ for the set of level-equivalence classes and write $q:\Delta^0_1\to\cL/\La$ for the quotient map defined by $q(v):=\La\cdot[v]$.
	For $u\in\Delta^0$ it follows from Lemma \ref{lem:downset}\ref{item:downclasses} that $q({\downarrow_1}(u))$ only depends on $\La\cdot[u]$.
	Now fix a total order $\preceq$ on $q(\Delta^0_1)$, and extend it to a partial order on $\cL/\La$ by setting $\La\cdot[u_1]\preceq\La\cdot[u_2]$ if $\La\cdot[u_1]=\La\cdot[u_2]$ or
	$$\max(q({\downarrow_1}(u_1))\triangle q({\downarrow_1}(u_2)))\in q({\downarrow_1}(u_2)).$$
	This is indeed a partial order by Lemma \ref{lem:total} (noting that $q(\Delta^0_1)$ is a finite set since $\La$ acts cocompactly on $\Delta$).
	%We also let $\preceq$ denote the quasiorder on $\cL$ obtained by pulling back the partial order on $\cL/\La$.
	%A strict inequality $\La\cdot[u_1]\prec\La\cdot[u_2]$ means that $\La\cdot[u_1]\preceq\La\cdot[u_2]$ and $\La\cdot[u_1]\neq\La\cdot[u_2]$, and $[u_1]\prec[u_2]$ will mean 
\end{defn}

The convex subgroups of $\La$ are separable by hypothesis, so in particular $\La$ has separable hyperplane stabilizers. Thus, we may apply \cite[Lemma 9.14]{HaglundWise08} and replace $\La$ by a finite-index subgroup that acts cleanly on $\Delta$ (Definition \ref{defn:specially}\ref{item:cleanly}).
As a consequence, if edges $e_1,e_2$ form the corner of a 2-cube in $\Delta$ then no $\La$-translate of $e_1$ is parallel to $e_2$.
This leads to the following lemma.

\begin{lem}\label{lem:distinctq}
	Let $v_1,v_2$ be vertices in a chamber $C$ of types $\{i\}$, $\{j\}$ respectively, with $i,j\in I$ adjacent. Then $q(v_1)\neq q(v_2)$.
\end{lem}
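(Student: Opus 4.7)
The plan is to derive a contradiction from the fact, arranged in the paragraph just before this lemma, that $\La$ acts cleanly on $\Delta$. Suppose for contradiction that $q(v_1) = q(v_2)$, so there is some $\lambda \in \La$ with $\lambda v_1 \approx v_2$. Let $v_0$ denote the center of $C$ and set $e_1 = v_0 v_1$ and $e_2 = v_0 v_2$; these are an $i$-edge and a $j$-edge respectively, and by Lemma \ref{lem:ijadj} the adjacency hypothesis on $i,j$ makes them the two sides of a $2$-cube corner at $v_0$.

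Since $v_1$ and $v_2$ have rank $1$, the lower-edge sets are singletons $E^-(v_1) = \{e_1\}$ and $E^-(v_2) = \{e_2\}$. Applying Lemma \ref{lem:approxhyp} to the relation $\lambda v_1 \approx v_2$ thus yields $H(\lambda e_1) = H(e_2)$, so $\lambda e_1$ is parallel to $e_2$ as unoriented edges. By Lemma \ref{lem:iedge}\ref{item:parisi} this forces $\lambda e_1$ to be a $j$-edge; combined with the fact that $\lambda \in \Aut_{\rk}(\Delta)$ preserves rank, the endpoints $\lambda v_0, \lambda v_1$ must then be of types $\emptyset$ and $\{j\}$ respectively.

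Orient each of $e_1, e_2$ with the center $v_0$ as initial endpoint. Then both $\lambda \overrightarrow{e_1}$ (running $\lambda v_0 \to \lambda v_1$) and $\overrightarrow{e_2}$ (running $v_0 \to v_2$) are $j$-edges with initial vertex of type $\emptyset$ and terminal vertex of type $\{j\}$, so Lemma \ref{lem:iedge}\ref{item:paror} promotes their unoriented parallelism to parallelism as oriented edges. This contradicts cleanness: $\overrightarrow{e_1}$ and $\overrightarrow{e_2}$ are distinct oriented edges (since $i \neq j$) sharing the initial vertex $v_0$, and a clean action would forbid $\lambda \overrightarrow{e_1}$ being parallel to $\overrightarrow{e_2}$. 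The only point requiring care is the orientation bookkeeping in this last step: one needs Lemma \ref{lem:iedge}\ref{item:paror} (and hence the fact that $\lambda$ preserves rank, forcing the "lower-type initial" convention to line up on both sides) in order to invoke the oriented version of cleanness rather than just the unoriented parallelism that we first extracted.
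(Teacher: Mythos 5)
Your proof has a genuine gap. The claim that ``$E^-(v_1) = \{e_1\}$ and $E^-(v_2) = \{e_2\}$'' is false in general. For a rank-1 vertex $v$ of type $\{i\}$, the set $E^-(v)$ consists of the edges joining $v$ to the centers of the chambers containing $v$, and there are $|G_i|$ such chambers (by Lemmas \ref{lem:CJC} and \ref{lem:cCv}; see also the second part of Lemma \ref{lem:tGamma}, where $d^-(v) = |E^-(v)| = |G_i|$). So unless $|G_i| = 1$ (which is not assumed and would make the vertex $i$ redundant), $E^-(v_1)$ has more than one element.

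Because of this, Lemma \ref{lem:approxhyp} applied to $\lambda v_1 \approx v_2$ only tells you that the two \emph{sets} of hyperplanes agree, i.e.\ that $H(\lambda e_1) = H(f_2)$ for \emph{some} $f_2 \in E^-(v_2)$, which need not be $e_2$. You cannot conclude that $\lambda e_1$ is parallel to $e_2$ itself. The paper's proof handles exactly this: it lets $f_2 \in E^-(v_2)$ be the edge parallel to $\lambda e_1$, lets $C'$ be the chamber containing $f_2$ (so $C' = C$ or $C'$ is $j$-adjacent to $C$), and replaces your $e_1, e_2$ by the $i$-edge $f_1$ and $j$-edge $f_2$ incident to the center $u'$ of $C'$. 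Lemma \ref{lem:parallelchamber} (using $j \in i\jperp$) gives $e_1 \parallel f_1$, hence $\lambda f_1 \parallel \lambda e_1 \parallel f_2$, and then your orientation argument goes through verbatim with $f_1, f_2, u'$ in place of $e_1, e_2, v_0$. The rest of your reasoning --- orienting by rank and invoking Lemma \ref{lem:iedge}\ref{item:paror} to upgrade to oriented parallelism before applying cleanness --- matches the paper once this correction is made.
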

\begin{proof}
	Suppose not.
	Let $e_1,e_2$ be the edges that join the center of $C$ to $v_1,v_2$ respectively.
	Note that $e_1\in E^-(v_1)$ is an $i$-edge and $e_2\in E^-(v_2)$ is a $j$-edge (Definition \ref{defn:E-}).
	As $q(v_1)=q(v_2)$, there exists $\lambda\in\La$ such that $\lambda v_1\approx v_2$, and by Lemma \ref{lem:approxhyp} this implies that some edge $f_2\in E^-(v_2)$ is parallel to $\lambda e_1$ (possibly $e_2=f_2$).
	Let $C'$ be the chamber containing $f_2$, and let $f_1$ be the $i$-edge incident to the center $u'$ of $C'$.
	The chambers $C,C'$ are either equal or $j$-adjacent, and $j\in i\jperp$, so it follows from Lemma \ref{lem:parallelchamber} that $e_1$ is parallel to $f_1$.
	Hence $\lambda f_1$ is parallel to $f_2$. If we orient $f_1,f_2$ to have initial vertex $u'$, and denote these oriented edges by $\overrightarrow{f_1}, \overrightarrow{f_2}$, then it follows from Lemma \ref{lem:iedge}\ref{item:paror} that $\lambda \overrightarrow{f_1}$ is parallel to $\overrightarrow{f_2}$.
	But this contradicts the fact that $\La$ acts cleanly on $\Delta$.	
\end{proof}

\begin{remk}\label{remk:extremehier}
	A strict inequality $u_1<u_2$ in $\Delta^0$ implies that ${\downarrow_1}(u_1)\subsetneq{\downarrow_1}(u_2)$.
	It then follows from Lemma \ref{lem:distinctq} that $q({\downarrow_1}(u_1))\subsetneq q({\downarrow_1}(u_2))$, so we get a strict inequality $\Lambda\cdot[u_1]\prec\Lambda\cdot[u_2]$. Thus the $\preceq$-maximal $\Lambda\cdot[v]\in\cL/\La$ have $v$ a $\leq$-maximal vertex in $\Delta^0$, $t(v)$ a maximal spherical subset of $I$ and $[v]=\{v\}$ a singleton.
	(Although there may be some $\leq$-maximal vertices $v$ such that $\Lambda\cdot[v]$ is not $\preceq$-maximal.) On the other hand, the unique $\preceq$-smallest class $\Lambda\cdot[v]\in\cL/\La$ is the class of all rank-0 vertices (see Remark \ref{remk:extreme}).
\end{remk}

We now introduce a (partial) binary operator on $\Delta^0$ called ascent, which moves upward with respect to the orderings $\leq$ and $\preceq$, and will play a key role in the inductive construction in Section \ref{sec:hierresgroup}.

\begin{defn}(Ascent)\\
	Given a chamber $C$ and vertices $u,v\in C$ with $t(u)=\{i\}$ and $i\in t(v)\jperp$, we define the \emph{ascent of $v$ by $u$}, denoted $v\Uparrow u$, to be the unique vertex in $C$ of type
	\begin{equation}\label{ascenttype}
t(v\Uparrow u)=\cup\{t(u')\mid u'\in{\downarrow_1}(v): q(u)\prec q(u')\}\cup\{i\}.
	\end{equation}
Observe that $C$ is the only chamber containing both $u$ and $v$ by Lemma \ref{lem:min}\ref{item:wCcapC'}, so $v\Uparrow u$ only depends on $u$ and $v$.
We note that (\ref{ascenttype}) implies
\begin{equation}\label{ascenttypesandwich}
	\{i\}\subset t(v\Uparrow u)\subset t(v)\cup\{i\}.
\end{equation}
	We also note that
	\begin{equation}\label{down1Down}
		{\downarrow_1}(v\Uparrow u)=\{u'\in{\downarrow_1}(v)\mid q(u)\prec q(u')\}\cup\{u\}.
	\end{equation}
\end{defn}

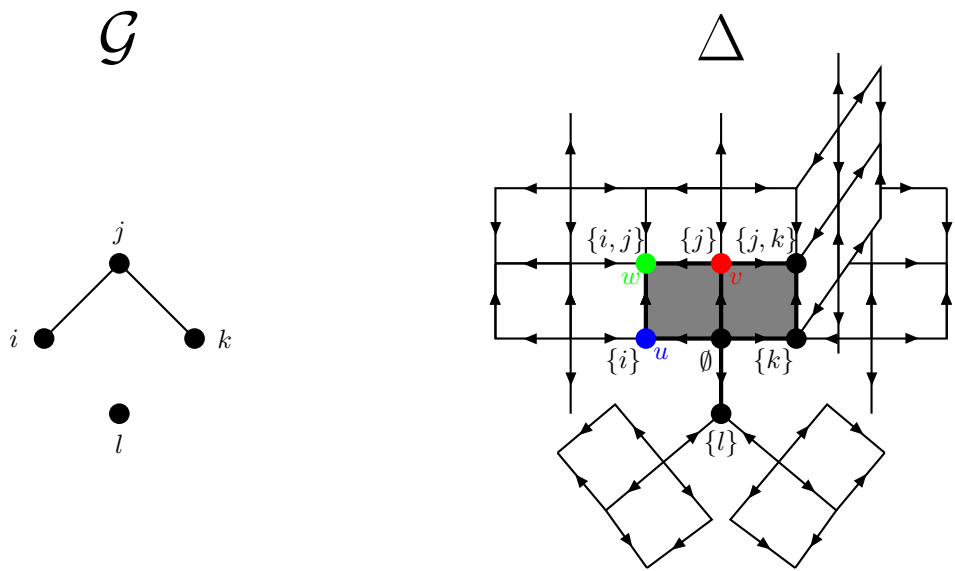
\begin{figure}[H]
	\centering
	\scalebox{1}{
		\begin{tikzpicture}[auto,node distance=2cm,
			thick,every node/.style={circle,draw,fill,inner sep=0pt,minimum size=7pt},
			every loop/.style={min distance=2cm},
			hull/.style={draw=none},
			]
			\tikzstyle{label}=[draw=none,fill=none]
			\tikzstyle{a}=[isosceles triangle,sloped,allow upside down,shift={(0,-.05)},minimum size=3pt]

			\begin{scope}[shift={(-6,0)}]
				\node at (-1,0) {};
				\node at (1,0) {};
				\node at (0,1) {};
				\node at (0,-1) {};
				\draw (-1,0)--(0,1)--(1,0);
				\node[label] at (-1.4,0) {$i$};
				\node[label] at (0,1.4) {$j$};
				\node[label] at (1.4,0) {$k$};
				\node[label] at (0,-1.4) {$l$};		
				\node[label,font=\Huge] at (0,4) {$\cG$};		
			\end{scope}

			\begin{scope}[shift={(2,0)}]
				\draw[fill=gray](-1,0)--(1,0)--(1,1)--(-1,1)--(-1,0);
				
				\draw(-2,0)-- node[a]{} (-3,0)--node[a]{}(-3,1);
				\draw(-2,0)-- node[a]{}(-2,1)-- node[a]{}(-3,1);
				\draw(-2,2)-- node[a]{}(-3,2)-- node[a]{}(-3,1);
				\draw(-2,2)-- node[a]{}(-2,1)-- node[a]{}(-3,1);
				\draw(-2,2)-- node[a]{}(-2,3);
				\draw(-2,0)-- node[a]{}(-2,-1);
				\draw(-2,1)-- node[a]{}(-1,1);
				\draw(-2,0)-- node[a]{}(-1,0)-- node[a]{}(-1,1);
				\draw(-2,2)-- node[a]{}(-1,2)-- node[a]{}(-1,1);
				
				\draw(0,0)-- node[a]{}(0,1)-- node[a]{}(-1,1);
				\draw(0,2)-- node[a]{}(-1,2);
				\draw(0,2)-- node[a]{}(0,1)-- node[a]{}(1,1);
				\draw(0,2)-- node[a]{}(0,3);
				\draw(0,2)-- node[a]{}(1,2)-- node[a]{}(1,1);
				\draw(0,0)-- node[a]{}(1,0)-- node[a]{}(1,1);
				\draw(0,0)-- node[a]{}(0,-1);
				\draw(1.56,.8)-- node[a]{}(1,0);
				\draw(1.56,2.8)-- node[a]{}(1,2);
				\draw(1.56,2.8)-- node[a]{}(1.56,1.8)-- node[a]{}(1,1);
				\draw(1.56,2.8)-- node[a]{}(1.56,1.8)-- node[a]{}(2.12,2.6);
				\draw(1.56,.8)-- node[a]{}(1.56,1.8);
				\draw(1.56,.8)-- node[a]{}(2.12,1.6)-- node[a]{}(2.12,2.6);
				\draw(1.56,.8)-- node[a]{}(1.56,-.2);
				\draw(1.56,2.8)-- node[a]{}(2.12,3.6)-- node[a]{}(2.12,2.6);
				\draw(1.56,2.8)-- node[a]{}(1.56,3.8);
				
				\draw(2,0)-- node[a]{}(1,0);
				\draw(2,0)-- node[a]{}(2,1)-- node[a]{}(3,1);
				\draw(2,0)-- node[a]{}(2,-1);
				\draw(2,0)-- node[a]{}(3,0)-- node[a]{}(3,1);
				\draw(3,2)-- node[a]{}(3,1);
				\draw(2.12,2)-- node[isosceles triangle,sloped,allow upside down,shift={(-.13,-.05)},minimum size=3pt]{}(3,2);
				\draw(1.7,1)--(2,1)--(2,1.429);
				
				\draw(-.766,-1.643)-- node[a]{}(0,-1);
				\draw(-.766,-1.643)-- node[a]{}(-.123,-2.409)-- node[a]{}(-.985,-3.052);
				\draw(-.766,-1.643)-- node[a]{}(-1.532,-2.286)-- node[a]{}(-.985,-3.052);
				\draw(-.766,-1.643)-- node[a]{}(-1.409,-.877)-- node[a]{}(-2.175,-1.52);
				\draw(-1.532,-2.286)-- node[a]{}(-2.175,-1.52);
				
				\draw(.766,-1.643)-- node[a]{}(0,-1);
				\draw(.766,-1.643)-- node[a]{}(.123,-2.409)-- node[a]{}(.985,-3.052);
				\draw(.766,-1.643)-- node[a]{}(1.532,-2.286)-- node[a]{}(.985,-3.052);
				\draw(.766,-1.643)-- node[a]{}(1.409,-.877)-- node[a]{}(2.175,-1.52);
				\draw(1.532,-2.286)-- node[a]{}(2.175,-1.52);

				%\draw[fill=black,opacity=.3](-1,0)--(1,0)--(1,1)--(-1,1)--(-1,0);
				\draw[ultra thick](0,0)--(0,-1);
				\draw[ultra thick](-1,0) grid (1,1);
				
				\draw(0,0)-- node[a]{}(-1,0);
				\draw(0,1)-- node[a]{}(-1,1);
				\draw(0,2)-- node[a]{}(-1,2);
				
				\draw(0,0)-- node[a]{}(0,1);
				\draw(-1,0)-- node[a]{}(-1,1);
				\draw(-2,0)-- node[a]{}(-2,1);
				\draw(-3,0)-- node[a]{}(-3,1);
				\draw(1,0)-- node[a]{}(1,1);
				\draw(2,0)-- node[a]{}(2,1);
				\draw(3,0)-- node[a]{}(3,1);
				\draw(1.56,.8)-- node[a]{}(1.56,1.8);
				\draw(2.12,1.6)-- node[a]{}(2.12,2.6);

				\node at (0,0){};
				\node[blue] at (-1,0){};
				\node[green] at (-1,1){};
				\node[red] at (0,1){};		
				\node at (1,1){};
				\node at (1,0){};
				\node at (0,-1){};
				
				\node[label,blue] at (-.8,-.2){$u$};
				\node[label,red] at (.2,.8){$v$};
				\node[label,green] at (-1.2,.8){$w$};
				
				\node[label] at (-.2,-.3) {$\emptyset$};
				\node[label] at (-1.3,-.3) {$\{i\}$};
				\node[label] at (-1.4,1.3) {$\{i,j\}$};
				\node[label] at (-.3,1.3) {$\{j\}$};
				\node[label] at (.6,1.3) {$\{j,k\}$};
				\node[label] at (.7,-.3) {$\{k\}$};
				\node[label] at (0,-1.4) {$\{l\}$};	
				\node[label,font=\Huge] at (0,4) {$\Delta$};			
			\end{scope}
			
		\end{tikzpicture}
	}
	\caption{A repeat of Figure \ref{fig:leveladj}.
	Three of the vertices are labeled $u,v,w$. We have ${\downarrow_1}(v)=\{v\}$, so $v\Uparrow u=u$ if $q(v)\prec q(u)$ and $v\Uparrow u=w$ if $q(u)\prec q(v)$.}\label{fig:ascent}
\end{figure}

In the remainder of this section we prove four lemmas about ascent.

\begin{lem}\label{lem:ascentequi}
	Ascent is $\La$-equivariant: $\lambda(v\Uparrow u)=\lambda v\Uparrow \lambda u$ for $\lambda\in\La$.
\end{lem}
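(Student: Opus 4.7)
The plan is to verify the equation by first checking that the ascent $\lambda v \Uparrow \lambda u$ is well-defined, and then comparing 1-downsets and invoking Lemma~\ref{lem:downset}\ref{item:downsetdet}.

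First I would check that $\lambda v \Uparrow \lambda u$ is defined. The chamber $C$ containing $u,v$ is mapped to a chamber $\lambda C$ containing $\lambda u, \lambda v$ (Proposition~\ref{prop:preserved}). Since $\lambda$ preserves ranks, $\lambda u$ has some singleton type $\{i'\}$. The condition $i \in t(v)\jperp$ is equivalent to the existence of a vertex $w \in C$ of type $\{i\} \cup t(v)$, with $u, v \leq w$. Applying $\lambda$, which preserves the poset, gives $\lambda u, \lambda v \leq \lambda w$ in $\lambda C$; a rank count then forces $t(\lambda w) = \{i'\} \cup t(\lambda v)$ with $i' \notin t(\lambda v)$, so $i' \in t(\lambda v)\jperp$.

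Next I would compute the two 1-downsets. Using (\ref{down1Down}) and $\La$-equivariance of ${\downarrow_1}$ (immediate since $\La$ preserves the poset and ranks),
\begin{equation*}
{\downarrow_1}(\lambda(v \Uparrow u)) = \lambda \cdot {\downarrow_1}(v \Uparrow u) = \{\lambda u' \mid u' \in {\downarrow_1}(v),\, q(u) \prec q(u')\} \cup \{\lambda u\}.
\end{equation*}
Substituting $u'' = \lambda u'$ and using that $q(\lambda x) = q(x)$ for all $x \in \Delta^0_1$ (by the very definition of $q$ as the quotient by the $\La$-action), this set becomes
\begin{equation*}
\{u'' \in {\downarrow_1}(\lambda v) \mid q(\lambda u) \prec q(u'')\} \cup \{\lambda u\} = {\downarrow_1}(\lambda v \Uparrow \lambda u),
\end{equation*}
where the last equality is again (\ref{down1Down}).

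Finally, since this common 1-downset contains $\lambda u$ and is therefore non-empty, Lemma~\ref{lem:downset}\ref{item:downsetdet} forces $\lambda(v \Uparrow u) = \lambda v \Uparrow \lambda u$. There is no significant obstacle here; the proof is essentially bookkeeping, with the only content being the observation that $q$ is $\La$-invariant so the order condition defining ascent transports cleanly.
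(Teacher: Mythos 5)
Your proposal is correct and follows essentially the same route as the paper: both arguments reduce to the fact that the formula (\ref{down1Down}) expresses ${\downarrow_1}(v\Uparrow u)$ purely in terms of the poset structure, the map $q$, and the order $\preceq$ (all of which $\La$ preserves), and then invoke Lemma~\ref{lem:downset}\ref{item:downsetdet} to recover the vertex from its 1-downset. The paper phrases this at a higher level (``ascent depends only on\dots, all preserved by $\La$'') while you make the 1-downset computation explicit, but the content is the same.
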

\begin{proof}
Given vertices $u,v\in C$, the condition that $t(u)=\{i\}$ for some $i\in t(v)\jperp$ is equivalent to $u$ being a rank-1 vertex incomparable with $v$ but with some cube in $C$ containing both $u$ and $v$.
The notion of 1-downset is defined in terms of the poset structure on $\Delta^0$, so it follows from (\ref{down1Down}) and Lemma \ref{lem:downset}\ref{item:downsetdet} that ascent depends only on the poset structure on $\Delta^0$, the chamber structure on $\Delta$, the map $q$ and the order $\preceq$.
All of these things are preserved by $\La$, so the lemma follows.
\end{proof}

\begin{lem}\label{lem:ascentdown}
	Ascent is strictly $\preceq$-increasing: $\Lambda\cdot[v]\prec\Lambda\cdot[v\Uparrow u]$
\end{lem}
\begin{proof}
	We know from Lemma \ref{lem:distinctq} that $q(u)$ is distinct from $q(u')$ for all $u'\in{\downarrow_1}(v)$, so it follows from (\ref{down1Down}) that
	\begin{equation*}
\max(q({\downarrow_1}(v\Uparrow u))\triangle q({\downarrow_1}(v)))=q(u)\in q({\downarrow_1}(v\Uparrow u)).\qedhere
	\end{equation*}	
\end{proof}

\begin{lem}\label{lem:doubleascent}
Let $C$ be a chamber with distinct vertices $u_1,u_2,v$ such that $t(u_1)=\{i\}$, $t(u_2)=\{j\}$, $i,j\in t(v)\jperp$ are adjacent and $q(u_1)\prec q(u_2)$. Then $(v\Uparrow u_1)\Uparrow u_2=v\Uparrow u_2$.
\end{lem}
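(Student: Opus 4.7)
The plan is to use the characterization of a vertex by its 1-downset (Lemma \ref{lem:downset}\ref{item:downsetdet}) together with the explicit formula (\ref{down1Down}) for the 1-downset of an ascent, reducing the statement to a set-theoretic identity controlled by the transitivity of $\prec$.

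First I would verify that the double ascent $(v\Uparrow u_1)\Uparrow u_2$ is even defined. Using the inclusion $\{i\}\subset t(v\Uparrow u_1)\subset t(v)\cup\{i\}$ from (\ref{ascenttypesandwich}) and the hypotheses that $j\in t(v)\jperp$ and $j$ is adjacent to $i$, one checks directly that $j\in t(v\Uparrow u_1)\jperp$, so $u_2$ is an admissible input. I would also note that $u_2$ lies in the same chamber $C$ as $v\Uparrow u_1$ (both are vertices of $C$), so the ascent takes place in $C$ as required.

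Next I would apply formula (\ref{down1Down}) twice to write
\begin{equation*}
{\downarrow_1}\big((v\Uparrow u_1)\Uparrow u_2\big)=\{u'\in{\downarrow_1}(v\Uparrow u_1)\mid q(u_2)\prec q(u')\}\cup\{u_2\},
\end{equation*}
and then substitute ${\downarrow_1}(v\Uparrow u_1)=\{u'\in{\downarrow_1}(v)\mid q(u_1)\prec q(u')\}\cup\{u_1\}$. This produces three contributions: vertices $u'\in{\downarrow_1}(v)$ with both $q(u_1)\prec q(u')$ and $q(u_2)\prec q(u')$, possibly $u_1$ itself if $q(u_2)\prec q(u_1)$, and $u_2$. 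To simplify, I would use Lemma \ref{lem:distinctq} applied to $u_1,u_2\in C$ of types $\{i\},\{j\}$ with $i,j$ adjacent to conclude $q(u_1)\ne q(u_2)$; combined with the hypothesis $q(u_1)\prec q(u_2)$ this rules out $q(u_2)\prec q(u_1)$, eliminating the $u_1$ term. Transitivity of $\prec$ then collapses the conjunction $q(u_1)\prec q(u')$ and $q(u_2)\prec q(u')$ down to $q(u_2)\prec q(u')$, yielding
\begin{equation*}
{\downarrow_1}\big((v\Uparrow u_1)\Uparrow u_2\big)=\{u'\in{\downarrow_1}(v)\mid q(u_2)\prec q(u')\}\cup\{u_2\}={\downarrow_1}(v\Uparrow u_2).
\end{equation*}

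Finally, since $u_2$ lies in both sets they are non-empty, so Lemma \ref{lem:downset}\ref{item:downsetdet} gives $(v\Uparrow u_1)\Uparrow u_2=v\Uparrow u_2$. The only slightly delicate point — and the step where I would take the most care — is invoking Lemma \ref{lem:distinctq} correctly to guarantee that $q(u_1)\prec q(u_2)$ is strict (so that $q(u_2)\not\prec q(u_1)$ holds in the partial order), since otherwise the $u_1$ term would leak in and break the identity. Everything else is a routine unwinding of the definitions.
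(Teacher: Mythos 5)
Your proof is correct and follows essentially the same route as the paper: verify well-definedness via (\ref{ascenttypesandwich}), then compare $1$-downsets using (\ref{down1Down}) and transitivity of $\prec$. One minor observation: the appeal to Lemma~\ref{lem:distinctq} to rule out $q(u_2)\prec q(u_1)$ is redundant, since the hypothesis $q(u_1)\prec q(u_2)$ is already a strict inequality in a (total) order on $q(\Delta^0_1)$, which precludes $q(u_2)\prec q(u_1)$ on its own; the paper relies on this implicitly, and your final step (uniqueness from $1$-downsets) is an equivalent substitute for the paper's conclusion by comparing types.
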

\begin{proof}
	First we note that $(v\Uparrow u_1)\Uparrow u_2$ is well-defined since $j\in (t(v)\cup\{i\})\jperp\subset t(v\Uparrow u_1)\jperp$ (using (\ref{ascenttypesandwich})).
	Now observe that any $u'\in{\downarrow_1}(v)$ with $q(u_2)\prec q(u')$ also satisfies $q(u_1)\prec q(u')$, so  $u'\in{\downarrow_1}(v\Uparrow u_1)$ by (\ref{down1Down}), and conversely we have that any $u'\in{\downarrow_1}(v\Uparrow u_1)$ with $q(u_2)\prec q(u')$ is an element of ${\downarrow_1}(v)$.
	We deduce that $(v\Uparrow u_1)\Uparrow u_2=v\Uparrow u_2$ since both vertices are in $C$ and are of the same type.
\end{proof}

\begin{lem}\label{lem:adjascent}
Let $C_1,C_2$ be $i$-adjacent chambers, let $v_1,v_2$ be vertices in $C_1,C_2$ respectively with $t(v_1)=t(v_2)=J\subset i\jperp$ (so $v_1,v_2$ are level-adjacent by Lemma \ref{lem:leveladj}). Then the following hold:
\begin{enumerate}
	\item\label{item:Downu} If $u=\wedge(C_1, C_2)$ then $v_1\Uparrow u=v_2\Uparrow u$.
	\item\label{item:Downu1u2} If $u_1,u_2$ are vertices in $C_1,C_2$ respectively of type $\{j\}$ with $j\in(J\cup\{i\})\jperp$, then $v_1\Uparrow u_1$ is level-adjacent to $v_2\Uparrow u_2$.
\end{enumerate}
\end{lem}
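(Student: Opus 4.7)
The plan for both parts is to exploit the formula (\ref{ascenttype}) for the type of an ascent, and to use the hypothesis $v_1 \approx v_2$ together with Lemma \ref{lem:downtype} to show that the types of the two ascents coincide as subsets of $I$.

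For part \ref{item:Downu}, Lemma \ref{lem:min}(\ref{item:iadjwedge}) gives $t(u)=\{i\}$ and $u \in C_1 \cap C_2$. Since $u \leq v_k \Uparrow u$ for $k = 1,2$, Lemma \ref{lem:min}(\ref{item:stayC}) places both ascents inside $C_1 \cap C_2$, so the equality $v_1 \Uparrow u = v_2 \Uparrow u$ will follow once I show they have the same type. To compare types via (\ref{ascenttype}), note that by Lemma \ref{lem:downset}(\ref{item:downsettypes}) the 1-downsets ${\downarrow_1}(v_1)$ and ${\downarrow_1}(v_2)$ each contain exactly one vertex of each type $\{j'\}$ with $j'\in J$; by Lemma \ref{lem:downtype} the corresponding pair of type-$\{j'\}$ vertices are themselves level-equivalent, hence share the same $q$-value. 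The sets $\{u' \in {\downarrow_1}(v_k) : q(u) \prec q(u')\}$ therefore contribute identical collections of types for $k=1,2$, giving $t(v_1 \Uparrow u) = t(v_2 \Uparrow u)$ as required.

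For part \ref{item:Downu1u2}, my first step is to apply Lemma \ref{lem:leveladj} to $u_1, u_2$: they both have type $\{j\}$ and sit in the $i$-adjacent chambers $C_1, C_2$ with $j \in i\jperp$ (since $j \in (J\cup\{i\})\jperp$), so $u_1 \approx u_2$ and $q(u_1) = q(u_2)$. Repeating the matching argument from part \ref{item:Downu} --- now with $q(u_k)$ in place of $q(u)$ --- yields $t(v_1 \Uparrow u_1) = t(v_2 \Uparrow u_2)$. To finish via Lemma \ref{lem:leveladj}, I need $i \in t(v_1 \Uparrow u_1)\jperp$: by (\ref{ascenttypesandwich}) this type lies in $J \cup \{j\}$, and the hypotheses $J \subset i\jperp$ (so $i$ is adjacent to every element of $J$) and $j \in (J\cup\{i\})\jperp$ (so $i$ is adjacent to $j$) together give $J \cup \{j\} \subset i\jperp$, whence $i \in (J \cup \{j\})\jperp \subseteq t(v_1 \Uparrow u_1)\jperp$.

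The main organizational issue --- not really an obstacle --- is ensuring in part \ref{item:Downu} that the two ascents genuinely land in a common chamber (so that equal type forces equal vertex), and in part \ref{item:Downu1u2} that the $\jperp$-condition needed for level-adjacency holds. Both reduce to careful bookkeeping with the poset, the sphericity of $J \cup \{i, j\}$, and the sandwich inclusion (\ref{ascenttypesandwich}); all of the real content has already been isolated in Lemma \ref{lem:downtype}, which is exactly the tool designed to transport level-equivalences along the poset.
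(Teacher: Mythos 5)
Your proof is correct and follows essentially the same route as the paper's: both use the formula (\ref{ascenttype}) together with Lemma \ref{lem:downtype} (the paper cites the derived Lemma \ref{lem:downset}\ref{item:downclasses}, which packages the same matching of level-equivalence classes in the 1-downsets) to show the ascents have equal type, then place the two ascents in a common chamber for part \ref{item:Downu} and verify the $\jperp$-condition for Lemma \ref{lem:leveladj} in part \ref{item:Downu1u2}. The minor differences in which auxiliary lemma is invoked and how the containment $v_k\Uparrow u\in C_1\cap C_2$ is justified are cosmetic.
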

\begin{proof}
	It follows from Lemma \ref{lem:downset}\ref{item:downclasses} that
	\begin{equation}\label{qdownsequal}
		\{[v']\mid v'\in{\downarrow_1}(v_1)\}=\{[v']\mid v'\in{\downarrow_1}(v_2)\}.
	\end{equation}
	We now prove the two parts of the lemma.
	\begin{enumerate}
		\item Combining (\ref{ascenttype}), (\ref{qdownsequal}) and the fact that level-equivalent vertices are of the same type, we deduce that $t(v_1\Uparrow u)=t(v_2\Uparrow u)$.
		As $i\in t(v_1\Uparrow u)$, we see that $v_1\Uparrow u, v_2\Uparrow u\in C_1\cap C_2$, so they must be the same vertex.
		\item The vertices $u_1,u_2$ are level-adjacent by Lemma \ref{lem:leveladj}, so $q(u_1)=q(u_2)$.
		Combining (\ref{ascenttype}) and (\ref{qdownsequal}) again, we deduce that $t(v_1\Uparrow u_1)=t(v_2\Uparrow u_2)$.
		As $t(v_1\Uparrow u_1)\subset J\cup\{j\}\subset i\jperp$, we apply Lemma \ref{lem:leveladj} again to conclude that $v_1\Uparrow u_1$ is level-adjacent to $v_2\Uparrow u_2$.\qedhere
	\end{enumerate}	 
\end{proof}

\bigskip
\section{Hierarchy of residue-groupoids}\label{sec:hierresgroup}

The goal of this section is to prove Proposition \ref{prop:Deltagroupoid}, which we restate for the reader's convenience.

\theoremstyle{plain}
\newtheorem*{prop:Deltagroupoid}{Proposition \ref{prop:Deltagroupoid}}
\begin{prop:Deltagroupoid}
	There exists a $\La'$-invariant $\cC(\Delta)$-groupoid for some finite-index subgroup $\La'<\La$.
\end{prop:Deltagroupoid}

We will prove this by constructing a hierarchy of residue-groupoids corresponding to the hierarchy of level-equivalence classes from the previous section.
First we arrange one more property for the lattice $\La$.
This is a generalization of $\La$ \emph{having no holonomy} in the terminology of Haglund \cite[definition 5.6 and Theorem 7.2]{Haglund06}.

\begin{lem}\label{lem:trivfactor1}
	Replacing $\La$ by a finite-index subgroup if necessary, we may assume that, for each chamber-residue $\cC([v])=\cC(J\uperp,C)$, the action of $\La_{[v]}$ on $\cC(J\uperp,C)\cong\cC(J,C)\times\cC(J\jperp,C)$ is trivial on the first factor (where $\La_{[v]}$ is the $\La$-stabilizer of the class $[v]$, and the product decomposition is from Lemma \ref{lem:cC[v]}).
\end{lem}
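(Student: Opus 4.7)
The plan is to reduce to finitely many constraints — one for each $\La$-orbit of level-equivalence classes, of which there are finitely many by cocompactness — and then realize each constraint by passing to a finite-index normal subgroup of $\La$ via the separability hypothesis.

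For each level-equivalence class $[v]$ of type $J = t(v)$, sitting in a chamber $C$, Proposition \ref{prop:preserved} gives that $\La_{[v]}$ preserves the product decomposition $\cC([v]) = \cC(J,C) \times \cC(J\jperp,C)$ together with the factor order. Because $J$ is spherical and the $G_i$ are finite, the factor $\cC(J,C)$ is finite (of size $|\G_J|$), so the first-factor action defines a homomorphism $\La_{[v]} \to \operatorname{Sym}(\cC(J,C))$ with finite-index kernel, which I will call $K_{[v]}$.

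The main step I would carry out is producing a single $\La' \triangleleft \La$ that lies in $K_{[v]}$ simultaneously at every class $[v]$. The separability of convex subgroups of $\La$, combined with Proposition \ref{prop:separable}, tells us that the finite-index subgroup $K_{[v]}$ of $\La_{[v]}$ is separable in $\La$, so Lemma \ref{lem:separable}\ref{item:intH1} supplies a finite-index normal subgroup $\hat{\La}_{[v]} \triangleleft \La$ with $\La_{[v]} \cap \hat{\La}_{[v]} \subset K_{[v]}$. Picking a representative from each $\La$-orbit of classes and intersecting the finitely many $\hat{\La}_{[v]}$ yields the desired finite-index normal subgroup $\La'$.

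The one step requiring a little care — but not, I expect, a serious obstacle — is verifying that $\La'$ works at \emph{every} class, not only the chosen orbit representatives. This follows from normality of $\La'$ together with the observation in Proposition \ref{prop:preserved} that conjugation by any $\lambda \in \La$ carries the product decomposition at $[v]$ to the one at $\lambda[v]$ while preserving the order of factors; consequently $\lambda K_{[v]} \lambda^{-1} = K_{\lambda[v]}$, and so $\La' \cap \La_{\lambda[v]} = \lambda(\La' \cap \La_{[v]})\lambda^{-1} \subset \lambda K_{[v]} \lambda^{-1} = K_{\lambda[v]}$. All the ingredients are supplied by the hypotheses and the preparatory work of Sections \ref{sec:rank} and \ref{sec:hyperplanes}, so the argument is essentially an orbit-counting plus separability bookkeeping.
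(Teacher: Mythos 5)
Your proposal is correct and follows essentially the same route as the paper: pass to the homomorphism $\La_{[v]} \to \operatorname{Sym}(\cC(J,C))$, note its kernel has finite index, invoke Proposition~\ref{prop:separable} and Lemma~\ref{lem:separable}\ref{item:intH1} to find a suitable finite-index normal subgroup, and then intersect over the finitely many $\La$-orbit representatives, using the $\La$-equivariance of the product decompositions (hence of the kernels) together with normality to handle all classes at once. The only difference is notational; the paper leaves the orbit-propagation step slightly more implicit than you do.
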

\begin{proof}
We know that $\La_{[v]}$ preserves the product decomposition for $\cC(J\uperp,C)$ by Proposition \ref{prop:preserved}.
Thus we get a homomorphism $\La_{[v]}\to\mathfrak{S}(\cC(J,C))$, where $\mathfrak{S}(X)$ denotes the symmetric group of a set $X$.
By Lemma \ref{lem:CJC} we know that $\cC(J,C)$ bijects with a coset of the subgroup $\G_J$; but $J$ is spherical, so $\cC(J,C)$ is finite.
Hence the kernel $\hat{\La}_{[v]}$ of the homomorphism $\La_{[v]}\to\mathfrak{S}(\cC(J,C))$ has finite index in $\La_{[v]}$.
The subgroup $\hat{\La}_{[v]}$ is separable in $\La$ by Proposition \ref{prop:separable}, so by Lemma \ref{lem:separable}\ref{item:intH1} there exists a finite-index normal subgroup $\hat{\La}\triangleleft\La$ with $\La_{[v]}\cap\hat{\La}<\hat{\La}_{[v]}$.
This means that the $\hat{\La}$-stabilizer of the class $[v]$ acts trivially on the first factor of $\cC(J\uperp,C)\cong\cC(J,C)\times\cC(J\jperp,C)$. It remains to achieve this property simultaneously for all classes $[v]$.

The action of $\La$ preserves product decompositions for all chamber-residues of the form $\cC([v])$, so we have
$$\lambda \hat{\La}_{[v]}\lambda^{-1}=\hat{\La}_{[\lambda v]}$$
for all $v\in\Delta^0$ and $\lambda\in\La$.
There are only finitely many $\La$-orbits of level-equivalence classes since $\La$ acts cocompactly on $\Delta$, so applying the argument of the preceding paragraph to a set of orbit representatives yields a finite-index normal subgroup $\hat{\La}\triangleleft\La$ with $\La_{[v]}\cap\hat{\La}<\hat{\La}_{[v]}$ for all $v\in\Delta^0$, as required.
\end{proof}

We know from Proposition \ref{prop:preserved} that $\La$ preserves the families of sets $\{\cC(v)\mid v\in\Delta^0\}$ and $\{\cC([v])\mid v\in\Delta^0\}$, so it is natural to make the following definition (which generalizes the notion of a $\La'$-invariant $\cC(\Delta)$-groupoid from Section \ref{sec:hierclasses}).

\begin{defn}(Actions of $\La$ on the collections of $\cC(v)$-groupoids and $\cC([v])$-groupoids)\\\label{defn:actionresgroup}
	Let $\phi$ be a $\cC(v)$-groupoid and let $\lambda\in\La$. We define the $\cC(\lambda v)$-groupoid $\lambda\cdot\phi$ by the maps
	$$(\lambda\cdot\phi)_{\lambda C_1,\lambda C_2}:=\lambda\circ\phi_{C_1,C_2}\circ\lambda^{-1}$$
	for $C_1,C_2\in\cC(v)$.
	It is straightforward to check that $\lambda\cdot\phi$ satisfies all the properties of being a $\cC(\lambda v)$-groupoid.
	It is also straightforward to check that this defines an action of $\La$ on the collection of all $\cC(v)$-groupoids with $v$ ranging over all the vertices of $\Delta$.
	Analogously, if $\phi$ is a $\cC([v])$-groupoid and $\lambda\in\La$, we can define a $\cC([\lambda v])$-groupoid $\lambda\cdot\phi$, and this defines an action of $\La$ on the collection of all $\cC([v])$-groupoids for $v\in\Delta^0$.
\end{defn}

\begin{remk}\label{remk:lambdaadj}
	If we want to show that $\lambda\cdot\phi$ is equal to some other $\cC(\lambda v)$-groupoid $\psi$, then it suffices to check that the diagram
	\begin{equation}\label{lambdasquare}
		\begin{tikzcd}[
			ar symbol/.style = {draw=none,"#1" description,sloped},
			isomorphic/.style = {ar symbol={\cong}},
			equals/.style = {ar symbol={=}},
			subset/.style = {ar symbol={\subset}}
			]
			C_1\ar{d}[swap]{\lambda}\ar{r}{\phi_{C_1,C_2}}&C_2\ar{d}{\lambda}\\
			\lambda C_1\ar{r}[swap]{\psi_{\lambda C_1,\lambda C_2}}&\lambda C_2
		\end{tikzcd}
	\end{equation}
	commutes for all pairs of adjacent chambers $C_1,C_2$ in $\cC(v)$.
	The commutative square for an arbitrary pair of chambers $C_1,C_2\in\cC(v)$ can be obtained by composing a sequence of commutative squares for pairs of adjacent chambers corresponding to a gallery in $\cC(v)$ that joins $C_1$ and $C_2$.
	Of course the same is true for $\cC([v])$-groupoids.
\end{remk}

We now define what it means to have a hierarchy of residue-groupoids.

\begin{defn}(Hierarchy of residue-groupoids)\\\label{defn:hierarchy}
	Let $\cL'\subset\cL$ be a union of $\La$-orbits that is upward closed under $\preceq$ -- i.e. $\cL'/\La\ni\Lambda\cdot[u_1]\preceq\Lambda\cdot[u_2]$ implies $[u_2]\in\cL'$.
	Let $\La'<\La$ be a finite-index subgroup.
	A \emph{$\La'$-hierarchy of residue-groupoids on $\cL'$} is a collection of residue-groupoids $(\phi^{[v]})_{[v]\in\cL'}$, where $\phi^{[v]}$ is a $\cC([v])$-groupoid, such that:
	\begin{enumerate}
	\item (Equivariance) $\lambda\cdot\phi^{[v]}=\phi^{[\lambda v]}$ for all $[v]\in\cL'$ and $\lambda\in\La'$.
	
	\item (Restriction) Let $C$ be a chamber with vertices $u,v\in C$ such that $t(u)=\{i\}$, $i\in t(v)\jperp$ and $[v]\in\cL'$. If $C'$ is another chamber that is $i$-adjacent to $C$, then
	$$\phi^{[v]}_{C,C'}=\phi^{[v\Uparrow u]}_{C,C'}.$$
	\end{enumerate}
\end{defn}

\begin{remk}\label{remk:restriction}
	It is not hard to see that the restriction property is well-defined.
	Indeed $i\in t(v\Uparrow u)$ by (\ref{ascenttype}), so $v\Uparrow u\in C\cap C'$ and $C,C'\in\cC([v\Uparrow u])$.
	Also, $\Lambda\cdot[v]\prec\Lambda\cdot[v\Uparrow u]$ by Lemma \ref{lem:ascentdown}, so $[v\Uparrow u]\in\cL'$.
\end{remk}

\begin{remk}\label{remk:La'La''}
	Note that a $\La'$-hierarchy of residue-groupoids on $\cL'$ is also a $\La''$-hierarchy of residue-groupoids on $\cL'$ for any finite-index $\La''<\La'$.
\end{remk}

The rest of this section will be spent proving the following proposition.
We observe that Proposition \ref{prop:Deltagroupoid} follows since $\cC([v])=\cC(\Delta)$ for any rank-0 vertex $v$ (note that such $[v]$ is at the bottom of the hierarchy by Remark \ref{remk:extremehier}).

\begin{prop}\label{prop:fullhierarchy}
	There exists a $\La'$-hierarchy of residue-groupoids on the whole of $\cL$ for some finite-index $\La'< \La$.
\end{prop}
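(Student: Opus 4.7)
The plan is to proceed by Noetherian induction on a linear extension of the opposite of $\preceq$ applied to the orbit space $\cL/\La$ (which is finite since $\La$ acts cocompactly on $\Delta$). Enumerating the $\La$-orbits as $O_1 \succ O_2 \succ \cdots \succ O_n$, let $\cL'_k := O_1 \sqcup \cdots \sqcup O_k$; I will inductively construct a $\La_k$-hierarchy of residue-groupoids on $\cL'_k$ for a descending chain $\La \geq \La_1 \geq \cdots \geq \La_n$ of finite-index subgroups, and take $\La' := \La_n$ at the end. Before beginning, I replace $\La$ by a finite-index subgroup lying in $\Aut_{\rk}(\Delta)$ (Proposition \ref{prop:finiteindex}), acting cleanly on $\Delta$ (using the separability of hyperplane stabilizers and \cite[Lemma 9.14]{HaglundWise08}), and satisfying the triviality condition of Lemma \ref{lem:trivfactor1}.

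At stage $k$, for each $\La_{k-1}$-orbit representative $[v] \in O_k$ with $J := t(v)$, I use the product decomposition $\cC([v]) \cong \cC(J, C_0) \times \cC(J\jperp, C_0)$ from Lemma \ref{lem:cC[v]}. The ``horizontal'' maps on $i$-adjacent pairs $C, C' \in \cC([v])$ with $i \in J\jperp$ are forced by the restriction property to equal $\phi^{[v \Uparrow u]}_{C, C'}$, where $u$ is the shared type-$\{i\}$ vertex; this is well-defined because $[v \Uparrow u] \succ [v]$ by Lemma \ref{lem:ascentdown}, so $[v \Uparrow u] \in \cL'_{k-1}$. The ``vertical'' maps on $i$-adjacent pairs with $i \in J$ are free: I fix one section $\cC(v_0) \subset \cC([v])$ (a finite $J$-chamber residue since $J$ is spherical), choose any $\cC(v_0)$-groupoid $\phi_0$ (e.g.\ from Lemma \ref{lem:Gammaresgroup}), and define the vertical groupoid on every other section $\cC(v')$ by transporting $\phi_0$ via the already-fixed horizontal maps. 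A direct computation, using the groupoid property of the horizontal maps, shows that this transport construction forces every ``mixed'' single-step square (vertical $i$-edge with $i \in J$, horizontal $j$-edge with $j \in J\jperp$) to commute by definition.

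I then invoke Lemma \ref{lem:resgroup} to extend these adjacent-pair definitions to a full $\cC([v])$-groupoid, verifying conditions (1$'$)--(5$'$). Conditions (1$'$), (2$'$), (5$'$) are routine consequences of the same properties for $\phi_0$ and for the higher groupoids. Condition (3$'$) splits cleanly into $i \in J$ (reducing to the transported $\phi_0$ on a fixed $\cC(v')$) and $i \in J\jperp$ (where all chambers in the $\{i\}$-chamber-residue share the type-$\{i\}$ vertex $u$, so all three maps are restrictions of the single higher groupoid $\phi^{[v \Uparrow u]}$). The main obstacle is the ``both-horizontal'' case of (4$'$), $i, j \in J\jperp$ adjacent: the four edges of the square are determined by restriction from potentially distinct higher classes. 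I would first use Lemma \ref{lem:adjascent} to show the two $i$-edges (resp.\ the two $j$-edges) are governed by a common higher class, reducing to two classes $A := [v \Uparrow u_1]$ and $B := [v \Uparrow w_1]$; Lemma \ref{lem:distinctq} together with Lemma \ref{lem:doubleascent} then shows $A, B$ are $\preceq$-comparable, with one being an ascent of the other. The restriction property at the larger of $A, B$ (available inductively) identifies $\phi^A$ and $\phi^B$ on the relevant edges of the square, reducing the condition to the groupoid property of a single higher $\phi^{[v \Uparrow ?]}$.

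Finally, for $\La_k$-equivariance: the horizontal part of $\phi^{[v]}$ inherits $\La_{k-1, [v]}$-invariance automatically from the inductively $\La_{k-1}$-equivariant higher groupoids (via Lemma \ref{lem:ascentequi}). The vertical part depends only on the single choice $\phi_0$ on the finite set $\cC(v_0)$, and $\La_{k-1, [v]}$ acts on the finite set of possible $\cC(v_0)$-groupoids through a holonomy homomorphism measuring the failure of each element to commute with the horizontal transport; hence the $\La_{k-1, [v]}$-stabilizer of $\phi_0$ has finite index. Proposition \ref{prop:separable} together with Lemma \ref{lem:separable}\ref{item:intH1} then supplies a finite-index normal subgroup $\La_k \triangleleft \La_{k-1}$ realizing this for every $\La_{k-1}$-orbit representative in $O_k$ (finitely many by cocompactness) simultaneously. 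Setting $\phi^{[\lambda v]} := \lambda \cdot \phi^{[v]}$ for $\lambda \in \La_k$ is then well-defined and $\La_k$-equivariant, completing the induction and yielding $\La' := \La_n$ as desired.
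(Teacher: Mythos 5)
Your proposal follows essentially the same strategy as the paper's proof: inductively extending the hierarchy one $\La$-orbit at a time, forcing the $J\jperp$-adjacent (``horizontal'') maps by the restriction property, choosing an arbitrary $\cC(v_0)$-groupoid $\phi_0$ as the free ``vertical'' data and transporting it across sections via the horizontal maps, and controlling equivariance by a holonomy homomorphism into the finite symmetry group of the set of $\cC(v_0)$-groupoids. The paper packages the horizontal step as Lemma \ref{lem:phiJjperp}, the transport step as Lemma \ref{lem:barpsi}, and the holonomy argument at the end, but the substance is the same, and your identification of the both-horizontal case of \ref{item:square} as a main obstacle and your treatment of it via Lemmas \ref{lem:adjascent}, \ref{lem:distinctq}, \ref{lem:doubleascent} match the paper's argument.

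There is, however, one place where you underestimate the difficulty: you describe \ref{item:fixadj} (the intersection property) as ``routine.'' This is true for horizontal pairs, where both maps come by restriction from higher residue-groupoids, but it is \emph{not} routine for a vertical pair $C_1,C_2$ (say $i$-adjacent with $i\in J$) lying outside the base section $\cC(v_0)$. Unwinding the transport definition, the vertical map on $C_1,C_2$ is the conjugate of a map of $\phi_0$ by two horizontal maps $\phi_{C_1,C'_1}$ and $\phi_{C_2,C'_2}$. The intersection property for $\phi_0$ together with commutativity of the mixed square does \emph{not} immediately imply that the vertical map fixes $C_1\cap C_2$; one must additionally know that $\phi_{C_1,C'_1}$ and $\phi_{C_2,C'_2}$ both carry $C_1\cap C_2$ into $C'_1\cap C'_2$ \emph{and that they agree} on $C_1\cap C_2$. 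The agreement is the hard point: it does not follow from the intersection property of the horizontal maps (they fix $C_1\cap C'_1$ and $C_2\cap C'_2$, which are different intersections). The paper's proof of part \wackyenum{5} of Lemma \ref{lem:barpsi} handles this with a separate induction on the distance from $C_1$ to $\cC(v)$: it uses Lemma \ref{lem:stayj} to show both horizontal maps send $\wedge(C_1,C_2)$ to $\wedge(C'_1,C'_2)$, then uses Lemma \ref{lem:adjascent}\ref{item:Downu1u2} to produce a single higher class $[y_1]$ (with $i,j\in t(y_1)\uperp$) whose residue-groupoid $\phi^{[y_1]}$ governs all four sides of the square, at which point the agreement on $C_1\cap C_2$ follows from the intersection and commutativity properties of $\phi^{[y_1]}$. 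Your proof is missing this argument, which is roughly as involved as the both-horizontal case you did flag.
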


We prove this by working down the hierarchy of level-equivalence classes, noting that $\cL/\La$ is finite because $\La$ acts cocompactly on $\Delta$. As a base case, there is vacuously a $\La$-hierarchy of residue-groupoids on $\emptyset$.
It remains to prove the inductive step, so assume that we are given a $\La'$-hierarchy of residue-groupoids on $\cL'$, denoted by $(\phi^{[v]})_{[v]\in\cL'}$, and pick a class $[v]\in\cL-\cL'$ such that $\Lambda\cdot[v]$ is $\preceq$-maximal. Our task is to extend this to a $\La''$-hierarchy of residue-groupoids on $\cL'\cup \La\cdot[v]$ for some finite-index $\La''<\La'$.
We will do this in three parts:
\begin{enumerate}
	\item\label{item:C[v]groupoid} (Lemma \ref{lem:barpsi}) Construct a $\cC([v])$-groupoid that satisfies the restriction property from Definition \ref{defn:hierarchy}.
	This will be constructed by piecing together a $\cC(v)$-groupoid $\psi$ with the existing residue-groupoids in the $\La'$-hierarchy.
	\item (Lemma \ref{lem:La''}) Pass to a finite-index subgroup $\La''<\La'$ to ensure that the residue-groupoid from \ref{item:C[v]groupoid} is $\La''_{[v]}$-invariant. This involves subgroup separability and a certain ``groupoid holonomy'' of $\La'_{[v]}$.
	\item (Lemma \ref{lem:extendhierarchy}) Construct residue-groupoids for the other residues in $\La\cdot[v]$ that satisfy both the equivariance and restriction properties from Definition \ref{defn:hierarchy} with respect to $\La''$. This involves taking appropriate $\La'$-translates of the residue-groupoid from \ref{item:C[v]groupoid}.
\end{enumerate}  

Suppose $t(v)=J$ and let $\cC([v])=\cC(J\uperp,C)$ (Lemma \ref{lem:cC[v]}).
Once again we will make use of the product decomposition from Lemma \ref{lem:cC[v]}:
\begin{equation}\label{product}
\cC([v])=\cC(J\uperp,C)\cong\cC(J,C)\times\cC(J\jperp,C)
\end{equation}
Note that $\La'_{[v]}$ acts trivially on the first factor by Lemma \ref{lem:trivfactor1}.
To construct a $\cC([v])$-groupoid $\phi$, we first define $\phi$ on $J\jperp$-chamber-residues within $\cC([v])$ with the following lemma -- note that these $J\jperp$-chamber-residues correspond to sections $\{C_1\}\times\cC(J\jperp,C)$ in the product decomposition (\ref{product}), so each is stabilized by $\La'_{[v]}$.
(We remark that this step is vacuous if $J$ is a maximal spherical subset of $I$, as then $J\jperp=\emptyset$.)

\begin{lem}\label{lem:phiJjperp}
For a given chamber-residue $\cC(J\jperp,C')\subset\cC([v])$, there exists a unique $\cC(J\jperp,C')$-groupoid $\phi$ such that:
\begin{enumerate}
	\item\label{item:equi} (Equivariance) $\lambda\cdot\phi=\phi$ for all $\lambda\in\La'_{[v]}$.
	\item\label{item:rest} (Restriction) If $C_1,C_2\in\cC(J\jperp,C')$ are $i$-adjacent and $u,v_1\in C_1$ are vertices of types $\{i\}$ and $J$ respectively, then
	$$\phi_{C_1,C_2}=\phi^{[v_1\Uparrow u]}_{C_1,C_2}.$$
\end{enumerate}	
\end{lem}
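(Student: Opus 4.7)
The plan is to define $\phi$ on adjacent pairs of chambers using the forced restriction formula, extend it to the whole chamber-residue via Lemma \ref{lem:resgroup}, and then verify equivariance. Uniqueness is immediate: condition (2) pins down $\phi$ on every adjacent pair, and the commutativity property of residue-groupoids then pins it down on every pair via any connecting $J\jperp$-gallery. The formula $\phi_{C_1,C_2}:=\phi^{[v_1\Uparrow u]}_{C_1,C_2}$ lands in the existing hierarchy because $v_1\in[v]$ (from the product decomposition of $\cC([v])$ given by Lemma \ref{lem:cC[v]}), so Lemma \ref{lem:ascentdown} gives $[v]\prec[v_1\Uparrow u]$, and $\preceq$-maximality of $[v]$ in $\cL-\cL'$ then forces $[v_1\Uparrow u]\in\cL'$.

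Conditions (1'), (2'), (3'), (5') of Lemma \ref{lem:resgroup} are mostly automatic. The lower-degree condition (1') is inherited from the existing hierarchy. For (2'), (3'), (5'), the key observation is that when chambers pairwise share a common type-$\{i\}$ vertex $u$ (which is automatic inside an $\{i\}$-chamber-residue by Lemma \ref{lem:min}\ref{item:iadjwedge}), Lemma \ref{lem:adjascent}\ref{item:Downu} gives $v_1\Uparrow u=v_2\Uparrow u=\cdots$, so the forced maps all live in a single residue-groupoid $\phi^{[v_1\Uparrow u]}$ and inherit its inverse, composition, and intersection-fixing properties.

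The commuting-square condition (4') is the main obstacle. Let $C_1,C_2,C'_1,C'_2$ form a square with $i,j\in I$ adjacent, write $v_k,v'_k$ for the type-$J$ vertices, and $u_i,u'_i,u_j,u'_j$ for the relevant type-$\{i\}$ and type-$\{j\}$ vertices in the four chambers. By Lemma \ref{lem:distinctq} we have $q(u_i)\neq q(u_j)$, so WLOG $q(u_i)\prec q(u_j)$ (else swap $i,j$ and run the argument with $\phi^{[v_1\Uparrow u_j]}$). I show every edge of the square equals the corresponding map in the single groupoid $\phi^{[v_1\Uparrow u_i]}$, after which commutativity of the square follows from the composition law for $\phi^{[v_1\Uparrow u_i]}$. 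Indeed, Lemma \ref{lem:adjascent}\ref{item:Downu1u2} yields $[v_1\Uparrow u_i]=[v'_1\Uparrow u'_i]$, which handles the two $i$-adjacent edges. For the $j$-adjacent edge $C_1\to C'_1$, the restriction property of the existing hierarchy applied to $\phi^{[v_1\Uparrow u_i]}$ (noting $j\in t(v_1\Uparrow u_i)\jperp$, since $t(v_1\Uparrow u_i)\subset J\cup\{i\}$ and $j\in J\jperp\cap\{i\}\jperp$) gives $\phi^{[v_1\Uparrow u_i]}_{C_1,C'_1}=\phi^{[(v_1\Uparrow u_i)\Uparrow u_j]}_{C_1,C'_1}$, and Lemma \ref{lem:doubleascent} (using $q(u_i)\prec q(u_j)$) collapses $(v_1\Uparrow u_i)\Uparrow u_j=v_1\Uparrow u_j$, producing exactly $\phi_{C_1,C'_1}$. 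The edge $C_2\to C'_2$ is handled identically after observing that the type-$t(v_1\Uparrow u_i)$ vertex in $C_2$ coincides with the common vertex $v_1\Uparrow u_i=v_2\Uparrow u_i\in C_1\cap C_2$ (Lemma \ref{lem:adjascent}\ref{item:Downu}), and then applying Lemma \ref{lem:doubleascent} to $(v_2\Uparrow u_i)\Uparrow u'_j=v_2\Uparrow u'_j$.

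Finally, equivariance need only be checked on adjacent pairs by Remark \ref{remk:lambdaadj}. For $\lambda\in\La'_{[v]}$, Lemma \ref{lem:trivfactor1} ensures $\lambda$ preserves the $J\jperp$-chamber-residue $\cC(J\jperp,C')$, and $\lambda v_1$ is the type-$J$ vertex in $\lambda C_1$ (since $\lambda\cdot[v]=[v]$ forces $\lambda v_1\in[v]$, hence of type $J$ by Corollary \ref{cor:leveltype}); the type-$\{k\}$ vertex in $\lambda C_1$ arising from $\lambda C_1,\lambda C_2$ being $k$-adjacent is then $\lambda u_i$. Combining Lemma \ref{lem:ascentequi} with the equivariance of the existing hierarchy yields $\phi_{\lambda C_1,\lambda C_2}=\phi^{[\lambda(v_1\Uparrow u_i)]}_{\lambda C_1,\lambda C_2}=\lambda\circ\phi^{[v_1\Uparrow u_i]}_{C_1,C_2}\circ\lambda^{-1}=\lambda\circ\phi_{C_1,C_2}\circ\lambda^{-1}$, which is the equivariance condition.
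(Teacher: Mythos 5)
Your proposal is correct and follows essentially the same route as the paper's: define $\phi$ on adjacent pairs via the forced restriction formula, extend by Lemma \ref{lem:resgroup}, handle (2'), (3'), (5') by observing all the relevant maps live in a single groupoid $\phi^{[w]}$ via Lemma \ref{lem:adjascent}\ref{item:Downu}, and treat the commuting square (4') by a double-ascent collapse (Lemma \ref{lem:doubleascent}) that funnels all four edges into one existing residue-groupoid. The only difference is cosmetic: for (4') you take the WLOG $q(u_i)\prec q(u_j)$ and bridge through $\phi^{[v_1\Uparrow u_i]}$, whereas the paper takes the opposite WLOG $q(x_1)\prec q(u)$ and bridges through $\phi^{[y_1]}$ (the case the paper explicitly notes is symmetric), and your explicit maximality argument for $[v_1\Uparrow u]\in\cL'$ is a slightly cleaner justification than the paper's bare citation of Lemma \ref{lem:ascentdown}.
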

\begin{proof}
	The maps $\phi_{C_1,C_2}$ are defined for adjacent chambers by \ref{item:rest} -- noting that $[v_1\Uparrow u]\in\cL'$ by Lemma \ref{lem:ascentdown}, so $\phi^{[v_1\Uparrow u]}$ is defined.
By Lemma \ref{lem:resgroup}, the collection of maps $(\phi_{C_1,C_2})$ extends to a $\cC(J\jperp,C')$-groupoid if properties \ref{item:d-'}--\ref{item:fixadj} are satisfied (and this $\cC(J\jperp,C')$-groupoid is uniquely determined), so we now check \ref{item:d-'}--\ref{item:fixadj}:
\begin{enumerate}[label=\wackyenum*]
	\item Each $\phi^{[v_1\Uparrow u]}_{C_1,C_2}$ preserves lower degrees of rank-1 vertices, so the maps $\phi_{C_1,C_2}$ do too.
	
	\item Let $C_1,C_2\in\cC(J\jperp,C')$ be $i$-adjacent, let $u=\wedge(C_1, C_2)$, and let $v_1,v_2\in[v]$ be in $C_1,C_2$ respectively.
	Let $w:=v_1\Uparrow u=v_2\Uparrow u\in C_1\cap C_2$ (Lemma \ref{lem:adjascent}\ref{item:Downu}).
	By construction we have
	$$\phi_{C_1,C_2}:=\phi^{[w]}_{C_1,C_2}\quad\text{and}\quad\phi_{C_2,C_1}:=\phi^{[w]}_{C_2,C_1},$$
	and these maps are inverse to each other since $\phi^{[w]}$ is a residue-groupoid.
	
	\item If $C_1,C_2,C_3$ are all in the same $\{i\}$-chamber-residue for $i\in J\jperp$, then the maps $\phi_{C_1,C_2},\phi_{C_2,C_3}$ and $\phi_{C_1,C_3}$ are all defined using $\phi^{[w]}$, where $w:=v_1\Uparrow u=v_2\Uparrow u=v_3\Uparrow u$, $u$ is the unique vertex in $C_1\cap C_2\cap C_3$ of type $\{i\}$, and $v_k\in C_k\cap[v]$ for $1\leq k\leq3$.
	We deduce that $\phi_{C_2,C_3}\circ\phi_{C_1,C_2}=\phi_{C_1,C_3}$ because the corresponding equation holds for $\phi^{[w]}$.
	
	\item\label{item:squarescase} Consider adjacent $i,j\in J\jperp$ and pairs of $i$-adjacent chambers $C_1,C_2$ and $C'_1,C'_2$, and suppose that the pairs $C_1,C'_1$ and $C_2,C'_2$ are $j$-adjacent.
	Let $v_1,v_2,v'_1,v'_2\in[v]$ be vertices in $C_1,C_2,C'_1,C'_2$ respectively.
	Put $u:=\wedge(C_1, C_2)$, $u':=\wedge(C'_1, C'_2)$, $x_1:=\wedge(C_1, C'_1)$ and $x_2:=\wedge(C_2, C'_2)$.
	Using Lemma \ref{lem:adjascent}\ref{item:Downu}, put
	\begin{align*}
w&:=v_1\Uparrow u=v_2\Uparrow u,\\
w'&:=v'_1\Uparrow u'=v'_2\Uparrow u',\\
y_1&:=v_1\Uparrow x_1=v'_1\Uparrow x_1,\\
y_2&:=v_2\Uparrow x_2=v'_2\Uparrow x_2.
	\end{align*}
An example is shown in Figure \ref{fig:lem}.
	Lemma \ref{lem:adjascent}\ref{item:Downu1u2} implies that $u,w,x_1,y_1$ are level-adjacent to $u',w',x_2,y_2$ respectively, so
	\begin{align*}
		q(u)&=q(u'),\\
		q(w)&=q(w'),\\
		q(x_1)&=q(x_2),\\
		q(y_1)&=q(y_2).
	\end{align*}
	By Lemma \ref{lem:distinctq} we have $q(u)\neq q(x_1)$. Say $q(x_1)\prec q(u)$ (the opposite case follows a symmetric argument).
	We can then apply Lemma \ref{lem:doubleascent} to deduce that $y_1\Uparrow u=w=y_2\Uparrow u$ and $y_1\Uparrow u'=w'=y_2\Uparrow u'$.
	Applying the restriction property of hierarchies to $\phi^{[y_1]}=\phi^{[y_2]}$ then yields
	\begin{equation*}
		\phi^{[y_1]}_{C_1,C_2}=\phi^{[w]}_{C_1,C_2}\quad\text{and}\quad \phi^{[y_1]}_{C'_1,C'_2}=\phi^{[w']}_{C'_1,C'_2}.
	\end{equation*}
	It follows that the diagrams	
	\begin{equation*}\label{phisquare}
		\begin{tikzcd}[
			ar symbol/.style = {draw=none,"#1" description,sloped},
			isomorphic/.style = {ar symbol={\cong}},
			equals/.style = {ar symbol={=}},
			subset/.style = {ar symbol={\subset}}
			]
			C_1\ar{d}[swap]{\phi_{C_1,C'_1}}\ar{r}{\phi_{C_1,C_2}}&C_2\ar{d}{\phi_{C_2,C'_2}}\\
			C'_1\ar{r}[swap]{\phi_{C'_1,C'_2}}&C'_2
		\end{tikzcd}
	\quad\text{and}\quad
	\begin{tikzcd}[
		ar symbol/.style = {draw=none,"#1" description,sloped},
		isomorphic/.style = {ar symbol={\cong}},
		equals/.style = {ar symbol={=}},
		subset/.style = {ar symbol={\subset}}
		]
		C_1\ar{d}[swap]{\phi^{[y_1]}_{C_1,C'_1}}\ar{r}{\phi^{[y_1]}_{C_1,C_2}}&C_2\ar{d}{\phi^{[y_1]}_{C_2,C'_2}}\\
		C'_1\ar{r}[swap]{\phi^{[y_1]}_{C'_1,C'_2}}&C'_2
	\end{tikzcd}
	\end{equation*}
are identical. Our goal is to prove that the left-hand diagram commutes, so this reduces to showing that the right-hand diagram commutes, but this follows since $\phi^{[y_1]}$ is a residue-groupoid.

\item Finally, each $\phi^{[v_1\Uparrow u]}_{C_1,C_2}$ fixes the intersection $C_1\cap C_2$ pointwise, so the same is true of the maps $\phi_{C_1,C_2}$.
\end{enumerate}

We now verify properties \ref{item:equi} and \ref{item:rest} from the lemma.
To show that $\lambda\cdot\phi=\phi$ for $\lambda\in\La'_{[v]}$ it suffices to check that diagram (\ref{lambdasquare}) from Remark \ref{remk:lambdaadj} commutes (with $\psi=\phi$).
But we defined the maps $\phi_{C_1,C_2}$ for pairs of adjacent chambers using \ref{item:rest}, and these will satisfy the commutative diagram because our existing hierarchy of residue-groupoids satisfies equivariance, and because ascent is $\La'$-equivariant (Lemma \ref{lem:ascentequi}).
Lastly, property \ref{item:rest} holds by construction.
\end{proof}

\begin{figure}[H]
	\centering
	\scalebox{1}{
		\begin{tikzpicture}[auto,node distance=2cm,
			thick,every node/.style={circle,draw,fill,inner sep=0pt,minimum size=7pt},
			every loop/.style={min distance=2cm},
			hull/.style={draw=none},
			]
			\tikzstyle{label}=[draw=none,fill=none]
			\tikzstyle{a}=[isosceles triangle,sloped,allow upside down,shift={(0,-.05)},minimum size=3pt]
			
				\draw(0,0)--(-2,1)--(0,2)--(2,1)--(0,0);
				\draw(0,0)--(0,-1.2)--(2,-.2)--(2,1);
				\draw(0,-1.2)--(-2,-.2)--(-2,1);
				\draw(-1,-.7)--(-1,.5)--(1,1.5);
				\draw(1,-.7)--(1,.5)--(-1,1.5);
				
				\node at (0,0){};
				\node at (-1,.5){};
				\node at (1,.5){};
				\node at (-2,1){};
				\node at (2,1){};
				\node at (-1,1.5){};
				\node at (1,1.5){};
				\node at (0,2){};
				\node at (-1,-.7){};
				\node at (1,-.7){};
				
				\node[red] at (-2,-.2){};
				\node[red] at (0,-1.2){};
				\node[red] at (2,-.2){};
				
				\node[label] at (.3,-.2){$v'_1$};
				\node[label] at (-1.3,.3){$y_1$};
				\node[label] at (1.3,.3){$w'$};
				\node[label] at (-2.3,.8){$v_1$};
				\node[label] at (2.3,.8){$v'_2$};
				\node[label] at (-1.3,1.7){$w$};
				\node[label] at (1.3,1.7){$y_2$};
				\node[label] at (.3,2.2){$v_2$};
				\node[label] at (-1.3,-.9){$x_1$};
				\node[label] at (1.3,-.9){$u'$};
			
		\end{tikzpicture}
	}
	\caption{The region of $\Delta$ relevant to part \ref{item:squarescase} in the proof of Lemma \ref{lem:phiJjperp}, shown for the case where $q(x_1)\prec q(u)\prec q(v_1)$ and $\rk(v_1)=1$. The picture shows four 3-cubes glued together, one in each of the chambers $C_1,C_2,C'_1,C'_2$. The centers of these chambers are shown in red.}\label{fig:lem}
\end{figure}
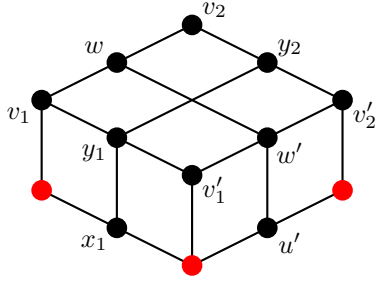

We apply Lemma \ref{lem:phiJjperp} to all $J\jperp$-chamber-residues in $\cC([v])$ and write $(\phi_{C_1,C_2})$ for the collection of all maps obtained.

If $J=\emptyset$ then $J\jperp=I$ and $\cC([v])=\cC(\Delta)$, so the maps $(\phi_{C_1,C_2})$ would define a $\cC(\Delta)$-groupoid $\phi$ that is invariant under $\La'$. In this case $\Lambda\cdot[v]$ would be the unique $\preceq$-smallest class in $\cL/\La$ (Remark \ref{remk:extremehier}), so the proof of Proposition \ref{prop:fullhierarchy} would be finished.

If $J\neq\emptyset$ however, then we are not yet finished with the inductive step in Proposition \ref{prop:fullhierarchy}, so assume $J\neq\emptyset$ for the rest of this section.
Next, we describe a procedure that extends the maps $(\phi_{C_1,C_2})$ to a $\cC([v])$-groupoid.
The idea here is to take a $\cC(v)$-groupoid $\psi$ -- noting that $\cC(v)$ corresponds to a section $\cC(J,C)\times\{C_2\}$ in the product decomposition (\ref{product}) -- and extend it using the maps $(\phi_{C_1,C_2})$.
Remember that $\cC(v)$-groupoids do exist by Lemma \ref{lem:Gammaresgroup}.

\begin{lem}\label{lem:barpsi}
	Given a $\cC(v)$-groupoid $\psi$, there is a unique extension to a $\cC([v])$-groupoid $\bar{\psi}$, such that $\bar{\psi}_{C_1,C_2}=\phi_{C_1,C_2}$ whenever $C_1,C_2$ are in the same $J\jperp$-chamber-residue.
	
	Moreover, it follows from Lemma \ref{lem:phiJjperp} that $\bar{\psi}$ satisfies the restriction property from Definition \ref{defn:hierarchy}: if $C_1,C_2\in\cC([v])$ are $i$-adjacent and $u,v_1\in C_1$ are vertices of types $\{i\}$ and $J$ respectively, then
	$$\bar{\psi}_{C_1,C_2}=\phi^{[v_1\Uparrow u]}_{C_1,C_2}.$$
\end{lem}
\begin{proof}
	Let $\psi$ be a $\cC(v)$-groupoid.
	Let's first consider what the maps between adjacent chambers in $\bar{\psi}$ must be.
	Let $C_1,C_2\in\cC([v])$ be $i$-adjacent.
	If $i\in J\jperp$, then the assumption of the lemma forces
	\begin{equation}\label{barpsiJp}
		\bar{\psi}_{C_1,C_2}:=\phi_{C_1,C_2}.
	\end{equation}
	Meanwhile, if $i\in J$ then the composition property for residue-groupoids determines $\bar{\psi}_{C_1,C_2}$.
	Specifically, if $C'_1\in\cC(v)\cap\cC(J\jperp,C_1)$ and $C'_2\in\cC(v)\cap\cC(J\jperp,C_2)$ (these chambers exist and are unique by Lemma \ref{lem:approxjperp}), then we have the following commutative diagram:
	\begin{equation}\label{barpsiJ}
		\begin{tikzcd}[
			ar symbol/.style = {draw=none,"#1" description,sloped},
			isomorphic/.style = {ar symbol={\cong}},
			equals/.style = {ar symbol={=}},
			subset/.style = {ar symbol={\subset}}
			]
			C_1\ar{d}[swap]{\phi_{C_1,C'_1}}\ar{r}{\bar{\psi}_{C_1,C_2}}&C_2\ar{d}{\phi_{C_2,C'_2}}\\
			C'_1\ar{r}[swap]{\psi_{C'_1,C'_2}}&C'_2
		\end{tikzcd}
	\end{equation}
	To prove that the collection of maps $\bar{\psi}_{C_1,C_2}$ from (\ref{barpsiJp}) and (\ref{barpsiJ}) can be extended (uniquely) to a $\cC([v])$-groupoid $\bar{\psi}$, it suffices to check that these maps satisfy properties \ref{item:d-'}--\ref{item:fixadj} from Lemma \ref{lem:resgroup}:
\begin{enumerate}[label=\wackyenum*]
	\item The maps $\bar{\psi}_{C_1,C_2}$ preserve lower degrees of rank-1 vertices because all of the maps in (\ref{barpsiJ}) and (\ref{barpsiJp}) do.
	\item The equation $\bar{\psi}_{C_1,C_2}=\bar{\psi}_{C_2,C_2}^{-1}$ follows because all of the maps in (\ref{barpsiJ}) and (\ref{barpsiJp}) come from residue-groupoids, and residue-groupoids satisfy \ref{item:inverse}.
	\item Let $C_1,C_2,C_3$ be chambers in an $\{i\}$-chamber-residue in $\cC([v])$. We want $\bar{\psi}_{C_2,C_3}\circ\bar{\psi}_{C_1,C_2}=\bar{\psi}_{C_1,C_3}$.
	If $i\in J$ then this follows from (\ref{barpsiJ}) and the fact that \ref{item:coi} holds for $\psi$.
	If $i\in J\jperp$ then this follows from (\ref{barpsiJp}) and the fact that \ref{item:coi} holds for the maps $(\phi_{C_1,C_2})$.
	\item Let $i,j\in J\uperp$ be adjacent and let $C_1,C_2,C'_1,C'_2\in\cC([v])$ be such that $C_1,C_2$ and $C'_1,C'_2$ are $i$-adjacent and $C_1,C'_1$ and $C_2, C'_2$ are $j$-adjacent. We want the following diagram to commute:
		\begin{equation}\label{barsquare}
		\begin{tikzcd}[
			ar symbol/.style = {draw=none,"#1" description,sloped},
			isomorphic/.style = {ar symbol={\cong}},
			equals/.style = {ar symbol={=}},
			subset/.style = {ar symbol={\subset}}
			]
			C_1\ar{d}[swap]{\bar{\psi}_{C_1,C'_1}}\ar{r}{\bar{\psi}_{C_1,C_2}}&C_2\ar{d}{\bar{\psi}_{C_2,C'_2}}\\
			C'_1\ar{r}[swap]{\bar{\psi}_{C'_1,C'_2}}&C'_2
		\end{tikzcd}
	\end{equation}
If $i,j\in J$ then this follows from (\ref{barpsiJ}) and the fact that \ref{item:square} holds for $\psi$.
If $i,j\in J\jperp$ then this follows from (\ref{barpsiJp}) and the fact that \ref{item:square} holds for the maps $(\phi_{C_1,C_2})$.
If $i\in J$ and $j\in J\jperp$ then (\ref{barsquare}) is identical to (\ref{barpsiJ}) after making substitutions. Lastly, the case $i\in J\jperp$ and $j\in J$ follows from the previous case by symmetry.

\item Let $C_1,C_2\in\cC([v])$ be $i$-adjacent.
We want $\bar{\psi}_{C_1,C_2}$ to fix $C_1\cap C_2$ pointwise.
If $i\in J\jperp$ then this follows from (\ref{barpsiJp}) and the fact that $\phi_{C_1,C_2}$ fixes $C_1\cap C_2$ pointwise.

Now suppose $i\in J$.
We induct by the length of a gallery joining $C_1$ to $\cC(v)$.
For the base case, if $C_1\in\cC(v)$ then (\ref{barpsiJ}) implies that $\bar{\psi}_{C_1,C_2}=\psi_{C_1,C_2}$, and we are done because $\psi$ satisfies the intersection property of residue-groupoids.
For the general case, take a pair of $i$-adjacent chambers $C'_1,C'_2$ such that $C_1,C'_1$ and $C_2,C'_2$ are $j$-adjacent for some $j\in J\jperp$, and such that $C'_1$ is joined to $\cC(v)$ by a shorter gallery than $C_1$ (such chambers exist by the product structure on $\cC([v])$).
The map $\bar{\psi}_{C'_1,C'_2}$ fixes $C'_1\cap C'_2$ pointwise by our induction hypothesis.

Let $w=\wedge(C_1, C_2)$, which is a vertex of type $\{i\}$ (Lemma \ref{lem:min}\ref{item:iadjwedge}). We know from Lemma \ref{lem:stayj} that $w':=\phi_{C_1,C'_1}(w)$ is also of type $\{i\}$, hence $w'\in C'_1\cap C'_2$. A second application of Lemma \ref{lem:stayj} implies that $\phi_{C_2,C'_2}(w)$ is of type $\{i\}$ as well, and also lies in $C'_1\cap C'_2$, so $w'=\phi_{C_2,C'_2}(w)=\wedge(C'_1,C'_2)$. The maps $\phi_{C_1,C'_1},\phi_{C_2,C'_2}$ preserve poset structure (Remark \ref{remk:rankgroup}), so by Lemma \ref{lem:min}\ref{item:wCcapC'} they both map $C_1\cap C_2$ to $C'_1\cap C'_2$.

Now consider the vertices $u_1:=\wedge(C_1,C'_1)$ and $u_2:=\wedge(C_2, C'_2)$, which are of type $\{j\}$, and the vertex $x\in C_1\cap C_2$ of type $J$.
Lemma \ref{lem:adjascent}\ref{item:Downu1u2} tells us that $y_1:=x\Uparrow u_1$ is level-adjacent to $y_2:= x\Uparrow u_2$, so $[y_1]=[y_2]$. Then Lemma \ref{lem:phiJjperp}\ref{item:rest} and (\ref{barpsiJp}) imply that
\begin{equation}\label{barpsiphix}
\bar{\psi}_{C_1,C'_1}=\phi^{[y_1]}_{C_1,C'_1}\quad\text{and}\quad\bar{\psi}_{C_2,C'_2}=\phi^{[y_1]}_{C_2,C'_2}.
\end{equation}
We have $\{j\}\subset t(y_1)\subset J\cup\{j\}$ by (\ref{ascenttypesandwich}), so $i,j\in t(y_1)\uperp$ and the residue-groupoid $\phi^{[y_1]}$ contains the following commutative square of maps:
	\begin{equation*}
	\begin{tikzcd}[
		ar symbol/.style = {draw=none,"#1" description,sloped},
		isomorphic/.style = {ar symbol={\cong}},
		equals/.style = {ar symbol={=}},
		subset/.style = {ar symbol={\subset}}
		]
		C_1\ar{d}[swap]{\phi^{[y_1]}_{C_1,C'_1}}\ar{r}{\phi^{[y_1]}_{C_1,C_2}}&C_2\ar{d}{\phi^{[y_1]}_{C_2,C'_2}}\\
		C'_1\ar{r}[swap]{\phi^{[y_1]}_{C'_1,C'_2}}&C'_2
	\end{tikzcd}
\end{equation*}
The horizontal maps $\phi^{[y_1]}_{C_1,C_2},\phi^{[y_1]}_{C'_1,C'_2}$ restrict to identity maps on $C_1\cap C_2,C'_1\cap C'_2$ respectively by the intersection property of residue-groupoids, so we deduce that the vertical maps $\phi^{[y_1]}_{C_1,C'_1}$ and $\phi^{[y_1]}_{C_2,C'_2}$ restrict to the same map $C_1\cap C_2\to C'_1\cap C'_2$.
Applying (\ref{barpsiphix}) tells us that $\bar{\psi}_{C_1,C'_1}$ and $\bar{\psi}_{C_2,C'_2}$ also restrict to the same map $C_1\cap C_2\to C'_1\cap C'_2$.
Finally, since $\bar{\psi}_{C'_1,C'_2}$ restricts to the identity on $C'_1\cap C'_2$, it follows from (\ref{barsquare})  that $\bar{\psi}_{C_1,C_2}$ restricts to the identity on $C_1\cap C_2$, as required.\qedhere
\end{enumerate}
\end{proof}

Next, we will pass to a finite-index subgroup $\La''<\La'$ to ensure that the residue-groupoids $\bar{\psi}$ from Lemma \ref{lem:barpsi} are $\La''_{[v]}$-invariant.

\begin{defn}(Groupoid holonomy at $\cC(v)$)\\
Given a $\cC([v])$-groupoid $\psi$, we let $\psi|_{\cC(v)}$ denote the $\cC(v)$ groupoid obtained by restricting to $\cC(v)$.
Let $\cR\cG(v)$ denote the collection of all $\cC(v)$-groupoids, which is finite since $\cC(v)$ is finite and each chamber is finite. Let $\mathfrak{S}(\cR\cG(v))$ denote the symmetric group on $\cR\cG(v)$.
We define the \emph{groupoid holonomy of $\La'$ at $\cC(v)$} to be the map
\begin{align*}
\Upsilon:\La'_{[v]}&\to\mathfrak{S}(\cR\cG(v))\\
\lambda&\mapsto(\psi\mapsto(\lambda\cdot\bar{\psi})|_{\cC(v)}),
\end{align*}
where $\bar{\psi}$ is the extension of $\psi$ defined by Lemma \ref{lem:barpsi}.
\end{defn}

\begin{lem}\label{lem:resbar}
For $\lambda\in\La'_{[v]}$ and $\psi\in\cR\cG(v)$, we have $\lambda\cdot\bar{\psi}=\overline{((\lambda\cdot\bar{\psi})|_{\cC(v)})}$.
\end{lem}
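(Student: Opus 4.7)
The plan is to reduce the claim to the uniqueness clause of Lemma~\ref{lem:barpsi}. Since $\lambda\in\La'_{[v]}$ gives $[\lambda v]=[v]$, the groupoid $\lambda\cdot\bar{\psi}$ is indeed a $\cC([v])$-groupoid, and both sides of the asserted identity restrict to $(\lambda\cdot\bar{\psi})|_{\cC(v)}$ on $\cC(v)$ -- tautologically on the left, by definition on the right. Thus the equality will follow from Lemma~\ref{lem:barpsi} provided I can show that $\lambda\cdot\bar{\psi}$ agrees with $\phi$ on every $J\jperp$-chamber-residue of $\cC([v])$, i.e.\ that
\begin{equation*}
	(\lambda\cdot\bar{\psi})_{C_1,C_2}=\phi_{C_1,C_2}\quad\text{whenever }C_1,C_2\in\cC(J\jperp,C')\subset\cC([v]).
\end{equation*}

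The key preliminary observation is that $\La'_{[v]}$ preserves every $J\jperp$-chamber-residue of $\cC([v])$ setwise. To see this, combine Lemma~\ref{lem:cC[v]}, which identifies these residues with the sections $\{C_0\}\times\cC(J\jperp,C)$ of the product decomposition $\cC([v])\cong\cC(J,C)\times\cC(J\jperp,C)$, with Lemma~\ref{lem:trivfactor1}, which states that $\La'_{[v]}$ acts trivially on the first factor.

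Given this, and given $C_1,C_2\in\cC(J\jperp,C')$, the chambers $\lambda^{-1}C_1$ and $\lambda^{-1}C_2$ again lie in $\cC(J\jperp,C')$. The construction of $\bar{\psi}$ in Lemma~\ref{lem:barpsi} gives $\bar{\psi}_{\lambda^{-1}C_1,\lambda^{-1}C_2}=\phi_{\lambda^{-1}C_1,\lambda^{-1}C_2}$, so unpacking Definition~\ref{defn:actionresgroup} yields
\begin{equation*}
	(\lambda\cdot\bar{\psi})_{C_1,C_2}=\lambda\circ\bar{\psi}_{\lambda^{-1}C_1,\lambda^{-1}C_2}\circ\lambda^{-1}=\lambda\circ\phi_{\lambda^{-1}C_1,\lambda^{-1}C_2}\circ\lambda^{-1}=(\lambda\cdot\phi)_{C_1,C_2},
\end{equation*}
where in the last expression $\phi$ denotes the $\cC(J\jperp,C')$-groupoid. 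An appeal to the equivariance property $\lambda\cdot\phi=\phi$ from Lemma~\ref{lem:phiJjperp} then finishes the calculation. I do not anticipate a serious obstacle: the argument is essentially a diagram chase plugging the two equivariance statements -- for the product structure and for $\phi$ itself -- into the uniqueness in Lemma~\ref{lem:barpsi}. The only subtlety worth double-checking is the setwise preservation of $\cC(J\jperp,C')$ by $\La'_{[v]}$, since without it the equation $\lambda\cdot\phi=\phi$ would not even be defined on the same residue.
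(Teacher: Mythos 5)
Your proof is correct and takes the same route as the paper's: invoke the equivariance clause of Lemma~\ref{lem:phiJjperp} to show that $(\lambda\cdot\bar{\psi})_{C_1,C_2}=\phi_{C_1,C_2}$ on $J\jperp$-chamber-residues, then conclude by the uniqueness in Lemma~\ref{lem:barpsi}. The paper compresses this into a single sentence, while you usefully spell out the intermediate steps — in particular the setwise preservation of $J\jperp$-chamber-residues by $\La'_{[v]}$ (which the paper records in the prose immediately preceding Lemma~\ref{lem:phiJjperp}) and the unwinding of $(\lambda\cdot\bar{\psi})_{C_1,C_2}$ via Definition~\ref{defn:actionresgroup} — so there is no gap and nothing to change.
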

\begin{proof}
Lemma \ref{lem:phiJjperp}\ref{item:equi} implies that $(\lambda\cdot\bar{\psi})_{C_1,C_2}=\phi_{C_1,C_2}$ for any chambers $C_1,C_2$ in the same $J\jperp$-chamber-residue, so the result follows from the uniqueness in Lemma \ref{lem:barpsi}.
\end{proof}

\begin{lem}
	The groupoid holonomy $\Upsilon$ is a homomorphism.
\end{lem}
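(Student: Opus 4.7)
The plan is to unpack the definition of $\Upsilon$ and reduce everything to the observation that $\La$ acts on the collection of $\cC([v])$-groupoids (Definition \ref{defn:actionresgroup}) combined with Lemma \ref{lem:resbar}, which says that the extension operation $\psi\mapsto\bar\psi$ is compatible with the $\La'_{[v]}$-action.

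Concretely, fix $\lambda_1,\lambda_2\in\La'_{[v]}$ and $\psi\in\cR\cG(v)$. I would compute $\Upsilon(\lambda_1)(\Upsilon(\lambda_2)(\psi))$ directly. By definition, $\Upsilon(\lambda_2)(\psi)=(\lambda_2\cdot\bar\psi)|_{\cC(v)}$, and then
\[
\Upsilon(\lambda_1)\bigl(\Upsilon(\lambda_2)(\psi)\bigr) = \bigl(\lambda_1\cdot\overline{(\lambda_2\cdot\bar\psi)|_{\cC(v)}}\bigr)\Big|_{\cC(v)}.
\]
Now Lemma \ref{lem:resbar} (applied with $\lambda=\lambda_2$) identifies $\overline{(\lambda_2\cdot\bar\psi)|_{\cC(v)}}$ with $\lambda_2\cdot\bar\psi$, so the right-hand side becomes $(\lambda_1\cdot(\lambda_2\cdot\bar\psi))|_{\cC(v)}$. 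Since the rule $(\lambda,\phi)\mapsto\lambda\cdot\phi$ is an actual group action on the collection of $\cC([v])$-groupoids (Definition \ref{defn:actionresgroup}), we have $\lambda_1\cdot(\lambda_2\cdot\bar\psi)=(\lambda_1\lambda_2)\cdot\bar\psi$. Restricting to $\cC(v)$ yields $\Upsilon(\lambda_1\lambda_2)(\psi)$, as desired.

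I would also briefly verify that $\Upsilon$ sends the identity of $\La'_{[v]}$ to the identity of $\mathfrak{S}(\cR\cG(v))$: for $\lambda=1$ we have $\lambda\cdot\bar\psi=\bar\psi$, so $\Upsilon(1)(\psi)=\bar\psi|_{\cC(v)}=\psi$. There is nothing to check here beyond definitions, once one notes that the target $\mathfrak{S}(\cR\cG(v))$ is a symmetric group, so the homomorphism property together with bijectivity of each $\Upsilon(\lambda)$ is already forced by the fact that $\Upsilon(\lambda)\Upsilon(\lambda^{-1})$ equals the identity.

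The only nontrivial ingredient here is Lemma \ref{lem:resbar}; it is what converts the potentially subtle assertion ``extending then acting equals acting then extending'' into a triviality. In particular, this is where the preceding work in Lemma \ref{lem:phiJjperp}\ref{item:equi}, namely the $\La'_{[v]}$-invariance of the maps $(\phi_{C_1,C_2})$ on $J\jperp$-chamber-residues, is doing its job. So once Lemma \ref{lem:resbar} is in hand the current lemma is a short formal consequence, and no further geometric input is needed.
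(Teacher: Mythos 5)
Your argument is correct and is essentially the same as the paper's: both apply Lemma \ref{lem:resbar} to rewrite $\overline{(\lambda_2\cdot\bar\psi)|_{\cC(v)}}$ as $\lambda_2\cdot\bar\psi$ and then invoke the group-action property of Definition \ref{defn:actionresgroup}. You just spell out the intermediate steps a bit more explicitly.
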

\begin{proof}
It is clear that $\Upsilon(1)$ is trivial.
Let $\lambda_1,\lambda_2\in\La'_{[v]}$ and $\psi\in\cR\cG(v)$.
By Lemma \ref{lem:resbar} we have
\begin{align*}
	\Upsilon(\lambda_1)\Upsilon(\lambda_2)(\psi)&=(\lambda_1\cdot(\lambda_2\cdot\bar{\psi}))|_{\cC(v)}\\
	&=((\lambda_1\lambda_2)\cdot\bar{\psi})|_{\cC(v)}\\
	&=\Upsilon(\lambda_1\lambda_2)(\psi).\qedhere
\end{align*}
\end{proof}

\begin{lem}\label{lem:La''}
	There is a finite-index subgroup $\La''<\La'$ (normal in $\La$) such that, for all $\psi\in\cR\cG(v)$, the residue-groupoid $\bar{\psi}$ is $\La''_{[v]}$-invariant.
\end{lem}
\begin{proof}
We know that $\ker(\Upsilon)$ has finite index in $\La'_{[v]}$, which in turn has finite index in $\La_{[v]}$, so $\ker(\Upsilon)$ is separable in $\La$ by Proposition \ref{prop:separable}.
By Lemma \ref{lem:separable}\ref{item:intH1} there exists a finite-index normal subgroup $\La''\triangleleft\La$ such that $\La''_{[v]}<\ker(\Upsilon)$. Intersecting with $\La'$ if necessary, we may assume that $\La''<\La'$.
As $\La''_{[v]}<\ker(\Upsilon)$, it follows from Lemma \ref{lem:resbar} that $\bar{\psi}$ is $\La''_{[v]}$-invariant for any $\psi\in\cR\cG(v)$.
\end{proof}

We complete the inductive step of Proposition \ref{prop:fullhierarchy} with the following lemma.

\begin{lem}\label{lem:extendhierarchy}
	The $\La'$-hierarchy of residue-groupoids $(\phi^{[u]})_{[u]\in\cL'}$ extends to a $\La''$-hierarchy of residue-groupoids on $\cL'\cup \La\cdot[v]$ for some finite-index $\La''<\La'$.
\end{lem}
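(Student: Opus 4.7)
Passing to the normal core (Remark \ref{remk:La'La''}), I may assume $\La'$ is normal in $\La$. The plan is then to decompose $\La\cdot[v]$ into its finitely many $\La'$-orbits $\cO_1,\dots,\cO_r$, build a representative residue-groupoid $\bar\psi^{(k)}$ on $\cC([v^{(k)}])$ for each, and spread it across $\La\cdot[v]$ via $\La''$-equivariance for an appropriately chosen finite-index $\La'' \triangleleft \La$. For each $k$, pick a representative $[v^{(k)}] \in \cO_k$ (taking $[v^{(1)}] = [v]$) and apply the constructions of Lemmas \ref{lem:phiJjperp} and \ref{lem:barpsi} verbatim, with $[v^{(k)}]$ in place of $[v]$ throughout, using some chosen $\cC(v^{(k)})$-groupoid $\psi^{(k)}$; this yields $\bar\psi^{(k)} = \phi^{[v^{(k)}]}$ satisfying the restriction property at $[v^{(k)}]$ by construction. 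The inputs required of the existing hierarchy (restriction and $\La'$-equivariance) are available because $[v^{(k)}]$ is still $\preceq$-maximal in $\cL - \cL'$, so any class strictly above it lies in $\cL'$.

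The main obstacle is arranging equivariance without destroying well-definedness. Consider the groupoid holonomy $\Upsilon^{(k)} : \La'_{[v^{(k)}]} \to \mathfrak{S}(\cR\cG(v^{(k)}))$ and the stabilizer $K^{(k)}$ of $\psi^{(k)}$; since $\cR\cG(v^{(k)})$ is finite, $K^{(k)}$ has finite index in $\La'_{[v^{(k)}]}$, and Lemma \ref{lem:resbar} ensures $K^{(k)}$ fixes $\bar\psi^{(k)}$. By Proposition \ref{prop:separable}, $K^{(k)}$ is separable in $\La$, so Lemma \ref{lem:separable}\ref{item:intH1} produces a finite-index normal $\tilde\La^{(k)} \triangleleft \La$ with $\La_{[v^{(k)}]} \cap \tilde\La^{(k)} \subset K^{(k)}$. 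Setting $\La'' := \La' \cap \bigcap_k \tilde\La^{(k)}$ gives a finite-index normal subgroup of $\La$ contained in $\La'$, satisfying $\La''_{[v^{(k)}]} \subset K^{(k)}$ for every $k$.

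To complete the construction I pick representatives $[u^{(k)}_m] = [\mu^{(k)}_m v^{(k)}]$ with $\mu^{(k)}_m \in \La'$ for the finitely many $\La''$-orbits inside each $\cO_k$, define $\phi^{[u^{(k)}_m]} := \mu^{(k)}_m \cdot \bar\psi^{(k)}$, and extend by $\phi^{[\nu u^{(k)}_m]} := \nu \cdot \phi^{[u^{(k)}_m]}$ for $\nu \in \La''$. Well-definedness holds because whenever $\nu_1 u^{(k)}_m \approx \nu_2 u^{(k)}_m$ with $\nu_1,\nu_2 \in \La''$, the element $(\mu^{(k)}_m)^{-1}\nu_2^{-1}\nu_1\mu^{(k)}_m$ lies in $\La_{[v^{(k)}]} \cap \La'' = \La''_{[v^{(k)}]} \subset K^{(k)}$ (using normality of $\La''$), so it fixes $\bar\psi^{(k)}$. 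Equivariance under $\La''$ is then immediate from the construction, and the restriction property at each $[u^{(k)}_m]$ reduces to restriction at $[v^{(k)}]$ via the $\La$-equivariance of ascent (Lemma \ref{lem:ascentequi}) together with $\La'$-equivariance of the pre-existing hierarchy, which is available precisely because each $\mu^{(k)}_m$ lies in $\La'$; restriction at arbitrary $[u]$ then follows by $\La''$-equivariance of the extended data.
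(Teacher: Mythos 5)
Your proof is correct and follows essentially the same strategy as the paper: build $\bar\psi$ at a representative of each $\La'$-orbit, use separability of a finite-index subgroup of the class stabilizer (via the groupoid holonomy $\Upsilon$) to obtain a finite-index normal $\La''$ whose $[v]$-stabilizer fixes $\bar\psi$, then spread by $\La''$-equivariance and verify consistency and the restriction property via the equivariance of ascent. The only cosmetic differences are that you handle all $\La'$-orbits at once after passing to the normal core, and you use the $\Upsilon^{(k)}$-preimage of the stabilizer of $\psi^{(k)}$ rather than $\ker\Upsilon^{(k)}$; the paper works with a single orbit and $\ker\Upsilon$, then remarks that one intersects over the finitely many orbits.
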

\begin{proof}	
Put $\phi=\bar{\psi}$ for some $\psi\in\cR\cG(v)$ and let $\La''<\La'$ be the subgroup from Lemma \ref{lem:La''}.
It suffices to define residue-groupoids for the orbit $\La'\cdot[v]$ that satisfy the equivariance and restriction properties for a $\La''$-hierarchy of residue groupoids. Indeed one could run the same argument for each $\La'$-orbit in $\La\cdot[v]$ and then take the intersection of the corresponding groups $\La''$ (a finite intersection, so the resulting group would still have finite index in $\La$).
Let $(\lambda_k)$ be a set of left coset representatives for $\La''\La'_{[v]}$ in $\La'$ (note that $\La''\La'_{[v]}$ is a subgroup since $\La''$ is normal in $\La$).
For each $\lambda_k$ and each $\lambda''\in\La''$ define the residue-groupoid
\begin{equation}\label{philamlam}
	\phi^{[\lambda_k\lambda'' v]}:=(\lambda_k\lambda'')\cdot\phi.
\end{equation}
We must check that (\ref{philamlam}) is consistent, so suppose $[\lambda_k\lambda''_1v]=[\lambda_l\lambda''_2v]$ for two classes as above.
Then
$$\lambda_k\lambda''_1\in\lambda_l\lambda''_2\La'_{[v]},$$
so $\lambda_k=\lambda_l$ and $\lambda''_1\in\lambda''_2\La''_{[v]}$.
We have $\lambda''_1\cdot\phi=\lambda''_2\cdot\phi=\phi$ by Lemma \ref{lem:La''}, so $(\lambda_k\lambda''_1)\cdot\phi=(\lambda_l\lambda''_2)\cdot\phi$ as required.

It remains to check that these residue-groupoids satisfy the equivariance and restriction properties for a $\La''$-hierarchy of residue groupoids:
\begin{enumerate}
	\item (Equivariance) Given $\lambda\in\La''$ and a class $[\lambda_k\lambda''v]$ as in (\ref{philamlam}), we have $\lambda\lambda_k\lambda''=\lambda_k(\lambda_k^{-1}\lambda\lambda_k)\lambda''$ with $(\lambda_k^{-1}\lambda\lambda_k)\lambda''\in\La''$ since $\La''$ is normal in $\La'$.
	Therefore (\ref{philamlam}) tells us that
	\begin{align*}
		\phi^{[\lambda\lambda_k\lambda'' v]}&=(\lambda\lambda_k\lambda'')\cdot\phi\\
		&=\lambda\cdot((\lambda_k\lambda'')\cdot\phi)\\
		&=\lambda\cdot\phi^{[\lambda_k\lambda''v]}.
	\end{align*}
\item (Restriction) The restriction property holds for $\phi^{[v]}=\phi$ by Lemma \ref{lem:barpsi}.
Ascent is $\La$-equivariant (Lemma \ref{lem:ascentequi}), and the residue groupoids $\phi^{[v_1\Uparrow u]}$ are from higher up the hierarchy (Lemma \ref{lem:ascentdown}) so satisfy $\La'$-equivariance, thus we deduce that the restriction property holds for all residue-groupoids from (\ref{philamlam}).\qedhere
\end{enumerate}
\end{proof}

\bigskip
\section{Typed atlases}\label{sec:atlas}

In this section we complete the proof of Theorem \ref{thm:Delta}, using the $\La'$-invariant $\cC(\Delta)$-groupoid from Proposition \ref{prop:Deltagroupoid} to show that $\La$ and $\G$ are weakly commensurable.
Key to this argument will be the notion of typed atlases, which in turn depends on a more general notion of typing map; roughly this will be a map $\Delta^0\to\bar{N}$ that behaves locally in the same way as the standard typing map $t$ from Definition \ref{defn:building}. From now on we will denote this standard typing map by $t_\G$ (as it was defined using the action of $\G$ on $\Delta$) and a general typing map by $t$, which is defined precisely as follows.
Other notions that were defined earlier -- such as rank, lower degree and chamber-residues -- remain the same, so these are defined in terms of the standard typing map $t_\G$. 

\begin{defn}(Typing map)\\\label{defn:typingmap}
	A \emph{typing map} is a map $t:\Delta^0\to\bar{N}$ such that
	\begin{enumerate}
		\item\label{item:toC*} For each chamber $C$ there is a cubical isomorphism $\phi:C\to C_*$ to the base chamber $C_*$ that preserves centers and satisfies $t(v)=t_\G(\phi v)$ for all vertices $v\in C$.
		\item\label{item:d-prod} For each $i\in I$ and each vertex $v\in\Delta^0$ with $t(v)=\{i\}$, the lower degree $d^-(v)$ is equal to $|G_i|$.
	\end{enumerate}
\end{defn}

\begin{lem}\label{lem:tGamma}
	The standard typing map $t_\G$ satisfies Definition \ref{defn:typingmap}.
\end{lem}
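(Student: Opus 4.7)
The proof is short and reduces to two direct verifications, one for each clause of Definition \ref{defn:typingmap}.

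For clause \ref{item:toC*}, the plan is to exploit the fact that $\G$ acts transitively on chambers and preserves $t_\G$. Given an arbitrary chamber $C = C_\gamma$, the element $\gamma^{-1}\in\G$ acts on $\Delta$ by a cubical automorphism sending $C_\gamma$ to $C_*$. This automorphism sends the center $[\gamma,v_N]$ of $C_\gamma$ to the center $[1_\G,v_N]$ of $C_*$, and since the typing map $t_\G$ is $\G$-invariant, we have $t_\G(v) = t_\G(\gamma^{-1}v)$ for every $v\in C_\gamma$. So $\phi := \gamma^{-1}\big|_{C_\gamma}$ is the required isomorphism.

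For clause \ref{item:d-prod}, I would fix $v \in \Delta^0$ with $t_\G(v) = \{i\}$ and compute $d^-(v)$ by identifying $E^-(v)$ with the set of chambers $\cC(v)$ containing $v$. The identification goes as follows: since $\rk(v) = 1$, the edges in $E^-(v)$ are precisely the edges joining $v$ to a rank-$0$ vertex, i.e.\ to the center of some chamber. Conversely, if $C\in\cC(v)$ has center $v_C$, then $v_C \leq v$ in the poset structure, so by Lemma \ref{lem:min}\ref{item:Q} there is a cube of dimension $\rk(v) - \rk(v_C) = 1$ — that is, an edge — joining $v_C$ to $v$. Distinct chambers in $\cC(v)$ have distinct centers, so this defines a bijection $\cC(v) \leftrightarrow E^-(v)$.

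Finally, by Lemma \ref{lem:cCv} the set $\cC(v)$ is an $\{i\}$-chamber-residue, and by Lemma \ref{lem:CJC} such a residue is in bijection with a coset of $\G_{\{i\}} = G_i$. Hence $|\cC(v)| = |G_i|$, giving $d^-(v) = |G_i|$ as required. There is no serious obstacle here; the only care needed is to invoke Lemma \ref{lem:min}\ref{item:Q} to be sure that $v$ and each center $v_C\in\cC(v)$ are joined by an actual edge and not just contained in a common cube.
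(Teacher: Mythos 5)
Your proof is correct and follows essentially the same approach as the paper's: the first clause is handled by the $\G$-action and $\G$-invariance of $t_\G$, and the second by identifying $E^-(v)$ with $\cC(v)$ and counting via Lemmas \ref{lem:cCv} and \ref{lem:CJC}. You spell out the $E^-(v)\leftrightarrow\cC(v)$ bijection in a bit more detail (invoking Lemma \ref{lem:min}\ref{item:Q} explicitly), where the paper leaves this step terse, but it is the same argument.
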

\begin{proof}
	Each chamber is of the form $C_\gamma$ for some $\gamma\in\G$, and $\gamma:C_*\to C_\gamma$ is a cubical isomorphism preserving centers and $t_\G$.
	For a vertex $v$ with $t_\G(v)=\{i\}$ we know that $\rk(v)=1$, so $E^-(v)$ consists of the edges that join $v$ to a rank-0 vertex. Hence $d^-(v)=|E^-(v)|$ is equal to the number of chambers containing $v$, and this equals $|G_i|$ by Lemmas \ref{lem:CJC} and \ref{lem:cCv}.
\end{proof}

\begin{lem}\label{lem:fttot'}
	Let $t$ and $t'$ be typing maps and let $C,C'\in\cC(\Delta)$.
	Then there is a unique cubical isomorphism $f:C\to C'$ such that $t(v)=t'(fv)$ for all vertices $v\in C$.
	Moreover, $f$ preserves centers, rank and poset structure.
\end{lem}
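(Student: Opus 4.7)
The plan is to reduce everything to the base chamber $C_*$ via Definition \ref{defn:typingmap}\ref{item:toC*}. I would apply that axiom to $t$ on $C$ to obtain a cubical isomorphism $\phi:C\to C_*$ preserving centers with $t=t_\G\circ\phi$, and to $t'$ on $C'$ to obtain $\phi':C'\to C_*$ preserving centers with $t'=t_\G\circ\phi'$. Set $f:=(\phi')^{-1}\circ\phi$. Then $f$ is a cubical isomorphism $C\to C'$ preserving centers, and
$$t'(fv)=t_\G(\phi'fv)=t_\G(\phi v)=t(v)$$
for every vertex $v\in C$, giving existence.

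For the preservation of rank and poset structure, I would argue that any cubical isomorphism between chambers that preserves centers automatically preserves rank, since within a chamber the rank of a vertex equals the length of a shortest edge path to the center (as used in Remark \ref{remk:rankgroup}). The poset structure is then preserved by Lemma \ref{lem:min}\ref{item:Q}, which characterizes $u\leq v$ within a chamber purely in terms of rank and the existence of a cube of the appropriate dimension containing both vertices — conditions visible to any rank-preserving cubical isomorphism.

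For uniqueness, the key observation is that the isomorphism $\phi$ from Definition \ref{defn:typingmap}\ref{item:toC*} is itself unique up to nothing: distinct vertices of $C_*=C(N)$ carry distinct values of $t_\G$ (the typing map on $C_*$ is a bijection with $\bar N$ by Definition \ref{defn:cubecone}), so any cubical automorphism of $C_*$ that preserves $t_\G$ must fix every vertex and hence be the identity. Now if $g:C\to C'$ is a second map satisfying $t=t'\circ g$, then $\phi'\circ g:C\to C_*$ is a cubical isomorphism translating $t$ to $t_\G$, so by the above uniqueness applied to $C$ we get $\phi'\circ g=\phi$, i.e.\ $g=(\phi')^{-1}\circ\phi=f$.

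There is really no hard step here; the only thing to be a little careful about is spelling out why the chamber-to-$C_*$ isomorphism provided by the definition is unique (a point the definition does not explicitly assert). Note that part \ref{item:d-prod} of Definition \ref{defn:typingmap} plays no role in this lemma — it will presumably be needed later, when typed atlases are assembled from local typing data.
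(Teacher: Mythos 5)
Your proof is correct and follows essentially the same route as the paper: construct $f=(\phi')^{-1}\circ\phi$ by composing the chamber-to-$C_*$ isomorphisms, observe that preservation of centers gives preservation of rank and hence of the poset structure, and deduce uniqueness from injectivity of the typing map. The only cosmetic difference is that the paper proves uniqueness directly from the injectivity of $t'$ on the vertices of $C'$, whereas you route it through the uniqueness of the map to $C_*$ — both amount to the same observation.
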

\begin{proof}
	Such a map $f$ can be obtained by composing the maps $C,C\to C_*$ from Definition \ref{defn:typingmap}\ref{item:toC*}. Uniqueness follows since $t,t'$ are injective on the vertex sets of $C,C'$ respectively.
	The maps $C,C\to C_*$ preserve centers by definition, so $f$ does as well.
	The edges in $C$ correspond exactly to the pairs of vertices $\{u,v\}$ such that $t_\G(u)=t_\G(v)\cup\{i\}$ for some $i\in I$, and the center is of type $\emptyset$, so the rank $\rk(v):=|t_\G(v)|$ of a vertex $v\in C$ is equal to the length of a shortest edge path joining it to the center of $C$. Once can also deduce that $u\leq v$ for $u\in C$ if and only if $u$ belongs to one of the shortest edge paths joining $v$ to the center of $C$.
	As $f$ preserves centers, it follows that it also preserves rank and poset structure.
\end{proof}

\begin{cor}\label{cor:ranktype}
	$|t(v)|=\rk(v)$ for any $v\in\Delta^0$ and any typing map $t$.
	In particular, Definition \ref{defn:typingmap}\ref{item:d-prod} is a statement about rank-1 vertices.
\end{cor}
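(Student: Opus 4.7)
The plan is to reduce the statement to Lemma \ref{lem:fttot'} by comparing the given typing map $t$ with the standard typing map $t_\G$, exploiting the fact that rank was originally defined via $t_\G$ (Definition \ref{defn:rank}) but that Lemma \ref{lem:fttot'} shows rank is an intrinsic cubical invariant of each chamber (the graph distance to the center). Given an arbitrary vertex $v\in\Delta^0$, I would pick any chamber $C$ containing $v$, and invoke Lemma \ref{lem:fttot'} applied to the pair of typing maps $(t,t_\G)$ — $t_\G$ being a genuine typing map by Lemma \ref{lem:tGamma} — with target chamber $C_*$.

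This yields a cubical isomorphism $f:C\to C_*$ that preserves centers, ranks and poset structure, and satisfies $t(u)=t_\G(fu)$ for every vertex $u\in C$. Specializing to $u=v$ gives
\begin{equation*}
|t(v)|=|t_\G(fv)|=\rk(fv)=\rk(v),
\end{equation*}
where the middle equality is Definition \ref{defn:rank} applied in $C_*$ and the last equality is the rank-preservation clause of Lemma \ref{lem:fttot'}. Since $v$ was arbitrary, this proves the first assertion.

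For the final sentence of the corollary, I would simply note that the equality $|t(v)|=\rk(v)$ just established means that a vertex $v$ satisfies $t(v)=\{i\}$ for some $i\in I$ if and only if $\rk(v)=1$. Hence condition \ref{item:d-prod} of Definition \ref{defn:typingmap} only constrains rank-1 vertices. There is no substantive obstacle here: the corollary is essentially a bookkeeping observation extracted from Lemma \ref{lem:fttot'}, and all the content lies in the intrinsic characterization of rank as graph distance to the chamber center, which was already recorded in the proof of that lemma.
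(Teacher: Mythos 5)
Your proof is correct and matches the paper's own argument exactly: the paper's entire proof is ``Apply Lemma~\ref{lem:fttot'} with $t'=t_\G$,'' which is precisely the reduction you carry out (with the harmless specialization to target chamber $C_*$). Your unpacking of the final sentence, while not written out in the paper, is the intended reading.
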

\begin{proof}
	Apply Lemma \ref{lem:fttot'} with $t'=t_\G$.
\end{proof}

\begin{lem}\label{lem:typinglevel}
	If $t$ is a typing map and $v_1\approx v_2$ then $t(v_1)=t(v_2)$. 
\end{lem}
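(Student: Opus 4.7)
The plan is to reduce to the case of level-adjacent vertices, since level-equivalence is generated by level-adjacency. So suppose $v_1$ is level-adjacent to $v_2$: by Definition \ref{defn:[v]} there exist adjacent chambers $C_1, C_2$ containing $v_1, v_2$ respectively, with $v_k \notin C_1 \cap C_2$, and a vertex $u \in C_1 \cap C_2$ with $v_1, v_2 \le u$ and $\rk(v_1) = \rk(v_2) = \rk(u) - 1$. Writing $i$ for the element of $I$ with $C_1, C_2$ being $i$-adjacent, Lemma \ref{lem:min} tells us that $t_\G(u)$ contains $i$ and has cardinality $\rk(u)$.

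Next I would use Definition \ref{defn:typingmap}\ref{item:toC*} to obtain cubical isomorphisms $\phi_k \colon C_k \to C_*$ satisfying $t(v) = t_\G(\phi_k v)$ for all $v \in C_k$, and form the composition $f := \phi_2^{-1} \circ \phi_1 \colon C_1 \to C_2$. Arguing as in the proof of Lemma \ref{lem:fttot'}, $f$ is a cubical isomorphism that preserves the typing map $t$, centers, rank, and the poset structure. The key observation is that $f$ restricts to the identity on $C_1 \cap C_2$: if $v \in C_1 \cap C_2$, then $\phi_1(v)$ and $\phi_2(v)$ are both vertices of $C_*$ with the same $t_\G$-type $t(v)$, and since $t_\G$ is injective on the vertex set of $C_*$ (Definition \ref{defn:cubecone}), they must coincide, giving $f(v) = v$. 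In particular $f(u) = u$, so $f(v_1) \in C_2$ satisfies $f(v_1) \le u$ and $\rk(f(v_1)) = \rk(u) - 1$.

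It then remains to identify $f(v_1)$ with $v_2$. By Lemma \ref{lem:min}\ref{item:wCcapC'} applied to the $i$-adjacency of $C_1, C_2$, a vertex of $C_2$ lies in $C_1 \cap C_2$ if and only if its $t_\G$-type contains $i$. Among the rank-$(\rk(u) - 1)$ vertices of $C_2$ that are $\le u$ — these are precisely the vertices whose $t_\G$-types are obtained from $t_\G(u)$ by deleting one element — the only one with $t_\G$-type not containing $i$ is $v_2$. Now if $f(v_1)$ lay in $C_1 \cap C_2$, then applying $f^{-1}$ (which is also the identity on the intersection) would give $v_1 = f^{-1}(f(v_1)) = f(v_1) \in C_1 \cap C_2$, contradicting $v_1 \notin C_1 \cap C_2$. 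Hence $f(v_1) = v_2$, and $t(v_2) = t(f(v_1)) = t(v_1)$, completing the proof. The main subtlety is arranging for $f$ to respect the intersection $C_1 \cap C_2$; this is precisely where Definition \ref{defn:typingmap}\ref{item:toC*} (injectivity of $t_\G$ on each chamber) does the work.
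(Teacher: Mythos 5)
Your argument is correct. Let me quickly highlight the key checks: the composition $f = \phi_2^{-1}\circ\phi_1$ is well-defined and satisfies $t(fv)=t(v)$ for all $v\in C_1$; it is the identity on $C_1\cap C_2$ because $t_\G$ restricted to $C_*$ is injective; and $f$ preserves rank and the poset structure (by the argument of Lemma \ref{lem:fttot'}, these are determined by edge-path distance to the center, which $f$ preserves). Consequently $f(v_1)\in\Delta^-(u)\cap C_2$ with $f(v_1)\notin C_1\cap C_2$ (your $f^{-1}$ argument), and $v_2$ is the unique such vertex, so $f(v_1)=v_2$ and $t(v_1)=t(v_2)$.

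Your route is genuinely different from the paper's. The paper works within a single chamber at a time: using property \ref{item:toC*} of typing maps to identify $\Delta^-(u)\cap C_k$ with the sets $\{t(u)-\{j\}\mid j\in t(u)\}$, it derives the explicit formula
\[
t(v_k)=t(u)-\bigcap_{w\in\Delta^-(u)\cap C_1\cap C_2}t(w),
\]
and then observes that the right-hand side is symmetric in $k$. You instead construct the canonical $t$-preserving isomorphism $f\colon C_1\to C_2$ (essentially the one guaranteed by Lemma \ref{lem:fttot'} with $t'=t$), show it is the identity on $C_1\cap C_2$, and identify $f(v_1)=v_2$ by a uniqueness argument inside $\Delta^-(u)\cap C_2$. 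Both arguments ultimately hinge on the same geometric fact -- that $v_k$ is the unique element of $\Delta^-(u)\cap C_k$ outside $C_1\cap C_2$ -- but you package it as a statement about where the isomorphism $f$ sends $v_1$, whereas the paper packages it as a closed-form expression for $t(v_k)$. Your version has the advantage of isolating the chamber isomorphism $f$ as the carrier of the information, which fits well with the way typing maps are used elsewhere in Section~\ref{sec:atlas}; the paper's version avoids verifying that $f$ fixes $C_1\cap C_2$ and instead does a short set-theoretic computation.
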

\begin{proof}
	It suffices to consider the case where $v_1,v_2$ are level-adjacent. Suppose $v_1,v_2$ are contained in adjacent chambers $C_1,C_2$ respectively, with $v_1,v_2\notin C_1\cap C_2$, and with some vertex $v_1,v_2\leq u\in C_1\cap C_2$ such that $\rk(v_1)=\rk(v_2)=\rk(u)-1$.
	We know from Lemma \ref{lem:leveladj} that $t_\G(v_1)=t_\G(v_2)=J$ say, and that $t_\G(u)=J\cup\{i\}$ for some $i\in J\jperp$. Since $v_1\notin C_1\cap C_2$, we deduce from Lemma \ref{lem:min}\ref{item:wCcapC'} and \ref{item:iadjwedge} that $C_1,C_2$ are $i$-adjacent.	
	Let
	$$\Delta^-(u):=\{v\in \Delta^0\mid v\leq u\text{ and }\rk(v)=\rk(u)-1\}.$$	
	We know that $\Delta^-(u)\cap C_1$ consists of one vertex $v$ with $t_\G(v)=t_\G(u)-\{j\}$ for each $j\in t_\G(u)$. Furthermore, by Lemma \ref{lem:min}\ref{item:wCcapC'} and \ref{item:iadjwedge}, $v_1$ is the only element of $\Delta^-(u)\cap C_1$ that is not in $C_1\cap C_2$. 
	By property \ref{item:toC*} of typing maps, we also know that $\Delta^-(u)\cap C_1$ consists of one vertex $v$ with $t(v)=t(u)-\{j\}$ for each $j\in t(u)$, hence
	$$t(v_1)=t(u)-\bigcap_{v\in\Delta^-(u)\cap C_1\cap C_2}t(v).$$
	The same is true for $t(v_2)$, so $t(v_1)=t(v_2)$ as required.
\end{proof}

\begin{lem}\label{lem:ijadjacent}
	Let $t$ be a typing map and let $u,v$ be rank-1 vertices in a chamber $C$ with $t(u)=\{i\}$ and $t(v)=\{j\}$.
	Then the property of $i,j$ being adjacent is independent of the choice of $t$.
\end{lem}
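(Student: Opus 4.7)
The plan is to reduce the conclusion to a purely geometric condition on $C$ that makes no reference to any typing map. Let $u_0 \in C$ be the center of $C$ (the unique rank-$0$ vertex, which is intrinsic to $C$ by Corollary \ref{cor:ranktype}), and let $e_u, e_v$ denote the edges of $C$ joining $u_0$ to the rank-$1$ vertices $u$ and $v$ respectively. The key claim I would establish is that
\[ i, j \text{ are adjacent in } \cG \iff e_u \text{ and } e_v \text{ form the corner of a } 2\text{-cube at } u_0. \]
Since the right-hand side depends only on the cube complex structure on $C$ together with the distinguished vertex $u_0$ --- data that does not involve $t$ --- the adjacency of $i$ and $j$ will automatically be independent of the choice of typing map.

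To establish the claim I would invoke Definition \ref{defn:typingmap}\ref{item:toC*} to pick a cubical isomorphism $\phi : C \to C_*$ which preserves centers and satisfies $t(w) = t_\G(\phi w)$ for every vertex $w \in C$. Then $\phi(u_0)$ is the center of $C_*$, while $t_\G(\phi u) = \{i\}$ and $t_\G(\phi v) = \{j\}$, so $\phi e_u$ is an $i$-edge and $\phi e_v$ is a $j$-edge in the sense of Lemma \ref{lem:iedge}, both incident at $\phi(u_0)$. Lemma \ref{lem:ijadj} then tells us that $\phi e_u$ and $\phi e_v$ form the corner of a $2$-cube at $\phi(u_0)$ exactly when $i$ and $j$ are adjacent in $\cG$. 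Transporting this condition back across the cubical isomorphism $\phi$ yields the equivalence in the claim.

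No serious obstacle arises: the entire argument rests on the observation that condition \ref{item:toC*} of Definition \ref{defn:typingmap} is tailored precisely to pull the adjacency criterion of Lemma \ref{lem:ijadj} back from the model chamber $C_*$ to $C$, and that the resulting criterion --- the existence of a $2$-cube spanning $e_u$ and $e_v$ at $u_0$ --- is a statement purely about the cube complex $\Delta$ with its distinguished rank-$0$ vertices, hence independent of $t$.
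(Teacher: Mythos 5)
Your proof is correct and follows essentially the same approach as the paper: both use property \ref{item:toC*} of typing maps to translate adjacency of $i,j$ into a purely geometric, $t$-independent condition at the center of $C$ (you phrase it as $e_u,e_v$ spanning the corner of a $2$-cube at $u_0$, while the paper says there is a rank-$2$ vertex $w\in C$ with $u,v\le w$, which is the same thing by Lemma \ref{lem:min}\ref{item:Q}). Your explicit invocation of Lemma \ref{lem:ijadj} and the isomorphism $\phi:C\to C_*$ is simply a more spelled-out version of the paper's one-line argument.
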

\begin{proof}
	By property \ref{item:toC*} of typing maps, we see that $i,j$ are adjacent if and only if there is a rank-2 vertex $w\in C$ with $u,v\leq w$ -- and in this case $t(w)=\{i,j\}$.
\end{proof}

We now define a notion of typed atlas, which generalizes Haglund's notion of atlas \cite[Definition 6.1]{Haglund06}.
(In fact Haglund's atlases coincide with typed atlases for which the typing map is the standard one.)
Roughly speaking, a typed atlas builds on the data given by a typing map by prescribing a family of local actions of the groups $G_i$ on $\Delta$, and it does this in a way that mimics the actions of the conjugates of the $G_i$ when viewed as subgroups of $\G$.

\begin{defn}(Typed atlas)\\\label{defn:typedatlas}
	A \emph{typed atlas} is a pair $(t,\cA)$ where $t$ is a typing map and $\cA$ is a collection of homomorphisms $(\cA_v)$, where $v$ ranges over rank-1 vertices in $\Delta$, such that:
	\begin{itemize}
		\item $\cA_v$ is a homomorphism
		$$\cA_v:G_i\to\mathfrak{S}(\cC(v))$$
		for $v\in\Delta^0$ with $t(v)=\{i\}$, such that $G_i$ acts simply transitively on $\cC(v)$. 
		\item If $v\approx v'$, then $\cA_v$ coincides with $\cA_{v'}$ under the natural identification $\cC(v)\cong\cC(v')$ (coming from Lemma \ref{lem:cC[v]}). 
		Note that $t(v)=t(v')=\{i\}$ in this case (Lemma \ref{lem:typinglevel}), so $\cA_v$ and $\cA_{v'}$ are both actions of $G_i$.
	\end{itemize}	
	We call $\cA$ an \emph{atlas} for $t$.	
\end{defn}

\begin{remk}\label{remk:=Gi}
	The residue $\cC(v)$ in Definition \ref{defn:typedatlas} has size $|G_i|$ because $|G_i|=d^-(v)$ (applying the second property of typing maps to $t$) and $d^-(v)=\cC(v)$ (immediate from the definition of $d^-$).
	Thus it is possible for $G_i$ to act simply transitively on $\cC(v)$.
\end{remk}

\begin{remk}
	In the case where $\G$ is a right-angled Coxeter group and $\Delta$ the associated Davis complex, we know that the groups $G_i$ all have order two, so in this case there is only one atlas for each typing map.
\end{remk}

The reason for defining typed atlases is so that we can establish an action of $\Aut_{\rk}(\Delta)$ on the set of typed atlases. We will see later that this action is closely related to the conjugation action of $\Aut_{\rk}(\Delta)$ on the set of its uniform lattices -- which is in turn connected to weak commensurability of uniform lattices.

\begin{defn}(Action on the set of typed atlases)\\
The structures used in the definitions of typing map and typed atlas depend on the standard typing map $t_\G$, however these structure are also preserved by $\Aut_{\rk}(\Delta)$ (Proposition \ref{prop:preserved}), so we get an action of $\Aut_{\rk}(\Delta)$ on the set of typed atlases. More precisely this action is defined as follows: given $f\in\Aut_{\rk}(\Delta)$ and a typed atlas $(t,\cA)$, we define $f_*(t,\cA)=(t',\cA')$ by $t'(fv):=t(v)$ for $v\in\Delta^0$ and $\cA'_{fv}(g)(fC):=f\cA_{v}(g)(C)$ for $v\in\Delta^0$ with $t(v)=\{i\}$, $g\in G_i$ and $C\in\cC(v)$.
One can easily check that $f_*(t,\cA)$ is a typed atlas using Proposition \ref{prop:preserved}.
\end{defn}

The next step is to construct typed atlases that are invariant under (finite-index subgroups of) $\La$ and $\G$.
This is where we make use of the $\La'$-invariant $\cC(\Delta)$-groupoid from Proposition \ref{prop:Deltagroupoid}.

\begin{defn}(Standard typed atlas)\\
Define the \emph{standard typed atlas} to be the typed atlas $(t_\G,\cA_\G)$, with $t_\G$ as already defined and $\cA_\G$ defined as follows.
Given a rank-1 vertex $v\in\Delta^0$ with $t_\G(v)=\{i\}$, a chamber $C_\gamma\in\cC(v)$ (as in Definition \ref{defn:building}), and an element $g\in G_i<\G$, we define
$$\cA_{\G,v}(g)(C_\gamma):=C_{\gamma g^{-1}}.$$
\end{defn}

\begin{lem}\label{lem:Gammatypedatlas}
	$(t_\G,\cA_\G)$ is a well-defined typed atlas, and it is preserved by $\G$.
\end{lem}
\begin{proof}
	The standard typing map $t_\G$ is preserved by $\G$ by construction (Definition \ref{defn:building}).
	
	Given a rank-1 vertex $v\in\Delta^0$ with $t_\G(v)=\{i\}$, a chamber $C_\gamma\in\cC(v)$, and an element $g\in G_i<\G$, we have
	$$\cA_{\G,v}(g)(C_\gamma):=C_{\gamma g^{-1}}.$$
	This is in the same $\{i\}$-chamber-residue as $C_\gamma$ by Lemma \ref{lem:CJC}, and $\cA_{\G,v}$ clearly defines a simply transitive (left) action of $G_i$ on $\cC(v)$.
	
	Now suppose $v\approx v'$.
	Fix a chamber $C_\gamma\in\cC(v)$.
	Then Lemmas \ref{lem:CJC} and \ref{lem:cC[v]} tell us that there is $\gamma_1\in \G_{i\jperp}$ such that $\cC(v')$ consists precisely of the chambers $C_{\gamma \gamma_1\gamma_2}$ where $\gamma_2\in G_i$.
	Moreover, the identification $\cC(v)\cong\cC(v')$ is given by $C_{\gamma\gamma_2}\mapsto C_{\gamma \gamma_1\gamma_2}$, so it follows that $\cA_{\G,v}$ and $\cA_{\G,v'}$ coincide under this identification.
	
	The collection of homomorphisms $(\cA_{\G,v})$ thus defines an atlas $\cA_\G$ for $t_\G$. Each homomorphism $\cA_{\G,v}$ is defined using right-multiplication, while the action of $\G$ on $\cC(\Delta)$ is defined using left multiplication ($\gamma'C_\gamma=C_{\gamma'\gamma}$), these operations commute with each other so one easily verifies that $\G$ preserves the atlas $\cA_\G$.
\end{proof}

\begin{lem}\label{lem:Latypedatlas}
	There is a typed atlas $(t,\cA)$ preserved by a finite-index subgroup $\hat{\La}<\La$.
\end{lem}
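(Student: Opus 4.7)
The plan is to use the $\La'$-invariant $\cC(\Delta)$-groupoid $\phi$ from Proposition \ref{prop:Deltagroupoid} to transport atlas data from the base chamber $C_* := C_{1_\G}$ to all other chambers of $\Delta$, and then pass to a finite-index subgroup of $\La$ that preserves the resulting typed atlas.

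First, define the typing map by $t(v) := t_\G(\phi_{C', C_*}(v))$ for any chamber $C'$ containing $v$. The intersection property of $\phi$ gives $\phi_{C_1, C_*}|_{C_1 \cap C_2} = \phi_{C_2, C_*}|_{C_1 \cap C_2}$, so $t$ is well-defined, and both axioms of Definition \ref{defn:typingmap} follow from Lemma \ref{lem:tGamma} because each $\phi_{C', C_*}$ is a cubical isomorphism preserving centers and lower degrees of rank-$1$ vertices. Next, introduce the holonomy map
\[
\Upsilon \colon \La' \to \Aut(C_*), \qquad \Upsilon(\lambda) := \phi_{\lambda C_*, C_*} \circ \lambda|_{C_*}.
\]
The $\La'$-equivariance $\phi_{\lambda A, \lambda B} = \lambda \circ \phi_{A, B} \circ \lambda^{-1}$ together with commutativity of $\phi$ makes $\Upsilon$ a homomorphism; since $C_*$ is finite, so is $\Aut(C_*)$, so $\hat{\La} := \ker(\Upsilon)$ has finite index in $\La$. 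A direct calculation with the $\La'$-equivariance yields $\phi_{\lambda C', C_*}(\lambda v) = \Upsilon(\lambda)(\phi_{C', C_*}(v))$; since $t_\G$ separates the vertices of $C_* \cong C(N)$, the only element of $\Aut(C_*)$ preserving $t_\G$ is the identity, so $\Upsilon(\lambda) = \operatorname{id}$ forces $t(\lambda v) = t(v)$. Hence $\hat{\La}$ preserves $t$.

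To build $\cA$, pick representatives $[v_0^{(1)}], \dots, [v_0^{(k)}]$ of the finitely many $\hat{\La}$-orbits of rank-$1$ level-equivalence classes. For each $[v_0^{(a)}]$ choose a reference chamber $C_a \in \cC([v_0^{(a)}])$ and any bijection $\iota_a \colon G_{i_a} \to \cC(\{j_a\}, C_a)$, where $i_a := t(v_0^{(a)})$ and $j_a := t_\G(v_0^{(a)})$ (such a bijection exists since $|G_{i_a}| = |G_{j_a}|$ by Remark \ref{remk:=Gi}). Using the product decomposition from Lemma \ref{lem:cC[v]} with respect to $C_a$, define $\cA_{[v_0^{(a)}]}(g)$ to send $(C^{(1)}, C^{(2)}) \mapsto (\iota_a(g \cdot \iota_a^{-1}(C^{(1)})), C^{(2)})$, which is simply transitive on the first factor via $\iota_a$ and trivial on the second. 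Extend to the remaining classes by conjugation: for $[v] = \lambda [v_0^{(a)}]$ with $\lambda \in \hat{\La}$, set $\cA_{[v]}(g) := \lambda \circ \cA_{[v_0^{(a)}]}(g) \circ \lambda^{-1}$. This is well-defined because Lemma \ref{lem:trivfactor1} makes $\hat{\La}_{[v_0^{(a)}]}$ act trivially on the first factor, so it commutes with $\cA_{[v_0^{(a)}]}(g)$ (which is trivial on the second factor); the domain $G_{i_a}$ agrees with the required $G_{t(v)}$ because $\hat{\La}$ preserves $t$; and $\hat{\La}$-equivariance of $\cA$ is immediate from the construction, so $\hat{\La}$ preserves $(t, \cA)$.

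The main obstacle is reconciling the requirements of preserving both $t$ and $\cA$ given that $\phi$ is only $\La'$-equivariant: the conjugation definition of $\cA_{[v]}$ must not depend on the coset representative, which forces the stabilizer $\hat{\La}_{[v_0^{(a)}]}$ to commute with each $\cA_{[v_0^{(a)}]}(g)$. This is exactly what Lemma \ref{lem:trivfactor1} delivers -- it provides complementary triviality on opposite factors of the product decomposition -- while the separate obstruction for $t$ is isolated into the finite-target homomorphism $\Upsilon$, whose kernel is automatically of finite index.
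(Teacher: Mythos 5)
Your proof is correct and follows essentially the same route as the paper: transport $t_\G$ along the $\La'$-invariant $\cC(\Delta)$-groupoid to define $t$, use the holonomy homomorphism $\Upsilon\colon\La'\to\Aut(C_*)$ to cut down to $\hat{\La}=\ker\Upsilon$ preserving $t$, and build $\cA$ by an arbitrary compatible choice on orbit representatives extended $\hat{\La}$-equivariantly, with Lemma \ref{lem:trivfactor1} guaranteeing well-definedness. The only cosmetic difference is your explicit bijection $\iota_a$ and the superfluous remark that the identity is the only $t_\G$-preserving automorphism of $C_*$ (the equality $t(\lambda v)=t(v)$ follows directly from $\Upsilon(\lambda)=\operatorname{id}$ and the displayed computation, without needing this observation).
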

\begin{proof}
Proposition \ref{prop:Deltagroupoid} gives us a finite-index subgroup $\La'<\La$ and a $\La'$-invariant $\cC(\Delta)$-groupoid $\phi$.
We define $t:\Delta\to\bar{N}$ so that
\begin{equation}\label{t}
t(v)=t_\G(\phi_{C,C_*}(v))
\end{equation}  
for each chamber $C$ and all vertices $v\in C$.
By the composition and intersection properties of residue-groupoids (Definition \ref{defn:resgroup}) we know that $\phi_{C_1,C_*}(v)=\phi_{C_2,C_*}(v)$ if $C_1,C_2\in\cC(v)$, so (\ref{t}) provides a consistent definition for a map $t:\Delta\to\bar{N}$.
The map $t$ satisfies property \ref{item:toC*} of typing maps because of (\ref{t}), and it satisfies property \ref{item:d-prod} because residue-groupoids preserve lower degrees of rank-1 vertices. Thus $t$ is a valid typing map.

The group $\La'$ may not preserve $t$, so we will pass to a finite-index subgroup of $\La'$ that does by using the following holonomy map:
\begin{align*}
	\Upsilon:\La'&\to\Aut(C_*)\\
	\lambda&\mapsto\phi_{\lambda C_*,C_*}\circ\lambda
\end{align*}
Here $\Aut(C_*)$ is the group of cubical automorphisms of the chamber $C_*$.
Let's check that $\Upsilon$ is a homomorphism.
Recall that, since $\phi$ is $\La'$-invariant, we have
\begin{equation}\label{La'invariant}
	\phi_{\lambda C_1,\lambda C_2}=\lambda\circ\phi_{C_1,C_2}\circ\lambda^{-1}
\end{equation}
for all chambers $C_1,C_2$ and $\lambda\in\La'$.
Suppose $\lambda_1,\lambda_2\in\La'$.
We then have
\begin{align*}
	\Upsilon(\lambda_1)\circ\Upsilon(\lambda_2)&=\phi_{\lambda_1C_*,C_*}\circ\lambda_1\circ\phi_{\lambda_2C_*,C_*}\circ\lambda_2,\\
	&=\phi_{\lambda_1C_*,C_*}\circ\phi_{\lambda_1\lambda_2 C_*,\lambda_1 C_*}\circ\lambda_1\circ\lambda_2,&\text{by (\ref{La'invariant}),}\\
	&=\phi_{\lambda_1\lambda_2 C_*, C_*}\circ (\lambda_1\lambda_2),\\
	&=\Upsilon(\lambda_1\lambda_2).
\end{align*}

Let $\hat{\La}=\ker\Upsilon$, which has finite index in $\La'$ since $C_*$ is finite.
Given a vertex $v$ in a chamber $C$ and $\lambda\in\hat{\La}$, we can then compute
\begin{align*}
	\phi_{\lambda C,C_*}(\lambda v)&=\phi_{\lambda C_*,C_*}\circ\phi_{\lambda C,\lambda C_*}(\lambda v),\\
	&=\phi_{\lambda C_*,C_*}\circ\lambda\circ\phi_{C,C_*}\circ\lambda^{-1}(\lambda v),&\text{by (\ref{La'invariant}),}\\
	&=\phi_{C,C_*}(v), &\text{as }\lambda\in\ker\Upsilon.
\end{align*}
It follows from (\ref{t}) that $t(v)=t(\lambda v)$, hence $t$ is $\hat{\La}$-invariant.

We now turn to constructing a $\hat{\La}$-invariant atlas $\cA$ for $t$.
Let $v\in\Delta^0$ be a rank-1 vertex with $t(v)=\{i\}$.
We know from Remark \ref{remk:=Gi} that $\cC(v)$ has size $|G_i|$.
Construct $\cA_{v}$ by arbitrarily choosing a simply transitive action of $G_i$ on $\cC(v)$.
In turn, this determines $\cA_{v'}$ for all $v'\approx v$ by the identifications $\cC(v)\cong\cC(v')$.
For $\lambda\in\hat{\La}$ we define $\cA_{\lambda v}$ by
\begin{equation}\label{Alav}
\cA_{\lambda v}(\lambda C)=\lambda \cA_{v}(C),
\end{equation}
for $C\in\cC(v)$.
Any $\lambda\in\hat{\La}_{[v]}$ acts trivially on the first factor of the product decomposition $\cC([v])\cong\cC(v)\times\cC(j^\perp,C)$ by Lemma \ref{lem:trivfactor1} (where $t_\G(v)=\{j\}$ and $C\in\cC(v)$ is some chamber), so $\cA_{\lambda v}$ and $\cA_{v}$ coincide under the identification $\cC(v)\cong\cC(\lambda v)$.
The $\hat{\La}$-invariant atlas $\cA$ is constructed by repeating this argument for each $\hat{\La}$-orbit of level-equivalence classes of rank-1 vertices.
\end{proof}

Given typed atlases $(t,\cA)$ and $(t',\cA')$, and an automorphism $f\in\Aut_{\rk}(\Delta)$ with $f_*(t,\cA)=(t',\cA')$, we would like to interpret $f$ in terms of these atlases.
This motivates the following definition and subsequent three lemmas.

\begin{defn}($(t,\cA)$-word of a gallery)\\\label{defn:tAword}
	Let $(t,\cA)$ be a typed atlas.
	We describe a way of associating a letter $s$ in the alphabet 
	$$\mathfrak{A}:=\sqcup_{i\in I}(\{i\}\times (G_i-\{1\}))$$
	 to each ordered pair of adjacent chambers.
	 Indeed, if $C_1,C_2$ are adjacent chambers, and $v=\wedge(C_1, C_2)$ with $t(v)=\{i\}$, and $g\in G_i$ with $\cA_{v}(g)(C_1)=C_2$, then we associate the letter $(i,g)\in\mathfrak{A}$ to $(C_1,C_2)$.
	 Note that the element $g\in G_i$ exists and is unique because $\cA_{v}$ is a simply transitive action.
	 The \emph{$(t,\cA)$-word} of a gallery $(C_0,C_1,...,C_n)$ is the word $(s_1,...,s_n)$ on $\mathfrak{A}$ such that each $s_k$ is the letter associated to the pair $(C_{k-1},C_k)$.
\end{defn}

\begin{lem}\label{lem:wordtogallery}
	Let $(s_1,...,s_n)$ be a word on $\mathfrak{A}$, let $C\in\cC(\Delta)$ and let $(t,\cA)$ be a typed atlas.
	Then there exists a unique gallery $(C_0,C_1,...,C_n)$ with $C_0=C$ whose $(t,\cA)$-word is $(s_1,...,s_n)$.
\end{lem}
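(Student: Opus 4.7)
The plan is to prove this by induction on $n$, with the base case $n = 0$ being immediate since the empty word corresponds uniquely to the trivial gallery $(C)$. For the inductive step, suppose we have already constructed a unique gallery $(C_0, C_1, \ldots, C_{k-1})$ starting at $C$ with $(t,\cA)$-word $(s_1, \ldots, s_{k-1})$, and write the next letter as $s_k = (i, g)$ with $g \in G_i - \{1\}$. The goal reduces to showing that there is a unique chamber $C_k$ adjacent to $C_{k-1}$ whose associated letter is $(i, g)$.

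First I would identify the relevant vertex. By Definition \ref{defn:typingmap}\ref{item:toC*}, the typing map $t$ is injective on the vertex set of $C_{k-1}$, so there is a unique vertex $v \in C_{k-1}$ with $t(v) = \{i\}$. By Corollary \ref{cor:ranktype} this $v$ is a rank-1 vertex, so $t_\G(v) = \{j\}$ for some $j \in I$, and Lemma \ref{lem:cCv} identifies $\cC(v) = \cC(\{j\}, C_{k-1})$ with the first factor of the product decomposition $\cC([v]) \cong \cC(\{j\}, C_{k-1}) \times \cC(j\jperp, C_{k-1})$ based at $C_{k-1}$.

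Next I would construct $C_k$. Define $C_k := \cA_{[v]}(g)(C_{k-1})$. Since $\cA_{[v]}$ acts simply transitively on the first factor and $g \ne 1$, the chamber $C_k$ lies in $\cC(v)$ and is distinct from $C_{k-1}$, so $C_k$ is $j$-adjacent to $C_{k-1}$; moreover $v \in C_{k-1} \cap C_k$ has rank $1$, so $v = \wedge(C_{k-1}, C_k)$ by Lemma \ref{lem:min}\ref{item:adjwedge}. Therefore the letter associated to $(C_{k-1}, C_k)$ is exactly $(i, g)$, giving existence.

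For uniqueness, suppose $C_k'$ is any chamber adjacent to $C_{k-1}$ with associated letter $(i, g)$, and set $v' := \wedge(C_{k-1}, C_k')$, so $t(v') = \{i\}$ and $\cA_{[v']}(g)(C_{k-1}) = C_k'$. Injectivity of $t$ on $C_{k-1}$ forces $v' = v$, hence $C_k' = \cA_{[v]}(g)(C_{k-1}) = C_k$. I do not anticipate a serious obstacle here: the only subtlety is correctly lining up the atlas action (indexed by $G_i$, where $t(v) = \{i\}$) with the standard-typing chamber-residue $\cC(v) = \cC(\{j\}, C_{k-1})$ (where $t_\G(v) = \{j\}$) via the product decomposition from Lemma \ref{lem:cC[v]}, and this is handled by the third bullet of Definition \ref{defn:typedatlas}, which makes $\cC(v)$ a simply transitive $G_i$-set.
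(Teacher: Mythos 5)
Your proof is correct and takes essentially the same approach as the paper: the paper simply states the recursive formula $C_k := \cA_{[v_k]}(g_k)(C_{k-1})$ with $v_k$ the unique vertex of $C_{k-1}$ of $t$-type $\{i_k\}$, and your inductive argument spells out why this recursion is well-defined and forced. The only thing to phrase more carefully is existence of $v$: Definition \ref{defn:typingmap}\ref{item:toC*} makes $t$ restricted to the vertex set of a chamber a bijection onto $\bar{N}$ (not merely an injection), which is what guarantees a vertex of each type $\{i\}$.
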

\begin{proof}
	If $s_k=(i_k,g_k)$, then the gallery is determined recursively by $C_0:=C$ and $C_k:=\cA_{v_k}(g_k)(C_{k-1})$, where $v_k$ is the unique vertex in $C_{k-1}$ with $t(v_k)=\{i_k\}$.
\end{proof}

\begin{lem}\label{lem:gallerytransfer}
Let $(t,\cA)$ and $(t',\cA')$ be typed atlases.
Let $G=(C_0,C_1,...,C_n)$ and $G'=(C'_0,C'_1,...,C'_n)$ be galleries, and suppose that the $(t,\cA)$-word of $G$ is the same as the $(t',\cA')$-word of $G'$.
Then the end-chamber $C'_n$ depends only on the end-chambers $C_0,C_n,C'_0$, not on the choice of galleries $G,G'$.
\end{lem}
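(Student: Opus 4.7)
The plan is to reduce to the three elementary moves of Lemma \ref{lem:moves} and treat each case. Note first that by Lemma \ref{lem:wordtogallery}, the gallery $G'$ is uniquely determined by $C'_0$ together with its $(t', \cA')$-word, which by hypothesis coincides with the $(t, \cA)$-word of $G$; so the content of the lemma is that if $\tilde G$ is any other gallery from $C_0$ to $C_n$, the gallery $\tilde G'$ reconstructed from $C'_0$ and the $(t, \cA)$-word of $\tilde G$ via Lemma \ref{lem:wordtogallery} (applied with the atlas $(t', \cA')$) has the same end chamber as $G'$. Since $G$ and $\tilde G$ differ by a sequence of moves \ref{M1}--\ref{M3} preserving endpoints, it suffices to handle each move.

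Moves \ref{M1} and \ref{M2} I expect to be direct. An \ref{M1} insertion appends a pair $(a, g), (a, g^{-1})$ to the word at some position; starting from a chamber $D$, the first step takes $D$ into $\cC(v')$ where $v'$ is the $t'$-type-$\{a\}$ vertex of $D$, so $v'$ lies in the intermediate chamber as well, the second step uses the same class $[v']$, and the group-action property of $\cA'_{[v']}$ cancels the two letters. For \ref{M2}, all four chambers involved share a common $t_\G$-rank-$1$ vertex $v$ (they form the $\{i\}$-chamber-residue $\cC(v)$ of Lemma \ref{lem:cCv}); setting $a = t(v)$, every involved letter is of the form $(a, \cdot)$ with class $[v]$, and simple transitivity of $\cA_{[v]}$ on $\cC(v)$ forces the product of the two letters to depend only on the common endpoints of the subgallery. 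The analogous simple transitivity on the $(t', \cA')$ side then yields the same end chamber.

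The main obstacle will be \ref{M3}. I first plan to show that such a move permutes two adjacent letters of the word. In the square $C_k, C_{k+1}, C', C_{k+2}$, the $t_\G$-type-$\{i\}$ vertices $u_1 = \wedge(C_k, C_{k+1})$ and $u_4 = \wedge(C', C_{k+2})$ are level-adjacent by Lemma \ref{lem:leveladj} (applied to the $j$-adjacent pair $C_k, C'$, using $j \in i\jperp$), and similarly the two $t_\G$-type-$\{j\}$ vertices $u_2, u_3$; Lemma \ref{lem:typinglevel} then provides common $t$-types $\{a\}$ and $\{b\}$ for these pairs. Unpacking the defining action properties of $\cA_{[u_1]} = \cA_{[u_4]}$ on the product decomposition $\cC([u_1]) \cong \cC(\{i\}, C_k) \times \cC(i\jperp, C_k)$ from Lemma \ref{lem:cC[v]}, in which the four chambers of the square have coordinates $(C_k, C_k), (C_{k+1}, C_k), (C_k, C'), (C_{k+1}, C')$, I will verify that the letters for $(C_k, C_{k+1})$ and $(C', C_{k+2})$ agree as $(a, g)$ for a common $g \in G_a$; analogously the other two sides share a letter $(b, h)$. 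Thus \ref{M3} amounts to swapping $(a, g)(b, h) \leftrightarrow (b, h)(a, g)$ in the word.

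It remains to show that these two orderings produce the same end chamber when applied from a common starting chamber $D$ via $(t', \cA')$. By Lemma \ref{lem:ijadjacent}, $a$ and $b$ are adjacent in $\cG$, and Lemma \ref{lem:ijadj} then ensures that the $t'$-type-$\{a\}$ and $\{b\}$ vertices $v'_1, w'_1$ of $D$ form the corner of a $2$-cube; writing $c = t_\G(v'_1)$ and $d = t_\G(w'_1)$ (also adjacent in $\cG$), all four chambers visited in either ordering lie in the sub-chamber-residue $\cC(\{c, d\}\uperp, D)$. On this residue, iterating Lemma \ref{lem:product} yields a decomposition into a $\{c\}$-factor on which $\cA'_{[v'_1]}$ acts simply transitively, a $\{d\}$-factor on which $\cA'_{[w'_1]}$ acts simply transitively, and a common complementary factor fixed by both; hence the two actions commute, and both orderings land at the same chamber. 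This commutativity of atlas actions on orthogonal factors is the technical heart of the argument.
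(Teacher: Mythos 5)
Your proposal follows essentially the same route as the paper: reduce to the elementary moves \ref{M1}--\ref{M3} from Lemma~\ref{lem:moves} and show that each preserves the end-chamber $C'_n$. For \ref{M1} and \ref{M2} your arguments match the paper's directly (the ``four chambers'' for \ref{M2} should read three, but the idea is right). For \ref{M3}, the paper explicitly constructs the new chamber $\hat{C}'$ via the product structure of the relevant $\{j'_1,j'_2\}$-chamber-residue and computes the $(t',\cA')$-word of $(C'_k,\hat{C}',C'_{k+2})$, whereas you argue more abstractly that the atlas actions commute; these are equivalent (your intermediate chamber is the paper's $\hat{C}'$), but the commutativity claim needs a little more care. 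As stated, ``the two actions commute'' is not literally meaningful, since $\cA'_{[v'_1]}$ and $\cA'_{[w'_1]}$ are defined on the distinct residues $\cC([v'_1])=\cC(c\uperp,D)$ and $\cC([w'_1])=\cC(d\uperp,D)$. What is true — and what you should spell out — is that both preserve the common sub-residue $\cC(\{c,d\}\uperp,D)$, that the triple product $\cC(\{c\},D)\times\cC(\{d\},D)\times\cC(\{c,d\}\jperp,D)$ is compatible with the two binary decompositions of Lemma~\ref{lem:cC[v]} (this uses $c\jperp\cap d\uperp=\{d\}\sqcup\{c,d\}\jperp$), and that each action touches only its own factor there. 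You also implicitly need that the relevant classes agree across the two steps, e.g.\ the $t'$-type-$\{b\}$ vertex of $D_1$ is level-equivalent to $w'_1$; this follows from Lemmas~\ref{lem:leveladj} and~\ref{lem:typinglevel} but is worth saying. With those points made explicit your argument is correct and agrees in substance with the paper's.
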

\begin{proof}
The moves \ref{M1}--\ref{M3} from Lemma \ref{lem:moves} do not change the end-chambers of galleries, and any two galleries with the same end-chambers differ by a sequence of such moves.
So it suffices to show that if we modify the gallery $G$ by one of the moves \ref{M1}--\ref{M3}, then we can modify the gallery $G'$ by a corresponding move such that the $(t,\cA)$-word of $G$ remains the same as the $(t',\cA')$-word of $G'$.
We split into three cases according to which of the moves \ref{M1}--\ref{M3} we apply to $G$:
\begin{enumerate}[label={(M\arabic*)}]
	\item This move replaces a subgallery $(C_k)$ by $(C_k,\hat{C},C_k)$, where $C_k,\hat{C}$ are adjacent.
	It follows readily from Definition \ref{defn:tAword} that the $(t,\cA)$-word of $(C_k,\hat{C},C_k)$ is of the form $((i,g),(i,g^{-1}))$, and that there is a chamber $\hat{C}'$ such that the $(t',\cA')$-word of $(C'_k,\hat{C}',C'_k)$ is also $((i,g),(i,g^{-1}))$.
	Hence replacing $(C'_k)$ by $(C'_k,\hat{C}',C'_k)$ in $G'$ is an \ref{M1} move which ensures that the $(t,\cA)$-word of $G$ remains the same as the $(t',\cA')$-word of $G'$.
	
	\item This move replaces a subgallery $(C_k,C_{k+1})$ by $(C_k,\hat{C},C_{k+1})$, where $C_k,\hat{C},C_{k+1}$ are pairwise $j$-adjacent for some $j\in I$.
	In this case there exists a unique rank-1 vertex $v\in C_k\cap \hat{C}\cap C_{k+1}$, and $t_\G(v)=\{j\}$.
	Observe that $C_k,\hat{C},C_{k+1}\in\cC(v)$.
	If $t(v)=\{i\}$ then $s_{k+1}=(i,g)$ for some $g\in G_i$ with
	\begin{equation}\label{AgCk}
		\cA_{v}(g)(C_k)=C_{k+1}.
	\end{equation} 
	Similarly, the $(t,\cA)$-word of $(C_k,\hat{C},C_{k+1})$ takes the form $((i,g_1),(i,g_2))$ for $g_1,g_2\in G_i$ with
	\begin{equation}\label{Ag1Ck}
		\cA_{v}(g_1)(C_k)=\hat{C}\quad\text{and}\quad\cA_{v}(g_2)(\hat{C})=C_{k+1}.
	\end{equation}
	Let $v'\in C'_k$ be the vertex with $t'(v')=\{i\}$ and let $\hat{C}':=\cA'_{v'}(g_1)(C'_k)$.
	The action of $G_i$ on $\cC(v)$ is simply transitive, so it follows from (\ref{AgCk}) and (\ref{Ag1Ck}) that $g=g_2g_1$, hence
	\begin{align*}
		\cA'_{v'}(g_2)(\hat{C}')=\cA'_{v'}(g)(C'_k)=C'_{k+1}.
	\end{align*}
	Thus $(C'_k,\hat{C}',C'_{k+1})$ has $(t',\cA')$-word $((i,g_1),(i,g_2))$, and replacing $(C'_k,C'_{k+1})$ by $(C'_k,\hat{C}',C'_{k+1})$ in $G'$ is an \ref{M2} move which ensures that the $(t,\cA)$-word of $G$ remains the same as the $(t',\cA')$-word of $G'$.
	
	\item This move replaces a subgallery $(C_k,C_{k+1},C_{k+2})$ by $(C_k,\hat{C},C_{k+2})$, where $C_k,C_{k+1}$ and $\hat{C},C_{k+2}$ are $j_1$-adjacent, and $C_k,\hat{C}$ and $C_{k+1},C_{k+2}$ are $j_2$-adjacent, for adjacent $j_1,j_2\in I$.
	Let $v_1:=\wedge(C_k, C_{k+1})$ and $v_2:=\wedge(C_{k+1}, C_{k+2})$.
	So $t_\G(v_1)=\{j_1\}$ and $t_\G(v_2)=\{j_2\}$.
	Let $t(v_1)=\{i_1\}$ and $t(v_2)=\{i_2\}$.
	Lemma \ref{lem:ijadjacent} implies that $i_1,i_2$ are adjacent.
	The $(t,\cA)$-word for $(C_k,C_{k+1},C_{k+2})$ takes the form $((i_1,g_1),(i_2,g_2))$ for some $g_1\in G_{i_1}$ and $g_2\in G_{i_2}$.
	
	Now consider the corresponding subgallery $(C'_k,C'_{k+1},C'_{k+2})$ in $G'$, which has $((i_1,g_1),(i_2,g_2))$ as its $(t',\cA')$-word.
	This means that $t'(v'_1)=\{i_1\}$ and $t'(v'_1)=\{i_2\}$ for $v'_1:=\wedge(C'_k, C'_{k+1})$ and $v'_2:=\wedge(C'_{k+1}, C'_{k+2})$.
	Let $t_\G(v'_1)=\{j'_1\}$ and $t_\G(v'_2)=\{j'_2\}$.
	Lemma \ref{lem:ijadjacent} implies that $j'_1,j'_2$ are adjacent.
	By the product structure of $\{j'_1,j'_2\}$-chamber-residues (Lemma \ref{lem:product}), we deduce that there is a chamber $\hat{C}'$ that is $j'_2$-adjacent to $C'_k$ and $j'_1$-adjacent to $C'_{k+1}$. So the setup of the chambers $C'_k,C'_{k+1},C'_{k+2},\hat{C}'$ mimics the setup of the chambers $C_k,C_{k+1},C_{k+2},\hat{C}$, but with respect to $j'_1,j'_2$ instead of $j_1,j_2$.
	
	Putting $u_1:=\wedge(\hat{C},C_{k+2})$, we have $t_\G(v_1)=t_\G(u_1)=\{j_1\}$ and $v_1\approx u_1$. The actions $\cA_{v_1}$ and $\cA_{u_1}$ coincide under the identification $\cC(v_1)\cong\cC(u_1)$, and $C_k,C_{k+1}\mapsto \hat{C},C_{k+2}$ under this identification, so $\cA_{u_1}(g_1)(\hat{C})=C_{k+2}$.
	Therefore $(\hat{C},C_{k+2})$ also has $(t,\cA)$-word $((i_1,g_1))$.
	Running the same argument with the roles of $i_1$ and $i_2$ reversed, we deduce that $(C_1,\hat{C})$ has $(t,\cA)$-word $((i_2,g_2))$, so $(C_k,\hat{C},C_{k+2})$ has $(t,\cA)$-word $((i_2,g_2),(i_1,g_1))$.
	
	Running the same argument for the chambers $C'_k,C'_{k+1},C'_{k+2},\hat{C}'$, we discover that the $(t',\cA')$-word for $(C'_k,\hat{C}',C'_{k+2})$ is also $((i_2,g_2),(i_1,g_1))$.
	So replacing the subgallery $(C'_k,C'_{k+1},C'_{k+2})$ with $(C'_k,\hat{C}',C'_{k+2})$ in $G'$ is an \ref{M3} move which ensures that the $(t,\cA)$-word of $G$ remains the same as the $(t',\cA')$-word of $G'$.\qedhere
\end{enumerate}
\end{proof}

\begin{lem}\label{lem:fzips}
	Let $(t,\cA)$ and $(t',\cA')$ be typed atlases, and let $f\in\Aut_{\rk}(\Delta)$ with $f_*(t,\cA)=(t',\cA')$.
	Let $G=(C_0,C_1,...,C_n)$ and $G'=(C'_0,C'_1,...,C'_n)$ be galleries, and suppose that the $(t,\cA)$-word of $G$ is the same as the $(t',\cA')$-word of $G'$.
	If $fC_0=C'_0$ then $fC_k=C'_k$ for all $0\leq k\leq n$.
\end{lem}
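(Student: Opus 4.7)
The plan is to use the action of $f$ to transport the gallery $G$ to a gallery based at $C'_0$, show that this transported gallery has the same $(t',\cA')$-word as the original gallery has $(t,\cA)$-word, and then invoke the uniqueness from Lemma \ref{lem:wordtogallery} to identify it with $G'$.

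First I would observe that $f$ preserves chambers and adjacency of chambers (Proposition \ref{prop:preserved}), so the sequence $(fC_0, fC_1, \dots, fC_n)$ is a gallery starting at $fC_0 = C'_0$. The key computation is to determine its $(t',\cA')$-word. Consider an adjacent pair $(C_{k-1}, C_k)$ with $(t,\cA)$-letter $(i_k, g_k)$, meaning that $v_k := \wedge(C_{k-1}, C_k)$ satisfies $t(v_k) = \{i_k\}$ and $\cA_{[v_k]}(g_k)(C_{k-1}) = C_k$. Since $f$ preserves the poset structure (Proposition \ref{prop:preserved}), we have $\wedge(fC_{k-1}, fC_k) = fv_k$. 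From $f_*(t,\cA) = (t',\cA')$ we read off $t'(fv_k) = t(v_k) = \{i_k\}$ and
\[
\cA'_{[fv_k]}(g_k)(fC_{k-1}) = f\cA_{[v_k]}(g_k)(C_{k-1}) = fC_k,
\]
so the $(t',\cA')$-letter associated to $(fC_{k-1}, fC_k)$ is also $(i_k, g_k)$.

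Hence $(fC_0, fC_1, \dots, fC_n)$ has the same $(t',\cA')$-word as the $(t,\cA)$-word of $G$, which by hypothesis equals the $(t',\cA')$-word of $G' = (C'_0, C'_1, \dots, C'_n)$. Both galleries start at $C'_0$, so Lemma \ref{lem:wordtogallery} forces $fC_k = C'_k$ for every $0 \leq k \leq n$.

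I do not anticipate any serious obstacle here: essentially the statement is a direct consequence of the definition of $f_*(t,\cA)$, together with the fact that the action on typed atlases was designed so that the letter-assignment of Definition \ref{defn:tAword} is natural with respect to $\Aut_{\rk}(\Delta)$. The only point requiring a small care is checking that $f$ maps the vertex $\wedge(C_{k-1}, C_k)$ to $\wedge(fC_{k-1}, fC_k)$, which is immediate from preservation of the poset structure.
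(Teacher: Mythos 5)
Your proof is correct and is essentially the paper's argument, repackaged: you compute that $(fC_0,\dots,fC_n)$ has the same $(t',\cA')$-word as the $(t,\cA)$-word of $G$ and then invoke the uniqueness of Lemma \ref{lem:wordtogallery}, whereas the paper carries out the identical letter-by-letter computation inside an explicit induction on $k$. The one minor difference worth noting is that you identify $\wedge(fC_{k-1},fC_k)=fv_k$ directly from poset-preservation, while the paper deduces $fv=v'$ by combining the inductive hypothesis with the type constraint; both are valid, and yours has the small advantage of not needing the inductive hypothesis at that step.
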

\begin{proof}
	Let $(s_1,...,s_n)$ be the $(t,\cA)$-word of $G$. We prove the lemma by induction on $k$.
	The case $k=0$ is immediate, so suppose $k\geq 1$, and assume by the inductive hypothesis that $fC_{k-1}=C'_{k-1}$.
	Let $s_k=(i,g)$, $v:=\wedge(C_{k-1}, C_k)$ and $v':=\wedge(C'_{k-1}, C'_k)$.
	Then $t(v)=t'(v')=\{i\}$ and
	\begin{equation}\label{AA'}
		\cA_{v}(g)(C_{k-1})=C_k\quad\text{and}\quad\cA'_{v'}(g)(C'_{k-1})=C'_k.
	\end{equation}
	We know that $\{i\}=t(v)=t'(fv)$ since $f_*(t,\cA)=(t',\cA')$, and $fv\in C'_{k-1}$ because $fC_{k-1}=C'_{k-1}$, so we deduce that $fv=v'$. Hence it follows from $f_*(t,\cA)=(t',\cA')$ and (\ref{AA'}) that
	\begin{align*}
		fC_k&=f\cA_{v}(g)(C_{k-1})\\
		&=\cA'_{fv}(g)(fC_{k-1})\\
		&=\cA'_{v'}(g)(C'_{k-1})\\
		&=C'_k.\qedhere
	\end{align*}
\end{proof}

The final ingredient in the proof of Theorem \ref{thm:Delta} is the following proposition. This proposition, along with its proof, are analogous to \cite[Proposition 6.5]{Haglund06}.

\begin{prop}\label{prop:typedatlas}
	Let $(t,\cA)$ and $(t',\cA')$ be typed atlases.
	Let $C$ and $C'$ be chambers in $\Delta$ and let $f:C\to C'$ be the unique isomorphism such that $t'(fv)=t(v)$ for all vertices $v\in C$. Then $f$ admits a unique extension $\bar{f}\in\Aut_{\rk}(\Delta)$ such that $\bar{f}_*(t,\cA)=(t',\cA')$.
\end{prop}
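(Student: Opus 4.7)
The plan is to define $\bar{f}$ chamber by chamber, using the typed atlases as a way to "transport" $f$ across galleries, and then check that the local pieces fit together consistently.

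First, I would define $\bar{f}$ on chambers as follows. For any chamber $D \in \cC(\Delta)$, choose a gallery $G = (C_0, C_1, \ldots, C_n)$ with $C_0 = C$ and $C_n = D$ (using Lemma \ref{lem:alljoined}). Compute its $(t,\cA)$-word $(s_1,\ldots,s_n)$, and by Lemma \ref{lem:wordtogallery} obtain the unique gallery $G' = (C'_0, C'_1, \ldots, C'_n)$ with $C'_0 = C'$ whose $(t',\cA')$-word is $(s_1,\ldots,s_n)$. Set $\bar{f}D := C'_n$. Lemma \ref{lem:gallerytransfer} guarantees that $C'_n$ does not depend on the choice of gallery $G$, so $\bar{f}: \cC(\Delta) \to \cC(\Delta)$ is well defined. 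Applying the same construction in reverse (swapping the roles of the two typed atlases and starting from $f^{-1}$) produces a two-sided inverse, so $\bar{f}$ is a bijection on chambers.

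Next, for each chamber $D$ I would let $\bar{f}|_D : D \to \bar{f}D$ be the unique cubical isomorphism that sends $t$-types to $t'$-types, provided by Lemma \ref{lem:fttot'}; note that when $D = C$ this recovers $f$ by hypothesis. The key technical step — and the main obstacle — is to verify that these chamber-by-chamber definitions agree on overlaps, so that they glue into a single map $\bar{f}: \Delta \to \Delta$. It suffices to check agreement on adjacent chambers $D_1, D_2$: I would first observe using the gallery $(C,\ldots,D_1,D_2)$ that the letter associated with $(D_1,D_2)$ equals the letter associated with $(\bar{f}D_1,\bar{f}D_2)$, so in particular the vertices $v = \wedge(D_1,D_2)$ and $v' = \wedge(\bar{f}D_1,\bar{f}D_2)$ satisfy $t(v) = t'(v') = \{i\}$, and the element $g \in G_i$ realizing the atlas transition is the same on both sides. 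Combining this with the fact that $\bar{f}|_{D_1}$ is type-preserving, one concludes $\bar{f}|_{D_1}(v) = v'$, and then using that $\bar{f}|_{D_1}$ and $\bar{f}|_{D_2}$ are both type-preserving and that $D_1 \cap D_2$ is determined by its type data (Lemma \ref{lem:min}\ref{item:wCcapC'}), they must agree pointwise on $D_1 \cap D_2$. The general case then follows by induction along a gallery.

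Once $\bar{f}$ is well defined on $\Delta$, it is cubical because each restriction $\bar{f}|_D$ is cubical, and it preserves ranks by Lemma \ref{lem:fttot'}, so $\bar{f} \in \Aut_{\rk}(\Delta)$. To check $\bar{f}_*(t,\cA) = (t',\cA')$, I would verify the two pieces separately: the typing map identity $t'(\bar{f}v) = t(v)$ holds by construction on every chamber; and the atlas identity is a chamber-local statement that follows by tracing the letter in the $(t,\cA)$-word associated with a pair of $[v]$-related adjacent chambers and comparing to the corresponding letter in the $(t',\cA')$-word (which are equal by construction), together with the fact that $\cA_{[v]}$ and $\cA'_{[\bar{f}v]}$ are determined by their simply transitive action on the first factor of the product decomposition in Lemma \ref{lem:cC[v]}.

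Finally, uniqueness is immediate from Lemma \ref{lem:fzips}: any $\tilde{f} \in \Aut_{\rk}(\Delta)$ with $\tilde{f}|_C = f$ and $\tilde{f}_*(t,\cA) = (t',\cA')$ must send the end-chamber of every gallery starting at $C$ to the end-chamber of the corresponding same-word gallery starting at $C'$, hence agrees with $\bar{f}$ on chambers; and then it agrees with $\bar{f}$ on each chamber because both restrictions are type-preserving, which is unique by Lemma \ref{lem:fttot'}.
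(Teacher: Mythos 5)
Your proposal follows essentially the same route as the paper's proof: define the chamber-to-chamber correspondence via $(t,\cA)$-words and Lemmas~\ref{lem:wordtogallery} and \ref{lem:gallerytransfer}, then use Lemma~\ref{lem:fttot'} to produce the type-transporting isomorphism on each chamber, check agreement on overlaps, and handle uniqueness via Lemma~\ref{lem:fzips}. The argument is correct. Two small places could be tightened to match the paper's care: first, the assertion that it suffices to check agreement on adjacent chambers relies on the fact (Lemma~\ref{lem:cCv}) that for any vertex $v$ the set $\cC(v)$ is a chamber-residue, so that any two chambers containing $v$ are joined by a gallery all of whose members still contain $v$ — this is what makes ``induction along a gallery'' legitimate; second, in the adjacency step, after establishing $\bar{f}|_{D_1}(v)=v'=\bar{f}|_{D_2}(v)$ one should invoke Lemma~\ref{lem:min}\ref{item:stayC} to conclude that for $w\in D_1\cap D_2$ both $\bar{f}|_{D_1}(w)$ and $\bar{f}|_{D_2}(w)$ land in $\bar{f}D_1\cap\bar{f}D_2$ (because $v'\leq\bar{f}|_{D_k}(w)$ and $v'$ lies in both image chambers), after which injectivity of $t'$ on a chamber forces equality. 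Your phrase ``$D_1\cap D_2$ is determined by its type data'' gestures at this but the crucial input is the poset-and-containment argument.
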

\begin{proof}
Given a gallery $G=(C_0,C_1,...,C_n)$ with origin $C_0=C$, let $G'=(C'_0,C'_1,...,C'_n)$ be the gallery with origin $C'_0=C'$ whose $(t',\cA')$-word is the same as the $(t,\cA)$-word of $G$ -- such a gallery exists and is unique by Lemma \ref{lem:wordtogallery}.
For each $k$, the $(t,\cA)$-word of the subgallery $(C_0,C_1,...,C_k)$ is equal to the $(t',\cA')$-word of the subgallery $(C'_0,C'_1,...,C'_k)$, so it follows from Lemma \ref{lem:gallerytransfer} that the chamber $C'_k$ depends only on the chamber $C_k$, not on the choice of gallery $G$ containing $C_k$.
Every chamber in $\Delta$ is joined to $C$ by some gallery, hence we get a well-defined map $\varepsilon:\cC(\Delta)\to\cC(\Delta)$ by sending $C_k\mapsto C'_k$ for each pair of galleries $G,G'$ as above.
By Lemma \ref{lem:fttot'}, for each chamber $C_k$ there is a unique cubical isomorphism $h_k:C_k\to C'_k$ such that
\begin{equation}\label{thk}
t(v)=t'(h_k v)
\end{equation} 
for all vertices $v\in C_k$.
Note that $h_0=f$.
We want these maps to fit together to define a cubical map $\bar{f}:\Delta\to\Delta$, so we must show that these maps agree on intersections of chambers.
Let $v$ be a vertex in the intersection of chambers $\hat{C}_1$ and $\hat{C}_2$.
Let $G$ be a gallery as above with $\hat{C}_1=C_{m_1}$ and $\hat{C}_2=C_{m_2}$ for some $0\leq m_1\leq m_2\leq n$.
Our task is to show that $h_{m_1}v=h_{m_2} v$.
By Lemma \ref{lem:cCv}, we may choose $G$ so that $v\in C_k$ for $m_1\leq k\leq m_2$.
Therefore, it is enough to show that $h_{k-1}v=h_kv$ for each $m_1< k\leq m_2$.
Let $u:=\wedge(C_{k-1}, C_k)$ and $u':=\wedge(C'_{k-1}, C'_k)$, and let $s_k=(i,g)$.
So $t(u)=\{i\}=t'(u')$, and $h_{k-1}u=u'=h_ku$ by (\ref{thk}).
By Lemma \ref{lem:min}\ref{item:wCcapC'} we know that $u\leq v$, and $h_{k-1},h_k$ both preserve poset structure by Lemma \ref{lem:fttot'} so $u'\leq h_{k-1}v,h_kv\in C'_{k-1}\cap C'_k$. As $t'(h_{k-1}v)=t(v)=t'(h_kv)$, we conclude that $h_{k-1}v=h_kv$.

We now have a cubical map $\bar{f}:\Delta\to\Delta$, and it preserves rank because each of the maps $h_k$ do (Lemma \ref{lem:fttot'}).
We note that the inverse maps $h^{-1}_k:C'_k\to C_k$ for pairs of galleries $G,G'$ as above define an inverse to $\bar{f}$ (which is well-defined by the same argument we used for $\bar{f}$), so $\bar{f}\in\Aut_{\rk}(\Delta)$.
Furthermore, $\bar{f}_*(t,\cA)=(t',\cA')$ follows from the way we constructed $\bar{f}$ with the pairs of galleries $G,G'$.

It remains to prove that such $\bar{f}$ is unique.
Indeed if $\bar{f}\in\Aut_{\rk}(\Delta)$ is an extension of $f$ with $\bar{f}_*(t,\cA)=(t',\cA')$, and if $G,G'$ are galleries as above, then it follows from Lemma \ref{lem:fzips} that $\bar{f}C_k=C'_k$ for all $0\leq k\leq n$. 
Hence the action of $\bar{f}$ on $\cC(\Delta)$ induces the map $\varepsilon:\cC(\Delta)\to\cC(\Delta)$ defined above.
Finally, the restriction of $\bar{f}$ to a chamber $C_k$ (again for some choice of $G$ as above) must agree with $h_k$ because of (\ref{thk}) and the fact that $\bar{f}_*(t,\cA)=(t',\cA')$, so $\bar{f}$ is uniquely determined.
\end{proof}

We can now prove Theorem \ref{thm:Delta}.

\begin{proof}[Proof of Theorem \ref{thm:Delta}]
	The group $\G$ acts transitively on the chambers of $\Delta$ and preserves the standard typed atlas $(t_\G,\cA_\G)$ by Lemma \ref{lem:Gammatypedatlas}, so it follows from Proposition \ref{prop:typedatlas} that the $\Aut_{\rk}(\Delta)$-stabilizer of $(t_\G,\cA_\G)$ is $\G$.
	By Lemma \ref{lem:Latypedatlas} there is a typed atlas $(t,\cA)$ preserved by a finite-index subgroup $\hat{\La}<\La$.
	Applying Proposition \ref{prop:typedatlas} again produces an automorphism $g\in\Aut_{\rk}(\Delta)$ with $g_*(t,\cA)=(t_\G,\cA_\G)$, so $g\hat{\La}g^{-1}$ preserves the typed atlas $(t_\G,\cA_\G)$.
	It follows that $g\hat{\La}g^{-1}$ is a subgroup of $\G$, and it has finite index since it acts cocompactly on $\Delta$.
	Thus $\La$ and $\G$ are weakly commensurable in $\Aut(\Delta)$.
\end{proof}

\bigskip
\section{Quasi-isometric rigidity}\label{sec:QI}

In this section we prove the following theorem about the quasi-isometric rigidity of the graph product $\G$.

\theoremstyle{plain}
\newtheorem*{thm:QI}{Theorem \ref{thm:QI}}
\begin{thm:QI}
	Let $\G=\G(\cG,(G_i)_{i\in I})$ be a graph product.
	Suppose that $\cG$ is a finite generalized $m$-gon, with $m\geq3$, which is bipartite with respect to the partition $I=I_1\sqcup I_2$.
	Suppose that $d_1,d_2,p_1,p_2\geq2$ are integers such that every $i\in I_k$ ($k=1,2$) has degree $d_k$ and $|G_i|=p_k$.
	Then in each of the following cases
	\begin{enumerate}[label=(\roman*)]
		\item $2< d_1,d_2,p_1,p_2$,
		\item $p_1=p_2=2<d_1,d_2$,
		\item $d_1=d_2=2<p_1,p_2$,
	\end{enumerate}
	any finitely generated group quasi-isometric to $\G$ is abstractly commensurable with $\G$.
\end{thm:QI}

The idea is to show that the associated right-angled building $\Delta=\Delta(\cG,(G_i)_{i\in I})$ has the structure of a Fuchsian building in these cases (Definition \ref{defn:Fuchsian}), and then apply the following quasi-isometric rigidity theorem of Xie \cite[Corollary 1.4]{Xie06} (which built on work of Bourdon--Pajot \cite{BourdonPajot00}).

\begin{thm}
	Let $X$ be a locally finite Fuchsian building and suppose that $\Aut(X)$ contains a uniform lattice. Then any finitely generated group quasi-isometric to $X$ admits a proper and cocompact action on $X$.
\end{thm}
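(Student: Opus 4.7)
The plan is to follow the Bourdon--Pajot/Xie strategy: reduce quasi-isometric rigidity to boundary rigidity via the rich conformal structure on $\partial X$, and then upgrade the quasi-action of $H$ to a genuine proper cocompact action. Fix a finitely generated group $H$ quasi-isometric to $X$. Choosing a basepoint and a quasi-inverse, the group $H$ admits a quasi-action on $X$ by quasi-isometries with uniformly bounded constants and uniformly bounded error. The main engine is the following Bourdon--Pajot/Xie theorem, which I would cite or re-prove as a black box: every self quasi-isometry of a locally finite Fuchsian building lies at bounded distance from an isometry. This is the hard part of the argument and is the main obstacle if one tries to reprove it; the existence of a uniform lattice is used here to guarantee that $X$ has enough symmetry and that the combinatorial cross-ratio data on $\partial X$ can be controlled.

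Granting this, the proof proceeds as follows. First, each quasi-isometry $h_\ast$ induced by $h \in H$ extends canonically to a boundary homeomorphism of $\partial X$ (the boundary of the Fuchsian building is a Loewner space of Menger curve type in the hyperbolic case, or a more general $\mathrm{CAT}(-1)$ boundary, on which the Bourdon combinatorial modulus is well defined). By the Bourdon--Pajot/Xie rigidity, one can replace each $h_\ast$ by a genuine isometry $\rho(h) \in \Is(X)$ that lies within a uniform distance of $h_\ast$.

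Second, I would verify that the map $\rho : H \to \Is(X)$ is a homomorphism. The point is that $\rho(h_1 h_2)$ and $\rho(h_1) \rho(h_2)$ differ by an isometry at uniformly bounded distance from the identity on $X$; since $X$ is locally finite and admits a uniform lattice, the set of isometries moving a given basepoint a bounded distance is compact (in fact discrete modulo a finite-index subgroup when intersected with a lattice), so on a finite-index subgroup $H' < H$ the composition ambiguity vanishes. A standard Milnor--Svarc style bookkeeping then yields an honest homomorphism $H' \to \Is(X)$ with finite kernel.

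Finally I would check properness and cocompactness of the resulting action of $H'$ on $X$, and thus of $H$ on $X$. Properness follows because $H'$ is quasi-isometric to $X$ via the orbit map $\rho(h) x_0$, so stabilizers of balls are finite; here we use that $\Aut(X)$ already contains a uniform lattice, so $\Is(X)$ acts cocompactly on $X$ and stabilizers are compact and hence finite in the locally finite setting. Cocompactness follows because the orbit map is a quasi-isometry onto a uniformly coarsely dense subset of $X$. This yields a proper cocompact action of a finite-index subgroup of $H$ on $X$, and extending it (using finitely many cosets) produces the desired proper cocompact action of $H$ itself.
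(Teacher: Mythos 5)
First, note that the paper does not prove this statement at all: it is quoted verbatim as Xie's result \cite[Corollary 1.4]{Xie06}, used as a black box. Your overall strategy --- quasi-action of $H$ on $X$, replace each quasi-isometry by an isometry at bounded distance using the Bourdon--Pajot/Xie rigidity theorem, then check that the resulting map $\rho:H\to\Is(X)$ is a homomorphism giving a proper cocompact action --- is exactly how Xie derives the corollary from his main theorem, so the route is the right one.

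There is, however, a genuine gap in your middle step. The correct reason $\rho$ is a homomorphism on all of $H$ is \emph{uniqueness} of the isometry at bounded distance from a given quasi-isometry: for a locally finite Fuchsian building with cocompact automorphism group, the only isometry at bounded distance from the identity is the identity (it fixes the CAT($-1$) boundary pointwise, and geodesic completeness plus local finiteness then force it to be trivial). With uniqueness, $\rho(h_1h_2)$ and $\rho(h_1)\rho(h_2)$ are both isometries at bounded distance from the same quasi-isometry, hence equal, and $\rho$ is a homomorphism on the nose, with finite kernel (elements moving every point a bounded distance), which does not obstruct properness. Your substitute --- that the set of isometries displacing a basepoint boundedly is compact, so ``on a finite-index subgroup $H'<H$ the composition ambiguity vanishes'' --- is not justified: compactness of that set gives no reason for the cocycle $\rho(h_1h_2)^{-1}\rho(h_1)\rho(h_2)$ to become trivial on a finite-index subgroup. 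Moreover, even if it did, your last step fails: a proper cocompact action of a finite-index subgroup $H'$ cannot in general be ``extended using finitely many cosets'' to an action of $H$ on $X$, and the theorem asserts an action of $H$ itself. Both problems disappear once you invoke the uniqueness statement, so the fix is to replace the finite-index detour by that uniqueness argument.
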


As a consequence, any group $\G'$ quasi-isometric to $\G$ acts properly and cocompactly on $\Delta$.
Since $\Delta$ is hyperbolic (as $\cG$ has no induced 4-cycles), such $\G'$ would have a finite-index subgroup which is a special uniform lattice in $\Aut(\Delta)$ \cite{Agol13}, so $\G'$ would be abstractly commensurable with $\G$ by Theorem \ref{thm:Delta}.
(In fact, for cases \ref{item:ii} and \ref{item:iii} we could use \cite[Theorem 1.7 and Theorem 1.3]{Haglund06} respectively in combination with \cite{Agol13} instead of Theorem \ref{thm:Delta} -- noting that the group of type preserving automorphisms has finite index in $\Aut(\Delta)$ in case \ref{item:iii}.)

In the rest of this section we show how the conditions of Theorem \ref{thm:QI} endow $\Delta$ with the structure of a Fuchsian building.
We follow a similar strategy to \cite{BoundsXie20}, who did exactly this for case \ref{item:ii} above (note that in this case $\G$ is a right-angled Coxeter group and $\Delta$ the associated Davis complex).
We remark that there are some other cases not covered by Theorem \ref{thm:QI} where $\Delta$ still has the structure of a Fuchsian building (which yield more examples of quasi-isometrically rigid groups), Theorem \ref{thm:QI} just deals with three of the simpler cases.

\begin{defn}(Fuchsian buildings)\\\label{defn:Fuchsian}
	Consider a compact convex polygon $R$ in the hyperbolic plane $\mathbb{H}^2$ whose angles are of the form $\pi/m$ for $2\leq m\in\mathbb{N}$. 
	Let $W$ be the Coxeter group generated by the reflections about the edges of $R$.	
	It is well known that the images of $R$ under $W$ form a tessellation of $\mathbb{H}^2$.
	If we label the vertices of $R$ cyclically by $1,2,...,k$, then we get a $W$-invariant labeling of the tessellation of $\mathbb{H}^2$. Let $A_R$ denote the obtained labeled 2-complex.
	
	Let $X$ be a connected 2-complex with vertices labeled by $1,2,...,k$, such that each 2-cell (called a \emph{chamber}) can be identified with $R$ via a label preserving isometry. Suppose that $X$ has a family of subcomplexes (called \emph{apartments}), each of which is isomorphic to $A_R$ as labeled 2-complexes. Then $X$ is a \emph{Fuchsian building} if it satisfies the following three properties:
	\begin{enumerate}
		\item\label{item:apart} Given any two chambers there is an apartment containing both.
		\item\label{item:mapapart} For any two apartments $A_1, A_2$ that share a chamber there is an isomorphism of labeled 2-complexes $f:A_1\to A_2$ that pointwise fixes $A_1\cap A_2$.
		\item\label{item:qi} There are integers $q_i\geq2$, $i=1,2,...,k$, such that each edge of $X$ with endpoints labeled by $i,i+1$ (mod $k$) is contained in exactly $q_i+1$ chambers.
	\end{enumerate}
\end{defn}

Properties \ref{item:apart} and \ref{item:mapapart} above can be replaced with local properties by using the following theorem, which is a special case of \cite[Corollary 2.4]{GaboriauPaulin01}.

\begin{thm}\label{thm:localF}
	Let $X$ be a simply connected 2-complex with all the assumptions from Definition \ref{defn:Fuchsian} except properties \ref{item:apart} and \ref{item:mapapart}.
	Then $X$ is a Fuchsian building (hence satisfies \ref{item:apart} and \ref{item:mapapart}) if the following properties hold:
	\begin{enumerate}[label=(\alph*)]
		\item\label{item:CAT1} The link of each vertex in $X$ is CAT(1).
		\item\label{item:embedcircle} Through any two points in the link of a vertex in $X$ there passes an isometrically embedded circle.
	\end{enumerate}
\end{thm}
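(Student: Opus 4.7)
My plan is to recover $X$ as a Fuchsian building via a developing-map argument into the model apartment $A_R \subset \mathbb{H}^2$, using the CAT(-1) geometry forced by the hypotheses together with a classification of vertex links as generalized polygons.

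Step 1 (vertex links are spherical buildings). For a vertex $v$ of $X$ whose type corresponds to an angle $\pi/m_v$ of $R$, I would show that $\operatorname{link}(v)$ is a generalized $m_v$-gon. The edges of $\operatorname{link}(v)$ inherit length $\pi/m_v$ from the metric on $R$; the CAT(1) hypothesis (a) forces girth $\geq 2m_v$; the isometric-circle hypothesis (b) provides, through any pair of edges, a circuit of length exactly $2m_v$, forcing girth equal to $2m_v$ and yielding the circuit-transitivity property characterizing generalized polygons (Theorem 1.1 of \cite{VanMaldeghem02}). The degree data from axiom (3) of Definition \ref{defn:Fuchsian} matches the thickness of the polygon.

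Step 2 (global CAT(-1) geometry). Each chamber is a hyperbolic polygon and each vertex link is CAT(1), so $X$ is locally CAT(-1); combined with simple connectedness, the Cartan--Hadamard theorem for piecewise-hyperbolic complexes yields that $X$ is globally CAT(-1), hence contractible and uniquely geodesic.

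Step 3 (building apartments via developing maps). Fix a base chamber $C_0$ and a label-preserving isometry $\phi_0 \colon C_0 \to R \subset \mathbb{H}^2$. For any chamber $C$ and any gallery $C_0, C_1, \dots, C_n = C$, extend $\phi_0$ iteratively: if $\phi$ is defined on $C_{k-1}$ and $C_k$ shares edge $e$ with $C_{k-1}$, let $\phi(C_k)$ be the reflection of $\phi(C_{k-1})$ across $\phi(e)$. This records a word $w$ in the Coxeter group $W$ of reflections of $A_R$, and we want $\phi(C)$ to be independent of gallery. Simple connectedness of $X$ lets us connect any two such galleries by moves supported in stars of vertices $v$; by Step 1 each such move is a closed loop in a generalized $m_v$-gon, which realises exactly the dihedral relation $(r_i r_j)^{m_v}=1$ of $W$. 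So $w$ is well-defined in $W$, and $\phi(C) = w \cdot R$ is a well-defined chamber of $A_R$. The image of all chambers traversed along galleries between $C_0$ and a prescribed chamber $C'$ forms an apartment isomorphic to $A_R$ containing both, yielding \ref{item:apart}. For \ref{item:mapapart}, two apartments through a common chamber $C$ each determine developing maps extending a single identification $C \cong R$; their extensions agree on $A_1 \cap A_2$ because this intersection is gallery-connected and convex in both (by CAT(-1) geometry), and the extension rule is locally dictated by the reflection across each crossed edge.

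The main obstacle is the well-definedness step in Step 3: verifying that null-homotopic loops in $X$ produce trivial words in $W$ requires more than just the CAT(1) link hypothesis, it requires the full generalized-polygon structure from Step 1, so that closed loops in $\operatorname{link}(v)$ lift precisely to null words in the dihedral subgroup of $W$ for the vertex $v$. Once this local holonomy triviality is established, all the Fuchsian-building axioms follow mechanically from the developing-map construction.
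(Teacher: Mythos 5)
First, a remark on the comparison you asked for: the paper does not prove Theorem \ref{thm:localF} at all --- it is quoted as a special case of \cite[Corollary 2.4]{GaboriauPaulin01} --- so your proposal has to be judged on its own merits rather than against an argument in the text.

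Your Steps 1 and 2 are fine in outline, but Step 3 contains a genuine gap, located exactly where you flag ``the main obstacle'', and your proposed resolution of that obstacle is false. The holonomy of your developing map around a vertex $v$ with angle $\pi/m_v$ is read off from closed edge-paths in $\operatorname{link}(v)$: a closed gallery around $v$ of combinatorial length $2k$ develops to the word $(r_ir_j)^k$, where $r_i,r_j$ are the reflections in the two sides of $R$ meeting at the corresponding corner, and this word is trivial in $W$ only when $m_v\mid k$. But a \emph{thick} generalized $m_v$-gon, while it has girth $2m_v$, contains circuits of length $2k$ with $m_v\nmid k$: for instance $K_{3,3}$ (a thick generalized $2$-gon) contains $6$-circuits, and the incidence graph of any projective plane (a thick generalized $3$-gon) contains $8$-circuits coming from quadrilateral configurations, so the corresponding holonomies $(r_ir_j)^3$ and $(r_ir_j)^4$ equal $r_ir_j\neq 1$ in the dihedral groups of order $4$ and $6$. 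Hence $\phi(C)$ genuinely depends on the choice of gallery, the map is not well-defined, and neither the apartments you extract from it nor the argument for property \ref{item:mapapart} survive. This is precisely the difference between thin buildings (Coxeter complexes), where every vertex link is a single $2m$-circuit and the developing map does work, and thick buildings, where one must instead use retractions onto apartments (which are allowed to ``fold'' non-minimal galleries) or, as in \cite{GaboriauPaulin01}, construct apartments metrically: the CAT($-1$) geometry from your Step 2 is used to extend geodesics, and hypothesis \ref{item:embedcircle} is used to thicken a geodesic to a totally geodesic, convex copy of $\mathbb{H}^2$, which is then shown to satisfy axioms \ref{item:apart} and \ref{item:mapapart}. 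Repairing your sketch would essentially amount to redoing Tits's local approach to buildings, a substantially different argument from the one you propose. (A smaller point: in Step 1 you also need to justify that the isometrically embedded circles have length exactly $2\pi$ rather than merely $\geq 2\pi$; this is true but not automatic.)
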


Recall that a \emph{generalized $m$-gon} is a connected, bipartite, simplicial graph with diameter $m$ and girth $2m$ ($m\geq2$). Equivalently, it is a connected, bipartite, simplicial graph with the following two properties \cite[Theorem 1.1]{VanMaldeghem02}:
\begin{enumerate}
	\item Given any pair of edges there is a circuit of length $2m$ containing both.
	\item For two circuits $A_1,A_2$ of length $2m$ with non-empty intersection, there is an isomorphism $f:A_1\to A_2$ that pointwise fixes $A_1\cap A_2$.
\end{enumerate}

These definitions of generalized $m$-gon connect neatly with Theorem \ref{thm:localF} as follows.
If $X$ is the 2-complex from Theorem \ref{thm:localF}, and if a vertex $x\in X$ subtends angles $\pi/m$ in each of its incident 2-cells, then the link of $x$ satisfies properties \ref{item:CAT1} and \ref{item:embedcircle} above if and only if it is a generalized $m$-gon (with one edge in the link of $x$ for each 2-cell incident to $x$).

We are now ready to show that $\Delta$ has the structure of a Fuchsian building in the cases of Theorem \ref{thm:QI} (in fact now we no longer require $\cG$ to be finite).

\begin{prop}\label{prop:Fuchsian}
Let $\cG$ be a generalized $m$-gon, with $m\geq3$, which is bipartite with respect to the partition $I=I_1\sqcup I_2$, and let $(G_i)_{i\in I}$ be finite groups. 
Suppose that $d_1,d_2,p_1,p_2\geq2$ are integers such that every $i\in I_k$ ($k=1,2$) has degree $d_k$ and $|G_i|=p_k$.
Then in each of the following cases
\begin{enumerate}[label=(\roman*)]
	\item\label{item:i'} $2< d_1,d_2,p_1,p_2$,
	\item\label{item:ii'} $p_1=p_2=2<d_1,d_2$,
	\item\label{item:iii'} $d_1=d_2=2<p_1,p_2$,
\end{enumerate}
the right-angled building $\Delta=\Delta(\cG,(G_i)_{i\in I})$ has the structure of a Fuchsian building.
\end{prop}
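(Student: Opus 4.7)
The plan is to apply Theorem~\ref{thm:localF} in each case, so I need to define a polygonal $2$-complex structure on $\Delta$ and verify that every vertex link is a thick generalized polygon with the correct metric, and that every edge lies in at least three chambers; simple connectivity is automatic since $\Delta$ is CAT(0).

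For case~\ref{item:i'}, I would keep the squares of the cube complex as the chambers, each metrized as a hyperbolic quadrilateral with angles $\pi/m,\pi/2,\pi/2,\pi/2$ at its rank-0, rank-1, rank-2, rank-1 corners (such a quadrilateral exists in $\mathbb{H}^2$ because $\pi/m+3\pi/2<2\pi$ for $m\geq 3$). Using the machinery of Sections~\ref{sec:galleries}--\ref{sec:hyperplanes}, the link at a rank-0 vertex comes out to $\cG$ with edge lengths $\pi/m$, a thick generalized $m$-gon since $d_1,d_2\geq 3$; the link at a rank-1 vertex of type $\{i\}$ with $i\in I_k$ is the complete bipartite graph $K_{p_k,d_k}$ with edge lengths $\pi/2$, a thick generalized $2$-gon since $p_k,d_k\geq 3$; and the link at a rank-2 vertex is $K_{p_1,p_2}$ with edge lengths $\pi/2$, again a thick generalized $2$-gon. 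Each rank-0-to-rank-1 edge lies in $d_k\geq 3$ squares and each rank-1-to-rank-2 edge lies in $p_k\geq 3$ squares, so the thickness hypothesis of Definition~\ref{defn:Fuchsian} is satisfied.

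For case~\ref{item:iii'}, $\cG$ is a $2m$-cycle, so each cubical chamber is combinatorially a right-angled $2m$-gon subdivided into $2m$ squares, with rank-2 vertices at its corners, rank-1 vertices as midpoints of sides, and the rank-0 vertex at the center. I would reinterpret the cubical chambers as right-angled $2m$-gons in $\mathbb{H}^2$, treating the rank-0 and rank-1 vertices as interior points of $2$-cells and edges respectively; the new $2$-complex then has the rank-2 vertices of $\Delta$ as its vertices and length-two paths from rank-2 through rank-1 to rank-2 as its edges. A direct calculation using Lemmas~\ref{lem:product} and \ref{lem:cC[v]} shows that the link at each rank-2 vertex is $K_{p_1,p_2}$ with right-angled metric (thick generalized $2$-gon since $p_1,p_2\geq 3$), each edge lies in $p_k\geq 3$ chambers, and $m\pi<(2m-2)\pi$ for $m\geq 3$ confirms that the right-angled $2m$-gon exists in $\mathbb{H}^2$.

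Case~\ref{item:ii'} is the most delicate and is essentially the case handled by \cite{BoundsXie20}. Here $p_1=p_2=2$, so the cubical chambers are cones over the full generalized $m$-gon $\cG$ and are not themselves polygons; the naive choice from case~\ref{item:i'} fails because rank-1-to-rank-2 edges then lie in only $p_k=2$ squares. Following \cite{BoundsXie20}, I would introduce a different polygonal structure whose chambers are right-angled $2m$-gons indexed by $2m$-cycles of $\cG$ (i.e.\ by apartments of the spherical building $\cG$), with the combinatorics of how chambers meet chosen so that each edge is shared by exactly $d_k\geq 3$ chambers. The main obstacle I expect is precisely this combinatorial bookkeeping --- specifying the gluing data so that the resulting $2$-complex has underlying space $\Delta$ and all edges have the correct thickness; once this is set up, the link computations parallel those in the other two cases, with the crucial link carrying the generalized-$m$-gon data being $\cG$ itself, which is thick because $d_1,d_2\geq 3$.
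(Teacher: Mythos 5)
Your treatment of cases \ref{item:i'} and \ref{item:iii'} coincides with the paper's: in case \ref{item:i'} the Fuchsian chambers are the cubical squares metrized with angle $\pi/m$ at the rank-0 corner and right angles elsewhere, and in case \ref{item:iii'} each cubical chamber (the cone over the $2m$-cycle $\cG$) becomes a single right-angled $2m$-gon with corners at the rank-2 vertices. Your link and edge-thickness computations in those two cases are correct and match the paper's Figure \ref{fig:Fchambers}.

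Your case \ref{item:ii'}, however, diverges from the paper and does not work as stated. You propose Fuchsian chambers that are ``right-angled $2m$-gons indexed by $2m$-cycles of $\cG$,'' but such chambers cannot tile $\Delta$: a given cubical square (the cone over an edge $\{i,j\}$ of $\cG$) lies in many $2m$-cycles of $\cG$ (precisely because $\cG$ is thick), so the candidate $2m$-gons would overlap in $2$-dimensional regions rather than forming a polygonal complex with underlying space $\Delta$. The proposal is also internally inconsistent: you want ``the crucial link carrying the generalized-$m$-gon data'' to be $\cG$ itself, but in a right-angled polygon every vertex subtends $\pi/2$ in each incident chamber, so for Theorem \ref{thm:localF} its link must be a generalized $2$-gon (a complete bipartite graph); a link isomorphic to $\cG$, a generalized $m$-gon with $m\geq 3$, would instead require the subtended angle $\pi/m$. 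Finally, you explicitly defer ``specifying the gluing data,'' which is exactly where this construction has to succeed or fail, so even setting aside the above objections the argument for case \ref{item:ii'} is incomplete.

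The paper's construction for case \ref{item:ii'} is different and considerably simpler: the Fuchsian chamber is a hyperbolic quadrilateral with all four angles equal to $\pi/m$, obtained by gluing together the four cubical squares surrounding a rank-2 vertex. This works precisely because $p_1=p_2=2$ forces the link of a rank-2 vertex to be a $4$-cycle, so the four squares close up into a quadrilateral with the rank-2 vertex as interior center, the two rank-1 vertices as side midpoints, and four rank-0 vertices (each with link $\cG$) as corners; each side then lies in $d_k\geq 3$ such quadrilaterals. A further point your sketch misses is the cyclic vertex labeling required by Definition \ref{defn:Fuchsian}: all four corners are rank-0 vertices of type $\emptyset$, so the typing map cannot distinguish them, and the paper instead labels them using the homomorphism $\eta\colon\G\to\mathbb{Z}/2\mathbb{Z}\times\mathbb{Z}/2\mathbb{Z}$ induced by the bipartition of $\cG$.
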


\begin{proof}
	To exhibit a Fuchsian building structure on $\Delta$ we must identify the chambers (in the sense of Definition \ref{defn:Fuchsian}). To avoid ambiguity, we will refer to chambers in the sense of Definition \ref{defn:Fuchsian} as \emph{Fuchsian chambers} and refer to chambers in the sense of Definition \ref{defn:building} as \emph{cubical chambers}.
	
	The general form of (candidate) Fuchsian chambers in each of cases \ref{item:i}--\ref{item:iii} is depicted in Figure \ref{fig:Fchambers}, along with vertex labels corresponding to the typing map $t:\Delta^0\to\bar{N}$.
	Each Fuchsian chamber is given a hyperbolic metric with angles as indicated and side lengths as symmetric as possible.
	One can easily verify in each case that $\Delta$ has the structure of a 2-complex in which the 2-cells are the Fuchsian chambers (using for instance Lemmas \ref{lem:intchamres} and \ref{lem:min}).
	
	Next we must consistently label the vertices of the Fuchsian chambers cyclically by numbers $1,2,...,k$ (where each Fuchsian chamber is a $k$-gon). This is easy in cases \ref{item:i} and \ref{item:iii} because the vertices of the Fuchsian chambers have distinct labels coming from the typing map.
	For case \ref{item:ii} we note that the four vertices of the Fuchsian chamber are centers of cubical chambers, and each cubical chamber is of the form $C_{\gamma}$ for some $\gamma\in\G$, so we can label the vertices according to the homomorphism
	$$\eta:\G\to \mathbb{Z}/2\mathbb{Z}\times \mathbb{Z}/2\mathbb{Z},$$
	where $G_i<\G$ maps onto the $k$-th factor ($k=1,2$) if $i\in I_k$ (note that all the $G_i$ have order 2).
	Let's analyze how this labeling might look in a single Fuchsian chamber.
	If two vertices of the Fuchsian chamber are in cubical chambers $C_{\gamma_1},C_{\gamma_2}$, and if the side joining the vertices has midpoint of type $\{i\}$ with $i\in I_1$, then $\gamma_2^{-1}\gamma_1\in G_i$, so $\eta(\gamma_1),\eta(\gamma_2)$ differ only in their first coordinates.
	Whereas a midpoint of type $\{j\}$ with $j\in I_2$ implies that $\eta(\gamma_1),\eta(\gamma_2)$ differ only in their second coordinates.
	A possible labeling by $\eta$ is shown in blue in Figure \ref{fig:Fchambers}, and the above discussion implies that the other three possible labelings are obtained by reflecting the picture in the $x$- and/or $y$-axes.
	In all cases we get cyclic labelings of the Fuchsian chambers by composing with the map $(0,0),(1,0),(1,1),(0,1)\mapsto 1,2,3,4$.
	
	In Figure \ref{fig:Fchambers}, each side of a Fuchsian chamber is labeled (in red) by the number of Fuchsian chambers that contain it. These numbers are all greater than 2, so this verifies property \ref{item:qi} in Definition \ref{defn:Fuchsian}.
	
	Finally, for properties \ref{item:apart} and \ref{item:mapapart} in Definition \ref{defn:Fuchsian} we use Theorem \ref{thm:localF}. This involves the angles for the Fuchsian chambers.
	The vertices with angle $\pi/m$ are all centers of cubical chambers, so have links isomorphic to $\cG$.
	Meanwhile, it is straightforward to check that the vertices with right-angles all have links isomorphic to complete bipartite graphs, so these are generalized 2-gons.
	It follows that $\Delta$ satisfies properties \ref{item:CAT1} and \ref{item:embedcircle} from Theorem \ref{thm:localF}.
	
\end{proof}

\begin{figure}[H]
	\centering
	\scalebox{1}{
		\begin{tikzpicture}[auto,node distance=2cm,
			thick,every node/.style={circle,draw,fill,inner sep=0pt,minimum size=7pt},
			every loop/.style={min distance=2cm},
			hull/.style={draw=none},
			]
			\tikzstyle{label}=[draw=none,fill=none]
			\tikzstyle{a}=[isosceles triangle,sloped,allow upside down,shift={(0,-.05)},minimum size=3pt]
			
			\begin{scope}[shift={(-7,1)}]
			
			\draw ({8*(-sqrt(2)*sin(112.5)+1)},0) arc (0:-22.5:8);
			\draw (0,{8*(-sqrt(2)*sin(112.5)+1)}) arc (90:112.5:8);
			\draw ({8*(-sqrt(2)*sin(112.5)+1)},0)--(0,0)--	(0,{8*(-sqrt(2)*sin(112.5)+1)});
			
			\draw [domain=20:70] plot ({-8*sqrt(2)*sin(112.5)+8*cos(22.5)+0.5*cos(\x)},{-8*sin(22.5)+0.5*sin(\x)});
			\draw ({8*(-sqrt(2)*sin(112.5)+1)},0) rectangle ({8*(-sqrt(2)*sin(112.5)+1)+0.3},-.3);
			\draw (0,{8*(-sqrt(2)*sin(112.5)+1)}) rectangle (-.3,{8*(-sqrt(2)*sin(112.5)+1)+0.3});
			\draw (0,0) rectangle (-.3,-.3);
			
			\node[label] at (-2.5,-2.5){$\frac{\pi}{m}$};
			\node[label] at (-3.2,-3.2){$\emptyset$};
			\node[label] at (-2.6,.3){$\{i\}$};
			\node[label] at (.3,-2.6){$\{j\}$};
			\node[label] at (.3,.3){$\{i,j\}$};
			\node[label,red] at (.3,-1.2){$p_1$};
			\node[label,red] at (-1.2,.3){$p_2$};
			\node[label,red] at (-2.8,-1.2){$d_1$};
			\node[label,red] at (-1.2,-2.8){$d_2$};

			\end{scope}
		
		\begin{scope}[scale=.8]
		\begin{scope}
\draw ({8*(-sqrt(2)*sin(112.5)+1)},0) arc (0:-22.5:8);
\draw (0,{8*(-sqrt(2)*sin(112.5)+1)}) arc (90:112.5:8);
\draw ({8*(-sqrt(2)*sin(112.5)+1)},0)--(0,0)--	(0,{8*(-sqrt(2)*sin(112.5)+1)});

\draw [domain=20:70] plot ({-8*sqrt(2)*sin(112.5)+8*cos(22.5)+0.5*cos(\x)},{-8*sin(22.5)+0.5*sin(\x)});

\node[label] at (-2.5,-2.5){$\frac{\pi}{m}$};
\node[label] at (-3.2,-3.2){$\emptyset$};
		\end{scope}
			\begin{scope}[rotate=90]
		\draw ({8*(-sqrt(2)*sin(112.5)+1)},0) arc (0:-22.5:8);
		\draw (0,{8*(-sqrt(2)*sin(112.5)+1)}) arc (90:112.5:8);
		\draw ({8*(-sqrt(2)*sin(112.5)+1)},0)--(0,0)--	(0,{8*(-sqrt(2)*sin(112.5)+1)});
		
		\draw [domain=20:70] plot ({-8*sqrt(2)*sin(112.5)+8*cos(22.5)+0.5*cos(\x)},{-8*sin(22.5)+0.5*sin(\x)});
		
		\node[label] at (-2.5,-2.5){$\frac{\pi}{m}$};
		\node[label] at (-3.2,-3.2){$\emptyset$};
	\end{scope}
		\begin{scope}[rotate=180]
	\draw ({8*(-sqrt(2)*sin(112.5)+1)},0) arc (0:-22.5:8);
	\draw (0,{8*(-sqrt(2)*sin(112.5)+1)}) arc (90:112.5:8);
	\draw ({8*(-sqrt(2)*sin(112.5)+1)},0)--(0,0)--	(0,{8*(-sqrt(2)*sin(112.5)+1)});
	
	\draw [domain=20:70] plot ({-8*sqrt(2)*sin(112.5)+8*cos(22.5)+0.5*cos(\x)},{-8*sin(22.5)+0.5*sin(\x)});
	
	\node[label] at (-2.5,-2.5){$\frac{\pi}{m}$};
	\node[label] at (-3.2,-3.2){$\emptyset$};
\end{scope}
		\begin{scope}[rotate=270]
	\draw ({8*(-sqrt(2)*sin(112.5)+1)},0) arc (0:-22.5:8);
	\draw (0,{8*(-sqrt(2)*sin(112.5)+1)}) arc (90:112.5:8);
	\draw ({8*(-sqrt(2)*sin(112.5)+1)},0)--(0,0)--	(0,{8*(-sqrt(2)*sin(112.5)+1)});
	
	\draw [domain=20:70] plot ({-8*sqrt(2)*sin(112.5)+8*cos(22.5)+0.5*cos(\x)},{-8*sin(22.5)+0.5*sin(\x)});
	
	\node[label] at (-2.5,-2.5){$\frac{\pi}{m}$};
	\node[label] at (-3.2,-3.2){$\emptyset$};
\end{scope}

\node[label] at (-2.8,0){$\{i\}$};
\node[label] at (2.8,0){$\{i\}$};
\node[label] at (0,-2.8){$\{j\}$};
\node[label] at (0,2.8){$\{j\}$};
\node[label] at (.5,.3){$\{i,j\}$};
\node[label,red] at (-2.8,-1.2){$d_1$};
\node[label,red] at (-2.8,1.2){$d_1$};
\node[label,red] at (2.8,-1.2){$d_1$};
\node[label,red] at (2.8,1.2){$d_1$};
\node[label,red] at (-1.2,-2.8){$d_2$};
\node[label,red] at (1.2,-2.8){$d_2$};
\node[label,red] at (-1.2,2.8){$d_2$};
\node[label,red] at (1.2,2.8){$d_2$};

\node[label,blue] at (3.2,3.7){$(1,1)$};
\node[label,blue] at (-3.2,3.7){$(1,0)$};
\node[label,blue] at (3.2,-3.7){$(0,1)$};
\node[label,blue] at (-3.2,-3.7){$(0,0)$};

\end{scope}

\begin{scope}[shift={(-7,-7)},scale=1.4]
\begin{scope}
	\draw [domain=-15:15] plot ({4*(-1/cos(15)-sqrt(2)*tan(15)+cos(\x))},{4*sin(\x)});
	\draw ({4*(1-1/cos(15)-sqrt(2)*tan(15))},0)--(0,0);
	\draw ({4*(-1/cos(15)-sqrt(2)*tan(15)+cos(12))},{4*sin(12)})--
	({4*(-1/cos(15)-sqrt(2)*tan(15)+1.055*cos(12.1))},{4.22*sin(12.1)});
	\draw ({4*(-1/cos(15)-sqrt(2)*tan(15)+cos(-12))},{4*sin(-12)})--
	({4*(-1/cos(15)-sqrt(2)*tan(15)+1.055*cos(-12.1))},{4.22*sin(-12.1)});
\end{scope}
\begin{scope}[rotate=60]
	\draw [domain=-15:15] plot ({4*(-1/cos(15)-sqrt(2)*tan(15)+cos(\x))},{4*sin(\x)});
	\draw ({4*(1-1/cos(15)-sqrt(2)*tan(15))},0)--(0,0);
	\draw ({4*(-1/cos(15)-sqrt(2)*tan(15)+cos(12))},{4*sin(12)})--
	({4*(-1/cos(15)-sqrt(2)*tan(15)+1.055*cos(12.1))},{4.22*sin(12.1)});
	\draw ({4*(-1/cos(15)-sqrt(2)*tan(15)+cos(-12))},{4*sin(-12)})--
	({4*(-1/cos(15)-sqrt(2)*tan(15)+1.055*cos(-12.1))},{4.22*sin(-12.1)});
\end{scope}
\begin{scope}[rotate=120]
	\draw [domain=-15:15] plot ({4*(-1/cos(15)-sqrt(2)*tan(15)+cos(\x))},{4*sin(\x)});
	\draw ({4*(1-1/cos(15)-sqrt(2)*tan(15))},0)--(0,0);
	\draw ({4*(-1/cos(15)-sqrt(2)*tan(15)+cos(12))},{4*sin(12)})--
	({4*(-1/cos(15)-sqrt(2)*tan(15)+1.055*cos(12.1))},{4.22*sin(12.1)});
	\draw ({4*(-1/cos(15)-sqrt(2)*tan(15)+cos(-12))},{4*sin(-12)})--
	({4*(-1/cos(15)-sqrt(2)*tan(15)+1.055*cos(-12.1))},{4.22*sin(-12.1)});
\end{scope}
\begin{scope}[rotate=180]
	\draw [domain=-15:15] plot ({4*(-1/cos(15)-sqrt(2)*tan(15)+cos(\x))},{4*sin(\x)});
	\draw ({4*(1-1/cos(15)-sqrt(2)*tan(15))},0)--(0,0);
	\draw ({4*(-1/cos(15)-sqrt(2)*tan(15)+cos(12))},{4*sin(12)})--
	({4*(-1/cos(15)-sqrt(2)*tan(15)+1.055*cos(12.1))},{4.22*sin(12.1)});
	\draw ({4*(-1/cos(15)-sqrt(2)*tan(15)+cos(-12))},{4*sin(-12)})--
	({4*(-1/cos(15)-sqrt(2)*tan(15)+1.055*cos(-12.1))},{4.22*sin(-12.1)});
\end{scope}
\begin{scope}[rotate=240]
	\draw [domain=-15:15] plot ({4*(-1/cos(15)-sqrt(2)*tan(15)+cos(\x))},{4*sin(\x)});
	\draw ({4*(1-1/cos(15)-sqrt(2)*tan(15))},0)--(0,0);
	\draw ({4*(-1/cos(15)-sqrt(2)*tan(15)+cos(12))},{4*sin(12)})--
	({4*(-1/cos(15)-sqrt(2)*tan(15)+1.055*cos(12.1))},{4.22*sin(12.1)});
	\draw ({4*(-1/cos(15)-sqrt(2)*tan(15)+cos(-12))},{4*sin(-12)})--
	({4*(-1/cos(15)-sqrt(2)*tan(15)+1.055*cos(-12.1))},{4.22*sin(-12.1)});
\end{scope}
\begin{scope}[rotate=300]
	\draw [domain=-15:15] plot ({4*(-1/cos(15)-sqrt(2)*tan(15)+cos(\x))},{4*sin(\x)});
	\draw ({4*(1-1/cos(15)-sqrt(2)*tan(15))},0)--(0,0);
	\draw ({4*(-1/cos(15)-sqrt(2)*tan(15)+cos(12))},{4*sin(12)})--
	({4*(-1/cos(15)-sqrt(2)*tan(15)+1.055*cos(12.1))},{4.22*sin(12.1)});
	\draw ({4*(-1/cos(15)-sqrt(2)*tan(15)+cos(-12))},{4*sin(-12)})--
	({4*(-1/cos(15)-sqrt(2)*tan(15)+1.055*cos(-12.1))},{4.22*sin(-12.1)});
\end{scope}

\node[label] at (-.3,.2){$\emptyset$};
\node[label] at (-1.9,0){$\{i_1\}$};
\node[label] at (1,1.6){$\{i_2\}$};
\node[label] at (1,-1.6){$\{i_3\}$};
\node[label] at (-1,1.6){$\{j_1\}$};
\node[label] at (1.9,0){$\{j_2\}$};
\node[label] at (-1,-1.6){$\{j_3\}$};
\node[label] at (-2.2,1.2){$\{i_1,j_1\}$};
\node[label] at (0,2.2){$\{i_2,j_1\}$};
\node[label] at (2.2,1.2){$\{i_2,j_2\}$};
\node[label] at (2.2,-1.2){$\{i_3,j_2\}$};
\node[label] at (0,-2.2){$\{i_3,j_3\}$};
\node[label] at (-2.2,-1.2){$\{i_1,j_3\}$};

\node[label,red] at (-1.9,.5){$p_1$};
\node[label,red] at (-1.9,-.5){$p_1$};
\node[label,red] at (.6,1.8){$p_1$};
\node[label,red] at (.6,-1.8){$p_1$};
\node[label,red] at (1.4,1.35){$p_1$};
\node[label,red] at (1.4,-1.35){$p_1$};
\node[label,red] at (1.9,.5){$p_2$};
\node[label,red] at (1.9,-.5){$p_2$};
\node[label,red] at (-.6,1.8){$p_2$};
\node[label,red] at (-.6,-1.8){$p_2$};
\node[label,red] at (-1.4,1.35){$p_2$};
\node[label,red] at (-1.4,-1.35){$p_2$};

\end{scope}

\node[label] at (-11,3){$(i)$};
\node[label] at (-3.5,3){$(ii)$};
\node[label] at (-11,-4){$(iii)$};
		\end{tikzpicture}
	}
	\caption{The general forms of Fuchsian chambers in each of cases \ref{item:i}--\ref{item:iii} from Proposition \ref{prop:Fuchsian}.
	The vertices are labeled according to the typing map $t:\Delta^0\to\bar{N}$, with $i,i_l\in I_1$ and $j,j_l\in I_2$.
	We assume that $\cG$ is a hexagon in case \ref{item:iii} (in general it will be a $2m$-gon).
	In case \ref{item:ii} the vertex of the Fuchsian chamber in the cubical chamber $C_{\gamma}$ is labeled (in blue) by $\eta(\gamma)$.
    Each side of a Fuchsian chamber is labeled (in red) by the number of Fuchsian chambers that contain it.
    Each Fuchsian chamber is given a hyperbolic metric with angles as indicated.
    }\label{fig:Fchambers}
\end{figure}
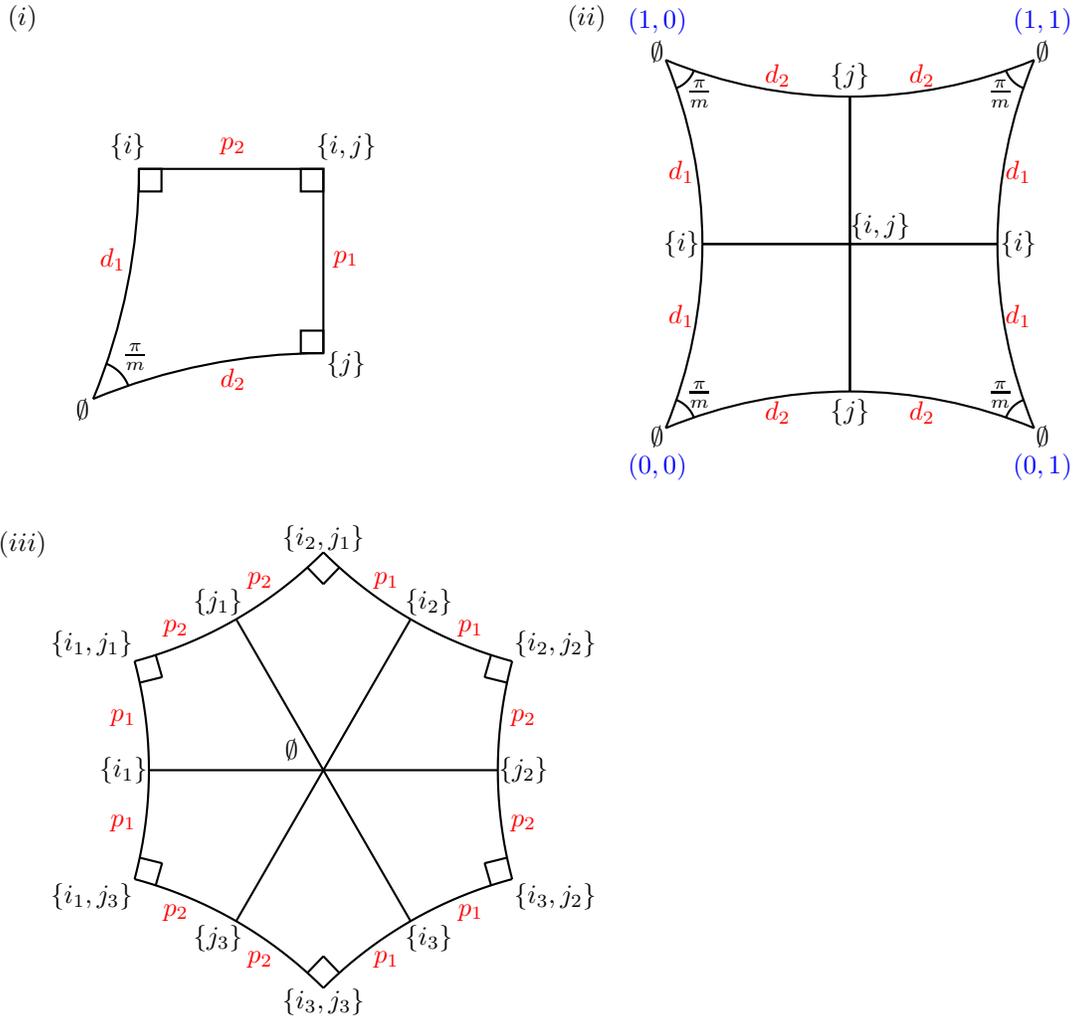

\bibliographystyle{alpha}
\bibliography{Ref}

\end{document}